\documentclass[11pt,oneside, reqno, a4paper]{amsart} 

\usepackage[english]{babel}
\usepackage[T1]{fontenc}
\usepackage{amsmath,mathdots}
\usepackage{amssymb}
\usepackage{float}
\usepackage{amsfonts, bm}
\usepackage{mathtools}
\usepackage[utf8]{inputenc}
\usepackage[mathcal]{eucal}
\usepackage{enumerate}
\usepackage{mathrsfs}
\usepackage{tikz}
\usetikzlibrary{arrows, matrix}
\usepackage{amsthm}
\usepackage{esint}
\usepackage{hyperref}
\usepackage[totalwidth=14cm,totalheight=20.5cm]{geometry}
\usepackage{epsfig,fancyhdr,color}
\usepackage{multicol} 
\usepackage{csquotes}
\usepackage{tikz-cd}
\usepackage[toc]{appendix}



\newtheorem{cor}{Corollary}[section]

\newtheorem{theorem}[cor]{Theorem}

\newtheorem{prop}[cor]{Proposition}

\newtheorem{lemma}[cor]{Lemma}

\theoremstyle{definition}
\newtheorem{defi}[cor]{Definition}

\newenvironment{manualtheorem}[1]{%
  \manualtheoreminner
}{\endmanualtheoreminner}
\theoremstyle{remark}
\newtheorem{remark}[cor]{Remark}
\newtheorem*{remark*}{Remark}
\newtheorem{example}[cor]{Example}

\newcommand{\Pp}{\mathbb{P}}
\newcommand{\R}{\mathbb{R}}

\newcommand{\C}{\mathbb{C}}
\newcommand{\Z}{\mathbb{Z}}

\newcommand{\h}{\mathbb{H}}

\newcommand{\p}{\mathbf{P}}

\newcommand{\Diff}{\mathrm{Diff}}

\newcommand{\diag}{\mathrm{diag}}

\newcommand{\SL}{\mathrm{SL}}

\newcommand{\vl}{|}
\newcommand{\Ker}{\mathrm{Ker}}

\newcommand{\PSL}{\mathbb{P}\mathrm{SL}}

\newcommand{\Ima}{\mathrm{Im}}
\newcommand{\Sec}{\mathrm{Sec}}

\newcommand{\trace}{\mathrm{tr}}

\newcommand{\II}{\mathrm{I}\mathrm{I}}

\newcommand{\U}{\mathrm{U}}
\newcommand{\SU}{\mathrm{SU}}
\newcommand{\GL}{\mathrm{GL}}

\newcommand{\SO}{\mathrm{SO}}

\newcommand{\Hom}{\mathrm{Hom}}

\newcommand{\Id}{\mathrm{Id}}

\newcommand{\Ree}{\mathrm{Re}}

\newcommand{\Sg}{\Sigma}

\newcommand{\bigslant}[2]{{\raisebox{.2em}{$#1$}\left/\raisebox{-.2em}{$#2$}\right.}}
\newcommand{\g}{\mathbf{g}}
\newcommand{\ome}{\boldsymbol{\omega}}
\newcommand{\divi}{\mathrm{div}}
\newcommand{\dev}{\mathrm{dev}}

\DeclareMathAlphabet{\mathpzc}{OT1}{pzc}{m}{it}

\usepackage{mathtools}

\setcounter{MaxMatrixCols}{20}

\title[superconformal surfaces and cyclic Higgs bundles]{superconformal surfaces in para-complex hyperbolic space and cyclic Higgs bundles}


\allowdisplaybreaks

\begin{document}

\setcounter{secnumdepth}{3}
\setcounter{tocdepth}{2}

\title[Para-complex geometry and cyclic Higgs bundles]{Para-complex geometry and cyclic Higgs bundles} 

\author[Nicholas Rungi]{Nicholas Rungi}
\address{NR: Department of Mathematics, University of Turin, Italy.} \email{nicholas.rungi@unito.it} 

\author[Andrea Tamburelli]{Andrea Tamburelli}
\address{AT: Department of Mathematics, University of Pisa, Italy.} \email{andrea.tamburelli@libero.it}

\date{\today}

\begin{abstract}We introduce para-complex and pseudo-Riemannian geometric methods for the study of representations of surface groups in $\SL(2m+1,\R)$. For $m=1$ our techniques allow to recover several known results for Hitchin representations without any reference to convex projective geometry or hyperbolic affine spheres. In particular, we describe analytically the Guichard-Wienhard domain of discontinuity in the flag variety and the corresponding concave foliated flag structure of Nolte-Riestenberg. In higher rank, we obtain a one-to-one correspondence between stable cyclic $\SL(2m+1,\R)$-Higgs bundles (not necessarily in the Hitchin component) and a special class of surfaces, which we call isotropic $\mathbf{P}$-alternating, in the para-complex hyperbolic space $\mathbb{H}^{2m}_{\tau}$. As a result, we give a geometric interpretation to the holomorphic differential $q_{2m+1}$ in the Hitchin base in terms of harmonic sequences for immersions in para-complex manifolds.
    
\end{abstract}

\maketitle

\tableofcontents

\section{Introduction}
\noindent The study of smooth manifolds defined over the algebra of para-complex numbers \( \R_\tau := \{z = x + \tau y \ | \ \tau^2 = 1, \ x,y\in\R\} \) (\cite{cockle1849iii}), has gained significant interest in recent years, both from a purely mathematical perspective and for its applications in theoretical physics (see \cite{cruceanu1996survey} for a survey of both topics). On the one hand, results analogous to those in the theory of complex manifolds have been established, while on the other hand, many new phenomena have been discovered, partly due to the different behavior of the algebra \( \R_\tau \) compared to complex numbers. 
\\ \\
The main objective of this article is to show how para-complex and pseudo-Riemannian geometry find an application in the study of representations of fundamental groups of closed surfaces into \(\SL(2m+1,\R)\), although this does not preserve any bilinear form in $\mathbb{R}^{2m+1}$. More generally, this problem falls within the framework of \emph{higher Teichmüller theory} (\cite{Wienhard_intro}), which has developed following Hitchin’s seminal work on the existence of a special connected component in the representation space (\cite{hitchin1992lie}). Introducing a hyperbolic space \(\h_\tau^{2m}\) defined over the algebra \(\R_\tau\), and naturally endowed with a para-Kähler metric, one can compute its isometry group and prove it is isomorphic to \(\SL(2m+1,\R)\). Through the use of stable $\SL(2m+1,\R)$-Higgs bundles, we can establish the existence of certain \(\rho\)-equivariant surface immersions in \(\h_\tau^{2m}\), where \(\rho\) is a representation into \(\SL(2m+1,\R)\) that is not necessarily Hitchin. 
Moreover, a detailed analysis of the properties of these immersions allows us to derive geometric insights into their moduli space and, more broadly, into the representation space. Interestingly enough, the holomorphic differential \( q_{2m+1} \) of order \( 2m+1 \), naturally associated with the cyclic Higgs bundle — not necessarily of Hitchin type — admits a geometric interpretation in terms of these immersions. More generally, we show that there exists a method to derive such a holomorphic differential from minimal maps between a Riemann surface and a para-Kähler manifold with constant para-holomorphic sectional curvature. Finally, in the case \(m=1\), we recover the Labourie-Loftin parametrization of the Hitchin component (\cite{loftin2001affine, Labourie_cubic}) and construct geometric structures on the projectivised tangent bundle of the surface modeled on the flag variety $\mathcal{F}$ given by pairs of lines and planes $(l,H)$ in \( \mathbb{R}^3 \) with $l\subset H$, which we show to be precisely those found by Guichard-Wienhard (\cite{guichard2012anosov}) using the theory of domains of discontinuity for Anosov representations. \\ 

\noindent We believe that this setting may prove itself useful for the study of Anosov representations of more general discrete groups in $\SL(2m+1,\R)$ with respect to the parabolic subgroup that fixes a line and a hyperplane or for the development of a universal higher Teichm\"uller theory for $\SL(2m+1,\R)$. Indeed, the Anosov property furnishes a boundary map with values in $\mathcal{F}$, which is exactly the boundary at infinity of the para-complex hyperbolic space mentioned previously. It is conceivable that an asymptotic Plateau problem for these kinds of boundary data, not necessarily equivariant under a group action, may be solved, thus obtaining new examples of higher higher Teichm\"uller spaces, in the spirit of \cite{seppi2023complete, beyrer2023mathbb}.

\subsection{Immersions into a para-K\"ahler space}
A \emph{para-complex} structure on a smooth manifold \( M \) of real dimension \( 2n \) is an endomorphism \( \p \) of the tangent bundle such that:  
\begin{enumerate}  
\item[(i)] \( \p^2 = \mathrm{Id} \)  
\item[(ii)] The two eigendistributions \( D_{\pm} := \Ker\big(\p \mp \mathrm{Id}\big) \) both have rank \( n \) and are involutive.  
\end{enumerate}  
The pair $(M,\p)$ will be called a \emph{para-complex manifold} of para-complex dimension $n$. Moreover, if there exists a non-degenerate symmetric bilinear two-tensor \( g \) on \( (M,\p) \) such that  
\[
g(\p X, \p Y) = -g(X,Y), \quad \text{for any} \ X,Y\in\Gamma(TM)
\]  
then \( g \) defines a pseudo-Riemannian metric on \( M \) of signature \( (n, n) \), and the triple \( (M, \p, g) \) is called a \emph{para-Hermitian manifold}. It is easily verified that the pairing  
\[
\omega := g(\cdot, \p \cdot)
\]  
defines a 2-form on \( M \), known as the fundamental 2-form of \( (M,\p, g) \). If, in addition, \( \omega \) is a symplectic 2-form, i.e., \( \mathrm{d}\omega = 0 \), then the manifold \( (M,\p, g, \omega) \) is called \emph{para-Kähler}. \\

\noindent The para-Kähler homogeneous space we wish to introduce can be defined as a hyperbolic space over the algebra \( \mathbb{R}_\tau \): given a para-Hermitian bilinear form $\mathbf{q}$ on $\R^{n+1}_{\tau}$, the para-complex hyperbolic space is 
\[
    \h^{n}_{\tau}=\{ z \in \R^{n+1}_{\tau} \ | \ \mathbf{q}(z,z)=-1\} / \mathcal{U},
\]
where $\mathcal{U}$ is the subgroup of unitary para-complex numbers acting by scalar multiplication. There is a natural notion of boundary at infinity given by projective classes of $\mathbf{q}$-isotropic vectors. It can be shown that this space is homeomorphic to the partial flag manifold \(\mathcal{F}_{1,n}\), obtained as the quotient \(\SL(n+1,\R)/P_{1,n}\), where \(P_{1,n}\) is the stabilizer of a pair of line $\ell$ and a hyperplane $H$ in $\mathbb{R}^{n+1}$ with $\ell \subset H$. It is easy to see — by analogy with the complex case — that \( \mathbb{H}_\tau^n \) is a para-Kähler manifold of para-complex dimension \( n \). We will denote such structure by $(\p,g,\omega)$. Moreover, the group of para-holomorphic isometries of $\mathbb{H}^{n}_{\tau}$ is isomorphic to \( \mathrm{SL}(n+1, \mathbb{R}) \). \\ \\ To introduce special immersions of oriented and connected surfaces \( S \), we must restrict ourselves to the case where \( n = 2m \) with \( m \geq 1 \). In this setting, the isometry group of the space becomes \( \SL(2m+1,\R) \), and \( \h_\tau^{2m} \) is a para-Kähler manifold equipped with a pseudo-Riemannian metric of signature \( (2m,2m) \). The idea is to consider all immersions \( \sigma: S \to \h_\tau^{2m} \) whose pullback tangent bundle admits an orthogonal splitting  
\[
\sigma^*T\h_\tau^{2m} = \mathscr{L}_1 \oplus \cdots \oplus \mathscr{L}_{2m}
\]  
into real rank $2$ sub-bundles with $\mathscr{L}_1\cong TS$ and such that the following properties hold: \begin{itemize}
    \item[$\bullet$] The pull-back metric $\sigma^*g$ restricted to \( \mathscr{L}_{i} \) is non-degenerate and positive-definite when $i\in\{1,\dots,2m\}$ is odd; \item[$\bullet$] the pull-back of the Levi-Civita connection admits a decomposition 
$$
\nabla=\begin{psmallmatrix}
    \nabla^1 & -\eta_2^\dag &  \\ \eta_2 & \nabla^2 & -\eta_3^\dag && \\ & \eta_3 & \nabla^3 & \ddots \\ && \ddots & \ddots & -\eta_{2m}^\dag \\ &&& \eta_{2m} &\nabla^{2m} \end{psmallmatrix}$$
where $\nabla^i$ is a metric connection on $\mathscr{L}_{i}$, $\eta_i$ is a $1$-form with values in $\Hom(\mathscr{L}_{i-1},\mathscr{L}_i)$ and $\eta_i^\dag$ is its adjoint. Moreover, we assume $\eta_{i}(X):\mathscr{L}_{i-1} \rightarrow \mathscr{L}_{i}$ to be conformal for all $X \in \Gamma(TS)$ and for all $i=2, \dots, 2m$; 

\item[$\bullet$] the pull-back symplectic form $\sigma^*\omega$ vanishes when restricted to each $\mathscr{L}_{i}$;
\item[$\bullet$] the para-complex structure \( \p \) alternates the rank-2 subbundles, i.e.,  
\[
\p(\mathscr{L}_{2k-1}) \cong \mathscr{L}_{2m-2(k-1)},
\]  
so that \( \sigma^*g \) is negative definite on \( \mathscr{L}_{2k} \), for $k=1,\dots,m$.
\end{itemize} We will refer to the newly introduced immersions as \emph{isotropic \(\p\)-alternating} surfaces (Definition \ref{def:Frenet} and \ref{def:P_alternating_surfaces}), and they can actually be defined in any para-Kähler manifold of real dimension \(4m\). A similar approach has been used by Nie (\cite{Nie_alternating}) in the study of \emph{A-surfaces} in the \emph{pseudo-hyperbolic space} of signature \( (n,n) \) for even \( n \), or signature \( (n+1,n-1) \) for odd \( n \). Instead, Collier-Toulisse (\cite{collier2023holomorphic}) studied \emph{alternating pseudo-holomorphic curves} in the pseudo-hyperbolic space of signature $(4,2)$. We will explain the relationship with their case in the final paragraph of the introduction. \\ \\ The first result concerns the structure equations of the immersions that we will study in this article, which will be directly linked to the Hitchin equations of a certain Higgs bundle over the associated Riemann surface induced by the conformal class of the first fundamental form. 
\begin{manualtheorem}A(Theorem \ref{thm:hol_data} and Theorem \ref{thm:structure_eq_hol})\label{thm:A}
 \emph{Let $\sigma:S\to\h_\tau^{2m}$ be a smooth isotropic $\p$-alternating immersion with $m\ge 2$. Let us assume that $\sigma(S)$ is not contained in any para-complex hyperbolic subspace of real codimension $4$, then its structural data is completely determined by the datum of $m$ holomorphic $(1,0)$-forms $\eta_2,\dots,\eta_{m+1}$ with values in $\Hom_\C(L_{i-1},L_i)$ for $i=2,\dots,m$ and in $\Hom_\C(L_m,\overline{L_m})$ for $i=m+1$, where $L_1,\dots,L_m$ are holomorphic line bundles endowed with Hermitian metrics $\boldsymbol h_1,\dots,\boldsymbol h_m$ satisfying the following system of equations:}
 \begin{equation}
    \begin{cases}
        \partial^{2}_{z\bar{z}}\log\boldsymbol h_{m}-|\eta_{m}|^{2}\boldsymbol h_{m}\boldsymbol h_{m-1}^{-1}+|\eta_{m+1}|^{2}\boldsymbol h_{m}^{-2}=0 \\
        \partial^{2}_{z\bar{z}}\log\boldsymbol h_{j}-|\eta_{j}|^{2}\boldsymbol h_{j}\boldsymbol h_{j-1}^{-1}+|\eta_{j+1}|^{2}\boldsymbol h_{j+1}\boldsymbol h_{j}^{-1}=0, \ \ \ \ \ \ \ j=2, \dots, m-1 \\
        \partial^{2}_{z\bar{z}}\log\boldsymbol h_{1}-\boldsymbol h_{1}+|\eta_{2}|^{2}\boldsymbol h_{2}\boldsymbol h_{1}^{-1} =0 .
    \end{cases}
\end{equation}
\end{manualtheorem}
\noindent It is important to observe that the connection \( \nabla \) introduced above is determined by \( 2m \) vector bundle-valued 1-forms \( \eta_1,\dots,\eta_{2m} \). However, in the previously stated theorem, it follows that only \( m+1 \) of them are sufficient to express the structure equations of the immersions. This is a consequence of the \( \p \)-alternating property that we have imposed, which is made possible by the existence of the para-complex structure on \( \h_\tau^{2m} \). This fact will be crucial in establishing a connection between \( \p \)-alternating isotropic immersions and cyclic $\SL(2m+1,\R)$-Higgs bundles.
\subsection{The role of cyclic $\SL(2m+1,\R)$-Higgs bundles}
Given a fixed Riemann surface $X$ of genus $g\ge 2$ an $\SL(n,\C)$-Higgs bundle is a pair $(E,\phi)$ where $E\to X$ is a holomorphic bundle of rank $n$ and $\phi$ is a holomorphic $(1,0)$-form with values in trace-less endomorphisms bundle of $E$. In this article, we restrict ourselves to the case where the following decomposition holds:  
\[
E = L_{m}^{-1} \oplus \cdots \oplus L_{1}^{-1} \oplus \mathcal{O}_{X} \oplus L_{1} \oplus \cdots \oplus L_{m}, \quad
   \phi=\begin{psmallmatrix}
    0 & & & & & & & &\gamma_{m}\\
    \gamma_{m-1} & 0 & & & & & & & \\
    &\ddots & \ddots & & & & & &\\
    & & \gamma_{1} & 0 & & & & &\\
    & & & \mu & 0 & & & &\\
    & & & & \mu & 0 & & &\\
    & & & & & \gamma_{1} & 0 & & \\
    & & & & & & \ddots & \ddots & \\
    & & & & & & & \gamma_{m-1} & 0
    \end{psmallmatrix} \]where $L_i$ are holomorphic line bundles, $0 \neq \gamma_{i} \in H^{0}(X, L_{i}^{-1}L_{i+1}K)$ for $i=1, \dots, m-1$, $0 \neq \mu \in H^{0}(X, L_{1}K)$ and $\gamma_{m} \in H^{0}(X, L_{m}^{-2}K)$. In the literature, such pairs \((E, \phi)\) are referred to as cyclic \(\SL(2m+1,\R)\)-Higgs bundles. There is a notion of stability condition (\cite{hitchin1987self}) that ensures the existence and uniqueness of the solution to the Hitchin equations, which corresponds to a unique Hermitian metric \(H\) on \(E\) compatible with the splitting into line bundles (\cite{Collier_thesis}), namely:  
\(
H = \operatorname{diag}(h_{m}^{-1}, \dots, h_{1}^{-1}, 1, h_{1}, \dots, h_{m}),
\) 
where \(h_{i}\) is a Hermitian metric on \(L_{i}\) for each \(i = 1, \dots, m\). Thanks to the work of Baraglia (\cite[\S 3.4.2]{baraglia2010g2}), it is known that for \(m=1\), Higgs bundles of the form  
\[
E = K \oplus \mathcal{O}_X \oplus K^{-1}, \qquad \phi = \begin{pmatrix} 0 & 0 & q_3 \\ 1 & 0 & 0 \\ 0 & 1 & 0 \end{pmatrix}
\]
give rise to hyperbolic affine spheres in \(\mathbb{R}^3\), whose affine invariant, the Pick form, is closely related to the holomorphic cubic differential \(q_3\). As soon as \(m \geq 2\), the geometry associated with cyclic \(\SL(2m+1,\R)\)-Higgs bundles remains largely unexplored, as it is not clear how to construct affine immersions in higher codimension. The answer is provided by para-complex geometry, or more precisely by the space \(\h_\tau^{2m}\), as suggested by the following result.
\begin{manualtheorem}B (Theorem \ref{thm:existence})\label{thm:B}
\emph{Let $(E,\phi)$ be a stable cyclic $\mathrm{SL}(2m+1,\R)$-Higgs bundle over $X$ with holonomy $\rho$. Assume further that $L_{1}=K^{-1}$ and $\mu=1$. Then there exists a $\rho$-equivariant isotropic $\mathbf{P}$-alternating surface $\sigma:\widetilde{X} \rightarrow \mathbb{H}^{2m}_{\tau}$. Moreover, the structural data of $\sigma$ is exactly the holomorphic data $(L_{1}, \dots, L_{m}, \gamma_{1}, \dots, \gamma_{m})$ used to construct the Higgs bundle along with the harmonic metric $h_{i}$ on each $L_{i}$.}
\end{manualtheorem}
\noindent The hypothesis that \( L_1 \cong K^{-1} \) has been included to ensure a well-defined Riemannian metric on the immersed surface. The statement remains valid even without this assumption, provided that the Higgs bundle is stable. At the end of Section \ref{sec:moduli_space_fixed_X}, we study the case in which the Higgs bundle satisfies \(\gamma_{m-1} \equiv 0\) and is therefore strictly polystable. Since solutions to the Hitchin equations still exist, one obtains isotropic \(\p\)-alternating surfaces that are equivariant with respect to reducible representations, to which we are able to assign a geometric interpretation in terms of the immersion.
\\ \\ The main idea behind Theorem \ref{thm:B} has been the \emph{para-complexification} of a Higgs bundle, a construction that, as we will explain, holds in a fairly general setting. In short, starting from an \(\SL(n,\R)\)-Higgs bundle \((E,\phi,Q)\), one can construct a new triple \((E^\tau,\phi^\tau, \p)\), where \(E^\tau\) now has rank \(2n\), \(\phi^\tau = \phi \otimes e_+ - \phi \otimes e_-\), and \(\p\) is a para-complex structure on \(E^\tau\) (in addition to having a natural holomorphic structure induced by \(E\)), where \(e_\pm\) is the basis of idempotents of the algebra \(\R_\tau\). In the \(\SL(2m+1,\R)\)-cyclic case, we are able to construct a \(\rho\)-equivariant map \(\sigma:\widetilde{X} \to \h_\tau^{2m}\) by using parallel transport on the para-complexification of the trivial section inside $E^\tau$, which comes from the presence of \(\mathcal{O}_X\) in the decomposition of \(E\). Moreover, the geometry of $\h_\tau^{2m}$ allows us to provide a geometric interpretation for the holomorphic differential \( q_{2m+1} = \mu (\gamma_1 \dots \gamma_{m-1})^{2} \gamma_m \) of order \( 2m+1 \), which is naturally associated with any such cyclic Higgs bundle (see the end of Section \ref{sec:existence_surfaces}). This result is a special case of a more general theorem, presented in Appendix \ref{sec:appendix}, which applies to minimal maps from a Riemann surface into a para-Kähler manifold with constant para-holomorphic sectional curvature. In analogy with the complex case (\cite{wood1984holomorphic}), it is possible to define an \emph{isotropic order} for the minimal map and obtain an explicit definition of a holomorphic differential on the Riemann surface, whose order depends on the isotropic order of the map.  \\ \\ Restricting to the Higgs bundles such that \( L_i^{-1} \cong K^i \) and \( \gamma_{i-1} \equiv 1 \) for \( i = 1, \dots, m-1 \), their stability can be controlled using \( d := \deg(L_m^{-1}) \in \mathbb{Z} \), where \( L_m \) is the unique line bundle that is not isomorphic to a power of the canonical bundle (Lemma \ref{lem:stability_Higgs_bundles}). In particular, denoting by \( \mathcal{ML}_d(X)_m \) the moduli space of isotropic \(\p\)-alternating immersions constructed from the Higgs bundles just described with fixed degree $d\in\Z$, we obtain \begin{manualtheorem}C (Theorem \ref{prop:parameterization_moduli_space})
\emph{For any $m\ge 2$, \begin{enumerate}
    \item[(i)] if $0<d\le m(2g-2)$ the set $\mathcal{ML}_d(X)_m$ is biholomorphic to the total space of a holomorphic vector bundle of complex rank $2d+g-1$ over the $(m(2g-2)-d)$-th symmetric product of $X$;
    \item[(ii)] if $1-g\le d\le 0$ the set $\mathcal{ML}_d(X)_m$ is biholomorphic to a bundle over an $H^{1}(X,\Z_{2})$-cover of the $(2d+2g-2)$-symmetric product of $X$ whose fiber is $(\C^{(2m-1)(g-1)-d} \setminus \{0\}) / \{\pm \mathrm{Id}\}$.
    \item[(iii)] if $d\notin[1-g,\dots,m(2g-2)]\cap\Z$ then $\mathcal{ML}_d(X)_m$ is empty.
\end{enumerate}Moreover, the space $\mathcal{ML}_d(X)_m$ is smooth for any $d\in[1-g,m(2g-2)]\cap\Z$.}
\end{manualtheorem}
\noindent The condition imposed on the Higgs bundles in the previously stated theorem is always satisfied when \( m = 2 \). Moreover, since the stability condition can be controlled by examining the degrees of the various line bundles that constitute the decomposition of \( E \) (\cite[\S 2.5]{Dai_Li}), it is plausible that a similar result could be obtained in the general case, although the description of the moduli spaces as bundles would become significantly more intricate.
\subsection{Higher rank Teichm\"uller theory}
In the early 1990s, a fundamental result proved by Hitchin (\cite{hitchin1992lie}) initiated the study of representations of surface groups of compact, oriented surfaces $\Sigma$ of genus \(g \geq 2\) into real split semisimple Lie groups \(G\) of rank greater than or equal to 2. Classical theory deals with discrete and faithful representations into \(\PSL(2,\R)\) and is directly connected to the Teichmüller space \(\mathcal{T}(\Sigma)\), which represents the deformation space of hyperbolic structures defined on the surface, and it is diffeomorphic to a connected component of the space of such representations. The main theorem proved by Hitchin implies that, in the case of rank \(r \geq 2\), there exists a connected component $\mathrm{Hit}(\Sigma,G)$ in the representation space that contains \(\mathcal{T}(\Sigma)\) and is diffeomorphic to an open ball of dimension \((2g-2)\dim G\), which is nowadays referred to as the \emph{Hitchin component}. Despite the very general nature of the result, a geometric description of such representations had not been obtained, due to the differential-geometric and holomorphic nature of the tools used by Hitchin. As previously explained, we are interested in the case where \( G = \SL(2m+1,\R) \), whose representation space will be denoted by \( \chi_m(\Sigma) \), and the Hitchin component by \( \mathrm{Hit}_m(\Sigma) \). When \( m=1 \), it was shown that any $\rho\in\mathrm{Hit}_1(\Sigma)$ preserves a properly convex set \( \mathcal{C} \) in \( \R\mathbb P^2 \), and the quotient \( \mathcal{C}/\rho(\pi_1(\Sigma)) \) induces a projective structure on the surface (\cite{goldman1990convex}). Moreover, the deformation space of all such convex structures is diffeomorphic to $\mathrm{Hit}_1(\Sigma)$ (\cite{kuiper1953convex, choi1997classification}).  Later, it was established that every Hitchin representation in $\SL(3,\R)$ preserves a special convex immersion \( f:\widetilde\Sigma\to\R^3 \) known as \emph{hyperbolic affine sphere}, which allowed, among other things, the holomorphic parametrization of $\mathrm{Hit}_1(\Sigma)$ as a holomorphic bundle over $\mathcal{T}(\Sigma)$ (\cite{loftin2001affine, Labourie_cubic}). As soon as \( m \geq 2 \), no geometric description is known for Hitchin representations in terms of convex structures or equivariant surfaces. Our work aims to bridge this gap by using equivariant isotropic \( \p \)-alternating immersions in \( \h_\tau^{2m} \), which, as implied by Theorem \ref{thm:B}, exist both for cyclic Hitchin representations and for representations in the other two connected components of the representation space \( \chi_m(\Sigma) \) (\cite{hitchin1992lie}). Let us denote by \( \widetilde{\mathcal{ML}}(\Sigma)_m \) the moduli space of such equivariant surfaces, which naturally carries a fibre bundle structure over the Teichmüller space, given by  
\begin{align*}
\pi: \widetilde{\mathcal{ML}}&(\Sigma)_m \to \mathcal{T}(\Sigma) \\ & [\sigma,\rho]\longmapsto[J]
\end{align*}
where \( J \) is the complex structure on \( \Sigma \) induced by the first fundamental form. An additional technical hypothesis is required on the isotropic \( \p \)-alternating surfaces in order to obtain results on the representation space. From Theorem \ref{thm:A}, we know that \( \eta_2, \dots, \eta_{m+1} \) can be regarded as holomorphic \((1,0)\)-forms with values in a holomorphic vector bundle. Let us denote by \( \widetilde{\mathcal{ML}}(\Sigma)_m^* \) the subspace consisting of all immersions in \( \widetilde{\mathcal{ML}}(\Sigma)_m \) that satisfy  
\[
(\eta_2) \prec (\eta_3) \prec \dots \prec (\eta_{m+1}),
\]  
where \( (\eta_i) \) represents the divisor of the holomorphic \((1,0)\)-form.
\begin{manualtheorem}D (Thereom \ref{thm:holonomy} and Corollary \ref{cor:immersion})\label{thm:D}
\emph{The holonomy map \begin{align*}\mathrm{Hol}:\widetilde{\mathcal{ML}}&(\Sigma)_{m}^{*}\longrightarrow\chi_m(\Sigma) \\ &[\sigma,\rho]\longmapsto [\rho]\end{align*} is an immersion for any $m\ge 1$.}
\end{manualtheorem}
\noindent The hypothesis on divisors, also used by Nie for A-surfaces (\cite{Nie_alternating}), allows us to establish an infinitesimal rigidity property for isotropic \( \p \)-alternating immersions, which is always satisfied in the Hitchin case. Theorem \ref{thm:D} was previously proven by Labourie (\cite{labourie2017cyclic}) using the theory of cyclic surfaces, but only in the case of representations in \( \mathrm{Hit}_m(\Sigma) \). In particular, our proof simplifies his argument by providing a geometric description of the image of the holonomy map and extends the result to the other two connected components of the representation space. In the case \( m=1 \), we recover the holomorphic parametrization of the Hitchin component for \( \SL(3,\R) \) by Labourie-Loftin (\cite{loftin2001affine, Labourie_cubic}) using only the isotropic \( \p \)-alternating immersions in \( \h_\tau^2 \). In this case, these immersions correspond to maximal space-like Lagrangian surfaces, which are in fact equivalent to hyperbolic affine spheres in \( \R^3 \) (see \cite{RT_bicomplex} and \cite{hildebrand2011cross}). \\ \\ Focusing on the case \( m=1 \), the theory of Anosov representations allowed Guichard and Wienhard (\cite{guichard2012anosov}) to construct, for instance, from a Hitchin representation \( \rho:\pi_1(\Sigma)\to\SL(3,\R) \), a domain of discontinuity \( \Omega_\rho \) inside the unique flag variety \( \mathcal{F} \). In particular, the group \( \Gamma:=\rho\big(\pi_1(\Sigma)\big) \) acts properly discontinuously and cocompactly on \( \Omega_\rho \), such that the quotient \( \Omega_\rho/\Gamma \) is homeomorphic to \( \mathbb{P}T\Sigma \), thereby endowing it with a \(\big(\mathcal{F}, \SL(3,\R)\big)\)-geometric structure. The final result concerns the construction of geometric structures using maximal space-like Lagrangian surfaces in \( \h_\tau^2 \), which turn out to be precisely those found by Guichard and Wienhard. \begin{manualtheorem}E (Theorem \ref{thm:geometric_structure})
\emph{Given a maximal space-like Lagrangian immersion \( \sigma:\widetilde{\Sigma} \to \h_\tau^2 \) equivariant with respect to a Hitchin representation \( \rho:\pi_1(\Sigma) \to \SL(3,\R) \), there exists a \((\mathcal{F}, \SL(3,\R))\)-geometric structure on \( \mathbb{P}T\Sigma \) that coincides with the one found by Guichard-Wienhard.}
\end{manualtheorem}

\noindent The approach is based on the existence of a transverse section of a certain flat bundle with holonomy \( \rho \) over the projectivized tangent bundle of the surface. In particular, this section takes values in the subbundle of $\mathbf{q}$-isotropic vectors which, after projectivize over the algebra \( \mathbb{R}_\tau \), coincides with the boundary \( \partial_\infty \h_\tau^2 \) and thus with the flag variety \( \mathcal{F} \). Recently, Nolte and Riestenberg (\cite{nolte2024concave}) studied the domain \(\Omega_\rho\subset\mathcal F\) and described geometric structures they defined \emph{concave foliated flag structures} using entirely different techniques, which, however coincides with ours. It is highly likely that this approach using isotropic \(\p\)-alternating immersions can be generalized to the case \(m \ge 2\), thus constructing \((\partial_\infty\h_\tau^{2m}, \SL(2m+1,\R))\) geometric structures that coincide with those of Guichard-Wienhard. In this case, it could provide insight into the topology of the fiber of \(\Omega_\rho / \rho(\pi_1(\Sigma)) \to \Sigma\) for each \(\rho : \pi_1(\Sigma) \to \SL(2m+1,\R)\) Hitchin, which is generally a difficult problem (\cite{alessandrini2023fiber}).

\subsection{Minimal map in the symmetric space}
A fundamental result by Labourie (\cite{labourie2008cross}) states that for every Anosov representation \(\rho: \pi_1(\Sigma) \to G\), where \(G\) is a real semisimple Lie group, there exists a \(\rho\)-equivariant map \(f: \widetilde{\Sigma} \to \mathbb{X}_G\) into the symmetric space of \(G\) that is both conformal and harmonic, i.e., minimal. 
In our setting, we consider $G=\SL(2m+1,\R)$ and, after obtaining a new geometric model of the symmetric space \(\mathbb{X}_m := \SL(2m+1,\R)/\SO(2m+1)\) as the set of totally geodesic and Lagrangian subspaces of \(\h_\tau^{2m}\) for which the restriction of the pseudo-Riemannian metric is negative definite, we describe a way of constructing a $\rho$-equivariant minimal map \(f: \widetilde{\Sigma} \to \mathbb{X}_m\) as a Gauss map of an equivariant, isotropic, and \(\p\)-alternating immersion \(\sigma: \widetilde{X} \to \h_\tau^{2m}\):

\begin{manualtheorem}F (Theorem \ref{thm:Gauss_map})
\emph{The Gauss map of an isotropic $\mathbf{P}$-alternating immersion $\sigma:\widetilde{X} \rightarrow \mathbb{H}^{2m}_{\tau}$ is conformal and harmonic, thus it parameterizes a minimal surface in the symmetric space $\mathbb X_m$.}
\end{manualtheorem} \noindent When \( m=1 \), this result provides an alternative proof, different from the approach used by Labourie (\cite[\S 9.3]{Labourie_cubic}), that starting from a hyperbolic affine sphere in \( \mathbb{R}^3 \) equivariant for a Hitchin representation in \( \SL(3,\mathbb{R}) \), the induced map into the symmetric space \( \mathbb{X}_1 \) is both harmonic and conformal (see Example \ref{ex:maximal_Lagrangian}).

\subsection{Related projects}
As stated at the beginning of the introduction, Nie (\cite{Nie_alternating}) studied immersions, referred to as \emph{A-surfaces}, into the pseudo-hyperbolic space \(\mathbb{H}^{n,n}\) for even \(n\) or \(\mathbb{H}^{n+1,n-1}\) for odd \(n\), which are equivariant with respect to representations into \(\mathrm{SO}_0(n,n+1)\), naturally acting on these spaces by (anti)-isometries. He introduced the notion of Frenet splitting for a surface immersed in \(\mathbb{H}^{n,n}\) and imposed an alternating property concerning the signature of the pullback metric on each subbundle of the decomposition. Indeed, the space \(\mathbb{H}^{n,n}\) is endowed with a pseudo-Riemannian metric of constant negative sectional curvature, unlike our case, where we only have constant para-holomorphic sectional curvature. However, the presence of the para-complex structure \(\p\) allowed us to impose an alternating property using the action of \(\p\) on \(\sigma^*T\h_\tau^{2m}\), while the symplectic form ensured an isotropy property. Despite this, we are still able to establish an infinitesimal rigidity property, albeit with a slightly more intricate analysis due to the presence of non-constant curvature. \\ \\ Another recent work by Collier-Toulisse (\cite{collier2023holomorphic}) studied surfaces in \(\mathbb{H}^{4,2}\), called \emph{alternating pseudo-holomorphic curves}, which are equivariant with respect to representations into the split real form \(\mathrm G_2'\) of the complex exceptional group \(\mathrm G_2^\mathbb{C}\). This group \(\mathrm G_2'\) is considered as the isometry group of \(\mathbb{H}^{4,2}\), acting by biholomorphisms with respect to the almost-complex structure that can be introduced using the theory of split-octonions. These surfaces also satisfy an alternating property with respect to the signature of the pullback metric restricted to the tangent, normal, and bi-normal bundles of the surface.
Additionally, it is required the tangent bundle of the surface to be invariant under the action of the almost-complex structure. \\ \\ Finally, during the writing of this work, non-Hitchin representations in \(\mathrm{SL}(3, \mathbb{R})\) were studied by Bronstein-Davalo (\cite{bronstein2025anosov}), particularly those that can be obtained as deformations of Barbot’s representations (\cite{barbot2010three}) corresponding to the reducible copy of \(\mathrm{SL}(2, \mathbb{R})\). Although in our case the isotropic \(\p\)-alternating surfaces in \(\h_\tau^2\) are necessarily equivariant with respect to Hitchin representations in \(\mathrm{SL}(3, \mathbb{R})\), it would be interesting to investigate whether a similar construction can be carried out for the aforementioned representations.
\subsection*{Acknowledgement} 
N.R. is funded by the European Union (ERC, GENERATE, 101124349). Views and opinions expressed are however those of the author(s) only and do not necessarily reflect those of the European Union or the European Research Council Executive Agency. Neither the European Union nor the granting authority can be held responsible for them. A.T. acknowledges support from the National Science Foundation with grant DMS-2005551 and from the "Istituto Nazionale di Alta Matematica" with the INdAM project "Geometric limits in higher Teichm\"uller theory".

\section{Para-complex hyperbolic $n$-space}
\subsection{Para-complex numbers}\label{sec:2.1}
Let us recall the definition and some basic properties of para-complex numbers (also known as split-complex or hyperbolic numbers), initially introduced in \cite{cockle1849iii} and subsequently studied by many authors in various different contexts. \\ \\ The set of para-complex numbers, denoted with $\R_\tau$, is defined as the commutative algebra over $\R$ generated by $\{1,\tau\}$ where $\tau$ is a non-real number such that $\tau^2=1$. In particular, any number $z\in \R_\tau$ can be written as $z=x+\tau y$ for $x,y\in\R$. There is a notion of conjugate para-complex number $\bar z^{\tau}:=x-\tau y$ which permits to define the absolute value as $$\vl z \vl_\tau^2=z\bar z^{\tau}=x^2-y^2\in\R.$$ The main difference compared to complex numbers is that the absolute value $\vl\cdot\vl_\tau^2$ does not have a sign and can also be zero. In fact, it can be shown that para-complex numbers with zero absolute value are exclusively the zero divisors. Among these, the two numbers $$e_+:=\frac{1+\tau}{2},\qquad e_-:=\frac{1-\tau}{2},$$ known as idempotent units\footnote{In fact, an easy computation shows that $e_{\pm}\cdot e_\pm=e_\pm$}, enable the algebra $\R_\tau$ to be realized as a direct product: \begin{align*}
    &\qquad\quad\R_\tau\longrightarrow\R\oplus\R \\ & z=x+\tau y\longmapsto (x+y,x-y).
\end{align*}
In particular, since $e_+$ and $e_-$ are linearly independent, any arbitrary para-complex number $z=x+\tau y$ can also be expressed as $z=z^+e_++z^-e_-$, where $z^+:=x+y$ and $z^-:=x-y$. We conclude by noting that the set of invertible numbers with absolute value equal to one $$\mathcal{U}:=\{z\in\R_\tau \ | \ \vl z\vl_\tau^2=1\}$$ is equal, as a set, to two hyperbolas in the real plane, and therefore to two copies of $\R$ as a group. This stands in stark contrast to the complex case, where the counterpart of the group $\mathcal U$ defined above is a copy of $S^1$, and is therefore compact.

\subsection{The hyperboloid model}\label{sec:2.2}
In this section, we introduce the para-complex hyperbolic space in a manner similar to how complex hyperbolic space is treated in the literature. To the best of our knowledge, the definition we are about to present, appears for the first time in \cite{trettel2019families} (see also \cite[\S 2.2]{RT_bicomplex} for a generalization using bi-complex numbers).
 \\ \\ Let us consider the $2(n+1)$-dimensional real vector space $\R^{n+1}_\tau$ consisting of $(n+1)$-uples of para-complex numbers, endowed with the para-hermitian form $$\mathbf{q}(z,w):=z^tQ\bar w^{\tau},$$ where $z=(z_1,\dots,z_{n+1}), w=(w_1,\dots,w_{n+1})\in\R^{n+1}_\tau$ and 
 \begin{equation}\label{Q_bilinear}
    Q=\begin{pmatrix}
     & & & 1\\
     & & \iddots & \\
     & \iddots &  &\\
    1 & & & 
    \end{pmatrix} .
\end{equation}
 \begin{defi}\label{def:paracomplex_hyperbolic_space}
The \emph{para-complex hyperbolic space} of dimension $n$ is defined as $$\mathbb H^n_\tau:=\{z\in\R^{n+1}_\tau \ | \ \mathbf{q}(z,z)=-1 \}/_\sim $$ where $z\sim w$ if and only if there exists $\lambda\in\mathcal U$ (i.e. a para-complex number of absolute value equal to one) such that $w=\lambda z$.
 \end{defi} \noindent It can be easily deduced that it is a smooth real manifold of dimension $2n$ which is, in fact, endowed with a special structure that we will now describe. For any point $z\in\mathbb{H}_\tau^n$ one can identify the tangent space $T_z\mathbb H^n_\tau$ with the $\mathbf{q}$-orthogonal complement of $z$, so that \begin{equation}\label{eq:decomposition_R_tau}T_z\R_\tau^{n+1}=T_z\mathbb{H}_\tau^n\oplus\R_\tau z.\end{equation} We deduce that $T_z\mathbb{H}_\tau^n$ can be naturally identified with $\R_\tau^n$ and thus can be equipped with a para-complex structure 
 $\mathbf{P}$ corresponding to the multiplication by $\tau$ on each entry. Moreover, the restriction of $\Ree(\mathbf q)$ on $T_z\h^n_\tau$ defines a pseudo-Riemannian metric $g$ of neutral signature $(n,n)$ so that $g(\p\cdot,\p\cdot)=-g(\cdot,\cdot)$. Therefore, we get an induced symplectic form $\omega=g(\cdot,\mathbf P\cdot)$ giving rise to a \emph{para-K\"ahler} metric on $\h^n_\tau$. A priori, the endomorphism $\p$ of the tangent bundle defines an \emph{almost} para-complex structure; however, as we will show in the next section through a symplectic reduction process, $\p$ is actually integrable, allowing us to refer to a para-Kähler structure on $\h_\tau^n$, which has para-complex dimension equal to $n$. In particular, such a structure is entirely determined by the para-Hermitian form $\mathbf q$ through the decomposition $\mathbf q(\cdot,\cdot)=g(\cdot,\cdot)+\tau\omega(\cdot,\cdot)$. \\ \\ Similarly to what occurs in the context of complex hyperbolic space, we can consider the group formed by all $(n+1)\times(n+1)$ matrices with para-complex entries that preserve the para-Hermitian form $\mathbf q$. More precisely, the unitary group $$\mathrm U(n+1,\R_\tau,Q):=\{A\in\GL(n+1,\R_\tau) \ | \ \mathbf q\big(A\cdot z,A\cdot w\big)=\mathbf q(z,w), \ \text{for all} \ z,w\in\R^{n+1}_\tau\}$$ acts by isometry on $\h_\tau^n$. In the next proposition, we will provide an alternative description of the group just introduced, which will be useful for another model of para-complex hyperbolic space that we will present later. A similar statement can be found in \cite{RT_bicomplex}, where it was carried out for $n=2$ and using bi-complex numbers. Since the proof follows the same idea with no significant modifications, it will not be included.
 \begin{prop}[{\cite[Proposition 2.3]{RT_bicomplex}}]\label{prop:iso_group}
 The map \begin{align*}\Psi:\GL(n&+1,\R)\longrightarrow\mathrm U(n+1,\R_\tau,Q) \\ & A\longmapsto Ae_++Q(A^{-1})^tQe_- \end{align*} is a group isomorphism. Moreover, the subgroup $\mathrm{SU}(n+1,\R_\tau,Q)$ of unitary matrices with determinant equal to one is isomorphic to $\SL(n+1,\R)$.
 \end{prop}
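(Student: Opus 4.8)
The plan is to exploit the ring decomposition $\R_\tau = \R e_+ \oplus \R e_-$ afforded by the idempotents, which diagonalizes every computation. First I would write an arbitrary matrix $A \in \GL(n+1,\R_\tau)$ in components as $A = A^+ e_+ + A^- e_-$ with $A^\pm$ real $(n+1)\times(n+1)$ matrices, and record the two structural facts that drive everything. Since $e_+ e_- = 0$ and $e_\pm^2 = e_\pm$, matrix multiplication is componentwise, $(A^+ e_+ + A^- e_-)(B^+ e_+ + B^- e_-) = A^+ B^+ e_+ + A^- B^- e_-$; and since conjugation swaps $\overline{e_+}^{\tau} = e_-$ and $\overline{e_-}^{\tau} = e_+$, it swaps the two real components, $\overline{A}^{\tau} = A^- e_+ + A^+ e_-$. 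In particular the matrix algebra splits as a product and $\GL(n+1,\R_\tau) \cong \GL(n+1,\R)\times\GL(n+1,\R)$.

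Next I would translate unitarity into these components. Since $\mathbf{q}(Az,Aw) = z^t (A^t Q \overline{A}^{\tau}) \overline{w}^{\tau}$, the group $\U(n+1,\R_\tau,Q)$ is cut out by the single para-complex equation $A^t Q \overline{A}^{\tau} = Q$. Substituting the componentwise product together with the swap under conjugation, and using that $Q$ is real so $Q = Qe_+ + Qe_-$, this becomes the pair $(A^+)^t Q A^- = Q$ and $(A^-)^t Q A^+ = Q$. Because $Q^t = Q$, the second relation is the transpose of the first, so unitarity is equivalent to the single condition $(A^+)^t Q A^- = Q$. Solving for $A^-$ and using $Q^2 = \Id$ gives $A^- = Q\big((A^+)^{-1}\big)^t Q$, i.e. the $e_-$-component is \emph{forced} by the $e_+$-component through exactly the formula defining $\Psi$. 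Hence $\Psi(A)$ is the unique unitary matrix with prescribed $e_+$-part $A$, which at once shows $\Psi$ is well-defined (lands in $\U$), injective (the $e_+$-part recovers $A$), and surjective (every unitary matrix equals $\Psi$ of its $e_+$-part).

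Then I would verify the homomorphism property directly from componentwise multiplication: the $e_+$-part of $\Psi(A)\Psi(B)$ is $AB$, while its $e_-$-part is $Q(A^{-1})^t Q \cdot Q(B^{-1})^t Q = Q (A^{-1})^t (B^{-1})^t Q = Q\big((AB)^{-1}\big)^t Q$, so $\Psi(A)\Psi(B) = \Psi(AB)$. Finally, for the determinant claim I would use that the para-complex determinant is computed componentwise, $\det\big(M^+ e_+ + M^- e_-\big) = \det(M^+) e_+ + \det(M^-) e_-$, whence $\det \Psi(A) = (\det A)\, e_+ + (\det A)^{-1} e_-$, the $e_-$-part simplifying because $(\det Q)^2 = 1$. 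This equals $1 = e_+ + e_-$ precisely when $\det A = 1$, yielding the restricted isomorphism $\SL(n+1,\R) \xrightarrow{\sim} \SU(n+1,\R_\tau,Q)$.

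I expect no serious obstacle: the argument is entirely formal once the idempotent splitting is installed. The only points demanding care are the bookkeeping of how conjugation interacts with the $e_\pm$-decomposition and the transpose–inverse appearing in the $e_-$-slot — these are exactly what make the two cut-out equations equivalent and the homomorphism identity close up, and they are also what single out the specific shape $Q(A^{-1})^t Q$ of the formula.
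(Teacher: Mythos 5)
Your argument is correct and complete: the idempotent splitting $\R_\tau=\R e_+\oplus\R e_-$, the observation that conjugation swaps the two components, and the reduction of unitarity to the single real equation $(A^+)^tQA^-=Q$ together force $A^-=Q((A^+)^{-1})^tQ$, which is exactly the content of the proposition. The paper itself omits the proof, deferring to \cite[Proposition 2.3]{RT_bicomplex} with the remark that the bicomplex argument carries over unchanged; your componentwise computation is precisely that standard argument, so there is nothing to flag.
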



\begin{remark}
The unitary groups $\SU(n,\R_{\tau}, Q)$ defined using different para-Hermitian forms $Q$ are, in fact, all conjugate, a phenomenon similar to what occurs with orthogonal groups over complex numbers. Indeed, the Gram-Schmidt algorithm to find an orthogonal basis of $\R^{n}_{\tau}$ still applies and we can always renormalize to make it orthonormal, since multiplication by $\tau$ changes the sign of the norm of a vector. For this reason, from now on, we will remove $Q$ from the notation of the unitary groups $\SU(n,\R_{\tau}, Q)$.
\end{remark}

\noindent The action of $\SU(n+1,\R_\tau)$ on $\h^n_\tau$ is isometrically transitive and the stabilizer is given by $\mathrm{S}\big(\U(n,\R_\tau)\times\R_\tau^\ast\big)$, where $\R_\tau^\ast:=\{z\in\R_\tau \ | \ \vl z\vl_\tau^2\neq 0\}$ is isomorphic to $\mathrm U(1,\R_\tau)$. In other terms, the para-complex hyperbolic space has a description as homogeneous space $$\h_\tau^n\cong\bigslant{\SU(n,1,\R_\tau)}{\mathrm{S}\big(\U(n,\R_\tau)\times\R_\tau^\ast\big)}$$ or, according to Proposition \ref{prop:iso_group}, as \begin{equation}\label{Htau_homogeneous}\h_\tau^n\cong\bigslant{\SL(n+1,\R)}{\mathrm{S}\big(\GL(n,\R)\times\R^\ast\big)}.\end{equation}
The latter description will be directly related to the incidence geometry in $\R^{n+1}$ which we will explain in Section \ref{sec:2.4}.
\subsection{The para-K\"ahler metric}\label{sec:2.3}
The aim of this section is to present a construction of the para-K\"ahler metric on $\h_\tau^n$ and study some of its properties (see \cite[\S 2.5]{RT_bicomplex} for a similar case). \\ \\ Recall that on $\R_\tau^{n+1}$ we introduced a para-Hermitian form $\mathbf q$ such that $\Ree(\mathbf q)$ and $\Ima(\mathbf q)$ define, respectively, a pseudo-Riemannian metric $\hat g$ of signature $(n+1,n+1)$ such that $\hat g(\p\cdot,\p\cdot)=-\hat g$ and a symplectic form $\hat \omega=\hat g(\cdot,\p\cdot)$. Let us define the function $f:\R_\tau^{n+1}\to\R$ as $f(z):=\mathbf q(z,z)$, so that the action of $\mathcal{U}$, namely the group of invertible para-complex numbers with norm equal to one, preserves its level sets. In fact, if $u\in\mathcal U$ then $$f(uz)=\mathbf q(uz,uz)=u\bar u\mathbf q(z,z)=f(z).$$ Noting that any $u=u_1+\tau u_2\in\mathcal U$ satisfies $(u_1)^2-(u_2)^2=1$ we obtain the following characterization $$\mathcal{U}=\{ \pm \cosh{t}+\tau\sinh{t} \ | \ t\in\R\}.$$ \begin{prop}
The action of $\mathcal U$ is Hamiltonian with respect to $\hat\omega=\Ima{(\mathbf q)}$ and with moment map $\mu(z):=\frac{1}{2}f(z)$. In particular, there is an induced para-K\"ahler structure $\big(\p, g:={\hat g}|_{\h_n^\tau}, \omega:=\hat{\omega}|_{\h_\tau^n}\big)$ on $\h_\tau^n$.
\end{prop}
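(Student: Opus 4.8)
The plan is to realize $\h^n_\tau$ as a para-Kähler quotient of the flat para-Kähler space $(\R^{n+1}_\tau,\p,\hat g,\hat\omega)$ by the Hamiltonian action of $\mathcal U$, and then to read off the induced structure on the symplectic reduction. The first step is to identify the infinitesimal action. From the description $\mathcal U=\{\pm\cosh t+\tau\sinh t \ | \ t\in\R\}$, the Lie algebra of $\mathcal U$ is $\R\cdot\tau$, and the fundamental vector field associated with the generator $\tau$ at a point $z$ is
\[
X_z=\frac{d}{dt}\Big|_{t=0}\big(\cosh t+\tau\sinh t\big)z=\tau z=\p z,
\]
the para-complex structure applied to the position vector.

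Next I would verify directly that $\mu=\tfrac12 f$ is a moment map. Since $\hat\omega(z,z)=0$ by antisymmetry, $f(z)=\mathbf q(z,z)=\hat g(z,z)$ is real-valued, and differentiating gives $d\mu_z(v)=\hat g(z,v)$ for every $v\in\R^{n+1}_\tau$. On the other hand, a short computation (e.g.\ in coordinates $z=a+\tau b$) using $\p^2=\Id$ and the compatibility of $\hat\omega$, $\hat g$ and $\p$ coming from $\mathbf q=\hat g+\tau\hat\omega$ yields $(\iota_X\hat\omega)_z(v)=\hat\omega(\p z,v)=\hat g(z,v)=d\mu_z(v)$, so that $\iota_X\hat\omega=d\mu$. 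Invariance $\mu(uz)=\mu(z)$ is exactly the identity $f(uz)=f(z)$ already established, and since $\mathcal U$ is abelian the equivariance of $\mu$ is automatic. This shows the action is Hamiltonian with the claimed moment map.

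For the reduction, $-\tfrac12$ is a regular value because $d\mu_z=\hat g(z,\cdot)\neq 0$ whenever $z\neq 0$, and on the level set $\mu^{-1}(-\tfrac12)=f^{-1}(-1)$ the $\mathcal U$-action is free: expanding $z=z^+e_++z^-e_-$ one gets $\mathbf q(z,z)=(z^+)^tQz^-$, so $\mathbf q(z,z)=-1$ forces $z^\pm\neq 0$, whence $uz=z$ implies $u=1$. The quotient is the smooth $2n$-manifold $\h^n_\tau$ already identified. To build the reduced structure I would identify $T_{[z]}\h^n_\tau$ with the $\mathbf q$-orthogonal complement $z^{\perp_{\mathbf q}}=\{v:\mathbf q(z,v)=0\}$, which is a complement to the orbit direction $\R\p z$ inside $\Ker d\mu_z=z^{\perp_{\hat g}}$ (indeed $\hat g(\p z,\p z)=-\hat g(z,z)=1\neq 0$ shows $\p z\notin z^{\perp_{\mathbf q}}$). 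The key compatibility is that $z^{\perp_{\mathbf q}}$ is $\p$-invariant, which follows from $\mathbf q(z,\p v)=\mathbf q(z,\tau v)=-\tau\,\mathbf q(z,v)$. Restricting the flat data to this subspace then produces $\p$ with $\p^2=\Id$ and rank-$n$ eigendistributions, a nondegenerate metric $g=\hat g|_{z^{\perp_{\mathbf q}}}$ of signature $(n,n)$ (since $\R_\tau z$ is $\mathbf q$-nondegenerate) satisfying $g(\p\cdot,\p\cdot)=-g$, and the Marsden--Weinstein reduced symplectic form $\omega=\hat\omega|_{z^{\perp_{\mathbf q}}}$ with $\omega=g(\cdot,\p\cdot)$. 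Because each $u\in\mathcal U$ acts as a para-holomorphic isometry ($\p(uv)=u\p v$ and $\mathbf q(uv,uw)=\mathbf q(v,w)$) and $(uz)^{\perp_{\mathbf q}}=u\cdot z^{\perp_{\mathbf q}}$, these tensors descend to well-defined objects $(\p,g,\omega)$ on $\h^n_\tau$.

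The final and most delicate point is the \emph{integrability} of the descended $\p$, i.e.\ precisely the claim that upgrades the almost para-complex structure to a genuine one. I would argue that the ambient eigendistributions $D_\pm=\R^{n+1}e_\pm$ are constant, hence involutive, so the flat $\p$ is integrable, and then show that involutivity is inherited by the reduced eigendistributions — the para-complex analog of the fact that Kähler reduction preserves the integrable complex structure. I expect this descent of involutivity through a non-compact, pseudo-Riemannian reduction to be the main obstacle, since one must track the horizontal projections of $D_\pm\cap T(\mu^{-1}(-\tfrac12))$ and check that their Lie brackets remain horizontal and tangent to the level set. Once this is settled, $(\p,g,\omega)$ is compatible, with $\omega$ closed and $\p$ integrable, hence para-Kähler, completing the proof.
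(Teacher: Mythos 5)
Your proposal follows essentially the same route as the paper: identify the infinitesimal generator $\tau z$, check $\iota_X\hat\omega=\mathrm d\mu$ by the same computation $\hat\omega(\tau z,v)=\Ree(\mathbf q(z,v))=\mathrm d_z\mu(v)$, and then perform Marsden--Weinstein reduction along the decomposition $T_z\R_\tau^{n+1}=T_z\h_\tau^n\oplus T(\mathcal U\cdot z)\oplus\tau T(\mathcal U\cdot z)$; your added checks (regularity of the level, freeness via $z^\pm\neq0$, non-degeneracy of $\hat g$ on the orbit) are correct and consistent with what the paper leaves implicit. The one point you leave open --- descending the involutivity of the eigendistributions of $\p$ through the reduction --- is exactly where the paper also outsources the work, but it does so by a cleaner device: the computation identical to the K\"ahler case (cited as \cite[Theorem B.6]{rungi2023pseudo}) shows that the reduced $\p$ is parallel for the reduced Levi-Civita connection, $\nabla^g\p=0$, and parallelism under a torsion-free connection immediately forces the $\pm1$-eigendistributions to be involutive, so no bracket-chasing of horizontal projections is needed. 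If you want to complete your argument as written, I would recommend replacing your proposed involutivity-descent with this parallelism statement, since it simultaneously delivers integrability and the compatibility $\omega=g(\cdot,\p\cdot)$ required for the para-K\"ahler conclusion.
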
 \begin{proof}
As an initial observation, we note that the restriction of the pseudo-Riemannian metric $\hat g:=\Ree(\mathbf q)$ on the $\mathcal U$-orbit of the action on the non-zero level sets of $f$ is non-degenerate. Indeed, if $k\in\R\setminus\{0\}$ and $z\in f^{-1}(k)$ then $$\frac{\mathrm d}{\mathrm dt}\Big|_{t=0}\big( \pm \cosh{t}+\tau\sinh{t}\big)z=\tau z,$$ and $\hat g(\tau z,\tau z)=\Ree(\mathbf{q})(\tau z,\tau z)=-k\neq 0$. Hence, the infinitesimal generator of the action of $\mathcal U$ is the vector field $U(z):=\tau z$. Moreover, for any $v\in\R_\tau^{n+1}$ we have \begin{align*}
    \hat\omega(U(z),v)&=\Ima(\mathbf{q}(\tau z,v)) \\ &=\Ree(\mathbf{q}(z,v)) \\ &=\frac{1}{2}\frac{\mathrm d}{\mathrm ds }\Big|_{s=0}\mathbf q(z+sv,z+sv) \\ &=\mathrm d_z\mu(v).
\end{align*} Using the standard theory of symplectic reduction (\cite{marsden1974reduction, weinstein1980symplectic}) we obtain a symplectic form $\omega$ on the quotient space $f(-1)/\mathcal U$, which is identified with $\h_\tau^n$. Furthermore, since the pseudo-Riemannian metric $\hat g$ is non-degenerate on the $\mathcal U$-orbit and we have the decomposition $$T_z\R_\tau^{n+1}=T_z\h_\tau^n\oplus T(\mathcal U\cdot z)\oplus\tau T(\mathcal U\cdot z)$$it can be deduced, through identical computations as in the K\"ahler setting (see \cite[Theorem B.6]{rungi2023pseudo} for a detailed proof), that we obtain a para-complex structure $\p$ and a pseudo-Riemannian metric $g$ of signature $(n,n)$ on $\h_\tau^n$ such that $$\nabla^g_{\cdot}\p=0,\qquad g(\p\cdot,\p\cdot)=-g(\cdot,\cdot),\qquad \omega(\cdot,\cdot)=g(\cdot,\p\cdot),$$ where $\nabla^g$ is the Levi-Civita connection with respect to $g$.
\end{proof} 
\noindent Even in the pseudo-Riemannian case, there is a natural notion of curvature for the Levi-Civita connection $\nabla^g\equiv\nabla$ whose Riemann tensor is defined as: $$R^\nabla(X,Y)Z:=\nabla_X\nabla_YZ-\nabla_Y\nabla_XZ-\nabla_{[X,Y]}Z \qquad X,Y,Z\in\Gamma(T\h_\tau^n).$$ The \emph{sectional curvature} of $g$ is defined as $$\Sec(X,Y):=-\frac{R^\nabla(X,Y,X,Y)}{g(X,X)g(Y,Y)-g(X,Y)^2},$$ where $X,Y\in\Gamma(T\h_\tau^n)$ are vector fields that at each point span a non-degenerate $2$-plane for $g$. Furthermore, for every non-isotropic $X$, one can consider the $\p$-invariant $2$-plane generated by $X$ and $\p X$, and we will refer to the quantity $\Sec(X,\p X)$ as the \emph{para-holomorphic sectional curvature}.
\begin{prop}
The para-complex hyperbolic space $\h_\tau^n$ has constant para-holomorphic sectional curvature equal to $-2$.
\end{prop}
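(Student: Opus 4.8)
The plan is to realise $\h^{n}_{\tau}$ as the base of a semi-Riemannian submersion and to combine the Gauss equation with O'Neill's formula. Concretely, I would work with the level set $\Sigma := \{z \in \R^{n+1}_{\tau} \mid \mathbf{q}(z,z) = -1\}$, equipped with the restriction of the flat ambient metric $\hat g = \Ree(\mathbf{q})$, so that $\pi : \Sigma \to \h^{n}_{\tau}$ is exactly the quotient by the $\mathcal{U}$-action studied in the previous proposition, a semi-Riemannian submersion whose one-dimensional fibres are tangent to the Killing field $V(z) = \tau z = \p z$. By the decomposition $T_z\R^{n+1}_{\tau} = T_z\h^{n}_{\tau} \oplus T(\mathcal{U}\cdot z) \oplus \tau\,T(\mathcal{U}\cdot z)$ already recorded, the position vector $z$ is the normal of $\Sigma$, with $\hat g(z,z) = -1$ timelike, the fibre direction is spacelike, $\hat g(V,V) = 1$, and the horizontal space is the $\mathbf{q}$-orthogonal complement identified with $T_{[z]}\h^{n}_{\tau}$, carrying the metric $g$.

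First I would compute the intrinsic curvature of $\Sigma$. Since the ambient connection is flat and the position field satisfies $\nabla^{\mathrm{flat}}_X z = X$, the shape operator of $\Sigma$ with respect to the normal $z$ is the identity; the Gauss equation for a hypersurface with timelike unit normal then shows that $\Sigma$ has constant sectional curvature. This is the purely ``hyperbolic'' contribution, and it is independent of the plane chosen.

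The submersion correction is where the para-Kähler structure enters, and I would compute O'Neill's integrability tensor $A$. Using $\nabla^{\mathrm{flat}}_X V = \tau X = \p X$ together with the fact that $\p X$ is again horizontal (which is immediate since $\p$ preserves $\mathbf{q}$-orthogonality), a short computation gives $\langle \nabla^{\Sigma}_X Y, V\rangle = -g(Y,\p X)$ for horizontal $X,Y$, hence $A_X Y = \omega(X,Y)\,V$; in other words the failure of the horizontal distribution to be integrable is measured precisely by the fundamental two-form $\omega$. Evaluating on a $\p$-invariant plane, $\omega(X,\p X) = g(X,\p^{2}X) = g(X,X) \neq 0$ for non-isotropic $X$, so the para-holomorphic plane realises the extremal value of the $A$-tensor, exactly as the $J$-invariant planes do in the complex hyperbolic case.

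Finally, O'Neill's formula for the base curvature of the plane $\{X,\p X\}$ adds to the curvature of $\Sigma$ a term proportional to $\langle A_X\p X, A_X\p X\rangle / \big(g(X,X)g(\p X,\p X) - g(X,\p X)^{2}\big)$. Since $g(\p X,\p X) = -g(X,X)$ and $g(X,\p X) = 0$, the denominator is $-g(X,X)^{2}$ while the numerator is $g(X,X)^{2}$, both homogeneous of the same degree in $X$; thus the ratio, and therefore $\Sec(X,\p X)$, is a negative constant independent of $X$, which the normalisation $\mathbf{q}(z,z) = -1$ pins to the value $-2$. The main obstacle is the pseudo-Riemannian sign bookkeeping: the normal of $\Sigma$ is timelike, the submersion fibre is spacelike, yet the para-holomorphic plane has negative ``area'' $g(X,X)g(\p X,\p X) - g(X,\p X)^{2} < 0$, so these signs must be tracked carefully in both the Gauss and the O'Neill formula. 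Constancy (independence of the point $z$ and of the non-isotropic vector $X$) can alternatively be obtained a priori from the transitive action of $\SU(n,1,\R_{\tau}) \cong \SL(n+1,\R)$ by para-holomorphic isometries together with the transitivity of the para-unitary stabiliser on non-isotropic directions, reducing the entire computation to a single para-holomorphic plane at one point.
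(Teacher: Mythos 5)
Your strategy --- realising $\h^n_\tau$ as the base of the semi-Riemannian submersion $\{\mathbf{q}(z,z)=-1\}\to\h^n_\tau$ and combining the Gauss equation with O'Neill's formula --- is sound, and it is a genuine argument where the paper itself only cites \cite{RT_bicomplex}. Your sign bookkeeping is also correct: the normal $z$ is timelike, the fibre direction $V=\tau z$ is spacelike with $\hat g(V,V)=1$, and the identity $A_XY=\omega(X,Y)V$ is right, since $\hat g(\nabla_XY,V)=-\hat g(Y,\p X)=\omega(X,Y)$ and $\p X$ remains horizontal.

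The gap is the very last step, where the numerical value is asserted rather than computed. The two constants that actually determine the answer are never pinned down: (i) the constant sectional curvature of $\Sigma=\{\mathbf{q}(z,z)=-1\}$, which the Gauss equation with shape operator $\mathrm{Id}$ and timelike unit normal gives as $-1$; and (ii) the coefficient $3$ in O'Neill's formula $\Sec_B(X,Y)=\Sec_\Sigma(X,Y)+3\,\hat g(A_XY,A_XY)/\big(g(X,X)g(Y,Y)-g(X,Y)^2\big)$. Substituting your own data $A_X\p X=g(X,X)V$, numerator $g(X,X)^2$ and denominator $-g(X,X)^2$, the correction is $3\cdot(-1)=-3$, so the completed computation yields $\Sec(X,\p X)=-1-3=-4$, not $-2$. (The same value $-4$ comes out of viewing $\h^1_\tau\cong\SL(2,\R)/\{\pm\,\mathrm{diag}\}$ as a symmetric space with $R(X,Y)Z=-[[X,Y],Z]$, and it agrees with the complex-hyperbolic analogue and with the value $-4$ that the paper actually uses from the remark following this proposition onward.) So either there is a factor-of-two normalisation hidden in the cited reference that you would have to locate and import, or your argument, once completed, proves a different constant from the one in the statement; in either case the sentence ``which the normalisation $\mathbf{q}(z,z)=-1$ pins to the value $-2$'' is precisely the step that is not justified. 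The homogeneity argument you add at the end correctly establishes constancy of the para-holomorphic sectional curvature, but of course it cannot produce the value.
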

\begin{proof} See for instance \cite[Lemma 2.7]{RT_bicomplex}.
\end{proof}

\noindent We conclude the section by presenting a result that holds more generally for para-K\"ahler manifolds with constant para-holomorphic curvature since will be useful later in the article.
\begin{lemma}[\cite{bonome1998paraholomorphic}]\label{lem:para_hol_curvature}
Let $(M,\p,\g,\ome)$ be a para-K\"ahler manifold of constant para-holomorphic sectional curvature $\kappa$, then its Riemann tensor satisfies \begin{align*}
    R^\nabla(X,Y,Z,W)=&-\frac{\kappa}{4}\big(\g(X,Z)\g(Y,W)-\g(Y,Z)\g(X,W)+\g(X,\p Z)\g(\p Y,W) \\ &-\g(Y,\p Z)\g(\p X,W)+2\g(X,\p Y)\g(\p Z,W)\big).
\end{align*}
\end{lemma}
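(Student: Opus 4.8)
The plan is to prove the identity by exhibiting the right-hand side as the unique \emph{model} para-Kähler algebraic curvature tensor with para-holomorphic sectional curvature $\kappa$, and then to invoke a polarization argument to show that any two such tensors coincide. Denote the right-hand side by $R_0(X,Y,Z,W)$ and set $T:=R^\nabla-R_0$; the goal is to prove $T\equiv 0$.

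First I would record the algebraic symmetries available. Since $g$ is para-Hermitian we have $g(\p\cdot,\p\cdot)=-g$, the fundamental form $\omega(X,Y)=g(X,\p Y)$ is skew, and in particular $g(X,\p X)=0$ and $g(\p X,\p X)=-g(X,X)$. Because the para-Kähler condition gives $\nabla\p=0$, the operator $\p$ commutes with $R^\nabla(X,Y)$, whence $R^\nabla(X,Y,\p Z,\p W)=-R^\nabla(X,Y,Z,W)$ and, by pair symmetry, $R^\nabla(\p X,\p Y,Z,W)=-R^\nabla(X,Y,Z,W)$, on top of the usual antisymmetries, pair symmetry and the first Bianchi identity. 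I would then check directly that $R_0$ enjoys all of these: antisymmetry in $(X,Y)$, antisymmetry in $(Z,W)$ and pair symmetry follow termwise from the skewness of $\omega$, while the $\p$-invariance uses $\p^2=\Id$ and $g(\p\cdot,\p\cdot)=-g$. The only verification requiring a little care is the first Bianchi identity for $R_0$: cyclically summing the five terms over $(X,Y,Z)$, the first two (purely metric) terms cancel as in the Riemannian constant-curvature model, and the three terms built from $\omega$ cancel precisely because of the coefficient $2$ on the last term (collecting the cyclic sum by the factor $g(\p Z,W)$ one finds each coefficient is of the form $\omega(A,B)+\omega(B,A)=0$). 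Consequently $T$ is again a para-Kähler algebraic curvature tensor.

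Next I would pin down the para-holomorphic sectional curvature of $R_0$. Substituting $(X,\p X,X,\p X)$ and using $g(X,\p X)=0$ and $g(\p X,\p X)=-g(X,X)$, the five terms evaluate respectively to $-g(X,X)^2$, $0$, $0$, $-g(X,X)^2$ and $-2g(X,X)^2$, giving $R_0(X,\p X,X,\p X)=\kappa\,g(X,X)^2$. Since the denominator in $\Sec(X,\p X)$ is $g(X,X)g(\p X,\p X)-g(X,\p X)^2=-g(X,X)^2$, this yields $\Sec_{R_0}(X,\p X)=\kappa$. By hypothesis $R^\nabla$ has the same constant para-holomorphic sectional curvature, so $R^\nabla(X,\p X,X,\p X)=\kappa\,g(X,X)^2$ for every non-isotropic $X$; as such $X$ are dense and both sides are continuous, the equality, hence $T(X,\p X,X,\p X)=0$, extends to all $X$.

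The main work, and the step I expect to be the real obstacle, is the purely algebraic polarization lemma: a para-Kähler algebraic curvature tensor $T$ with $T(X,\p X,X,\p X)\equiv 0$ must vanish. I would polarize $X\mapsto X+tY$ in this quartic identity and equate the coefficient of each power of $t$ to zero. The coefficient of $t$, after using pair symmetry, reads $T(Y,\p X,X,\p X)+T(X,\p Y,X,\p X)=0$; replacing $Y$ by $\p Y$ and invoking the $\p$-invariance $T(\p\cdot,\p\cdot,\cdot,\cdot)=-T$ together with first-pair antisymmetry collapses this to $T(X,Y,X,\p X)=0$. Iterating the polarization (further substitutions $X\mapsto X+sZ$, symmetrising in the remaining slots), and systematically reducing the resulting multilinear expressions with the first Bianchi identity and the relations $T(\p\cdot,\p\cdot,\cdot,\cdot)=-T$ and $T(\cdot,\cdot,\p\cdot,\p\cdot)=-T$, forces every component $T(X,Y,Z,W)$ to vanish. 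The delicate part throughout is the careful bookkeeping of the signs produced by $g(\p\cdot,\p\cdot)=-g$, which is what distinguishes this computation from the familiar Kähler one; once $T=0$ we conclude $R^\nabla=R_0$, as claimed.
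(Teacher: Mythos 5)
Your proposal is correct in outline, but note that the paper does not actually prove this lemma: it is quoted verbatim from the cited reference of Bonome et al., so there is no in-text argument to compare against. What you have written is essentially the standard proof from that literature, i.e.\ the para-complex analogue of the classical Kobayashi--Nomizu/Barros--Romero argument that a (para-)K\"ahler algebraic curvature tensor is determined by its (para-)holomorphic sectional curvature function. The computations you do carry out check out: $R_0$ satisfies the two antisymmetries, pair symmetry, the first Bianchi identity (where the coefficient $2$ on the $\omega\otimes\omega$ term is indeed exactly what makes the cyclic sum vanish), and the identity $R_0(X,Y,\p Z,\p W)=-R_0(X,Y,Z,W)$ coming from $\nabla\p=0$ and $\g(\p\cdot,\p\cdot)=-\g$; the evaluation $R_0(X,\p X,X,\p X)=\kappa\,\g(X,X)^2$ together with the denominator $\g(X,X)\g(\p X,\p X)-\g(X,\p X)^2=-\g(X,X)^2$ gives $\Sec_{R_0}(X,\p X)=\kappa$ with the paper's sign convention; and the density argument handling isotropic $X$ (where the plane $\Span\{X,\p X\}$ degenerates) is the right fix in neutral signature. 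Your first two polarization steps, yielding $T(Y,\p X,X,\p X)+T(X,\p Y,X,\p X)=0$ and then $T(X,Y,X,\p X)=0$ after the substitution $Y\mapsto\p Y$, are also correct. The one place where the write-up falls short of a proof is the final clause ``iterating the polarization \dots forces every component of $T$ to vanish'': this uniqueness statement is the entire content of the lemma, and you assert rather than execute it. It is true, and the iteration is a routine (if sign-laden) adaptation of the K\"ahler case --- one continues polarizing to recover all values $T(X,Y,X,Y)$ and then invokes the standard fact that a tensor with the curvature symmetries is determined by these --- but as written that step would need to be carried out, or else delegated to the reference exactly as the paper does.
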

\begin{remark}
According to the above lemma and to simplify the calculations, from now on we will rescale the metric $g$ of $\h_\tau^n$ so that its para-holomorphic sectional curvature is equal to $-4$.  
\end{remark}

\subsection{The incidence geometry model}\label{sec:2.4}
We will now introduce a new model for $\h_\tau^n$ using incidence geometry. In particular, we will prove how the para-complex hyperbolic space can be interpreted as the space of projective classes of pairs given by a line and a transverse hyperplane in $\R^{n+1}$. This was initially observed in \cite[\S 8.5]{trettel2019families}. \\ \\
Recall that the idempotents $\{e_+,e_-\}$ introduced in Section \ref{sec:2.1} allow us to find an isomorphism between the algebra $\R_\tau$ and $\R e_+\oplus\R e_-$. In particular, for any $n\ge 2$ we get an induced isomorphism $\alpha:\R_\tau^{n+1}\to\R^{n+1}e_+\oplus\R^{n+1}e_-$ simply by applying the one described above component-wise. Let us identify the second copy of $\R^{n+1}$ in the direct sum with its dual by using the bilinear form $Q$ defined in (\ref{Q_bilinear}). In other terms, \begin{align*}
    (\R^{n+1})^{*} &\longleftrightarrow \R^{n+1} \\
        \varphi &\longleftrightarrow v_{\varphi} \ ,
\end{align*}
where $v_{\varphi}$ is the unique vector in $\R^{n+1}$ such that $\varphi(w)=v_{\varphi}^tQw$ 
for all $w\in \R^{n+1}$. \begin{prop}[{\cite{trettel2019families},\cite{RT_bicomplex}}]
The space $\h_\tau^n$ is diffeomorphic to the set $$\{(v,\varphi)\in \R\mathbb P^n\times(\R\mathbb P^n)^{*} \ | \ \varphi(v)\neq 0\},$$ which consists of pairs of projective classes of transverse lines and hyperplanes in $\R^{n+1}$.
\end{prop}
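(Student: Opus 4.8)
The plan is to make everything explicit through the idempotent decomposition $\R_\tau \cong \R e_+ \oplus \R e_-$. Extending it componentwise gives the linear isomorphism $\alpha:\R^{n+1}_\tau \to \R^{n+1} \oplus \R^{n+1}$, $z \mapsto (z^+, z^-)$, already used above. First I would record how the para-Hermitian form reads in these coordinates. Since conjugation swaps the idempotents ($\overline{e_+}^\tau = e_-$), one has $\bar z^\tau \leftrightarrow (z^-, z^+)$, and using $e_+ e_- = 0$ together with $e_\pm^2 = e_\pm$ a direct computation gives
\[
\mathbf{q}(z,w) = \big((z^+)^t Q w^-\big) e_+ + \big((z^-)^t Q w^+\big) e_-.
\]
Because $Q$ is symmetric, $\mathbf{q}(z,z) = (z^+)^t Q z^-$ is a real number, so the defining condition $\mathbf{q}(z,z) = -1$ becomes $(z^+)^t Q z^- = -1$; in particular this forces $z^+ \neq 0$ and $z^- \neq 0$. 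Next I would compute the $\mathcal U$-action: writing a unitary $\lambda = \lambda^+ e_+ + \lambda^- e_-$, the relation $\vl\lambda\vl^2_\tau = \lambda^+\lambda^- = 1$ shows $\lambda^- = (\lambda^+)^{-1}$ with $\lambda^+ \in \R^*$, so the action becomes $(z^+, z^-) \mapsto (t z^+,\, t^{-1} z^-)$ for $t = \lambda^+ \in \R^*$.

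With this in hand I would define the candidate map $[z] \mapsto ([z^+], [z^-])$, where $[z^+] \in \R\Pp^n$ is the line spanned by $z^+$ and $[z^-]$ is viewed, via the $Q$-identification $\varphi \leftrightarrow v_\varphi$, as a projective class in $(\R\Pp^n)^*$. Since the $\mathcal U$-action rescales $z^+$ and $z^-$ independently, both projective classes are unchanged, so the map descends to $\h^n_\tau$. Moreover, the functional $\varphi$ attached to $z^-$ satisfies $\varphi(z^+) = (z^-)^t Q z^+ = -1 \neq 0$, so the image lands exactly in the transverse locus $\{(v,\varphi) : \varphi(v)\neq 0\}$.

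Finally I would construct the inverse and check it is a diffeomorphism. Given a transverse pair $(v, \varphi)$, choose a representative $a$ of $v$ and the vector $b = v_\varphi$; transversality gives $c := \varphi(a) = a^t Q b \neq 0$, and setting $z^+ = a$, $z^- = -b/c$ yields $(z^+)^t Q z^- = -1$ with the correct projective classes. A short check shows that changing the chosen representatives alters $(z^+,z^-)$ precisely by the $\mathcal U$-action, and conversely that any two lifts of the same pair satisfying the constraint differ by $(t z^+, t^{-1}z^-)$; hence the construction is a well-defined two-sided inverse and the map is a bijection. Smoothness in both directions follows from the explicit polynomial and rational formulas, together with the fact that $\{(z^+)^t Q z^- = -1\}$ is a regular level set and the projectivizations are submersions. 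I do not expect a genuine obstacle here: the only delicate point is the bookkeeping of the scalings, namely matching the $\mathcal U$-action against the two independent projective rescalings, which is exactly what makes the correspondence a clean bijection rather than a finite-to-one map.
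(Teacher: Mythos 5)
Your proof is correct and follows exactly the route the paper intends: the paper itself omits the argument (citing \cite{trettel2019families} and \cite{RT_bicomplex}) but sets up the idempotent isomorphism $\alpha:\R_\tau^{n+1}\to\R^{n+1}e_+\oplus\R^{n+1}e_-$ and the $Q$-duality $(\R^{n+1})^*\leftrightarrow\R^{n+1}$ immediately before the statement, which is precisely the mechanism you use. Your bookkeeping of the form $\mathbf{q}(z,z)=(z^+)^tQz^-$, the $\mathcal U$-action $(z^+,z^-)\mapsto(tz^+,t^{-1}z^-)$, and the matching of scalings in the inverse is all accurate.
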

\noindent The above result is consistent, giving a geometric description, of the representation of $\h_\tau^n$ as a homogeneous space for $\SL(n+1,\R)$ (see (\ref{Htau_homogeneous})) that we obtained through isomorphisms of Lie groups. Moreover, the quadric $\widehat{\h}_\tau^n:=\{z\in\R_\tau^{n+1} \ | \ \mathbf q(z,z)=-1\}$, which has the structure of a principal $\mathcal{U}$-bundle over $\h_\tau^n$, is diffeomorphic to the set of pairs given by a vector and a transverse hyperplane in $\R^{n+1}$. Specifically, we have a diffeomorphism $$\widehat{\h}_\tau^n\cong\bigslant{\SL(n+1,\R)}{\SL(n,\R)}.$$
Finally, using the incidence geometry model, we can introduce a natural notion of boundary at infinity for $\h_\tau^n$. In summary, it is given by all possible "degenerate" configurations of a line and a hyperplane in $\R^{n+1}$. For dimensional reasons, this occurs only when the line is contained within the hyperplane. We thus define the boundary at infinity of the para-complex hyperbolic space as the set $$\partial_\infty\h_\tau^n:=\{(v,\varphi)\in \R\mathbb P^n\times(\R\mathbb P^n)^{*} \ | \ \varphi(v)=0\}.$$ It is a compact manifold of real dimension $2n-1$ and is diffeomorphic to the flag manifold $\SL(n+1,\R)/P_{1,n}$, where $P_{1,n}$ is the parabolic subgroup stabilizing the pair formed by a line transverse to a hyperplane. In the hyperboloid model it is given by \begin{equation}
    \partial_\infty\h_\tau^n=\{z\in\R_\tau^{n+1} \ | \ \boldsymbol{q}(z,z)=0\}/_{\sim}
\end{equation} where the equivalence relation is the same as Definition \ref{def:paracomplex_hyperbolic_space}.
\section{Superconformal surfaces in $\mathbb{H}^{2m}_{\tau}$}
\noindent The purpose of this section is to introduce the main geometric object of this work and to study some of its properties, which will be fundamental for its application to the representation space for $\SL(2m+1,\R)$. As explained in the introduction, a similar phenomenon was described by Nie (\cite{Nie_alternating}) for immersions into the pseudo-hyperbolic space. Indeed, our surfaces will be connected to cyclic $\SL(2m+1,\R)$-Higgs bundles, and it can be shown that Nie's surfaces correspond to the sub-cyclic ones according to the inclusion $\SO(m,m+1)<\SL(2m+1,\R)$.
\subsection{Definition and main properties}\label{sec:def_superconformal}\label{sec:definition_surfaces}
Let $S$ be a connected oriented surface and let $(M,\g,\p,\ome)$ be a para-K\"ahler manifold of para-complex dimension $2m\ge 2$. In particular, it can be seen as a pseudo-Riemannian manifold $(M,\g)$ of signature $(2m,2m)$. In the following, we will consider smooth space-like immersions $\sigma:S\longrightarrow M$, namely those whose first fundamental form $h|_{TS}:=\sigma^*\g|_{TS}$ is positive definite, so that defines a conformal structure on the immersed surface. Let us denote by $\nabla$ the pullback of the Levi-Civita connection of $(M,\g)$. We now recall the definition of Frenet splittings for surfaces in pseudo-Riemannian manifolds with neutral signature (\cite{Nie_alternating}). \begin{defi}\label{def:Frenet}
Let $\sigma: S\longrightarrow (M,\g)$ be a smooth spacelike immersion, then a \emph{Frenet splitting} of $\sigma^*TM$ is an orthogonal decomposition $\sigma^*TM=\mathscr{L}_1\oplus\mathscr{L}_2\oplus\cdots\oplus\mathscr{L}_{2m-1}\oplus\mathscr{L}_{2m}$ such that: \begin{enumerate}
    \item[(i)] Each $\mathscr{L}_i$ is a rank $2$ vector bundle over $S$ for $i=1,\dots,2m$ and $\mathscr{L}_1\cong TS$; \item[(ii)] The pull-back metric $h$ is positive or negative definite when restricted to $\mathscr{L}_i$ for $i=1,\dots,2m$; \item[(iii)] For any $X\in\Gamma(TS)$ and any $\nu\in\Gamma(\mathscr{L}_j)$ the projection of $\nabla_X\nu$ onto $\mathscr{L}_i$ is zero whenever $|i-j|\ge 2$.
\end{enumerate}
\end{defi}
\noindent Since the pull-back metric $h$ is parallel for $\nabla$, we obtain the following decomposition \begin{equation}\label{eq:nabla_decomposition}\renewcommand\arraystretch{1.5}
    \nabla=\begin{pmatrix}
        \nabla^1 & -\eta_2^\dag &  \\ \eta_2 & \nabla^2 & -\eta_3^\dag && \\ & \eta_3 & \nabla^3 & \ddots \\ && \ddots & \ddots & -\eta_{2m}^\dag
        \\ &&& \eta_{2m} &\nabla^{2m}
    \end{pmatrix} 
\end{equation}
where $\nabla^i$ is a connection on $\mathscr{L}_i$ compatible with $h|_{\mathscr L_i}$ and $\eta_i$ is a $1$-form with values in $\Hom(\mathscr{L}_{i-1},\mathscr{L}_i)$. Moreover, the element $\eta^\dag_i\in\Omega^1\big(\Hom(\mathscr{L}_{i},\mathscr{L}_{i-1})\big)$ is completely determined by the following relation $$h\big(\eta_i(X)\xi_{i-1},\xi_{i}\big)=h\big(\xi_{i-1},\eta_i^\dag(X)\xi_i\big),$$ for $X\in\Gamma(TS)$ and $\xi_{i-1},\xi_i$ smooth sections of $\mathscr{L}_{i-1},\mathscr{L}_i$. To be more precise, $\eta_{i}^{\dag}$ is the adjoint of $\eta_{i}$ hence is an element in $\Omega^{1}(\Hom(\mathscr{L}_{i}^{*}, \mathscr{L}_{i-1}^{*}))$ but we then identify $\mathscr{L}_{i}^{*}$ with $\mathscr{L}_{i}$ using the metric $h$.  \\

\noindent Recall that a linear map of the Euclidean plane is said to be conformal if it sends any circle centered at the origin to a circle (possibly reduced to $\{0\}$), or equivalently, if it is either a scaled rotation, a scaled reflection, or the zero map. A homomorphism between rank $2$ metric vector bundles
is said to be conformal if it is a conformal linear map on each fiber. 

\begin{defi}\label{def:superconformal} A smooth spacelike immersion $\sigma:S\rightarrow (M,\mathbf{g})$ as in Definition \ref{def:Frenet} is said to be \emph{superconformal} if $\eta_{i}(X):\mathscr{L}_{i-1} \rightarrow \mathscr{L}_{i}$ is conformal for all $X \in \Gamma(TS)$ and for all $i=2, \dots, 2m$.
\end{defi}

\noindent At this point, by using the additional structure on the ambient para-Kähler manifold, namely the symplectic form $\ome$ and the para-complex structure $\p$, we introduce a class of special surfaces that we will study in the following. \begin{defi}\label{def:P_alternating_surfaces}
A superconformal immersed surface $\sigma: S\longrightarrow (M,\g,\p,\ome)$ admitting a Frenet splitting is called an \emph{isotropic $\p$-alternating} surface if \begin{enumerate}
\item[(i)] the pull-back metric $h$ is positive-definite when restricted to $\mathscr{L}_i$ for $i$ odd.
    \item[(ii)] $\p(\mathscr{L}_{2k-1})\cong\mathscr{L}_{2m-2(k-1)}$ for $k=1,\dots,m$; \item[(iii)] the pull-back form $\sigma^*\ome$ is everywhere zero when restricted to $\mathscr{L}_i$.
\end{enumerate}
\begin{remark}\label{rem:metric_positive_negative_definite}
As a consequence of point $(i)$ and $(ii)$ in Definition \ref{def:P_alternating_surfaces} we get that the pull-back metric $h$ is negative-definite when restricted to $\mathscr{L}_{i}$ with $i$ even, being it isomorphic to $\p(\mathscr{L}_{2m-i+1})$. In fact, the para-K\"ahler condition implies $h(\p\cdot,\p\cdot)=-h(\cdot,\cdot)$.
\end{remark}
\end{defi}\noindent 
The adjective \emph{$\p$-alternating} is encapsulated in properties $(i)-(ii)$, and \emph{isotropic} in point $(iii)$. The $\p$-alternating property is outlined as follows: 

 \[
\begin{tikzcd}[column sep=0.5em, row sep=1em]
 & \arrow[rrrrrrrrrr, bend right]\mathscr{L}_1  & \oplus & \arrow[rrrrrr, bend right]\mathscr{L}_2  & \oplus & \arrow[rr, bend right, "\p"]\mathscr{L}_3  & \oplus \cdots\cdots \oplus & \mathscr{L}_{2m-2} \arrow[ll, bend left] & \oplus &\mathscr{L}_{2m-1} \arrow[llllll, bend left] & \oplus & \mathscr{L}_{2m} \arrow[llllllllll, bend left]
\end{tikzcd}
\]    
\begin{remark}\label{rem:maximal_spacelike}
The smooth section $\eta_2$ is a $1$-form with values in $\Hom(TS,\mathscr{L}_2)$ and therefore coincides with the second fundamental form $\II$ of the immersion. In particular, requiring that $\eta_2(X):TS\to\mathscr{L}_2$ is conformal for any $X\in\Gamma(TS)$ implies that $\trace_h\II=0$. Thus any smooth superconformal immersion as in Definition \ref{def:superconformal} is automatically maximal (see also \cite[Remark 4.4]{Nie_alternating}). Moreover, the ones given in Definition \ref{def:P_alternating_surfaces} are spacelike by point (i). In light of this observation, and to simplify the notation, from now on we will refer to those surfaces as \emph{isotropic $\p$-alternating} immersions since it is understood that they are also superconformal, maximal, and space-like. 
\end{remark}
\noindent We now provide an example of a surface as in Definition \ref{def:P_alternating_surfaces}, which, on the one hand, illustrates a natural context in which they can be found, and, on the other, served as the motivating example that led to the general definition.
\begin{example}\label{ex:maximal_Lagrangian}
Let $\sigma:S\to(M,\g,\p,\ome)$ be a space-like maximal Lagrangian surface into a para-K\"ahler manifold of real dimension $4$. By definition, $h|_{TS}$ is positive-definite, $\trace_{h}\II=0$ and $\sigma^*\omega|_{TS}\equiv0$. The Lagrangian condition allows us to identify the normal bundle of the surface with $\p(TS)$, so that we get a $h$-orthogonal decomposition $$\sigma^*TM=TS\oplus\p(TS).$$ Moreover, using $h(\p\cdot,\p\cdot)=-h$ we conclude that $h$ is negative-definite when restricted to $\p(TS)$. It is now easy to deduce that $\sigma$ satisfies all the hypotheses of Definition \ref{def:P_alternating_surfaces}. If we choose the para-complex hyperbolic space $(\h_\tau^2,g,\p,\omega)$ as the ambient manifold, the space-like maximal Lagrangian surfaces correspond to hyperbolic affine spheres in $\R^3$ which are extensively studied in the framework of affine differential geometry (\cite{nomizu1994affine}). The correspondence was initially proved by Hildebrand in a slightly different context (\cite{hildebrand2011cross}), and later proved in (\cite[\S 4]{RT_bicomplex}) using directly the geometry of the para-complex hyperbolic space. It would be interesting to determine whether surfaces into $\h_\tau^{2m}$, with $m\ge 2$, as in Definition \ref{def:P_alternating_surfaces}, also have an equivalent description in terms of affine differential geometry in codimension greater than one (\cite{nomizu1986cubic}).
\end{example}
\begin{prop}\label{prop:some_properties}
Let $\sigma:S\longrightarrow (M,\g,\p,\ome)$ be an isotropic $\p$-alternating surface, then \begin{itemize}
    \item[(1)] the real vector bundle $\p(\mathscr L_k)$ can be identified with $\mathscr{L}_k^{*_{\ome}}$ using the symplectic form $\ome$ for $k=1,\dots,m$; \item[(2)] the $\Hom$-valued $1$-forms $\eta_2,\dots,\eta_{2m}$ appearing in the decomposition of $\nabla$ (see \ref{eq:nabla_decomposition}) satisfy the following relations: \begin{align*}
        &\eta_{2k}(X)\p\xi_{2m-2k+2}=-\p\Big(\eta_{2m-2k+2}^\dag(X)\xi_{2m-2k+2}\Big), \\ & -\eta_{2k-1}^\dag(X)\p\xi_{2m-2k+2}=\p\Big(\eta_{2m-2k+3}(X)\xi_{2m-2k+2}\Big),
    \end{align*}for $X\in\Gamma(TS), \xi_{2m-2k+2}\in\Gamma(\mathscr{L}_{2m-2k+2})$ and $k=1,\dots,m$. In particular, only $m$ of the $(\eta_i)_{i=1,\dots,2m}$ are not determined by the others and by the ambient para-K\"ahler structure.
\end{itemize}
\end{prop}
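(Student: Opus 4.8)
The plan is to obtain part (1) from the interplay of the isotropy condition, the $\g$-orthogonality of the Frenet splitting, and the compatibility $\ome = \g(\cdot,\p\cdot)$, and to obtain part (2) from the single fact that the para-complex structure is parallel. I first record the index form of the $\p$-alternating property: since $\p^2=\Id$ and $\p(\mathscr{L}_{2k-1})=\mathscr{L}_{2m-2k+2}$, one checks that $\p(\mathscr{L}_i)=\mathscr{L}_{2m+1-i}$ for \emph{every} $i=1,\dots,2m$, which is the uniform statement I will use repeatedly.

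For part (1), fix $k$ and $v\in\mathscr{L}_k$. For any $w\in\mathscr{L}_j$ the compatibility gives $\ome(v,w)=\g(v,\p w)$, and since $\p w\in\mathscr{L}_{2m+1-j}$ and the splitting is $\g$-orthogonal, this pairing vanishes unless $2m+1-j=k$, i.e. unless $w\in\p(\mathscr{L}_k)$. Thus $\ome(v,\cdot)$ is supported exactly on $\p(\mathscr{L}_k)$ (the isotropy condition $\sigma^*\ome|_{\mathscr{L}_k}=0$ confirms the diagonal block vanishes, so $\p(\mathscr{L}_k)$ is the only block that can be nonzero). Non-degeneracy of $\ome$ on $\sigma^*TM$ then forces the induced bilinear pairing $\mathscr{L}_k\times\p(\mathscr{L}_k)\to\R$ to be non-degenerate, and since both bundles have rank two this yields the bundle isomorphism $\p(\mathscr{L}_k)\cong\mathscr{L}_k^{*_\ome}$.

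For part (2), the key input is that $\p$ is parallel, i.e. $\nabla_X(\p\xi)=\p(\nabla_X\xi)$ for all $\xi\in\Gamma(\sigma^*TM)$ (the pullback of $\nabla^g\p=0$). I apply this to a section $\xi_j\in\Gamma(\mathscr{L}_j)$ with $j=2m-2k+2$, so that $\p\xi_j\in\Gamma(\mathscr{L}_{2k-1})$. Reading off from the matrix form of $\nabla$ the three-term expansion $\nabla_X\xi_j=\nabla^j_X\xi_j+\eta_{j+1}(X)\xi_j-\eta_j^\dag(X)\xi_j$, with summands in $\mathscr{L}_j,\mathscr{L}_{j+1},\mathscr{L}_{j-1}$, I apply $\p$ and track each summand through $\p(\mathscr{L}_i)=\mathscr{L}_{2m+1-i}$. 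Expanding the left-hand side $\nabla_X(\p\xi_j)$ the same way (with $\p\xi_j$ a section of $\mathscr{L}_{2k-1}$) and comparing components in the orthogonal decomposition, the $\mathscr{L}_{2m-j}$-component yields $-\eta_{2k-1}^\dag(X)\p\xi_{2m-2k+2}=\p\big(\eta_{2m-2k+3}(X)\xi_{2m-2k+2}\big)$ and the $\mathscr{L}_{2m+2-j}$-component yields $\eta_{2k}(X)\p\xi_{2m-2k+2}=-\p\big(\eta_{2m-2k+2}^\dag(X)\xi_{2m-2k+2}\big)$, which are precisely the two stated relations (the $\mathscr{L}_{2m+1-j}$-component gives an analogous compatibility between the metric connections $\nabla^j$ and $\nabla^{2m+1-j}$, which is not needed here). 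For the final count, both relations express $\eta_i$ through $\eta_{2m+2-i}$, so the involution $i\mapsto 2m+2-i$ on $\{2,\dots,2m\}$ pairs up the forms into $m-1$ transpositions and fixes only $\eta_{m+1}$; choosing one representative from each pair together with $\eta_{m+1}$ leaves exactly $m$ of the forms undetermined, the remaining ones being recovered via $\p$, the metric, and adjunction.

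The main obstacle I anticipate is purely combinatorial: keeping the index shifts straight when applying $\p$ to each summand and matching them against the correct block of the orthogonal decomposition, so that the two relations land with the right signs and the correct adjoints. Once the uniform identity $\p(\mathscr{L}_i)=\mathscr{L}_{2m+1-i}$ is in hand each step is a direct comparison, but care is needed at the boundary indices $j=2m$ and $j=2$, where $\eta_1$ and $\eta_{2m+1}$ are absent, to confirm that the edge cases of the two families are vacuous rather than contradictory.
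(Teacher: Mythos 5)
Your argument is correct and follows essentially the same route as the paper: part (2) is exactly the paper's computation (apply parallelism of $\p$ to a section of $\mathscr{L}_{2m-2k+2}$, expand both tridiagonal decompositions, and match blocks via $\p(\mathscr{L}_i)=\mathscr{L}_{2m+1-i}$), and part (1) differs only cosmetically, since your appeal to non-degeneracy of $\ome$ on $\sigma^*TM$ plus block orthogonality reduces to the paper's observation that $\ome(\cdot,\p\xi_k)=\g(\cdot,\xi_k)$ is non-degenerate on $\mathscr{L}_k$. Your explicit count via the involution $i\mapsto 2m+2-i$ and the check of the vacuous edge case at $k=1$ are welcome additions the paper leaves implicit.
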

\begin{proof}
Regarding point $(1)$ we observe that the map \begin{align*}
    \p(&\mathscr{L}_k)\longrightarrow\mathscr{L}_k^* \\ & \p\xi_k\longmapsto\ome|_{\mathscr{L}_k}(\cdot,\p\xi_k)
\end{align*}induces a linear map on each fiber over a point $p\in S$. Furthermore, since $\g(\cdot,\cdot)=\ome(\cdot,\p\cdot)$ and the pseudo-Riemannian metric is non-degenerate whenever restricted to $\mathscr{L}_k$, we obtain that the induced map on each fiber in an isomorphism. \newline The claim in point $(2)$ is simply a calculation that we will explain as follows: let $k=1,\dots,m$ be fixed and let $\xi_{2m-2k+2}\equiv\xi$ be a smooth section of $\mathscr{L}_{2m-2k+2}$; then for any $X\in\Gamma(TS)$, according to the decomposition (\ref{eq:nabla_decomposition}), we have $$\nabla_X\p\xi=-\underbrace{\eta_{2k-1}^\dag(X)\p\xi}_{\in\mathscr{L}_{2k-2}}+\underbrace{\nabla^{2k-1}_X\p\xi}_{\in\mathscr{L}_{2k-1}}+\underbrace{\eta_{2k}(X)\p\xi}_{\in\mathscr{L}_{2k}}.$$On the other hand, $\p$ is parallel with respect to $\nabla$, hence $$\nabla_X\p\xi=\p\big(\nabla_X\xi\big)=-\underbrace{\p\big(\eta_{2m-2k+2}^\dag(X)\xi\big)}_{\in\p(\mathscr{L}_{2m-2k+1})}+\underbrace{\p\big(\nabla_X^{2m-2k+2}\xi\big)}_{\in\p(\mathscr{L}_{2m-2k+2})}+\underbrace{\p\big(\eta_{2m-2k+3}(X)\xi\big)}_{\in\p(\mathscr{L}_{2m-2k+3})}.$$ Using the $\p$-alternating property of Definition \ref{def:P_alternating_surfaces}, namely $\p(\mathscr{L}_{2k-1})\cong\mathscr{L}_{2m-2(k-1)}$ for any $k=1,\dots,m$, we get that $$\p(\mathscr{L}_{2m-2k+1})\cong\mathscr{L}_{2k},\quad\p(\mathscr{L}_{2m-2k+2})\cong\mathscr{L}_{2k-1},\quad\p(\mathscr{L}_{2m-2k+3})\cong\mathscr{L}_{2k-2}$$ and we conclude by comparing the two orthogonal decompositions above.
\end{proof}

\noindent As a consequence, the Frenet splitting of an isotropic $\mathbf{P}$-alternating immersion $\sigma:S \rightarrow (M,\mathbf{g},\mathbf{P},\ome)$ can be written as
\[
    \sigma^*TM\cong \mathscr{L}_1 \oplus \mathbf{P}(\mathscr{L}_{2m-1}) \oplus \mathscr{L}_{3} \oplus \cdots \oplus \p(\mathscr{L}_{3}) \oplus \mathscr L_{2m-1} \oplus\p(\mathscr L_1),\quad \mathscr L_1\cong TS
\]
and the connection $\nabla$ decomposes as
\begin{equation}\label{eq:nabla_decomposition2}\renewcommand\arraystretch{1.5}
    \nabla=\begin{pmatrix}
        \nabla^1 & -\eta_2^\dag &  \\ \eta_2 & \nabla^2 & -\eta_3^\dag && \\ & \eta_3 & \nabla^3 & \ddots \\ && \ddots & \ddots & -\eta_m^\dag
        \\ &&& \eta_{m} &\nabla^{m} & -\eta_{m+1}^\dag \\
         &&&& \eta_{m+1} & \nabla^{m} & \eta_m \\
         &&&&&-\eta_{m}^\dag & \nabla^{m-1} & \ddots
        \\ &&&&&& \ddots & \ddots & \eta_{2}
        \\ &&&&&&& -\eta_{2}^\dag &\nabla^{1}
    \end{pmatrix} 
\end{equation}
\begin{remark}
To obtain the Frenet splitting above, point $(iii)$ of Definition \ref{def:P_alternating_surfaces} has been used. Moreover, the new decomposition of the pull-back connection should be understood as follows: the \( m \times m \) block in the top left remains unchanged with respect to the decomposition in (\ref{eq:nabla_decomposition}), while the \( m \times m \) block in the bottom right has been rewritten using point $(2)$ of Proposition \ref{prop:some_properties}. For instance, the smooth section $\eta_{2m}$ has been replaced by $-\eta_2^\dag$ according to $$\eta_{2m}(X)\p\xi=-\p\big(\eta_2^\dag(X)\xi\big), \quad \xi\in\mathscr L_2$$ and the same applies to $\eta_{m+2},\dots,\eta_{2m-1}$. Finally, we have chosen not to include the action of the para-complex structure $\p$ to avoid complicating the notation.
\end{remark}
\subsection{Structure equations}\label{sec:3.2}
In the following section, we fix the ambient para-Kähler manifold $(M,\g,\p,\ome)$ to be the para-complex hyperbolic space $(\h_\tau^{2m}, g,\p,\omega)$ introduced in Section \ref{sec:2.2}. Let $\sigma:S\to\h_\tau^{2m}$ be an isotropic $\p$-alternating smooth immersion with Frenet splitting 
$$\sigma^*T\h_\tau^{2m}\cong \mathscr{L}_1 \oplus \mathbf{P}(\mathscr{L}_{2m-1}) \oplus \mathscr{L}_{3} \oplus \cdots \oplus \p(\mathscr{L}_{3}) \oplus \mathscr L_{2m-1} \oplus\p(\mathscr L_1),\quad \mathscr L_1\cong TS.$$
Let $h:=\sigma^*g$ be the pull-back metric which is positive (resp. negative) definite on $\mathscr{L}_{2i-1}$ (resp. $\p(\mathscr{L}_{2i-1})$) for $i=1,\dots,m$. We will denote with $h_{i}$ its restriction to $\mathscr{L}_{i}$. Being $\sigma$ a space-like immersion, the surface is naturally equipped with an (almost)-complex structure $J_1$ induced by the conformal structure of the first fundamental form, and thus with a Riemann surface structure $X=(S,J_1)$. Since the vector bundles $\mathscr{L}_{2i-1}$ are all of real rank $2$, and the connection $\nabla^{2i-1}$ induced by the decomposition (\ref{eq:nabla_decomposition}) is compatible with 
$h_{2i-1}$, they can be regarded as a pair $(L_{2i-1}, \boldsymbol{h}_{2i-1})$ where $L_{2i-1}\to X$ is a holomorphic line bundle and $\boldsymbol{h}_{2i-1}$ is an Hermitian metric. They are naturally endowed with one of the two possible orientations of the Euclidean plane, so that $\overline{\mathscr{L}_{2i-1}}$ corresponds to the pair $(L_{2i-1}^{-1},\boldsymbol{h}_{2i-1}^{-1})$ (see \cite{huybrechts2005complex} for more details). Once an orientation on $\mathscr{L}_{2i-1}$, with $i=1,\dots,m$, is fixed, the real vector bundles $\p(\mathscr{L}_{2i-1})$ inherits an orientation by requiring that $\p$ be orientation-preserving. In this way, $\p(\mathscr{L}_{2i-1})$ corresponds to the line bundle $L_{2i-1}$ as well. In particular, since $\mathscr{L}_{1}=TS$, we have $L_1\cong K^{-1}$ where $K:=(T^{1,0}X\big)^*$ is the canonical bundle of $X$. 

\begin{remark} By the discussion above, we obtain $m$ holomorphic line bundles $L_{2i-1}$ endowed with Hermitian metric $\boldsymbol{h}_{2i-1}$, for $i=1, \dots, m$, associated to an isotropic $\p$-alternating surface in $\h^{2m}_{\tau}$. Since $\p(\mathscr{L}_{2m-2i+1}) \cong \mathscr{L}_{2i}$, we are going to re-index the line bundles so that $L_{2m-2i+1}:=L_{2i}$ for $1\leq i \leq \frac{m}{2}$.    
\end{remark}

\noindent The next result provides a holomorphic interpretation of the smooth sections $\eta_2,\dots,\eta_{m+1}$ appearing in the decomposition of the pull-back connection of the ambient space (see $(\ref{eq:nabla_decomposition})$), similar to what occurs for the case of \emph{A-surfaces} defined by Nie (see \cite[Theorem 4.6]{Nie_alternating}). 

\begin{theorem}\label{thm:hol_data} 
 Let $\sigma:S\to\h_\tau^{2m}$ be a smooth isotropic $\p$-alternating immersion with $m\ge 2$. Let us assume that $\sigma(S)$ is not contained in any para-complex hyperbolic subspace of real codimension $4$, then: \begin{enumerate}
    \item[(1)] none of the smooth sections $\eta_2,\dots,\eta_m$ is identically zero;
    \item[(2)] once an orientation on $\mathscr{L}_1\cong TS$ is chosen, the real vector bundles $\mathscr{L}_2,\dots,\mathscr{L}_{2m}$ can be endowed with an orientation as well, so that they correspond to holomorphic line bundles $L_1,L_2,\dots, L_{m}, L_{m}, \dots, L_2, L_1$ as explained previously; 
     \item[(3)] any $\eta_i$ can be seen as a holomorphic $(1,0)$-form with values in $\Hom_{\C}(L_{i-1},L_i)$, for $i=2,\dots,m$ and $\eta_{m+1}$ can be seen as a holomorphic $(1,0)$-form with values in $\Hom_{\C}(L_{m},\overline{L_{m}})$;
     \item [(4)] The Frenet splitting $\sigma^*T\h_\tau^{2m}\cong \mathscr{L}_1 \oplus \mathbf{P}(\mathscr{L}_{2m-1}) \oplus \mathscr{L}_{3} \oplus \cdots \oplus \p(\mathscr{L}_{3}) \oplus \mathscr L_{2m-1} \oplus\p(\mathscr L_1)$ is unique;
 \end{enumerate}
\end{theorem}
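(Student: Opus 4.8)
The plan is to establish the four assertions together by an induction that builds the complex structures on the Frenet subbundles one index at a time, using a single elementary fact throughout: a nonzero conformal linear map between oriented Euclidean $2$-planes is an isomorphism, and is either $\C$-linear or $\C$-antilinear. First I fix the complex structure $J_1$ on $\mathscr{L}_1 = TS$ coming from the conformal class of the first fundamental form, so that $\mathscr{L}_1$ becomes the holomorphic line bundle $L_1 \cong K^{-1}$. Suppose inductively that $\mathscr{L}_{i-1}$ carries a complex structure, hence is a holomorphic line bundle $L_{i-1}$. Viewing $\eta_i \in \Omega^1(\Hom(\mathscr{L}_{i-1}, \mathscr{L}_i))$ and splitting each value into its $\C$-linear and $\C$-antilinear parts, the conformality of $\eta_i(X)$ for every $X$ forces, at each point $p$, one of the two multipliers to vanish on all of $T_pS$: the two kernels are linear subspaces of $T_pS \cong \R^2$ whose union is everything, and a real plane is not the union of two proper subspaces. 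Hence $\eta_i|_p$ is purely $\C$-linear or purely $\C$-antilinear, and I orient $\mathscr{L}_i$ so that $\eta_i$ is $\C$-linear, i.e. valued in $\Hom_\C(L_{i-1}, L_i) = L_{i-1}^{-1}L_i$. This is the mechanism behind part (2).

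For part (1) I argue by contradiction: if $\eta_i \equiv 0$ for some $2 \le i \le m$, then by the block form \eqref{eq:nabla_decomposition2} together with Proposition \ref{prop:some_properties}(2), the subbundle $\mathscr{L}_1 \oplus \cdots \oplus \mathscr{L}_{i-1} \oplus \p(\mathscr{L}_{i-1}) \oplus \cdots \oplus \p(\mathscr{L}_1)$ is $\nabla$-parallel, non-degenerate and $\p$-invariant. Its flatness and totally geodesic nature exhibit $\sigma(S)$ inside a totally geodesic para-complex hyperbolic subspace of real dimension $4(i-1)$, hence of real codimension $4(m-i+1) \ge 4$, contradicting the hypothesis; the threshold $i = m$ is precisely what produces the codimension $4$ in the statement.

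I then address part (3), the holomorphicity and the $(1,0)$-type. Having reduced $\eta_i$ to a section of $(K \oplus \overline{K}) \otimes L_{i-1}^{-1}L_i$, I must show its $(0,1)$-component vanishes and that its $(1,0)$-component $\eta_i^{1,0}$ is holomorphic for the Chern connections $\nabla^{i-1}, \nabla^i$ of $L_{i-1}, L_i$. This comes from the structure equations: comparing $R^\nabla$ with the pulled-back ambient curvature of Lemma \ref{lem:para_hol_curvature}, the near-neighbour off-diagonal blocks of the curvature vanish (the ambient curvature only couples a plane to itself and to its $\p$-partner), which yields the Codazzi relation $d^\nabla \eta_i = 0$; combined with the isotropy condition (iii) of Definition \ref{def:P_alternating_surfaces} and the vanishing of $(2,0)$-forms on a Riemann surface, this forces $\eta_i^{0,1} = 0$ and $\bar\partial^\nabla \eta_i^{1,0} = 0$, exactly as maximality kills the $(1,1)$-part of $\eta_2 = \II$ in Remark \ref{rem:maximal_spacelike}. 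At the fold $i = m+1$ the target is the $\p$-partner $\p(\mathscr{L}_m)$; since $g(\p\cdot,\p\cdot) = -g$, the structure $\p$ is orientation-reversing on the $2$-planes and intertwines $J_m$ with the conjugate structure, so $\p(\mathscr{L}_m) \cong \overline{L_m}$ and $\eta_{m+1}$ becomes a holomorphic $(1,0)$-form with values in $\Hom_\C(L_m, \overline{L_m})$.

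Finally, holomorphicity feeds back into part (2) and yields part (4): each $\eta_i^{1,0}$ is a nonzero holomorphic section (by part (1)), hence vanishes on a discrete set, so the locus where $\eta_i$ is a genuine conformal isomorphism is the complement of finitely many points, which is connected; this makes the orientation chosen above globally well defined and smooth and rules out the $\C$-antilinear alternative. The non-vanishing of $\eta_2, \dots, \eta_m$ then shows that $\mathscr{L}_1, \dots, \mathscr{L}_m$ must coincide with the successive osculating (higher fundamental) subbundles of $\sigma$, which are intrinsically defined once $\eta_2,\dots,\eta_m$ are nonzero, and the $\p$-alternating property determines $\mathscr{L}_{m+1}, \dots, \mathscr{L}_{2m}$ as their $\p$-images, so the splitting is unique. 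I expect the main obstacle to be part (3): extracting the precise Codazzi equations from Lemma \ref{lem:para_hol_curvature} in the block-tridiagonal frame and, in particular, proving $\eta_i^{0,1} = 0$ for the interior indices $i \ge 3$, where the symmetry of the second fundamental form that settles $i = 2$ is unavailable, so that $\eta_i$ is genuinely of type $(1,0)$; the orientation bookkeeping at the $\p$-fold, giving $\overline{L_m}$ rather than $L_m$, is the secondary delicate point.
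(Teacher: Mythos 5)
Your overall strategy (moving--frame structure equations, induction on the index $i$, the conformality dichotomy to orient the $\mathscr{L}_i$, density of the non-vanishing locus) is the same as the paper's, and your treatments of parts (1) and (4) are essentially the paper's arguments. But there is a genuine gap exactly where you yourself flag the ``main obstacle'': the vanishing of $\eta_i^{(0,1)}$ for $3\le i\le m+1$. You claim this follows from the Codazzi relation $d^{\nabla}\eta_i=0$ ``combined with the isotropy condition (iii) of Definition \ref{def:P_alternating_surfaces} and the vanishing of $(2,0)$-forms on a Riemann surface.'' That derivation does not work: the Codazzi equation constrains the exterior covariant derivative of $\eta_i$, not its type, and the isotropy condition enters only in setting up the $\p$-alternating form of the splitting, not here. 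The ingredient actually needed is the \emph{other} family of structure equations, $\boldsymbol\eta_i\wedge\boldsymbol\eta_{i-1}=0$ for $3\le i\le m+1$ (the $|i-j|\ge 2$ blocks of the flatness of the full Maurer--Cartan form of the frame in $\R_\tau^{2m+1}$, equations \eqref{eq:Maurer_cartan_third_block} in the paper). Once $\boldsymbol\eta_{i-1}$ is inductively known to be a nonzero holomorphic $(1,0)$-form, its zero set is discrete, the equation reduces to $\boldsymbol\eta_i^{(0,1)}\wedge\boldsymbol\eta_{i-1}=0$, and density plus continuity give $\boldsymbol\eta_i^{(0,1)}=0$ everywhere; only then does the Codazzi equation upgrade $\boldsymbol\eta_i$ to a holomorphic section. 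Without this wedge identity and the density argument, part (3) --- and with it the global well-definedness of the orientations in part (2), which you defer to the holomorphicity --- is not established.

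A second, smaller gap is at the fold $i=m+1$. You assert that $\p(\mathscr{L}_m)\cong\overline{L_m}$ because ``$g(\p\cdot,\p\cdot)=-g$ makes $\p$ orientation-reversing.'' This is not a proof: the sign reversal of the metric says nothing by itself about orientations of two distinct rank-$2$ subbundles, and indeed the paper's convention is the opposite --- the orientation on $\p(\mathscr{L}_m)$ is \emph{defined} by declaring $\p$ orientation-preserving, so that $\p(\mathscr{L}_m)$ corresponds to $L_m$. The fact that $\eta_{m+1}$ nevertheless lands in $\Hom_\C(L_m,\overline{L_m})$ is a nontrivial cancellation: one decomposes the conformal map $\boldsymbol\eta_{m+1}$ into its $\C$-linear and $\C$-antilinear parts and kills the $\C$-linear one by showing the matrix of $\boldsymbol\eta_{m+1}$ is \emph{symmetric} in compatible orthonormal frames, which follows from the adjoint relation $\eta_{m+1}(X)\p\xi=-\p\bigl(\eta_{m+1}^{\dag}(X)\xi\bigr)$ of Proposition \ref{prop:some_properties}(2). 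That input is absent from your argument and cannot be replaced by the orientation heuristic you give.
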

\begin{proof}
We begin with point $(1)$. This is similar to the pseudo-hyperbolic case considered by Nie (\cite{Nie_alternating}). In fact, immersed copies of para-complex hyperbolic space in $\h_\tau^{2m}$ are obtained by taking a non-degenerate linear $\mathbb{R}_{\tau}$-invariant subspace $V\subset\R_\tau^{2m+1}$, then consider the intersection with the quadric formed by vectors of $\boldsymbol q$-norm equal to $-1$ and then projectivize as in Definition \ref{def:paracomplex_hyperbolic_space}. The only difference is that if there exists an index $k=2,\dots,m$ such that $\eta_k\equiv 0$, then by Proposition \ref{prop:some_properties} the corresponding $\eta_{2m-2j+2}$ (if $k=2j$ is even) or $\eta_{2m-2j+3}$ (if $k=2j-1$ is odd) is identically zero as well. Using the decomposition (\ref{eq:nabla_decomposition}) of the pull-back connection $\nabla$, we obtain a projection onto a copy of $\h_\tau^{2m-2}$ which has real codimension $4$ in $\h_\tau^{2m}$. 
\newline For point $(2)$ and $(3)$ we shall write the Gauss-Codazzi equations for the immersed surface in an appropriate moving frame. First, let us choose a lift $\hat\sigma:S\to\widehat\h_\tau^{2m}\subset\R_\tau^{2m+1}$ inside the quadric formed by all vectors with $\boldsymbol q$-norm equal to $-1$. Then, by using the decomposition \(\mathbb{R}_\tau^{2m+1} = \hat\sigma^*T\mathbb{H}_\tau^{2m} \oplus \mathbb{R}\hat\sigma \oplus \tau\mathbb{R}\hat\sigma\) (see (\ref{eq:decomposition_R_tau})), we can consider, in a neighborhood of each point of \(S\), locally defined functions \((u_1, v_1, \dots, u_{2m}, v_{2m})\) with values in \(\mathbb{R}^{4m+2} \equiv \mathbb{R}_\tau^{2m+1}\), such that \(u_{2i-1}, v_{2i-1}\) is an orthonormal frame of $\mathscr{L}_{2i-1}$ and $(u_{2m-2i+2},v_{2m-2i+2})=(\p u_{2i-1}, \p v_{2i-1})$ (see Proposition \ref{prop:some_properties}). In other words, if we consider the functions \((u_1, v_1, \dots, u_{2m}, v_{2m})\) as column vectors, we have a map \(F = (\sigma, u_1, v_1, \dots, u_{2m}, v_{2m}, \tau \sigma)\) that is locally well-defined around each point of the surface, with values in \(\mathrm{SU}(2m+1, \mathbb{R}_\tau) \cong \mathrm{SL}(2m+1, \mathbb{R})\) (Proposition \ref{prop:iso_group}). Using the decomposition (\ref{eq:nabla_decomposition2}) of the pull-back connection \(\nabla\), we find that the differential of the moving frame can be expressed as $\mathrm{d}F = F \cdot \Omega$, 
with 
\[  \renewcommand\arraystretch{1.5}
    \Omega = \begin{pmatrix}  
        \ & \theta^t \\ 
        \theta & \Omega_1 & \boldsymbol\eta_2^t \\
        \ & \boldsymbol\eta_2 & \Omega_2 & \boldsymbol\eta_3^t \\ 
        \ & \ & \boldsymbol\eta_3 & \Omega_3 & \ddots \\
        \ & \ & \ & \ddots & \ddots & \boldsymbol\eta_m^t \\ 
        \ & \ & \ & \ & \boldsymbol\eta_{m} &\Omega_{m} & \boldsymbol\eta_{m+1}^t \\
        \ & \ & \ & \ & \ & \boldsymbol\eta_{m+1} & \Omega_{m} & \boldsymbol\eta_m \\
        \ & \ & \ & \ & \ & \ &\boldsymbol\eta_{m}^t & \Omega_{m-1} & \ddots 
        \\ 
        \ & \ & \ & \ & \ & \ & \ & \ddots & \ddots & \boldsymbol\eta_{2} \\ 
        \ & \ & \ & \ & \ & \ & \ & \ & \boldsymbol\eta_{2}^t & \Omega_{1} & \theta \\ 
        & \ & \ & \ & \ & \ & \ & \ & \ & \theta^t & 
        \end{pmatrix}
\] 
where $\theta^t:=(\theta_1,\theta_2)$ is the local frame of $T^*S$ dual to $(u_1,v_1)$ and $\Omega_i$ is the matrix of the connection $\nabla^i$ in the frame $(u_i,v_i)$. Moreover, $\boldsymbol{\eta}_i$ (resp. $\boldsymbol{\eta}_i^t$) are the locally defined $2\times 2$ matrices of $1$-forms corresponding to the section $\eta_i$ (resp. $-\eta_i^\dag$) for $i=1,\dots,m+1$. In the local frame $(u_i,v_i)$ the matrices $\Omega_i$ can be written as $$\Omega_i=\begin{pmatrix}
    0 & \omega_i \\ -\omega_i & 0
\end{pmatrix}$$ for a $1$-form $\omega_i$, since $\nabla^i$ is metric preserving. The Gauss-Codazzi equation of the immersed surface are: $\mathrm{d}\Omega+\Omega\wedge\Omega=0$, and, since the expression for \(\Omega\) is formally similar to that provided by Nie in \cite[Theorem 4.6]{Nie_alternating}, we can employ formulas (4.6), (4.7), (4.8) from \cite{Nie_alternating} to derive the following equations in our case: 
\begin{align} 
&\begin{cases}\label{eq:Maurer_Cartan_first_block}\mathrm{d}\Omega_1+\theta\wedge\theta^t+\boldsymbol\eta_2^t\wedge\boldsymbol\eta_2=0 \\ \mathrm{d}\Omega_i+\boldsymbol\eta_i\wedge\boldsymbol\eta_i^t+\boldsymbol\eta_{i+1}^t\wedge\boldsymbol\eta_{i+1}=0 \quad (2\le i\le m) \\ \mathrm{d}\Omega_{m}+\boldsymbol\eta_{m+1}\wedge\boldsymbol\eta_{m+1}^t+\boldsymbol\eta_m\wedge\boldsymbol\eta_m^t=0 
    \end{cases}\\ &\begin{cases}\label{eq:Maurer_Cartan_second_block}
        \mathrm d\theta+\Omega_1\wedge\theta=0 \\ \mathrm{d}\boldsymbol\eta_i+\boldsymbol\eta_i\wedge\Omega_{i-1}+\Omega_i\wedge\boldsymbol\eta_i=0 \quad (2\le i\le m+1)
    \end{cases} \\ &\begin{cases}\label{eq:Maurer_cartan_third_block}
        \boldsymbol\eta_2\wedge\theta=0 \\ \boldsymbol\eta_i\wedge\boldsymbol\eta_{i-1}=0 \quad (3\le i\le m+1)
    \end{cases}
\end{align}It is important to note that part of the equations has not been written due to the symmetry of the matrix \(\Omega\), which, we recall, is a consequence of the \(\p\)-alternating property of the surfaces under consideration. In what follows we shall employ some standard arguments from complex differential geometry, which we recall for the sake of clarity. Let $L$ be a Hermitian line bundle and let $E$ be a real rank $2$ vector bundle endowed with a metric and a compatible connection. It is possible to define a natural structure of Hermitian holomorphic bundle on $\Hom_\R(L,E)$, where the complex structure is obtained by pre-composing each homomorphism $L\to E$ with the complex structure on $L$, while the metric and compatible connection are induced by those on $L$ and $E$. We will denote such holomorphic Hermitian bundle as $\mathscr{Hom}_\R(L,E)$. Furthermore, whenever we have a non-zero holomorphic $(1,0)$-form $\boldsymbol\eta$ with values in $\mathscr{Hom}_\R(L,E)$ and such that $\boldsymbol\eta(X)$ is conformal for any $X\in\Gamma(TS)$, then there exists a unique orientation on the real vector bundle $E$ such that $\boldsymbol\eta$ takes values in $\Hom_\C(L,E)$ (see \cite[Lemma 4.9]{Nie_alternating} for instance). The holomorphicity of $\boldsymbol\eta$ is equivalent to require that $\mathrm{d}^{\nabla^\Hom}\boldsymbol\eta =0$ where $\nabla^\Hom$ is the induced metric connection on $\Hom_\R(L,E)$ and $\mathrm{d}^{\nabla^\Hom}\boldsymbol{\eta}$ is understood as a smooth $2$-form with values in $\Hom_\R(L,E)$. Bearing this in mind, we can now prove the existence of an orientation on the real vector bundles \(\mathscr{L}_i\) that form the Frenet splitting. We begin by choosing an orientation on \(\mathscr{L}_1 \cong TS\) and treating it as a Hermitian line bundle \(K^{-1}\) so that the complex structure \(J_1\), induced by the first fundamental form, is compatible with this orientation. In particular, since these surfaces are maximal (see Remark \ref{rem:maximal_spacelike}), it follows that the second fundamental form, and hence \(\eta_2\), satisfies the following relation:  
\[
\II(J_1 X, Y) + \II(X, J_1 Y) = 0, \quad \text{for any} \ X, Y \in \Gamma(TS).
\]  
In other words, the associated matrix \(\boldsymbol\eta_2\) can be interpreted as a \(\Hom_\R(K^{-1}, \mathscr{L}_2)\)-valued \((1,0)\)-form. The second equation in (\ref{eq:Maurer_Cartan_second_block}) with \(i=2\) is equivalent to requiring that \(\mathrm{d}^{\nabla^\Hom} \boldsymbol\eta_2 = 0\), and thus \(\boldsymbol\eta_2\) is in fact holomorphic. As discussed above, there exists a unique orientation on \(\mathscr{L}_2\) that induces a Hermitian line bundle structure \(L_2\) and such that \(\boldsymbol\eta_2\) becomes a \((1,0)\)-form taking values in \(\Hom_\C(K^{-1}, L_2)\). The next step is to obtain an analogous result for \(\boldsymbol\eta_i\) with \(i = 3, \dots, m+1\) using the second equations in (\ref{eq:Maurer_Cartan_second_block}) and (\ref{eq:Maurer_cartan_third_block}). We will explain the procedure only for \(\boldsymbol\eta_3\), as it is analogous for the remaining cases. Being a smooth $1$-form with values in $\Hom_\R(L_2,\mathscr{L}_3)$, we can decompose \(\boldsymbol\eta_3 = \boldsymbol\eta_3^{(1,0)} + \boldsymbol\eta_3^{(0,1)}\) into its \((1,0)\)-part and \((0,1)\)-part. Since, by the preceding argument, \(\boldsymbol\eta_2\) is a matrix of \((1,0)\)-forms, the second equation in (\ref{eq:Maurer_cartan_third_block}) with \(i=3\) becomes \(\boldsymbol\eta_3^{(0,1)} \wedge \boldsymbol\eta_2 = 0\), which implies that \(\boldsymbol\eta_3^{(0,1)}\) must vanish on the set of points in \(S\) where \(\boldsymbol\eta_2 \neq 0\). Moreover, since \(0 \neq \boldsymbol\eta_2\) is holomorphic, this set is dense in \(S\), and by continuity, \(\boldsymbol\eta_3^{(0,1)} = 0\) everywhere. In other words, \(\boldsymbol\eta_3\) is a matrix of \((1,0)\)-forms. Finally, the second equation in (\ref{eq:Maurer_Cartan_second_block}) with \(i=3\) implies that \(\boldsymbol\eta_3\) is holomorphic, allowing us to apply the same reasoning as previously used for \(\boldsymbol\eta_2\). There is a caveat only for $\boldsymbol{\eta}_{m+1}$: we explain the argument when $m$ is odd, the other being analogous. Equation \eqref{eq:Maurer_Cartan_second_block} still implies that $\boldsymbol{\eta}_{m+1}$ is a $(1,0)$-form with values in $\Hom_{\R}(\mathscr{L}_{m}, \p(\mathscr{L}_{m}))$ (if $m$ were even, it would take values in $\Hom_{\R}(\p(\mathscr{L}_{m}), \mathscr{L}_{m})$, but that is the only difference), however now the orientation on $\mathscr{L}_{m}$ has already been fixed when considering $\boldsymbol{\eta}_{m}$ and there is a natural orientation on $\p(\mathscr{L}_{m})$ by requiring that $\p$ be orientation preserving. Since $\boldsymbol{\eta}_{m+1}$ is conformal, we can write $\boldsymbol{\eta}_{m+1}=\boldsymbol{\eta}_{m+1}'+\boldsymbol{\eta}_{m+1}''$, where $\boldsymbol{\eta}_{m+1}' \in \Hom_{\C}(L_{m}, L_{m})$ and $\boldsymbol{\eta}_{m+1}''\in \Hom_{\C}(L_{m}, \overline{L}_{m})$. We shall prove that $\boldsymbol{\eta}_{m+1}'=0$ by showing that in orthonormal basis compatible with the orientations of $\mathscr{L}_{m}$ and $\p(\mathscr{L}_{m})$ the matrix $\boldsymbol{\eta}_{m+1}$ is symmetric. Let $\{u_{m}, v_{m}\}$ be an oriented orthonormal basis of $\mathscr{L}_{m}$ and let $\{\p u_{m}, \p v_{m}\}$ be the corresponding oriented basis of $\p(\mathscr{L}_{m})$. We already observed at the beginning of the proof that $-\eta^{\dag}_{i}$ is represented by the matrix $\boldsymbol{\eta}_{i}^{t}$ in the chosen frames. On the other hand, by Proposition \ref{prop:some_properties}, for all $X\in \Gamma(TS)$ and $\xi \in \{\p u_{m}, \p v_{m}\}$, we have 
\[
    \eta_{m+1}(X)\p\xi = -\p(\eta_{m+1}^{\dag}(X)\xi)
\]
which implies that $\eta_{m+1}$ and $-\eta^{\dag}_{m+1}$ are represented by the same matrices in these frames, hence $\boldsymbol{\eta}_{m+1}$ is symmetric. \\
We finally prove point $(4)$. The vector bundles $\mathscr{L}_1\cong TS$ and $\mathscr{L}_{2m}\cong\p(TS)$ are uniquely determined by the surface and the para-complex structure of the ambient space. Iteratively, the bundles $\mathscr{L}_{i}$ for $i=2, \dots, m$ are uniquely determined by $\mathscr{L}_{i-1}$ as the image of the map $\eta_{i}$, which has full rank except at isolated points by item $(1)$ and $(3)$. The bundles $\mathscr{L}_{i}$ with $i=m+1, \dots, 2m$ can be recovered using the para-complex structure $\mathbf{P}$ by Definition \ref{def:P_alternating_surfaces} part $(iii)$.

\end{proof}
\begin{remark}
In the statement of the theorem, the case \(m=1\) has not been considered, as it falls within the study of maximal spacelike Lagrangian immersions in \(\h_\tau^2\), which have been analyzed in \cite{RT_bicomplex} (see also \cite{hildebrand2011cross,dorfmeister2024half}). In this case, the techniques used to study these surfaces are simplified.
\end{remark}
\noindent We will refer to the holomorphic line bundles $L_{1}, \dots, L_{m}$ endowed with the Hermitian metrics $\boldsymbol{h}_{1}, \dots, \boldsymbol{h}_{m}$ together with the holomorphic $(1,0)$-forms $\eta_{2}, \dots, \eta_{m+1}$ as the holomorphic immersion data associated to a smooth isotropic $\p$-alternating immersion $\sigma:S \rightarrow \mathbb{H}^{2m}_{\tau}$. 

\begin{theorem}\label{thm:structure_eq_hol}
Let $\sigma:S\to\h_\tau^{2m}$ be a smooth isotropic $\p$-alternating immersion with $m\ge 2$. Let us assume that $\sigma(S)$ is not contained in any para-complex hyperbolic subspace of real codimension $4$, then its holomorphic immersion data satisfy \begin{equation}\label{eq:immersion_data_eta}
    \begin{cases}
        \partial^{2}_{z\bar{z}}\log\boldsymbol h_{m}-|\eta_{m}|^{2}\boldsymbol h_{m}\boldsymbol h_{m-1}^{-1}+|\eta_{m+1}|^{2}\boldsymbol h_{m}^{-2}=0 \\
        \partial^{2}_{z\bar{z}}\log\boldsymbol h_{j}-|\eta_{j}|^{2}\boldsymbol h_{j}\boldsymbol h_{j-1}^{-1}+|\eta_{j+1}|^{2}\boldsymbol h_{j+1}\boldsymbol h_{j}^{-1}=0, \ \ \ \ \ \ \ j=2, \dots, m-1 \\
        \partial^{2}_{z\bar{z}}\log\boldsymbol h_{1}-\boldsymbol h_{1}+|\eta_{2}|^{2}\boldsymbol h_{2}\boldsymbol h_{1}^{-1} =0 .
    \end{cases}
\end{equation}
\end{theorem}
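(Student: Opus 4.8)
The plan is to read off the system (\ref{eq:immersion_data_eta}) from the curvature block (\ref{eq:Maurer_Cartan_first_block}) of the Maurer--Cartan equations already obtained in the proof of Theorem \ref{thm:hol_data}, by translating those real $2\times 2$ matrix identities into scalar equations for the Hermitian metrics $\boldsymbol{h}_1,\dots,\boldsymbol{h}_m$. Since each connection matrix $\Omega_i=\begin{psmallmatrix} 0 & \omega_i \\ -\omega_i & 0\end{psmallmatrix}$ is skew-symmetric one has $\Omega_i\wedge\Omega_i=0$, so the curvature of $\nabla^i$ reduces to $\mathrm{d}\Omega_i$. Under the identification of the oriented rank-$2$ bundle $\mathscr{L}_i$ with the Hermitian holomorphic line bundle $(L_i,\boldsymbol{h}_i)$ furnished by Theorem \ref{thm:hol_data}, the off-diagonal entry $\mathrm{d}\omega_i$ is precisely the Chern curvature of $(L_i,\boldsymbol{h}_i)$, which in a local conformal coordinate $z$ equals a fixed constant multiple of $\partial^2_{z\bar z}\log\boldsymbol{h}_i\,\mathrm{d}x\wedge\mathrm{d}y$. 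This produces the Laplacian term in each scalar equation.

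Next I would rewrite the quadratic terms. Each $\boldsymbol{\eta}_i$ is a conformal $(1,0)$-form by Theorem \ref{thm:hol_data}, so writing $\eta_i=f_i\,\mathrm{d}z$ in a holomorphic trivialization and recalling that $\boldsymbol{\eta}_i^t$ represents the adjoint $-\eta_i^\dag$, and is therefore of type $(0,1)$, a direct computation shows that $\boldsymbol{\eta}_i\wedge\boldsymbol{\eta}_i^t$ and $\boldsymbol{\eta}_{i+1}^t\wedge\boldsymbol{\eta}_{i+1}$ are $2$-forms proportional to $|f_i|^2\,\mathrm{d}x\wedge\mathrm{d}y$ and $|f_{i+1}|^2\,\mathrm{d}x\wedge\mathrm{d}y$ respectively, but with opposite signs coming from the order of the wedge. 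The proportionality constant is the squared endomorphism norm dictated by the metrics on source and target: for $\eta_i\in\Hom_\C(L_{i-1},L_i)$ it is $\boldsymbol{h}_i\boldsymbol{h}_{i-1}^{-1}$, matching the factors $|\eta_i|^2\boldsymbol{h}_i\boldsymbol{h}_{i-1}^{-1}$ and $|\eta_{j+1}|^2\boldsymbol{h}_{j+1}\boldsymbol{h}_j^{-1}$ in the generic equation. Substituting into the middle equation of (\ref{eq:Maurer_Cartan_first_block}) and cancelling $\mathrm{d}x\wedge\mathrm{d}y$ yields the equations for $j=2,\dots,m-1$. Since the matrix $\Omega$ is formally the same as in Nie's setting, one may in fact borrow the interior translation from formulas (4.6)--(4.8) of \cite{Nie_alternating} and only redo the two endpoints.

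The two boundary equations need separate treatment. For $j=1$ the Gauss equation carries the term $\theta\wedge\theta^t$ in place of a lower $\eta$-term: since $\mathscr{L}_1\cong TS$ corresponds to $L_1\cong K^{-1}$ and $\theta$ is the tautological orthonormal coframe, its off-diagonal entry $\theta_1\wedge\theta_2$ equals $\boldsymbol{h}_1\,\mathrm{d}x\wedge\mathrm{d}y$; the coefficient of this term is fixed by the flat Maurer--Cartan equation of the $\SL(2m+1,\R)$-frame $F$, equivalently by the constant ambient curvature computed from Lemma \ref{lem:para_hol_curvature} on the spacelike plane $\mathscr{L}_1$, where the $\p$-cross terms drop out because $\p\mathscr{L}_1=\mathscr{L}_{2m}$ is $g$-orthogonal to $\mathscr{L}_1$. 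With the normalization of the para-holomorphic sectional curvature to $-4$ this leaves exactly the term $-\boldsymbol{h}_1$. For $j=m$ the relevant quadratic term is $\boldsymbol{\eta}_{m+1}\wedge\boldsymbol{\eta}_{m+1}^t$ with $\eta_{m+1}\in\Hom_\C(L_m,\overline{L_m})$; since the target is now the conjugate bundle $\overline{L_m}\cong L_m^{-1}$, the endomorphism norm factor is $\boldsymbol{h}_m^{-1}\cdot\boldsymbol{h}_m^{-1}=\boldsymbol{h}_m^{-2}$ rather than the generic $\boldsymbol{h}_{m+1}\boldsymbol{h}_m^{-1}$, which accounts for the last term of the first equation.

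The main obstacle is the consistent bookkeeping of orientations, $\sqrt{-1}$-factors and signs when passing from the real moving-frame formalism to the Hermitian one: one must verify that the universal prefactor of $\mathrm{d}\omega_i$ and the prefactors of the $\eta$-wedge terms share a common normalization, so that cancelling $\mathrm{d}x\wedge\mathrm{d}y$ reproduces (\ref{eq:immersion_data_eta}) with exactly the displayed signs (in particular the opposite signs of the $\eta_j$ and $\eta_{j+1}$ contributions, and the negative sign of the $-\boldsymbol{h}_1$ term). One must also confirm that the conjugate-linear nature of $\eta_{m+1}$, traced back through the identification $\p(\mathscr{L}_m)\cong\mathscr{L}_m$ of Proposition \ref{prop:some_properties}, genuinely yields the exponent $-2$ on $\boldsymbol{h}_m$ rather than the naive $+1$ pattern of the interior equations.
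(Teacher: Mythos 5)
Your proposal follows essentially the same route as the paper: both read the scalar system off the curvature block \eqref{eq:Maurer_Cartan_first_block}, identify $\mathrm{d}\Omega_i$ with the Chern curvature of $(L_i,\boldsymbol{h}_i)$ after noting $\Omega_i\wedge\Omega_i=0$, compute the wedge products $\boldsymbol{\eta}_i\wedge\boldsymbol{\eta}_i^t$ in a unitary frame adapted to a holomorphic trivialization to produce the $|\eta_i|^2\boldsymbol{h}_i\boldsymbol{h}_{i-1}^{-1}$ terms with the correct relative signs, and handle the two endpoint equations separately via the $\theta\wedge\theta^t$ term (giving $-\boldsymbol{h}_1$) and the conjugate-linear $\eta_{m+1}\in\Hom_\C(L_m,\overline{L_m})$ (giving the $\boldsymbol{h}_m^{-2}$ factor). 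The only cosmetic difference is that you justify the coefficient of $-\boldsymbol{h}_1$ via the ambient curvature normalization from Lemma \ref{lem:para_hol_curvature}, whereas the paper obtains it more directly by identifying $\theta_1\wedge\theta_2$ with the volume form of the first fundamental form.
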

\begin{proof} In the proof of Theorem \ref{thm:hol_data}, we obtained equations \eqref{eq:Maurer_Cartan_first_block}, \eqref{eq:Maurer_Cartan_second_block} and \eqref{eq:Maurer_cartan_third_block} after choosing an orthonormal local frame $(u_i,v_i)$ for each $\mathscr{L}_{i}$. Under the special choice $(u_i,v_i)=\boldsymbol{h}_i^{-\frac{1}{2}}(\ell_i,i\ell_i)$, for a holomorphic section $\ell_{i}$ of $L_{i}$ such that $\boldsymbol{h}_{i}(\ell_{i}, \ell_{i})=\boldsymbol{h}_{i}$, we have
\begin{align}\label{eq:matrices_eta}
\boldsymbol{\eta}_i &= \sqrt{\frac{\boldsymbol{h}_{i}}{\boldsymbol{h}_{i-1}}}\begin{pmatrix}
            \Ree(\eta_{i}dz) & -\Ima(\eta_{i}dz) \\
            \Ima(\eta_{i}dz) & \Ree(\eta_{i}dz)
    \end{pmatrix}    \ \ \  \text{for $2\leq j\leq m$} \\
\boldsymbol{\eta}_{m+1} &= \boldsymbol{h}_{m}^{-1}\begin{pmatrix}
            \Ree(\eta_{m}dz) & -\Ima(\eta_{m}dz) \\
            -\Ima(\eta_{m}dz) & -\Ree(\eta_{m}dz) 
    \end{pmatrix}. \notag
\end{align}
We shall show that Equation \eqref{eq:Maurer_Cartan_first_block} reduces to the required PDEs. After expanding the $1$-forms in Equation \eqref{eq:matrices_eta} in the real coordinates $(x,y)$ with $z=x+iy$, we obtain by direct computations that
\begin{align}\label{eq:wedge_etas}
    \boldsymbol{\eta}_{i}\wedge \boldsymbol{\eta}_{i}^{t} & = - \boldsymbol{\eta}_{i}^{t} \wedge \boldsymbol{\eta}_{i} = -2\frac{\boldsymbol{h}_{i}}{\boldsymbol{h}_{i-1}}|\eta_{i}|^{2}  \begin{pmatrix} 0 & -1 \\ 1 & 0   \end{pmatrix} dx \wedge dy \\
    \boldsymbol{\eta}_{m+1}\wedge \boldsymbol{\eta}_{m+1}^{t} & =  \boldsymbol{\eta}_{m+1}\wedge \boldsymbol{\eta}_{m+1} = 2\boldsymbol{h}_{m}^{-2}|\eta_{m+1}|^{2}  \begin{pmatrix} 0 & -1 \\ 1 & 0   \end{pmatrix} dx \wedge dy. \notag
\end{align}
Moreover, by the proof of Theorem \ref{thm:hol_data} we know that $\Omega_i\wedge\Omega_i=0$, hence the exterior differential $d\Omega_i$ is the matrix of the curvature $2$-form of $\nabla^i$ under the frame $(u_i,v_i)$. But since $\nabla^i$ is the Chern connection of $L_i$, its curvature is expressed in the holomorphic frame $\ell_i$ as
$\bar{\partial}\partial\log \boldsymbol{h}_i=2i \partial_{z\bar{z}}^{2}\log \boldsymbol{h}_i \  dx\wedge dy$. Changing to the real frame $(u_i,v_i)$, we get 
\begin{equation}\label{eq:curvature}
d\Omega_i=2 \partial^{2}_{z\bar{z}}\log \boldsymbol{h}_i \begin{pmatrix}
    0 & -1 \\ 1 & 0
\end{pmatrix} dx \wedge dy.
\end{equation}
Finally, since $\theta^{t}=(\theta_1,\theta_2)$ is the frame of $T^{*}S$ dual to the orthonormal frame $(u_1,v_1)$ of $\mathscr{L}_{1}=TS$, the wedge product $\theta_1\wedge\theta_2$ equals the volume form of the first fundamental form $2\Ree(\boldsymbol{h}_{1})$. It follows that 
\begin{equation} \label{eq:wedge_theta}
\theta \wedge \theta^{t}=- \begin{pmatrix} 0 & -1 \\ 1 & 0   
\end{pmatrix} dx \wedge dy = -2\boldsymbol{h}_{1} \begin{pmatrix} 0 & -1 \\ 1 & 0   \end{pmatrix} dx \wedge dy .
\end{equation}
Inserting Equations \eqref{eq:wedge_etas}, \eqref{eq:curvature} and \eqref{eq:wedge_theta} into Equation \eqref{eq:Maurer_Cartan_first_block}, we get the required equations.
\end{proof}

\subsection{Infinitesimal rigidity of isotropic $\mathbf{P}$-alternating surfaces}\label{sec:inf_rigidity}
In this section, we restrict to equivariant isotropic $\p$-alternating surfaces in $\h^{2m}_{\tau}$ satisfying an extra technical assumption first introduced by Dai and Li (\cite{Dai_Li}) and also appearing in Nie's work (\cite{Nie_alternating}). We will show that this class of surfaces are infinitesimally rigid. \\

\noindent Let $\Sigma$ be a closed, connected, oriented surface of genus at least $2$ and let $\sigma: \widetilde{\Sigma} \rightarrow \mathbb{H}^{2m}_{\tau}$ be an isotropic $\p$-alternating surface that is equivariant under a representation $\rho:\pi_{1}(\Sigma) \rightarrow \SL(2m+1,\R)$. By equivariance, we can interpret the holomorphic data $(L_{1}, \dots, L_{m}, \boldsymbol{h}_{1}, \dots, \boldsymbol{h}_{m}, \eta_{2}, \dots, \eta_{m+1})$ as being defined on the Riemann surface $X=(\Sigma, J_{1})$, where $J_{1}$ is the complex structure compatible with the induced metric on $\sigma(\widetilde{\Sigma})$. Moreover, by Theorem \ref{thm:hol_data}, we can interpret $\eta_{i}$ as holomorphic $1$-forms on $X$. Recall the following definition:

\begin{defi} Given a holomorphic $1$-form $\alpha$ on $X$, we denote by $(\alpha)$ its divisor, i.e. the $\mathbb{Z}_{\geq 0}$-valued function on $X$ assigning to every $z\in X$ the multiplicity of the zero of $\alpha$ at $z$. If $\beta$ is another holomorphic $1$-form we say that $(\alpha) \prec (\beta)$ if the divisor $(\alpha)$ is strictly smaller than $(\beta)$ at every point where $\alpha$ vanishes. 
\end{defi}

\noindent In this section we consider equivariant isotropic $\p$-alternating surfaces in $\mathbb{H}^{2m}_{\tau}$ whose holomorphic data satisfy the extra technical assumption 
\begin{equation}\label{eq:assumption}
    (\eta_{2}) \prec (\eta_{3}) \prec \dots \prec (\eta_{m+1}) \ .
\end{equation}
This condition, as first observed by Dai-Li (\cite{Dai_Li}) and then Nie (\cite{Nie_alternating}), guarantees a better control on the behavior of the solutions of the system of PDEs in Equation \eqref{eq:immersion_data_eta}, as the following theorem shows (see also \cite{sagman2024hitchin}). We do not know whether this assumption is necessary in order for the result to hold. 

\begin{theorem} \label{thm:estimates}
Let $(L_{1}, \dots, L_{m}, \boldsymbol{h}_{1}, \dots, \boldsymbol{h}_{m}, \eta_{2}, \dots, \eta_{m+1})$ be the holomorphic data associated to an equivariant isotropic $\p$-alternating surface in $\h^{2m}_{\tau}$ that is not contained in any para-complex hyperbolic subspace of real codimension $4$. Assume that Equation \eqref{eq:assumption} holds, then $\partial^{2}_{z\bar{z}}\log\boldsymbol{h}_{i} \geq 0$ for all $i=1, \dots, m$ with strict inequality away from the zeros of $\eta_{i}$.
\end{theorem}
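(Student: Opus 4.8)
The plan is to translate the system \eqref{eq:immersion_data_eta} into a pointwise domination statement and prove it by a maximum principle of the type used by Dai--Li and Nie. Writing $F_i:=\partial^2_{z\bar z}\log\boldsymbol h_i$ (a globally well-defined curvature quantity) and $\|\eta_j\|^2$ for the pointwise squared norm appearing in \eqref{eq:immersion_data_eta}, all three lines of \eqref{eq:immersion_data_eta} take the uniform form $F_i=\|\eta_i\|^2-\|\eta_{i+1}\|^2$, under the conventions $\|\eta_1\|^2:=\boldsymbol h_1$ and $\|\eta_{m+1}\|^2:=|\eta_{m+1}|^2\boldsymbol h_m^{-2}$. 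Thus the assertion $F_i\ge 0$, strict away from the zeros of $\eta_i$, is exactly the pointwise domination $\|\eta_i\|^2\ge\|\eta_{i+1}\|^2$; for $i=1$ the nowhere-vanishing term $\boldsymbol h_1$ plays the role of $\|\eta_1\|^2$, so there the claim is $F_1>0$ everywhere.

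Next I would introduce $G_i:=\log\!\big(\|\eta_{i+1}\|^2/\|\eta_i\|^2\big)$, smooth away from the zeros of $\eta_i$ and $\eta_{i+1}$, and reduce the theorem to showing $G_i<0$. This is where the hypothesis \eqref{eq:assumption} enters decisively: at a zero of $\eta_i$ the condition $(\eta_i)\prec(\eta_{i+1})$ forces $\eta_{i+1}$ to vanish to strictly higher order, so $G_i\to-\infty$; the same happens at zeros of $\eta_{i+1}$ not shared with $\eta_i$. Hence every $G_i$ is proper to $-\infty$ on the singular set, and the joint supremum $M:=\max_i\sup_X G_i$ is attained at a regular interior point. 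It then suffices to rule out $M\ge 0$.

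The computation I would run uses that $\log|\eta_j|^2$ is harmonic off its zeros together with the structure equations: $\partial^2_{z\bar z}G_i$ becomes a second difference of the $F_j$, which upon substituting $F_j=\|\eta_j\|^2-\|\eta_{j+1}\|^2$ turns into a discrete-Laplacian expression in $p_j:=\|\eta_j\|^2$. The crucial asymmetry sits at the top slot: since $\|\eta_{m+1}\|^2=|\eta_{m+1}|^2\boldsymbol h_m^{-2}$ has target the \emph{conjugate} bundle $\overline{L_m}$, its curvature enters with the opposite sign, producing a term $-2F_m$ rather than $F_{m+1}-F_m$. Feeding the extremum relations $\partial^2_{z\bar z}G_m\le 0$ and $G_{m-1}\le G_m=M$ into the top equation, and setting $t=e^{M}\ge 1$, yields a lower bound $\partial^2_{z\bar z}G_m\ge p_m\,\psi(t)$ with $\psi(t)>0$ for $t>1$; this contradicts $\partial^2_{z\bar z}G_m\le 0$ unless $M\le 0$. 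The borderline $M=0$ is then removed by the strong maximum principle, and at a zero of $\eta_i$ both $p_i$ and (by \eqref{eq:assumption}) $p_{i+1}$ vanish, giving $F_i=0$ there.

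The hard part will be that the analogous computation does \emph{not} close at an interior index: there the local extremum inequalities are consistent with $M>0$ (the discrete-Laplacian bound degenerates to $-p_{i_0}(t-1)^3/t\le 0$), so one cannot argue index by index — the coupling is genuinely two-sided and the sign is pinned only at the distinguished top node created by conjugation. The real content is therefore a global propagation argument showing that a nonnegative joint maximum must be realized at the top index $i_0=m$, where the computation above applies; concretely I would run a strong maximum principle for the cooperative system satisfied by $(G_1,\dots,G_m)$ to force the extremum up the chain to the conjugation node, with \eqref{eq:assumption} guaranteeing that each $G_i$ stays proper so that the extremum cannot escape into the singular set. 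This propagation step, rather than any single pointwise estimate, is the analytic heart of the theorem, exactly as in the work of Dai--Li and Nie.
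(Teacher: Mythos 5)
Your reduction coincides with the paper's: after dividing by $\boldsymbol{h}_1$ the system \eqref{eq:immersion_data_eta} reads $\tfrac14\Delta_{\boldsymbol{h}_1}\log\boldsymbol{h}_i=\|\eta_i\|^2-\|\eta_{i+1}\|^2$, your $G_i$ are (up to sign) the paper's $v_i$, and assumption \eqref{eq:assumption} is used in the same way to make these quantities proper, so that the joint extremum is attained away from the zeros. Your top-node computation is also correct: the factor $\boldsymbol{h}_m^{-2}$ in $\|\eta_{m+1}\|^2$ produces the coefficient $3$ there, and the extremum inequality $4\ge 3t+1/t$ indeed forces $t\le 1$. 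The genuine gap is exactly the step you defer: the ``global propagation argument showing that a nonnegative joint maximum must be realized at the top index'' is not a routine strong-maximum-principle application, and you do not supply it. Setting $u_j:=G_j-M\le 0$ and $c_k:=(\|\eta_k\|^2-\|\eta_{k+1}\|^2)/v_k\ge 0$, the system becomes $\tfrac14\Delta u_j-2c_ju_j+c_{j+1}u_{j+1}+c_{j-1}u_{j-1}=M\,(2c_j-c_{j+1}-c_{j-1})$, whose right-hand side has no sign because $2c_j-c_{j+1}-c_{j-1}$ does not; hence the standard strong maximum principle for cooperative systems does not push the contact set up the chain to $i=m$, and, as you yourself observe, the pointwise extremum inequality at an interior index is consistent with $M>0$. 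This propagation is precisely the content of the maximum principle for systems of Dai--Li (\cite[Lemma 3.1]{Dai_Li}) that the paper invokes, and the hypothesis that actually powers it there is one you never use: the common zero set $P=\cup_jP_j$ is \emph{non-empty}, because $q_{2m+1}=(\eta_2\cdots\eta_m)^2\eta_{m+1}$ is a non-zero holomorphic $(2m+1)$-differential of positive degree on a surface of genus $\ge 2$ and must vanish somewhere (this is Condition (2) of that lemma). Without either citing that lemma and verifying its hypotheses, or carrying out the propagation in full, the proof is incomplete.

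A second, independent gap is the case $\eta_{m+1}\equiv 0$, which is allowed by Theorem \ref{thm:hol_data} (only $\eta_2,\dots,\eta_m$ are guaranteed to be non-zero). In that case $G_m\equiv-\infty$, so the conjugation node on which your entire contradiction mechanism rests disappears, and moreover $P$ may be empty, since each $\eta_j$ with $j\le m$ can be nowhere vanishing when the corresponding line bundle $L_{j-1}^{-1}L_jK$ has degree zero. The paper treats this case separately by removing $v_m$ from the system and appealing to a different condition of the same Dai--Li lemma; your proposal would need an analogous separate argument, and as written it offers none.
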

\begin{proof} The proof is a straightforward application of the maximum principle for systems proved by Dai-Li (\cite{Dai_Li}) and is an easy adaptation of the proof of a similar result appeared in the aformentioned paper. It is reported here mostly for reader's convenience. We will explain the proof for $m\geq 4$: the other simpler cases can be easily deduced using the same ideas. We first assume that $\eta_{m+1}$ is not identically zero and at the end we will point out how the proof has to be modified if $\eta_{m+1} =0$. By Theorem \ref{thm:structure_eq_hol}, the following system of locally defined PDEs holds
\[
\begin{cases}
        \partial^{2}_{z\bar{z}}\log\boldsymbol h_{m}-|\eta_{m}|^{2}\boldsymbol h_{m}\boldsymbol h_{m-1}^{-1}+|\eta_{m+1}|^{2}\boldsymbol h_{m}^{-2}=0 \\
        \partial^{2}_{z\bar{z}}\log\boldsymbol h_{j}-|\eta_{j}|^{2}\boldsymbol h_{j}\boldsymbol h_{j-1}^{-1}+|\eta_{j+1}|^{2}\boldsymbol h_{j+1}\boldsymbol h_{j}^{-1}=0, \ \ \ \ \ \ \ j=2, \dots, m-1 \\
        \partial^{2}_{z\bar{z}}\log\boldsymbol h_{1}-\boldsymbol h_{1}+|\eta_{2}|^{2}\boldsymbol h_{2}\boldsymbol h_{1}^{-1} =0 .
\end{cases}
\]
We observe that, after dividing each equation by $\boldsymbol{h}_{1}$, which is a Hermitian metric on $K^{-1}$, the equations above are actually defined globally on $X=(S,J_{1})$. We also set $\|\eta_{j+1}\|^{2}=\boldsymbol{h}_{j+1}\boldsymbol{h}_{1}^{-1}\boldsymbol{h}_{j}^{-1}|\eta_{j+1}|^{2}$ for $j=1, \dots, m-1$ and $\| \eta_{m+1}\|^{2}=\boldsymbol{h}_{1}^{-1}\boldsymbol{h}_{m}^{-2}|\eta_{m+1}|^{2}$ so that we obtain
\[
\begin{cases}
    \frac{1}{4}\Delta_{\boldsymbol{h}_{1}} \log \boldsymbol{h}_{m} = \| \eta_{m}\|^{2} - \|\eta_{m+1}\|^{2} \\
    \frac{1}{4}\Delta_{\boldsymbol{h}_{1}} \log \boldsymbol{h}_{j} = \| \eta_{j}\|^{2} - \|\eta_{j+1}\|^{2}, \ \ \ \ \ \ \ j=2, \dots, m-1 \\ 
    \frac{1}{4}\Delta_{\boldsymbol{h}_{1}} \log \boldsymbol{h}_{1} = 1 - \| \eta_{2} \|^{2}.
\end{cases}
\]
We now set $v_{j}=\log\|\eta_{j}\|^{2}-\log\|\eta_{j+1}\|^{2}$ for $j=2, \dots, m$ and $v_{1}=-\log\|\eta_{2}\|^{2}$. These functions satisfy the following equations away from $P=\cup_{j}P_{j}$, where $P_{j}$ is the (discrete) set of zeros of $\eta_{j}$:
\begin{equation}\label{eq:coop_system}
    \begin{cases}
        \frac{1}{4}\Delta_{\boldsymbol{h}_{1}}v_{1} = -c_{2}v_{2}+2c_{1}v_{1} \\
        \frac{1}{4}\Delta_{\boldsymbol{h}_{1}}v_{j} = -c_{j+1}v_{j+1}+2c_{j}v_{j}-c_{j-1}v_{j-1}, \ \ \ \ \ \ \ j=2, \dots, m-1 \\
        \frac{1}{4}\Delta_{\boldsymbol{h}_{1}}v_{m} = 3c_{m}v_{m}-c_{m-1}v_{m-1},
    \end{cases}
\end{equation}
with $c_{k}=\frac{\|\eta_{k}\|^{2}-\|\eta_{k+1}\|^{2}}{v_{k}}\geq 0$ for $j=2, \dots, m-1$ and $c_{1}=\frac{1-\|\eta_{2}\|^{2}}{v_{1}} \geq 0$. Note that the assumption in Equation \eqref{eq:assumption} implies that $v_{j}$ is a smooth function on $X\setminus P_{j+1}$ that tends to $+\infty$ when $z$ approaches a point in $P_{j+1}$ and the functions $c_{k}$ are smooth on $X\setminus P$ and bounded on $X$ for all $k$. Moreover, $P$ is non-empty because $q=(\eta_{2}\dots \eta_{m})^{2}\eta_{m+1}$ is a non-zero holomorphic $(2m+1)$-differential on $X$ vanishing in $P$. Therefore, the system in Equation \eqref{eq:coop_system} satisfies the assumption of the maximum principle for systems (\cite[Lemma 3.1, Condition (2)]{Dai_Li}) which implies that $v_{j}>0$ and hence $\partial_{z\bar{z}}^{2}\log\boldsymbol{h}_{j}\geq 0$ for all $j=1,\dots, m$ with strict inequality away from the zeros of $\eta_{j}$ \\
If $\eta_{m+1} =0$, the proof follows the same lines after simply removing $\|\eta_{m+1}\|^{2}=0$ and $v_{m}$ in all the equations above, which thus become
\[
    \begin{cases}
    \frac{1}{4}\Delta_{\boldsymbol{h}_{1}}v_{1} = -c_{2}v_{2}+2c_{1}v_{1} \\
    \frac{1}{4}\Delta_{\boldsymbol{h}_{1}}v_{j} = -c_{j+1}v_{j+1}+2c_{j}v_{j}-c_{j-1}v_{j-1}, \ \ \ \ \ \ \ \ \ \ \ \ \ \ \ \  j=2, \dots, m-2 \\
    \frac{1}{4}\Delta_{\boldsymbol{h}_{1}}v_{m-1}= 3c_{m-1}v_{m-1}-\|\eta_{m-2}\|^{2} \leq  3c_{m-1}v_{m-1} .
    \end{cases}
\]
The only subtlety is that now the set $P$ may be empty. However, since $\eta_{m-2}$ is not identically zero by Theorem \ref{thm:hol_data}, Condition (1) in \cite[Lemma 3.1]{Dai_Li} can be used, which still implies that $v_{j}>0$ everywhere on $X$ for all $j=1, \dots, m-1$ and thus we can conclude as before. 
\end{proof}

\noindent We now are going to see that this analytical result implies that this family of isotropic $\p$-alternating surfaces in $\h^{2m}_{\tau}$ is stable. To do that, we first recall some well-known facts about variations of a spacelike maximal immersion $\sigma:S \rightarrow (M,\mathbf{g})$ into a pseudo-Riemannian manifold. A variation of $\sigma$ is a smooth $1$-parameter family $\{\sigma_{t}\}_{t\in (-\epsilon, \epsilon)}$ of immersions such that $\sigma_{0}=\sigma$. The derivative $\dot{\sigma}:=\frac{d}{dt}{\sigma_{t}}_{|_{t=0}}$ is a section of $\sigma^{*}TM$ and is called \emph{variational vector field}. We say that the variation is \emph{compactly supported} if $\sigma_{t}=\sigma$ outside a compact set of $S$ and we say it is \emph{normal} if $\dot{\sigma}_{0}$ is a section of the bundle $\mathcal{N}$ normal to $TS$ inside $\sigma^{*}TM$ for the pull-back metric $h=\sigma^{*}\mathbf{g}$. The following result is standard (\cite{anciaux2011minimal}):

\begin{theorem} \label{thm:variation_area} Let $\sigma: S \rightarrow (M, \mathbf{g})$ be a spacelike maximal immersion. Then for every compactly supported  variation $\{\sigma_{t}\}_{t\in (-\epsilon, \epsilon)}$
\begin{itemize}
    \item the area functional $        t \to Area(\sigma(S))$
    has a critical point at $t=0$;
    \item furthermore, if the variation is normal, the second derivative of the area functional at $t=0$ is given by
    \begin{equation}\label{eq:second_variation}
        \int_{S} \trace_{h} \left( h(\nabla^{\mathcal{N}}_{\cdot}\xi, \nabla^{\mathcal{N}}_{\cdot}\xi)- h(\mathbf{S}_{\xi}(\cdot), \mathbf{S}_{\xi}(\cdot)) + R^{\nabla}(\cdot, \xi, \cdot, \xi) \right)
    \end{equation}
    where $\xi=\dot{\sigma}$, $\nabla^{\mathcal{N}}$ denoted the normal component of the Levi-Civita connection of $h$ and $\mathbf{S}_{\xi}(\cdot)$ is the shape operator of $\sigma$ defined as the tangential component of $\nabla_{\cdot}\xi$;
    \item if every $\sigma_{t}$ is a spacelike maximal immersion then the normal component of $\dot{\sigma}$ is a \emph{Jacobi field}. 
\end{itemize}
\end{theorem}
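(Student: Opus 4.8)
This is the classical first and second variation of area for a spacelike maximal immersion, so the plan is to reproduce the standard computation (as in \cite{anciaux2011minimal}), taking care only of the pseudo-Riemannian sign conventions and of the fact that the normal bundle carries an indefinite metric. I work in a local oriented chart and write $\mathrm{Area}(\sigma_t)=\int_S\sqrt{\det (h_t)_{ij}}\,dx^1dx^2$, where $h_t=\sigma_t^*\mathbf g$ is positive definite because the $\sigma_t$ are spacelike. Denote by $V=\sigma_{t*}\partial_t$ the variation field along the family, so that $V|_{t=0}=\xi$. Since the pulled-back Levi-Civita connection $\nabla$ is torsion-free and $[\partial_t,\partial_i]=0$, one has $\nabla_{\partial_t}\sigma_{t*}\partial_i=\nabla_{\partial_i}V$, whence $\dot h_t(\partial_i,\partial_j)=\mathbf g(\nabla_{\partial_i}V,\sigma_{t*}\partial_j)+\mathbf g(\sigma_{t*}\partial_i,\nabla_{\partial_j}V)$. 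Differentiating the area element through $\tfrac{d}{dt}\sqrt{\det h_t}=\tfrac12(\trace_h\dot h)\sqrt{\det h}$ and splitting $\xi=\xi^T+\xi^N$, the Weingarten identity gives $\tfrac12\trace_h\dot h=\divi_S(\xi^T)-h(\xi^N,\vec H)$ with $\vec H=\trace_h\II$ the mean curvature vector. Integrating over $S$, the divergence term drops by compact support, so $\tfrac{d}{dt}\big|_0\mathrm{Area}(\sigma_t)=-\int_Sh(\xi^N,\vec H)\,dA$, which vanishes by maximality $\vec H\equiv0$: this is the first bullet.

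For the second bullet I restrict to normal variations, so that $\xi^T=0$ and $\trace_h\dot h=0$ pointwise at $t=0$. Differentiating $\log\det h_t$ twice yields
\[
\frac{d^2}{dt^2}\Big|_0\sqrt{\det h_t}=\Big(\tfrac12\trace_h\ddot h-\tfrac12\trace_h(\dot h\,h^{-1}\dot h)\Big)\sqrt{\det h},
\]
the $(\trace_h\dot h)^2$ term dropping out. The computational heart is the expansion of $\ddot h$: writing $\ddot h(\partial_i,\partial_j)$ out and using $\nabla_{\partial_t}\nabla_{\partial_i}V=\nabla_{\partial_i}\nabla_{\partial_t}V+R^\nabla(V,\partial_i)V$ to commute the time and space derivatives produces exactly the ambient-curvature term $R^\nabla(\cdot,\xi,\cdot,\xi)$, a gradient term $2\,\mathbf g(\nabla_{e_i}\xi,\nabla_{e_i}\xi)$, and an acceleration term in $A:=\nabla_{\partial_t}V$. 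Decomposing $\nabla_{e_i}\xi=\mathbf S_\xi(e_i)+\nabla^{\mathcal N}_{e_i}\xi$ for the normal field $\xi$ and combining with $-\tfrac12\trace_h(\dot h\,h^{-1}\dot h)=-2\,\trace_h h(\mathbf S_\xi(\cdot),\mathbf S_\xi(\cdot))$ leaves the net coefficient $-1$ in front of the shape-operator term, $+1$ in front of the normal-gradient term $h(\nabla^{\mathcal N}_\cdot\xi,\nabla^{\mathcal N}_\cdot\xi)$, and $+1$ in front of the curvature term. Finally the acceleration term integrates to zero, since $\sum_i\mathbf g(\nabla_{e_i}A,e_i)=\divi_S(A^T)-h(A^N,\vec H)$ and both pieces vanish after integrating against the compactly supported maximal immersion. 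Collecting everything and tracing against $h$ gives precisely Equation \eqref{eq:second_variation}.

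The last bullet is formal. By construction the integrand of \eqref{eq:second_variation} is a Dirichlet-type quadratic form in $\xi$, whose associated self-adjoint second-order operator $\mathcal J$ on normal fields (the \emph{Jacobi operator}) satisfies $\int_Sh(\mathcal J\xi,\xi)\,dA$ equal to \eqref{eq:second_variation} after one integration by parts; a normal field is by definition a Jacobi field when $\mathcal J\xi=0$. Now $\mathcal J$ is exactly the linearization of the mean-curvature operator $\sigma\mapsto\vec H(\sigma)$ on normal directions. If every $\sigma_t$ is a spacelike maximal immersion then $\vec H(\sigma_t)\equiv0$; differentiating this identity at $t=0$ and discarding the tangential component $\xi^T$ (which only reparametrizes $\sigma(S)$ and does not enter the normal linearization) gives $\mathcal J(\xi^N)=0$, so $\xi^N$ is a Jacobi field.

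The main obstacle is the second-variation computation: one must expand $\ddot h$ carefully, separate the tangential and normal parts of the first and second covariant derivatives of $\xi$, and organize the resulting terms so that the ambient Riemann tensor enters exactly as $R^\nabla(\cdot,\xi,\cdot,\xi)$ and the shape operator with the correct net sign $-h(\mathbf S_\xi(\cdot),\mathbf S_\xi(\cdot))$. In the pseudo-Riemannian setting these signs require particular care because $h|_{\mathcal N}$ is indefinite; however, no positivity of $h|_{\mathcal N}$ is actually used, and once the convention $\mathbf S_\xi(X)=(\nabla_X\xi)^T$ is fixed the bookkeeping is identical to the Riemannian case, which is why the standard reference \cite{anciaux2011minimal} applies verbatim.
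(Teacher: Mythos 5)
Your computation is correct: the first- and second-variation bookkeeping (including the commutation $\nabla_{\partial_t}\nabla_{\partial_i}V=\nabla_{\partial_i}\nabla_{\partial_t}V+R^{\nabla}(V,\partial_i)V$, the cancellation $+\trace_h h(\mathbf{S}_\xi\cdot,\mathbf{S}_\xi\cdot)-2\trace_h h(\mathbf{S}_\xi\cdot,\mathbf{S}_\xi\cdot)$ giving the net sign $-1$, and the disappearance of the acceleration term via $\divi_S(A^T)$ and maximality) reproduces the standard second-variation formula, and you are right that none of it uses positivity of $h$ on the normal bundle. The paper does not prove this statement at all but simply cites \cite{anciaux2011minimal}, and your argument is precisely the standard one that citation stands in for, so there is nothing further to compare.
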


\noindent Jacobi fields are vector fields $\xi \in \Gamma(\mathcal{N})$ that annihilate the integral in Equation \eqref{eq:second_variation}. Equivalently, these are vector fields in the kernel of the self-adjoint operator $L:\Gamma(\mathcal{N}) \rightarrow \Gamma(\mathcal{N})$ defined by
\[
    L\xi:=\Delta^{\mathcal{N}}\xi+ \sum_{i,j=1}^{n}h(\II(e_{i},e_{j}),\xi)\II(e_{i},e_{j})-\trace_{h}(R^{\nabla}(\cdot, \xi)\cdot),
\]
where $\Delta^{\mathcal{N}}$ is the Laplacian of the connection on the normal bundle. Indeed, unpacking the definitions of the second fundamental form and the shape operator together with a simple integration by parts, we have
\[
    -\int_{S} h(L\xi,\xi)dVol_{h} = \int_{S} \trace_{h} \left( h(\nabla^{\mathcal{N}}_{\cdot}\xi, \nabla^{\mathcal{N}}_{\cdot}\xi)- h(\mathbf{S}_{\xi}(\cdot), \mathbf{S}_{\xi}(\cdot)) + R^{\nabla}(\cdot, \xi, \cdot, \xi) \right)dVol_{h} 
\]
for any compactly supported normal variation $\xi$. \\

\noindent If we specialize to equivariant isotropic $\p$-alternating immersions into $\h^{2m}_{\tau}$, which are maximal by Remark \ref{rem:maximal_spacelike}, Theorem \ref{thm:variation_area} implies the following:

\begin{theorem}\label{thm:inf_rigidity}Let $\sigma:\widetilde{\Sigma} \rightarrow \h^{2m}_{\tau}$ be an isotropic $\p$-alternating immersion that is not contained in any para-complex hyperbolic subspace of real codimension $4$ and whose holomorphic data $(L_{1}, \dots, L_{m}, \boldsymbol{h}_{1}, \dots, \boldsymbol{h}_{m}, \eta_{2}, \dots, \eta_{m+1})$ satisfy the assumption in Equation \eqref{eq:assumption}. Let $\mathcal{N}^{+}=\mathscr{L}_{3} \oplus \mathscr{L}_{5} \oplus \dots \oplus \mathscr{L}_{2m-1}$ and $\mathcal{N}^{-}=\mathscr{L}_{2} \oplus \mathscr{L}_{4} \oplus \dots \oplus \mathscr{L}_{2m}$ denote the spacelike and timelike part of the normal bundle of $\sigma(\widetilde{\Sigma})$ respectively. Then, for any compactly supported variation of $\sigma$, the associated second derivative of the area functional is positive if the variational vector field $\xi_{+}=\dot{\sigma}$ is a non-zero section of $\mathcal{N}^{+}$ and negative if the variational vector field $\xi_{-}=\dot{\sigma}$ is a non-zero section of $\mathcal{N}^{-}$.    
\end{theorem}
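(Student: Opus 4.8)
The plan is to apply the second variation formula \eqref{eq:second_variation} of Theorem \ref{thm:variation_area} and to show that the quadratic form
$Q(\xi) := \int_S \trace_h\big(h(\nabla^{\mathcal N}_\cdot\xi,\nabla^{\mathcal N}_\cdot\xi) - h(\mathbf S_\xi(\cdot),\mathbf S_\xi(\cdot)) + R^\nabla(\cdot,\xi,\cdot,\xi)\big)\,dVol_h$
is positive definite on $\Gamma(\mathcal N^{+})$ and negative definite on $\Gamma(\mathcal N^{-})$. The first step is to exploit the Frenet structure \eqref{eq:nabla_decomposition2}: since the second fundamental form $\II=\eta_2$ takes values in $\mathscr L_2$ (Remark \ref{rem:maximal_spacelike}) and the shape operator $\mathbf S_\xi$ is the tangential ($\mathscr L_1$) component of $\nabla_\cdot\xi$, both $h(\mathbf S_\xi(\cdot),\mathbf S_\xi(\cdot))$ and the contribution of $\II$ to the Jacobi operator are supported only on the $\mathscr L_2$-part of $\xi$. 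In particular they vanish identically for $\xi\in\Gamma(\mathcal N^{+})$, while for $\xi\in\Gamma(\mathcal N^{-})$ they are computed explicitly from $\eta_2$ and the first fundamental form $\boldsymbol h_1$.

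Next I would compute the curvature term using Lemma \ref{lem:para_hol_curvature} with the normalized constant para-holomorphic sectional curvature $\kappa=-4$. Writing $\xi=\sum_k\xi_k$ with $\xi_k\in\Gamma(\mathscr L_k)$ and choosing an orthonormal frame $e_1,e_2$ of $TS$, the $\p$-alternating relations $\p(\mathscr L_{2k-1})\cong\mathscr L_{2m-2k+2}$ of Definition \ref{def:P_alternating_surfaces} together with the orthogonality of the splitting force most of the five terms of Lemma \ref{lem:para_hol_curvature} to vanish: since $\p e_a\in\mathscr L_{2m}$ and $\p\xi_k\in\p(\mathscr L_k)$ is again orthogonal to $TS$ whenever $k\neq 2m$, and since the mixed curvature terms $R^\nabla(\cdot,\xi_k,\cdot,\xi_l)$ with $k\neq l$ all vanish, one finds $\trace_h R^\nabla(\cdot,\xi,\cdot,\xi)=2\,g(\xi,\xi)$ on each component $\xi_k$ with $2\le k\le 2m-1$ — hence strictly positive on $\mathcal N^{+}$ and strictly negative on the corresponding part of $\mathcal N^{-}$ — whereas the component $\xi_{2m}\in\p(TS)$ produces an additional strictly negative contribution coming from the surviving terms $g(\xi,\p e_a)$ and $g(e_a,\p\xi)$.

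The heart of the argument is the Dirichlet term. I would expand $\trace_h h(\nabla^{\mathcal N}_\cdot\xi,\nabla^{\mathcal N}_\cdot\xi)$ according to the tridiagonal form \eqref{eq:nabla_decomposition2}, grouping the contributions by target subbundle: the projections onto the odd (spacelike) bundles give $+\|\nabla^{2i-1}\xi_{2i-1}\|^2$, while the projections onto the even (timelike) bundles $\mathscr L_{2j}$ collect $-|\eta_{2j}(\cdot)\xi_{2j-1}-\eta_{2j+1}^\dagger(\cdot)\xi_{2j+1}|^2$, which is where both the indefiniteness and the coupling between adjacent subbundles appear. Using the holomorphic structure of Theorem \ref{thm:hol_data}, each $\nabla^{2i-1}$ is the Chern connection of $(L_{2i-1},\boldsymbol h_{2i-1})$, so a Bochner integration by parts trades $\int_S\|\nabla^{2i-1}\xi_{2i-1}\|^2$ for a nonnegative $\bar\partial$-energy plus a curvature term proportional to $\int_S \partial^2_{z\bar z}\log\boldsymbol h_{2i-1}\,|\xi_{2i-1}|^2$, while the remaining $\eta$-norms assemble, after passing to the local frames of \eqref{eq:matrices_eta}, into exactly the potentials $|\eta_j|^2\boldsymbol h_j\boldsymbol h_{j-1}^{-1}$ and $|\eta_{j+1}|^2\boldsymbol h_{j+1}\boldsymbol h_j^{-1}$ of the structure equations \eqref{eq:immersion_data_eta}. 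Invoking \eqref{eq:immersion_data_eta} to rewrite every such combination as $\pm\partial^2_{z\bar z}\log\boldsymbol h_i$ and then applying the estimate $\partial^2_{z\bar z}\log\boldsymbol h_i\ge 0$ of Theorem \ref{thm:estimates}, with strict inequality away from the zeros of $\eta_i$, yields $Q(\xi)>0$ on $\mathcal N^{+}$. The case $\xi\in\mathcal N^{-}$ follows from the same computation, the sign being reversed by the negative-definiteness of $h$ on the even bundles and by the extra $\mathscr L_2$ (shape/$\II$) and $\mathscr L_{2m}$ (curvature) terms identified above, which are all negative.

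I expect the main obstacle to be this Dirichlet step: the timelike targets couple each $\xi_{2i-1}$ to its neighbour $\xi_{2i+1}$ through $\mathscr L_{2i}$, so the quadratic form cannot simply be diagonalized, and a naive Cauchy–Schwarz splitting of $|\eta_{2j}(\cdot)\xi_{2j-1}-\eta_{2j+1}^\dagger(\cdot)\xi_{2j+1}|^2$ loses the sharp constants needed to match \eqref{eq:immersion_data_eta}. Keeping these cross terms together with the Bochner curvature terms and recognizing the resulting expression as a sign-definite combination via the maximum-principle estimates of Theorem \ref{thm:estimates} — together with the bookkeeping forced by the genuinely non-constant sectional curvature of $\h^{2m}_\tau$ (only the para-holomorphic sectional curvature is constant) — is the delicate point, and the place where the divisor hypothesis \eqref{eq:assumption} enters decisively.
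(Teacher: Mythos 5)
Your skeleton is correct: the decomposition of the second variation into the diagonal connection term, the off-diagonal connection term, the shape-operator term and the curvature term matches the paper, and your handling of the latter two (vanishing of $\mathbf S_{\xi_+}$ since $\II$ lands in $\mathscr L_2$; the trace of $R^\nabla$ equal to $2\|\xi\|_h^2$ except for the extra $-3h(\xi,\p e_i)^2$ contribution from the $\mathscr L_{2m}=\p(TS)$ component) is exactly what is needed. The gap is in the connection term, precisely the step you flag as ``the delicate point'' but do not resolve, and the route you sketch for it would not close. First, the coupling you worry about --- adjacent $\xi_{2j-1}$ and $\xi_{2j+1}$ interacting through the timelike target $\mathscr L_{2j}$ --- in fact disappears pointwise after taking the trace: in the orthonormal frames of Theorem \ref{thm:hol_data} each $\boldsymbol\eta_i$ is a scaled rotation or reflection representing a holomorphic $(1,0)$-form, so $\trace_h(\boldsymbol\eta_i(\cdot)\boldsymbol\eta_j(\cdot))=\trace_h(\boldsymbol\eta_i^t(\cdot)\boldsymbol\eta_j^t(\cdot))=0$ for $i\neq j$ while $\trace_h(\boldsymbol\eta_i(\cdot)\boldsymbol\eta_i^t(\cdot))=\|\eta_i\|^2\,\mathrm{Id}$. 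This diagonalizes the off-diagonal quadratic form, whose eigenvalues become explicit sums $\|\eta_{2j-1}\|^2+\|\eta_{2j}\|^2$ (resp.\ $\|\eta_{2j}\|^2+\|\eta_{2j+1}\|^2$). You miss this observation, and without it your Cauchy--Schwarz concern is unanswerable.

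Second, these eigenvalues are \emph{sums} over consecutive indices, not the \emph{differences} $\|\eta_j\|^2-\|\eta_{j+1}\|^2$ that appear in the structure equations \eqref{eq:immersion_data_eta}, so they cannot be ``rewritten as $\pm\partial^2_{z\bar z}\log\boldsymbol h_i$'' as your Bochner scheme requires; the potentials simply do not match, and the integration by parts of the diagonal term $\int\|\nabla^{2i-1}\xi_{2i-1}\|^2$ is both unnecessary (its pointwise sign is all that is used) and insufficient to absorb the off-diagonal contribution. What the argument actually needs from Theorem \ref{thm:estimates} is not $\partial^2_{z\bar z}\log\boldsymbol h_i\ge 0$ per se but the equivalent pointwise chain $\|\eta_{m+1}\|^2<\|\eta_m\|^2<\dots<\|\eta_2\|^2<1$ (the positivity of the $v_j$ from the maximum principle), extended to all indices via Proposition \ref{prop:some_properties}. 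This bounds every eigenvalue strictly by $2$ in absolute value, so the off-diagonal term is strictly dominated by the curvature term $2\|\xi_\pm\|_h^2$ \emph{pointwise}, and no integration by parts is performed at all. As written, your proposal identifies the right obstacle but does not supply the two facts (vanishing of the mixed traces, and the chain of strict inequalities $\|\eta_i\|^2<1$) that overcome it.
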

\begin{proof}  The proof follows the same lines as \cite[Theorem 5.5]{Nie_alternating}. We first decompose the connection on the normal bundle $\mathcal{N}$ under the splitting $\mathcal{N}=\mathcal{N}^{+}\oplus \mathcal{N}^{-}$:
\[
    \nabla^{\mathcal{N}}=\begin{pmatrix}
        \nabla^{\mathcal{N}^{+}} & \Gamma_{0}^{-} \\
        \Gamma_{0}^{+} & \nabla^{\mathcal{N}^{-}} 
    \end{pmatrix} . 
\]
Here $\nabla^{\mathcal{N}^{\pm}}$ is a connection on $\mathcal{N}^{\pm}$ and $\Gamma_{0}^{\pm}$ is a $\Hom(\mathcal{N}^{\pm}, \mathcal{N}^{\mp})$-valued $1$-form. Let $\xi_{\pm}$ be a non-zero section of $\mathcal{N}^{\pm}$. We have to estimate the expression 
\[
    \underbrace{\trace_{h}(h(\nabla^{\mathcal{N}^{\pm}}_{\cdot}\xi_{\pm}, \nabla^{\mathcal{N}^{\pm}}_{\cdot}\xi_{\pm}))}_{(a)}+
    \underbrace{\trace_{h}(\Gamma_{0}^{\pm}(\cdot)\xi_{\pm}, \Gamma_{0}^{\pm}(\cdot)\xi_{\pm})}_{(b)}-
    \underbrace{\trace_{h}(\mathbf{S}_{\xi_{\pm}}(\cdot), \mathbf{S}_{\xi_{\pm}}(\cdot))}_{(c)}+\underbrace{\trace_{h}(R^{\nabla}(\cdot, \xi_{\pm}, \cdot, \xi_{\pm}))}_{(d)}
\]
where every $"\pm"$ is taken to be a $"+"$ when dealing with $\xi_{+}$ and a $"-"$ when dealing with $\xi_{-}$. We will deal with each term separately. \\
\underline{\emph{Term (a)}.} It is positive (resp. negative) when considering $\xi_{+}$ (resp. $\xi_{-}$) because $h$ is positive (resp. negative) definite on $\mathcal{N}^{+}$ (resp. $\mathcal{N}^{-}$). \\
\underline{\emph{Term (c)}.} We recall that the shape operator and the second fundamental form are related by the formula $h(\II(X,Y),\xi)=h(X,\mathbf{S}_{\xi}(Y))$ for all tangent vector fields $X, Y$. By Remark \ref{rem:maximal_spacelike}, the second fundamental form $\II(X,Y)$ coincides with $\eta_{2}(X)Y$, which lies in $\mathscr{L}_{2}$. Therefore, $h(X,\mathbf{S}_{\xi_{+}}(Y))=0$ for all $X,Y \in \Gamma(TS)$ because $\xi_{+}$ is orthogonal to $\mathscr{L}_{2}$ and $h(\mathbf{S}_{\xi_{-}}(X),\mathbf{S}_{\xi_{-}}(X))\geq 0$ for all $X\in \Gamma(TS)$ because $h$ is positive definite on $TS$. As a consequence, term (c) vanishes for $\xi_{+}$ and is non-negative for $\xi_{-}$. \\
\underline{\emph{Term (d)}.} Let $\{e_{1},e_{2}\}$ be an $h$-orthonormal basis of $TS$. Using the explicit expression of the curvature tensor of a para-K\"ahler manifold with constant para-holomorphic sectional curvature $-4$ (Lemma \ref{lem:para_hol_curvature}), we obtain
\begin{align*}
    R^{\nabla}(e_{i},\xi, e_{i}, \xi) &= h(e_{i},e_{i})h(\xi,\xi) - h(\xi,e_{i})h(e_{i},\xi) + h(e_{i},\p e_{i})h(\p\xi,\xi) \\
    & \ \ \ \ -h(\xi, \p e_{i})h(\p e_{i}, \xi) +2h(e_{i}, \p\xi)h(\p e_{i},\xi) \\
    &= \|\xi\|^{2}_{h}-3h(\xi, \p e_{i})^{2},
\end{align*}
for all sections $\xi$ of $\mathcal{N}$. In particular, $\trace_{h}(R^{\nabla}(\cdot, \xi_{+}, \cdot, \xi_{+}))= 2 \|\xi_{+}\|_{h}^{2}$ because $\p e_{i} \in \mathscr{L}_{2} \subset (\mathcal{N}^{+})^{\perp}$ and $\trace_{h}(R^{\nabla}(\cdot, \xi_{-}, \cdot, \xi_{-})) \leq 2 \|\xi_{-}\|_{h}^{2}$. \\
\underline{\emph{Term (b)}.} From the expression of the connection $\nabla$ in Equation \eqref{eq:nabla_decomposition}, we deduce that 
\[
    \Gamma_{0}^{+} = \begin{pmatrix}
        -\eta_{3}^{\dag} & & & \\
        \eta_{4} & -\eta_{5}^{\dag} & & \\
         & \ddots & \ddots & \\
         & & \ddots & -\eta_{2m-1}^{\dag} \\
         & & & \eta_{2m} 
    \end{pmatrix} \ \ \ \text{and} \ \ \ 
    \Gamma_{0}^{-} = \begin{pmatrix}
        \eta_{3} & -\eta_{4}^{\dag} & & & \\
         & \eta_{5} & -\eta_{6}^{\dag} & & \\
         &  & \ddots & \ddots & \\
         & & & \ddots & -\eta_{2m}^{\dag} \\
         & & & & \eta_{2m-1}
    \end{pmatrix}.
\]
In order to compute term (b), we fix an orthonormal frame $(u_{2i-1}, v_{2i-1})$ of $\mathscr{L}_{2i-1}$ and the corresponding frame $(u_{2m-2i+2},v_{2m-2i+2}):=(\p u_{2i-1}, \p v_{2i-1})$ of $\mathscr{L}_{2m-2i-1}=\p(\mathscr{L}_{2i-1})$, for $i=2, \dots, m$, so that $\eta_{i}$ is represented by $2\times 2$-matrices of $1$-forms $\boldsymbol{\eta}_{i}$ and $-\eta_{i}^{\dag}$ by its transpose $\boldsymbol{\eta}_{i}^{t}$, as in the proof of Theorem \ref{thm:structure_eq_hol}. We also use the local frame to view the section $\xi_{\pm}$ as a column vector, so that 
\[
    h(\Gamma_{0}^{\pm}(\cdot)\xi_{\pm},\Gamma_{0}^{\pm}(\cdot)\xi_{\pm}) = \mp\xi_{\pm}^{t}(\Gamma_{0}^{\pm}(\cdot))^{t}\Gamma_{0}^{\pm}(\cdot)\xi_{\pm}.
\]
Then, a simple matrix multiplication gives that these last expressions are equal to
\[  \renewcommand\arraystretch{1.5}
    -\xi_{+}^{t} \begin{pmatrix}
        \boldsymbol{\eta}_{3}\boldsymbol{\eta}_{3}^{t}  +\boldsymbol{\eta}_{4}^{t}\boldsymbol{\eta}_{4} & \boldsymbol{\eta}_{4}^{t}\boldsymbol{\eta}_{5}^{t} & &  \\
        \boldsymbol{\eta}_{5}\boldsymbol{\eta}_{4} & \boldsymbol{\eta}_{5}\boldsymbol{\eta}_{5}^{t} + \boldsymbol{\eta}_{6}^{t}\boldsymbol{\eta}_{6} & \boldsymbol{\eta}_{6}^{t}\boldsymbol{\eta}_{7}^{t} &  \\
        & \ddots & \ddots & \ddots \\
        & & \boldsymbol{\eta}_{2m}\boldsymbol{\eta}_{2m-1} & \boldsymbol{\eta}_{2m-1}\boldsymbol{\eta}_{2m-1}^{t} + \boldsymbol{\eta}_{2m}^{t}\boldsymbol{\eta}_{2m} \\
    \end{pmatrix}\xi_{+} 
\]
\[
    \ \ \xi_{-}^{t} \begin{pmatrix}
        \boldsymbol{\eta}_{3}\boldsymbol{\eta}_{3}^{t} & \boldsymbol{\eta}_{3}^{t}\boldsymbol{\eta}_{4}^{t} & &  & \\
        \boldsymbol{\eta}_{4}\boldsymbol{\eta}_{3} & \boldsymbol{\eta}_{4}\boldsymbol{\eta}_{4}^{t} + \boldsymbol{\eta}_{5}^{t}\boldsymbol{\eta}_{5} & \boldsymbol{\eta}_{5}^{t}\boldsymbol{\eta}_{6}^{t} &  & \\
        & \ddots & \ddots & \ddots & \\
        &  & \boldsymbol{\eta}_{2m-2}\boldsymbol{\eta}_{2m-3} & \boldsymbol{\eta}_{2m-2}\boldsymbol{\eta}_{2m-2}^{t} + \boldsymbol{\eta}_{2m-1}^{t}\boldsymbol{\eta}_{2m-1} & \boldsymbol{\eta}_{2m-1}^{t}\boldsymbol{\eta}_{2m}^{t} \\
        & & & \boldsymbol{\eta}_{2m}\boldsymbol{\eta}_{2m-1} & \boldsymbol{\eta}_{2m}\boldsymbol{\eta}_{2m}^{t} \\
    \end{pmatrix}\xi_{-}
\]
The explicit formula of $\boldsymbol{\eta}_{i}$ allows to compute the following traces:
\begin{itemize}
    \item $\trace_{h}(\boldsymbol{\eta}_{i}(\cdot)\boldsymbol{\eta}_{j}(\cdot))= \boldsymbol{\eta}_{i}(e_{1})\boldsymbol{\eta}_{j}(e_{1})+\boldsymbol{\eta}_{i}(e_{2})\boldsymbol{\eta}_{j}(e_{2})=0$ for all $i\neq j$;
    \item $\trace_{h}(\boldsymbol{\eta}_{i}^{t}(\cdot)\boldsymbol{\eta}_{j}^{t}(\cdot))= 0$ for all $i\neq j$
    \item $\trace_{h}(\boldsymbol{\eta}_{i}(\cdot)\boldsymbol{\eta}_{i}^{t}(\cdot))= \trace_{h}(\boldsymbol{\eta}^{t}_{i}(\cdot)\boldsymbol{\eta}_{i}(\cdot))=\|\eta_{i}\|^{2}\mathrm{Id}$ for all $i$.
\end{itemize}
We deduce that the quadratic form $\xi_{+} \to \trace_{h}(\Gamma_{0}^{+}(\cdot)\xi_{+}, \Gamma_{0}^{+}(\cdot)\xi_{+})$ has eigenvalues 
\[
    -(\|\eta_{3}\|^{2}+\|\eta_{4}\|^{2}), \ -(\|\eta_{5}\|^{2}+\|\eta_{6}\|^{2}), \ \dots, \  -(\|\eta_{2m-1}\|^{2}+\|\eta_{2m}\|^{2})
\]
and the quadratic form $\xi_{-} \to \trace_{h}(\Gamma_{0}^{-}(\cdot)\xi_{-}, \Gamma_{0}^{-}(\cdot)\xi_{-})$ has eigenvalues 
\[
    \|\eta_{3}\|^{2}, \ \|\eta_{4}\|^{2}+\|\eta_{5}\|^{2}, \ \|\eta_{6}\|^{2}+\|\eta_{7}\|^{2}, \ \dots, \  \|\eta_{2m-2}\|^{2}+\|\eta_{2m-1}\|^{2}, \ \|\eta_{2m}\|^{2} .
\]
From the proof of Theorem \ref{thm:estimates}, we learn that
\[
    \| \eta_{m+1} \|^{2} < \|\eta_{m}\|^{2} < \dots < \|\eta_{2}\|^{2} < 1
\]
and by Proposition \ref{prop:some_properties}, the inequality $\|\eta_{i}\|^{2}<1$ actually holds for all $i=1, \dots m$. Therefore, we obtain the following estimates for term (b):
\[
    \trace_{h}(\Gamma_{0}^{+}(\cdot)\xi_{+}, \Gamma_{0}^{+}(\cdot)\xi_{+}) > -2\|\xi_{+}\|^{2}_{h} \ \ \ \text{and} \ \ \ \trace_{h}(\Gamma_{0}^{-}(\cdot)\xi_{-}, \Gamma_{0}^{-}(\cdot)\xi_{-}) < 2\|\xi_{-}\|^{2}_{h}.
\]
Combining all four terms we obtain
\begin{itemize}
    \item for $\xi_{+}: (a)+(b)-(c)+(d) > -2\|\xi_{+}\|^{2}_{h}+ 2\|\xi_{+}\|^{2}_{h} = 0$;
    \item for $\xi_{-}: (a)+(b)-(c)+(d) < 2\|\xi_{-}\|^{2}_{h}+ 2\|\xi_{-}\|^{2}_{h} < 0$ because $\mathcal{N}^{-}$ is negative definite,
\end{itemize}
so the proof is complete.
\end{proof}

\noindent As a consequence we obtain:

\begin{cor}\label{cor:no_Jacobi} Under the assumptions of Theorem \ref{thm:inf_rigidity}, an isotropic $\p$-alternating immersion $\sigma:\widetilde{\Sigma} \rightarrow \h^{2m}_{\tau}$ does not admit any non-zero compactly supported Jacobi field.
\end{cor}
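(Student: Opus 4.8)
The plan is to combine the pointwise sign-definiteness established in Theorem~\ref{thm:inf_rigidity} with the fact that a Jacobi field belongs to the kernel of the self-adjoint operator $L$, which is strictly stronger than merely annihilating the second variation. First I would package the second variation into the symmetric \emph{index form}
\[
I(\xi,\zeta):=-\int_{\widetilde{\Sigma}} h(L\xi,\zeta)\,dVol_{h},
\]
defined on compactly supported sections of the normal bundle $\mathcal{N}$; it is symmetric precisely because $L$ is self-adjoint and $h$ is symmetric, and along the diagonal $I(\zeta,\zeta)=Q(\zeta)$ equals the second derivative of the area functional for the normal variation with variational field $\zeta$.

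Next, suppose for contradiction that $\xi$ is a non-zero compactly supported Jacobi field, so that $L\xi=0$ and hence $I(\xi,\zeta)=0$ for every compactly supported normal section $\zeta$. I would decompose $\xi=\xi_{+}+\xi_{-}$ according to the orthogonal splitting $\mathcal{N}=\mathcal{N}^{+}\oplus\mathcal{N}^{-}$; since the projections onto $\mathcal{N}^{\pm}$ are smooth bundle maps, $\xi_{+}$ and $\xi_{-}$ are again compactly supported normal sections, hence admissible test fields. Testing the relation $I(\xi,\cdot)=0$ against $\xi_{+}$ and against $\xi_{-}$, and expanding by bilinearity together with the symmetry $I(\xi_{+},\xi_{-})=I(\xi_{-},\xi_{+})$, produces the two identities
\[
Q(\xi_{+})+I(\xi_{+},\xi_{-})=0,\qquad Q(\xi_{-})+I(\xi_{+},\xi_{-})=0,
\]
and subtracting them eliminates the cross term to leave $Q(\xi_{+})=Q(\xi_{-})$.

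At this point Theorem~\ref{thm:inf_rigidity} finishes the argument: it gives $Q(\xi_{+})\ge 0$, with strict inequality unless $\xi_{+}\equiv 0$, and $Q(\xi_{-})\le 0$, with strict inequality unless $\xi_{-}\equiv 0$. An equality between a non-negative and a non-positive quantity forces $Q(\xi_{+})=Q(\xi_{-})=0$, whence $\xi_{+}=\xi_{-}=0$ and $\xi=0$, contradicting the choice of $\xi$. I expect the only genuine obstacle to be conceptual rather than computational: the index form is indefinite, so one cannot apply the estimates of Theorem~\ref{thm:inf_rigidity} directly to a Jacobi field $\xi$, which generically has non-trivial components in both $\mathcal{N}^{+}$ and $\mathcal{N}^{-}$. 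The resolution is to use membership in $\ker L$ (and not just the vanishing $Q(\xi)=0$) in order to test against each summand separately; the unknown cross term $I(\xi_{+},\xi_{-})$ --- whose direct evaluation would otherwise require analyzing the off-diagonal blocks $\Gamma_{0}^{\pm}$ of the normal connection --- then cancels automatically upon subtraction, so that no further computation is needed.
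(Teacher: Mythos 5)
Your proposal is correct and follows essentially the same route as the paper: decompose $\xi=\xi_{+}+\xi_{-}$, test $L\xi=0$ against each summand, use self-adjointness of $L$ to identify the cross terms, deduce that the second variations $Q(\xi_{+})$ and $Q(\xi_{-})$ are equal, and conclude from the sign-definiteness in Theorem~\ref{thm:inf_rigidity}. The only difference is cosmetic — you phrase the cancellation via the symmetric index form, while the paper writes the same algebra as a chain of integral identities.
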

\begin{proof} Let $\xi$ be a compactly supported Jacobi field and write $\xi=\xi_{+}+\xi_{-}$ where $\xi_{\pm}$ are compactly supported sections of $\mathcal{N}^{\pm}$. Since $L\xi=0$, we have
\begin{equation}\label{eq:1}
    h(L\xi_{+}, \xi_{+})+h(L\xi_{-},\xi_{+})=h(L\xi, \xi_{+})=0
\end{equation}
\begin{equation}\label{eq:2}
h(L\xi_{-}, \xi_{-})+h(L\xi_{+},\xi_{-})=h(L\xi, \xi_{-})=0.
\end{equation}
Therefore, we obtain
\begin{align*}
    -\int_{\widetilde{\Sigma}} h(L\xi_{+}, \xi_{+})dVol_{h} & = \int_{\widetilde{\Sigma}}  h(L\xi_{-}, \xi_{+})dVol_{h} \tag{Equation \eqref{eq:1}}\\
    & = \int_{\widetilde{\Sigma}}  h(L\xi_{+}, \xi_{-})dVol_{h} \tag{$L$ is self-adjoint} \\
    & = -\int_{\widetilde{\Sigma}} h(L\xi_{-}, \xi_{-})dVol_{h}. \tag{Equation \eqref{eq:2}} 
\end{align*}
However, using the fact that the integral of $-h(L\xi_{\pm}, \xi_{\pm})$ coincides with the second derivative of the area functional induced by the variational vector field $\xi_{\pm}$, by Theorem \ref{thm:inf_rigidity} the first integral in the equation above is positive unless $\xi_{+}=0$ and the last integral is negative unless $\xi_{-}=0$. We thus conclude that $\xi_{\pm}=0$ and $\xi=0$, so that there are no non-zero compactly supported Jacobi fields.
\end{proof}

\section{Cyclic $\mathrm{SL}(2m+1, \mathbb{R})$-Higgs bundles}
\noindent In this section we recall some well-known facts about cyclic $\mathrm{SL}(2m+1, \mathbb{R})$-Higgs bundles over a Riemann surface $X$. We introduce their para-complexification and use it to prove the existence of an equivariant isotropic $\mathbf{P}$-alternating surface in $\mathbb{H}^{2m}_{\tau}$ for any given representation $\rho:\pi_{1}(X) \rightarrow \SL(2m+1,\R)$ associated to a family of stable cyclic $\mathrm{SL}(2m+1, \mathbb{R})$-Higgs bundles. In particular, we interpret Hitchin's equations in this context as structure equations for this type of surfaces in $\mathbb{H}^{2m}_{\tau}$. 

\subsection{Definition, stability and moduli space}\label{sec:4.1}
Let $X=(\Sigma, J)$ be a fixed closed Riemann surface of negative Euler characteristic and denote by $K$ its canonical bundle. An $\SL(2m+1,\R)$-Higgs bundle over $X$ is a triple $(E,\phi, Q)$, where
\begin{itemize}
    \item  $E$ is a holomorphic vector bundle of rank $2m+1$ over $X$ with $\det(E)=\mathcal{O}_{X}$;
    \item $Q$ is a non-degenerate holomorphic bilinear form on $E$;
    \item $\phi$ is a holomorphic $1$-form with values in the bundle $\mathrm{End}_{0}(E)$ of traceless endomorphisms of $E$ that is compatible with $Q$, in the sense that $Q(\phi(v),w)=Q(v,\phi(w))$ for all $v,w \in E$.
\end{itemize}
Note that the bilinear form $Q$ induces an isomorphism between $E$ and its dual $E^{*}$. The pair $(E,\phi)$ alone would instead define an $\SL(2m+1,\C)$-Higgs bundle.\\

\noindent We say that an $\SL(2m+1,\R)$-Higgs bundle is cyclic if $E$ decomposes into a direct sum of line bundles $E=L_{m}^{-1}\oplus \cdots \oplus L_{1}^{-1} \oplus \mathcal{O}_{X} \oplus L_{1} \oplus \cdots \oplus L_{m}$ and in this holomorphic splitting the Higgs field writes as
\[
    \phi=\begin{pmatrix}
    0 & & & & & & & &\gamma_{m}\\
    \gamma_{m-1} & 0 & & & & & & & \\
    &\ddots & \ddots & & & & & &\\
    & & \gamma_{1} & 0 & & & & &\\
    & & & \mu & 0 & & & &\\
    & & & & \mu & 0 & & &\\
    & & & & & \gamma_{1} & 0 & & \\
    & & & & & & \ddots & \ddots & \\
    & & & & & & & \gamma_{m-1} & 0
    \end{pmatrix}
\]
where $0 \neq \gamma_{i} \in H^{0}(X, L_{i}^{-1}L_{i+1}K)$ for $i=1, \dots, m-1$, $0 \neq \mu \in H^{0}(X, L_{1}K)$ and $\gamma_{m} \in H^{0}(X, L_{m}^{-2}K)$. Note that $(\mu\gamma_{1}\dots \gamma_{m-1})^{2}\gamma_{m} \in H^{0}(X,K^{2m+1})$, thus it is a holomorphic $(2m+1)$-differential that we will denote by $q_{2m+1}$. The bilinear form $Q$ in this case is represented by the matrix
\[
    Q=\begin{pmatrix}
     & & & 1\\
     & & \iddots & \\
     & \iddots &  &\\
    1 & & & 
    \end{pmatrix}.
\]
A cyclic $\SL(2m+1,\R)$-Higgs bundle is thus determined by the holomorphic data $(L_{1}, \dots, L_{m}, \mu, \gamma_{1}, \dots, \gamma_{m})$. \\

\noindent To form the moduli space of $\SL(2m+1,\R)$-Higgs bundles, we need to consider a notion of stability. There is an appropriate one for $G$-Higgs bundle for each Lie group $G$ (\cite{Simpson_quasiprojective, HK_correspondence}), however, in our setting, we can consider an $\SL(2m+1,\R)$-Higgs bundle as an $\SL(2m+1,\C)$-Higgs bundle with extra structure, specifically given by the holomorphic bilinear form $Q$. It turns out (\cite{HK_correspondence}) that in this case it is sufficient to check the stability condition for the $\SL(2m+1,\C)$-Higgs bundle, which we now recall. 

\begin{defi}\label{def:stability} An $\SL(2m+1,\C)$-Higgs bundle $(E,\phi)$ is stable if, for any proper $\phi$-invariant subbundle $F$, we have $\deg(F)<0$. Moreover, $(E,\phi)$ is polystable if $(E,\phi)$ is a direct sum of stable Higgs bundles of degree $0$. 
\end{defi}

\noindent For cyclic $\SL(2m+1,\C)$-Higgs bundles, the following result by Simpson (\cite[Lemma 6.4]{Simpson_Katz}) further simplifies the search for destabilizing subbundles:

\begin{prop}\label{prop:stability_cyclic} Let $(E,\phi)$ be a cyclic $\SL(2m+1,\C)$-Higgs bundle with $E=L_{1} \oplus \dots \oplus L_{m+1}$. Then $(E,\phi)$ is stable if and only if for all proper $\phi$-invariant holomorphic subbundles $F=F_{1} \oplus \dots \oplus F_{k} \subset E$ with $F_{i} \subset E_{i}$, we have $\deg(F)<0$.  
\end{prop}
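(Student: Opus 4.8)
The plan is to prove both directions by exploiting the cyclic symmetry of $(E,\phi)$. The forward implication is immediate: a graded subbundle $F=\bigoplus_i F_i$ with $F_i\subset E_i$ is in particular a proper $\phi$-invariant subbundle, so stability forces $\deg(F)<0$. For the converse I would argue by contraposition, assuming $(E,\phi)$ is not stable and producing a \emph{graded} proper $\phi$-invariant subbundle of non-negative degree.

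The key structural input is a finite symmetry. Writing $E=E_0\oplus\dots\oplus E_{2m}$ for the ordered line-bundle summands and fixing a primitive $(2m+1)$-th root of unity $\zeta$, consider the holomorphic automorphism $g=\diag(\zeta^0,\zeta^1,\dots,\zeta^{2m})$. A direct check shows $g\phi g^{-1}=\zeta\phi$: each subdiagonal entry, mapping weight $j$ to weight $j+1$, is scaled by $\zeta^{(j+1)-j}=\zeta$, while the single ``wrap-around'' entry $\gamma_m$, mapping weight $2m$ to weight $0$, is scaled by $\zeta^{-2m}=\zeta$ because $-2m\equiv 1\pmod{2m+1}$. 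Since $g$ acts by distinct scalars on the $E_i$, a holomorphic subbundle is $g$-invariant if and only if it is graded, i.e. of the form $\bigoplus_{i\in S}E_i$. Moreover, multiplying a Higgs field by a nonzero constant does not change its invariant subsheaves, so $(E,\phi)$ and $(E,\zeta\phi)$ have exactly the same $\phi$-invariant subsheaves, while $g$ realizes an isomorphism of Higgs bundles $(E,\phi)\xrightarrow{\sim}(E,\zeta\phi)$.

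I would then invoke the uniqueness of canonical destabilizers. If $E$ is not semistable, the maximal destabilizing subsheaf $F_{\max}$ (the first term of the Harder--Narasimhan filtration relative to $\phi$-invariant subsheaves) is unique, saturated, and satisfies $\deg(F_{\max})>0$, hence is proper. The isomorphism $g\colon(E,\phi)\to(E,\zeta\phi)$ permutes $\phi$-invariant subsheaves preserving degree and slope, and since the two Higgs structures share the same invariant subsheaves it preserves the Harder--Narasimhan filtration; by uniqueness $g(F_{\max})=F_{\max}$. Thus $F_{\max}$ is $g$-invariant, hence graded, giving a proper graded $\phi$-invariant subbundle of positive degree and contradicting the hypothesis.

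The remaining and genuinely delicate case is when $E$ is strictly semistable of slope $0$ but not stable, so that every proper invariant subbundle has degree $\le 0$ while some has degree exactly $0$. Here the Harder--Narasimhan filtration is trivial and one must instead use a canonical object inside the abelian category of slope-$0$ semistable Higgs sheaves: the socle, or, when $E$ is polystable, its isotypic decomposition. Each of these is preserved by every automorphism of the Higgs bundle, hence is $g$-invariant and therefore graded; provided it is proper it furnishes the desired graded degree-$0$ destabilizer. I expect this semistable boundary to be the main obstacle, since the canonical subobject can a priori equal $E$ (for instance when $E\cong W^{\oplus k}$ for a single stable $W$), and ruling this out requires a finer analysis of the $g$-orbit of a degree-$0$ destabilizer $F$: forming the $g$-invariant subbundles $\sum_i g^iF$ and $\bigcap_i g^iF$, both of degree $0$ by semistability, and checking that at least one of them is a proper nonzero graded subbundle. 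This is exactly the step carried out in Simpson's argument.
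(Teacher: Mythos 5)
The paper itself does not prove this proposition --- it is quoted from Simpson (\cite[Lemma 6.4]{Simpson_Katz}) --- so the relevant comparison is with the standard symmetry argument you are reconstructing, and your overall strategy is the right one. The forward direction, the identity $g\phi g^{-1}=\zeta\phi$, the equivalence between $g$-invariance and being graded, and the treatment of the unstable case via uniqueness of the maximal destabilizing Higgs subsheaf are all correct.

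The genuine gap is in the strictly semistable case, and it is twofold. First, the isotypic decomposition is not fixed by $g$ in the way you need: since $g$ is an isomorphism $(E,\phi)\to(E,\zeta\phi)$, it carries a stable subobject isomorphic to $(W,\psi)$ to one isomorphic to $(W,\zeta^{-1}\psi)$, so it \emph{permutes} the isotypic components rather than preserving each of them; an individual component therefore need not be graded, and the sum over a $g$-orbit may be all of $E$. Second, your fallback of forming $\sum_i g^iF$ and $\bigcap_i g^iF$ actually fails in the hardest sub-case $E\cong W^{\oplus k}$: there the degree-zero subobjects are exactly $W\otimes U$ for subspaces $U\subset\C^k\cong\Hom\bigl((W,\psi),(E,\phi)\bigr)$, the action of $g$ on them is induced by an invertible linear map $\tilde g$ of $\C^k$ with $\tilde g^{\,2m+1}$ scalar (using an isomorphism $a:(W,\psi)\to(W,\zeta^{-1}\psi)$ and $f\mapsto g\circ f\circ a$), and for a generic line $U$ one has simultaneously $\sum_i\tilde g^{\,i}U=\C^k$ and $U\cap\tilde gU=0$, so neither the orbit sum nor the orbit intersection is proper and nonzero. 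The repair is to take $U$ to be an eigenline of $\tilde g$, which exists because $\tilde g$ has finite order modulo scalars and is therefore diagonalizable; then $W\otimes U$ is a proper, nonzero, graded, $\phi$-invariant subobject of degree zero. If you want to avoid this case analysis altogether, the uniform argument is the $\C^*$-limit: for $g_t=\diag(1,t,\dots,t^{2m})$ one has $g_t\phi g_t^{-1}=t\phi$, so a destabilizer $F$ produces a family $g_tF$ of $\phi$-invariant subsheaves of fixed rank and degree, and the flat limit as $t\to0$ in the Quot scheme is $\phi$-invariant, has degree at least $\deg F$, and is fixed by every $g_t$, hence graded.
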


\noindent By results of Hitchin (\cite{hitchin1987self}) and Simpson (\cite{Simpson_Higgs}), the polystability condition is equivalent to the existence of a Hermitian metric $H$ on $E$ that solves Hitchin's equations
\begin{equation}\label{eq:Hitchin_general}
    F_{H}+[\phi, \phi^{*H}]=0 \ ,
\end{equation}
where $F_{H}$ denotes the curvature of $H$ and $\phi^{*H}$ is the adjoint of $\phi$ with respect to the metric $H$. It is well-known that these equations arise by asking the connection
\begin{equation}\label{eq:connection}
    \nabla=D_{H}+\phi+\phi^{*H}
\end{equation}
to be flat, where $D_{H}$ denotes the Chern connection of $H$. For an $\SL(2m+1,\R)$-Higgs bundle $(E,\phi,Q)$ the Hermitian metric $H$ must also be compatible with $Q$, in the sense that $QH^{t}Q=H^{-1}$ (see for instance \cite{Collier-Li-asymptotics}). \\

\noindent When $(E,\phi,Q)$ is stable, there is a unique Hermitian metric $H$ on $E$ solving Hitchin's equation. By \cite{Collier_thesis}, the metric $H$ is diagonal in the above decomposition of $E$ into line bundles and can be written as
\[
    H=\diag(h_{m}^{-1}, \dots, h_{1}^{-1}, 1, h_{1}, \dots, h_{m}),
\]
where $h_{i}$ is a Hermitian metric on $L_{i}$ for each $i=1, \dots, m$. In particular, Hitchin's equations \eqref{eq:Hitchin_general} simplify to 
\begin{equation}\label{eq:Hitchin_cyclic}
    \begin{cases}
        \partial^{2}_{z\bar{z}}\log h_{m}-|\gamma_{m-1}|^{2}h_{m}h_{m-1}^{-1}+|\gamma_{m}|^{2}h_{m}^{-2}=0 \\
        \partial^{2}_{z\bar{z}}\log h_{j}-|\gamma_{j-1}|^{2}h_{j}h_{j-1}^{-1}+|\gamma_{j}|^{2}h_{j+1}h_{j}^{-1}=0, \ \ \ \ \ \ \ j=2, \dots, m-1 \\
        \partial^{2}_{z\bar{z}}\log h_{1}-|\mu|^{2}h_{1}+|\gamma_{1}|^{2}h_{2}h_{1}^{-1} =0 .
    \end{cases}
\end{equation}

\noindent The metric $H$ together with the bilinear form $Q$ defines a real structure on $E$: the bundle morphism $T:E\rightarrow E$ given by $T(v)=H^{-1}Q\bar{v}$ is a $\C$-antilinear involution and its fixed locus $\mathcal{E}$ is parallel for $\nabla$. \\

\noindent We recall the following distinguished family of cyclic $\SL(2m+1,\R)$-Higgs bundles: 
\begin{defi}\label{def:Higgs_bundles_Hitchin_component} A cyclic $\SL(2m+1,\R)$-Higgs bundle belongs to the Hitchin component (\cite{hitchin1992lie}) if $L_{i}=K^{-i}$ for all $i=1, \dots, m$ and the Higgs field is of the form
\[
    \phi=\begin{pmatrix}
    0 & & &  &q_{2m+1}\\
    1 & 0 &  & & \\
    &\ddots & \ddots &  &\\
    & & 1 & 0 & \\
    & & & 1 & 0
    \end{pmatrix},
\]
where $q_{2m+1}$ is a holomorphic $(2m+1)$-differential. 
\end{defi}

\noindent It turns out that all Higgs bundles in the Hitchin component are stable (\cite{hitchin1992lie}). \\

\noindent We finally recall the equivalence relation needed to form the moduli space of Higgs bundles.

\begin{defi} \label{defi:equivalent}Two polystable $\SL(2m+1,\C)$-Higgs bundles $(E,\phi)$ and $(E',\phi')$ are isomorphic if there is a holomorphic bundle automorphism $E \rightarrow E'$ that identifies $\phi$ with $\phi'$.
\end{defi}

\noindent In our setting, this equivalence relation becomes more explicit: 
\begin{lemma}\label{lm:isomorphic_Higgs} Two stable cyclic $\SL(2m+1,\R)$-Higgs bundles determined by the data \[
(L_{1}, \dots, L_{m}, \mu, \gamma_{1}, \gamma_{2}, \dots, \gamma_{m-1}, \gamma_{m}) \ \ \ \text{and} \ \ \ (L_{1}', \dots, L_{m}', \mu', \gamma_{1}', \gamma_{2}', \dots, \gamma_{m-1}', \gamma_{m}')
\]
are isomorphic if $L_{i}=L_{i}'$, $\mu'= \lambda_{1}^{-1}\mu$, $\gamma_{i}'=\lambda_{i}^{-1}\lambda_{i+1}\gamma_{i}$ for all $i=1, \dots m-1$ and $\gamma_{m}'=\lambda_{m}^{-2}\gamma_{m}$, for some $\lambda_{i}\in \C^{*}$.
\end{lemma}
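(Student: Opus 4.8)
The plan is to exhibit the isomorphism concretely as a diagonal gauge transformation, thereby reducing the statement to the elementary fact that conjugating a matrix by a diagonal one rescales its off-diagonal entries. Since $L_i = L_i'$ for every $i$ by hypothesis, the two underlying holomorphic bundles literally coincide, $E = E' = L_m^{-1}\oplus\cdots\oplus L_1^{-1}\oplus\mathcal{O}_X\oplus L_1\oplus\cdots\oplus L_m$, so an isomorphism of Higgs bundles in the sense of Definition \ref{defi:equivalent} is a holomorphic automorphism $g$ of $E$ satisfying $g\phi g^{-1} = \phi'$. I would search for $g$ among the automorphisms preserving the line-bundle splitting: since each summand is a line bundle over a compact connected Riemann surface, $\Aut$ of each summand is $\C^{*}$, so such a $g$ is a single constant diagonal matrix acting globally.

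The key step is to choose the diagonal entries in terms of the $\lambda_i$. Motivated by the requirement that $g$ be compatible with the bilinear form $Q$ — so that it is a genuine $\SL(2m+1,\R)$-Higgs bundle isomorphism, not merely an $\SL(2m+1,\C)$ one — I would take the $Q$-symmetric assignment $g = \diag(\lambda_m^{-1},\dots,\lambda_1^{-1},1,\lambda_1,\dots,\lambda_m)$, which scales $L_i$ by $\lambda_i$, scales $L_i^{-1}$ by $\lambda_i^{-1}$, and fixes $\mathcal{O}_X$. By construction $\det g = 1$ and $g^{t}Qg = Q$. Conjugation by $g$ multiplies the $(i,j)$ entry of $\phi$ by the ratio of the corresponding diagonal scalars, so a direct computation on the subdiagonal entries carrying $\mu$ and $\gamma_1,\dots,\gamma_{m-1}$ and on the corner entry carrying $\gamma_m$ returns a cyclic Higgs field whose data are $\mu,\gamma_1,\dots,\gamma_m$ rescaled exactly by the factors recorded in the statement. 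Two mild points deserve a line of verification rather than posing a real difficulty: the two symmetric occurrences of each $\gamma_i$ (and of $\mu$) in $\phi$ are rescaled by the same factor precisely because the assignment is $Q$-symmetric, so $g\phi g^{-1}$ is again of cyclic shape; and the rescaling factor of the corner entry comes out as $\lambda_m^{-2}$, which is forced by the telescoping of the diagonal ratios around the full cycle.

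The sufficiency above is essentially formal; the place where I would expect genuine work is the converse, needed to identify the moduli space as the quotient by these transformations. There one starts from an abstract isomorphism $g\colon (E,\phi)\to(E',\phi')$ and must show that $g$ necessarily respects the line-bundle grading, after which the argument of the first two paragraphs runs backwards to produce $L_i\cong L_i'$ and scalars $\lambda_i\in\C^{*}$ realizing the stated relations. The obstacle is exactly this diagonality: it should follow from the fact that the grading of a cyclic Higgs bundle is intrinsic — it is the canonical Hodge-type grading of the corresponding $\C^{*}$-fixed point, recoverable from $(E,\phi)$ alone — so that any Higgs-bundle isomorphism is forced to intertwine the gradings. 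Stability enters to guarantee that the only grading-preserving automorphisms are the diagonal ones and that no exotic isomorphism can mix summands, and I would combine it with Proposition \ref{prop:stability_cyclic} to control the $\phi$-invariant subbundles involved.
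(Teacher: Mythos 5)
Your first two paragraphs reproduce exactly the paper's argument: the isomorphism is exhibited as the constant diagonal gauge transformation $g=\diag(\lambda_{m}^{-1},\dots,\lambda_{1}^{-1},1,\lambda_{1},\dots,\lambda_{m})$, which has unit determinant, preserves $Q$, and rescales $\mu$, the $\gamma_{i}$, and the corner entry $\gamma_{m}$ precisely as claimed. Your third paragraph addresses a converse that the lemma as stated does not assert (it is an \emph{if}, not an \emph{if and only if}), so no gap arises from leaving that direction as a sketch.
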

\begin{proof} Such isomorphism is given by a diagonal gauge transformation $g$ with $\det(g)=1$ that preserves the bilinear form $Q$. As a result,
\[
    g=\diag(\lambda_{m}^{-1}, \dots, \lambda_{1}^{-1}, 1, \lambda_{1}, \dots, \lambda_{m}).
\]
Since $\gamma_{i} \in H^{0}(X, L_{i}^{-1}L_{i+1}K)$ for $i=1, \dots, m-1$, $\mu \in H^{0}(X,L_{1}K)$ and $\gamma_{m} \in H^{0}(X,L_{m}^{-2}K)$, the gauge transformation $g$ acts on $\gamma_{i}$ and $\mu$ as in the statement.
\end{proof}

\noindent By a result of Nitsure (\cite{Nitsure}) and Simpson (\cite{Simpson_quasiprojective}), isomorphism classes of polystable $\SL(2m+1,\C)$-Higgs bundles over a compact Riemann surface $X$ form a quasi-projective variety. \\

\noindent Finally, by the nonabelian Hodge correspondence, there is a homeomorphism between the moduli space of $\SL(2m+1,\C)$-Higgs bundles and the character variety 
\[
    \chi(\pi_{1}(X), \SL(2m+1,\C)) = \Hom(\pi_{1}(X), \SL(2m+1,\C)//\SL(2m+1,\C) \ 
\]
given by associating to an equivalence class of a polystable Higgs bundle $(E,\phi)$ the holonomy of the flat connection $\nabla$ in Equation \eqref{eq:connection}. In particular, polystable $\SL(2m+1,\R)$-Higgs bundles correspond to representations into a subgroup isomorphic to $\SL(2m+1,\R)$.

\subsection{Para-complexification of Higgs bundles}\label{sec:4.2}In this section we introduce the notion of \emph{para-complex} Higgs bundle over a Riemann surface $X$ and describe a "para-complexification" procedure that from an $\SL(n,\R)$-Higgs bundle gives a para-complex one with isomorphic holonomy.

\begin{defi}\label{def:para-complex_higgs} A para-complex Higgs bundle over a Riemann surface $X$ is a triple $(F, \varphi, \mathbf{P})$, where
\begin{itemize}
    \item $(F,\varphi)$ is a Higgs bundle of rank $2k$;
    \item $\mathbf{P}$ is a holomorphic section of $\mathrm{End}(F)$ such that $\mathbf{P}^{2}=\mathrm{Id}$. 
    \item the Higgs field $\phi$ is compatible with $\mathbf{P}$, in the sense that $\varphi(\mathbf{P}v)=\mathbf{P}\varphi(v)$ for all $v\in F$.
\end{itemize}
\end{defi}

\noindent Let $(E,\phi, Q)$ be an $\SL(n,\R)$-Higgs bundle. Denote by $\C_{\tau}=\R_{\tau} \otimes_{\R} \C$ the algebra of bi-complex numbers (see e.g. \cite{RT_bicomplex} for more details). We define the \emph{para-complexification} of $(E,\phi, Q)$ as follows.
We consider the tensor product of each complex vector bundle in the fiber of $E$ with $\C_{\tau}$ and obtain a holomorphic vector bundle $E^{\tau}$ of complex rank $2n$, in brief $E^{\tau}:=E \otimes_{\C} \C_{\tau}$. The holomorphic structure on $E^{\tau}$ can be easily understood via the decomposition $E^{\tau}= (E \otimes e_{+}) \oplus (E \otimes e_{-}$): holomorphic sections of $E^{\tau}$ are of the form $\sigma^{\tau}=\sigma^{+} \otimes e_{+} + \sigma^{-} \otimes e_{-}$, with $\sigma^{\pm}$ holomorphic sections of $E$. On $E^{\tau}$ there is a natural para-complex structure $\mathbf{P}$ given by multiplication by $\tau$ on each fiber. We note that the subbundles $E\otimes e_{\pm}$ are the eigenspaces for $\mathbf{P}$ relative to the eigenvalues $\pm 1$. The bilinear form $Q$ can be extended to a bilinear form on $E^{\tau}$ by setting $Q^{\tau}=Q\otimes \Id = Q\otimes e_{+} + Q\otimes e_{-}$. We then define the Higgs field on $E^{\tau}$ by
\[
    \phi^{\tau}=\phi \otimes \tau = \phi \otimes e_{+} - \phi \otimes e_{-} \ . 
\]
It is straightforward to verify that the triple $(E^{\tau}, \phi^{\tau}, \mathbf{P})$ is a para-complex Higgs bundle according to Definition \ref{def:para-complex_higgs}. With this particular choice of $\phi^{\tau}$, the Higgs field is compatible with the para-hermitian metric induced by $Q^{\tau}$ because
\[
    (\phi^{\tau})^{t}Q^{\tau}+Q^{\tau}\overline{\phi^{\tau}}^{\tau}=0.
\]
This guarantees that, after setting $H^{\tau}:=H\otimes e_{+} + H\otimes e_{-}$, where $H$ is the Hermitian metric solving Hitchin's equations on $(E,\phi)$, the connection 
\[
    \nabla^{\tau}:= D_{H^{\tau}}+\phi^{\tau}+(\phi^{\tau})^{*H^{\tau}} = (D_{H}+\phi+\phi^{*H}) \otimes e_{+} + (D_{H}-\phi-\phi^{*H}) \otimes e_{-}
\]
is flat and has the following properties:
\begin{enumerate}[i)]
    \item the para-complex structure $\mathbf{P}$ is parallel for $\nabla^{\tau}$;
    \item $\nabla^{\tau}$ preserves the para-complex real subbundle $\mathcal{E}^{\tau}:=(\mathcal{E} \otimes e_{+}) \oplus (\mathcal{E} \otimes e_{-})$. This is exactly the fixed locus of the $\C$-antilinear involution $T^{\tau}(v)= (H^{\tau})^{-1}Q^{\tau}\bar{v}$. 
    \item there is a $\nabla^{\tau}$-parallel para-Hermitian sesquilinear form defined by
    \[
        \mathbf{q}^{\C}(v,w) := v^{t} Q^{\tau}\overline{T^{\tau}(w)}^{\tau}.
    \]
    This restricts to a para-Hermitian metric on $\mathcal{E}$, which we will denote by $\mathbf{q}$. As a consequence, $\nabla^{\tau}$ preserves a fiber bundle over $X$ with fiber $\mathbb{H}^{n-1}_{\tau}$, obtained by considering vectors in $\mathcal{E}^{\tau}$ of unit norm with respect to $\mathbf{q}$.
    \item it follows from item iii) that the holonomy of $\nabla^{\tau}$ lies in a subgroup of $\SL(n,\C_{\tau})$ isomorphic to $\SL(n,\R)$. Indeed, since $\nabla^{\tau}$ preserves the real subbundle $\mathcal{E}^{\tau}$ and the Hermitian form $\mathbf{q}$, the holonomy of $\nabla^{\tau}$ is contained in $\SU(n,\R_{\tau}, \mathbf{q})$, which is isomorphic to $\SL(n.\R)$ by Proposition \ref{prop:iso_group}. More precisely, if $\Omega$ is the matrix connection of $\nabla$, then, using the fact that $\phi=Q\phi^{t}Q$ and $QH^{t}Q=H^{-1}$ (which implies that $-D_{H}=QD_{H}^{t}Q$), the matrix connection of $\nabla^{\tau}$ is
    \[
        \Omega^{\tau}= \Omega \otimes e_{+} - Q\Omega^{t}Q \otimes e_{-} 
    \]
    and thus lies in the Lie algebra of $\SU(n,\R_{\tau},Q)$. In particular, for any loop $\gamma$ on $X$, the holonomy of $\nabla^{\tau}$ around $\gamma$ is a matrix of the form
    \[
        A_{\gamma}e_{+}+Q(A_{\gamma}^{t})^{-1}Qe_{-} \in \SU(n,\R_{\tau},Q),
    \]
    where $A_{\gamma}$ is the holonomy of $\nabla$ along $\gamma$. Hence, the there is an explicit isomorphism between the holonomies of $\nabla^{\tau}$ and $\nabla$.
\end{enumerate}

\subsection{Construction of equivariant isotropic $\mathbf{P}$-alternating surfaces}\label{sec:existence_surfaces} This entire section is dedicated to the proof of the following result:
\begin{theorem}\label{thm:existence} Let $(E,\phi,Q)$ be a stable cyclic $\mathrm{SL}(2m+1,\R)$-Higgs bundle over $X$ with holonomy $\rho$. Assume further that $L_{1}=K^{-1}$ and $\mu=1$. Then there exists a $\rho$-equivariant isotropic $\mathbf{P}$-alternating surface $\sigma:\widetilde{X} \rightarrow \mathbb{H}^{2m}_{\tau}$. Moreover, the structural data of $\sigma$ is exactly the holomorphic data $(L_{1}, \dots, L_{m}, \gamma_{1}, \dots, \gamma_{m})$ used to construct the Higgs bundle along with the harmonic metric $h_{i}$ on each $L_{i}$.
\end{theorem}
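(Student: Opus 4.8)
The plan is to produce $\sigma$ by \emph{developing} a canonical section of the para-complexified Higgs bundle of Section~\ref{sec:4.2}, and then to extract the Frenet splitting together with the three conditions of Definition~\ref{def:P_alternating_surfaces} directly from the block form of the flat connection $\nabla^\tau$. First I would set up the developing map. Para-complexifying $(E,\phi,Q)$ yields $(E^\tau,\phi^\tau,\p)$ with the flat connection $\nabla^\tau=D_{H^\tau}+\phi^\tau+(\phi^\tau)^{*H^\tau}$, which by Section~\ref{sec:4.2} preserves the real subbundle $\mathcal{E}^\tau$, the para-Hermitian form $\qq$ and the associated $\h_\tau^{2m}$-bundle, and whose holonomy is conjugate to $\rho$. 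The summand $\mathcal{O}_X$ supplies the canonical holomorphic section $s_0$; since the central entries of $Q$ and of $H=\diag(h_m^{-1},\dots,1,\dots,h_m)$ equal $1$ one has $\qq(s_0,s_0)=+1$, so the para-imaginary rescaling $\tau s_0\in\mathcal{E}^\tau$ satisfies $\qq(\tau s_0,\tau s_0)=-1$ and defines a section of the $\h_\tau^{2m}$-bundle. Trivializing $(E^\tau,\nabla^\tau)$ over the simply connected $\widetilde X$ by parallel transport turns $\tau s_0$ into a smooth map $\sigma\colon\widetilde X\to\h_\tau^{2m}$; flatness makes $\sigma$ well defined, and the explicit identification of the holonomy of $\nabla^\tau$ with $\rho$ makes it $\rho$-equivariant.

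Next I would identify the Frenet splitting. Since $D_{H^\tau}s_0=0$ and $\phi^\tau=\phi\otimes\tau$, each covariant differentiation reaches the next pair of line-bundle summands and multiplies the outcome by one further factor of $\tau$. Writing $\mathscr{M}_k$ for the real rank-$2$ bundle cut out by the real structure from the conjugate summands $L_k$ (position $m+1+k$) and $L_k^{-1}$ (position $m+1-k$), the osculating filtration of $\sigma$ climbs $\mathscr{M}_1,\dots,\mathscr{M}_m$ as $\phi^\tau$ runs up the subdiagonal $\mu,\gamma_1,\dots,\gamma_{m-1}$, then, after the corner entry $\gamma_m$ folds $L_m$ onto $L_m^{-1}$, descends through $\p(\mathscr{M}_m),\dots,\p(\mathscr{M}_1)$. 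Thus the $k$-th osculating bundle $\mathscr{L}_k$ is supported on a single $\mathscr{M}_{j(k)}$ and carries \emph{$\tau$-level} $k-1$, i.e.\ it lies in $\mathcal{E}$ when $k$ is odd and in $\tau\mathcal{E}=\p(\mathcal{E})$ when $k$ is even. This reproduces exactly the block-tridiagonal shape of $\nabla$ in \eqref{eq:nabla_decomposition2}: the diagonal blocks are the Chern connections of the $h_i$ and the off-diagonal blocks are the Higgs entries, so that $\eta_i=\gamma_{i-1}$ for $i=2,\dots,m+1$, matching the Maurer--Cartan form $\Omega$ of the developed frame $F=(\sigma,u_1,v_1,\dots,\tau\sigma)\in\SU(2m+1,\R_\tau)\cong\SL(2m+1,\R)$ written in the proof of Theorem~\ref{thm:hol_data}.

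The three conditions then follow from the signs and from $\p$ being multiplication by $\tau$. A direct computation of the Hermitian form gives $\qq(e_{m+1+k},e_{m+1+k})=h_k$ and $\qq(e_{m+1-k},e_{m+1-k})=h_k^{-1}$, both positive, while $\qq(\p v,\p v)=-\qq(v,v)$; hence $g$ is positive on each $\mathscr{M}_k$ and negative on each $\p(\mathscr{M}_k)$, so $g|_{\mathscr{L}_k}>0$ exactly when $k$ is odd, which is (i). Since $\p$ raises the $\tau$-level by one while fixing the underlying $\mathscr{M}_j$, it sends $\mathscr{L}_{2k-1}$ to $\mathscr{L}_{2m-2k+2}$, which is (ii); and because distinct osculating bundles are mutually $g$-orthogonal and $\p(\mathscr{L}_i)$ is one of them, $\sigma^*\omega|_{\mathscr{L}_i}=g(\cdot,\p\cdot)|_{\mathscr{L}_i}=0$, which is (iii). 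Superconformality is automatic because each $\eta_i=\gamma_{i-1}$ is $\C$-linear, hence conformal. The hypotheses $L_1=K^{-1}$ and $\mu=1$ guarantee that the first osculating step gives $\mathscr{L}_1=TS$, with $J_1$-compatible orientation and induced first fundamental form $\boldsymbol h_1=h_1$, so $\sigma$ is a genuine spacelike immersion. Finally, setting $\boldsymbol h_i=h_i$ and $\eta_i=\gamma_{i-1}$ turns the structure equations \eqref{eq:immersion_data_eta} of Theorem~\ref{thm:structure_eq_hol} into Hitchin's equations \eqref{eq:Hitchin_cyclic} with $\mu=1$, which hold by hypothesis; this both confirms that $\sigma$ is isotropic $\p$-alternating and pins down its structural data.

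I expect the main obstacle to be establishing the precise dictionary between the Higgs data $\mu,\gamma_1,\dots,\gamma_m$ and the Frenet data $\eta_2,\dots,\eta_{m+1}$, which requires computing the adjoint $(\phi^\tau)^{*H^\tau}$ carefully and verifying that the Maurer--Cartan form of the developed frame is \emph{literally} the $\Omega$ of \eqref{eq:nabla_decomposition2}. The delicate point is the combinatorial \emph{fold} at the corner $\gamma_m$ together with the $\tau$-parity bookkeeping: one must check that the $2m$ osculating bundles $\mathscr{L}_1,\dots,\mathscr{L}_{2m}$ are distinct and exhaust $\sigma^*T\h_\tau^{2m}$, that multiplication by $\tau$ interchanges the positive real form $\mathcal{E}$ and the negative form $\tau\mathcal{E}$ consistently along the whole sequence (so that (i) and (ii) hold simultaneously), and that the uniqueness and non-degeneracy granted by stability prevent the surface from degenerating into a lower-dimensional para-complex hyperbolic subspace. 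Once this dictionary is in place, the remaining assertions are a direct term-by-term comparison of \eqref{eq:Hitchin_cyclic} with \eqref{eq:immersion_data_eta}.
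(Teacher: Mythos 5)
Your proposal is correct and follows essentially the same route as the paper: para-complexify the Higgs bundle, develop the $\tau$-multiple of the canonical section of $\mathcal{O}_X$ by $\nabla^\tau$-parallel transport, identify the Frenet splitting with the real forms $\mathcal{L}_j$ of the line-bundle summands twisted by powers of $\tau$ (the paper's rearrangement $\mathcal{L}_1\oplus\tau\mathcal{L}_2\oplus\cdots\oplus\tau\mathcal{L}_1$ is exactly your $\tau$-level bookkeeping), and read off conditions (i)--(iii), superconformality, and the dictionary $\eta_{i}=\gamma_{i-1}$ from the block-tridiagonal Maurer--Cartan form. The "main obstacle" you flag — the explicit computation of $(\phi^\tau)^{*H^\tau}$ and of $\nabla^\tau$ on the frame $(u_j\otimes 1, v_j\otimes 1)$, including the fold at $\gamma_m$ — is precisely the computation the paper carries out in detail, and your parity analysis of $\p(\mathscr{L}_{2k-1})\cong\mathscr{L}_{2m-2k+2}$ checks out.
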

\begin{proof} Let $(E^{\tau}, \phi^{\tau}, Q^{\tau})$ be the para-complexification of the Higgs bundle $(E,\phi,Q)$, as explained in the previous section. Fix a base point $x_{0}\in X$. We identify $\mathbb{H}^{n}_{\tau}$ with the set of all vectors $v\in \mathcal{E}^{\tau}_{|_{x_{0}}}$ such that $\mathbf{q}(v,v)=-1$. We define a $\rho$-equivariant map $\sigma:\widetilde{X}\rightarrow \mathbb{H}^{2m}_{\tau}$ by $\nabla^{\tau}$-parallel transport of the section 
\[
 1\otimes e_{+} - 1\otimes e_{-} \in (\mathcal{O}_{\R} \otimes e_{+}) \oplus (\mathcal{O}_{\R} \otimes e_{-}) \subset \mathcal{E}^{\tau}_{\R} . 
\]
We are now going to verify that $\sigma$ is harmonic, superconformal, maximal, isotropic and $\mathbf{P}$-alternating (see Remark \ref{rem:maximal_spacelike}). For this calculation, we let $\ell_{i}$ be a holomorphic section of $L_{i}$ such that $H(\ell_{i}, \ell_{i})=h_{i}$ for all $i=1, \dots, m$ and $\ell_{i}^{-1}$ be their dual sections of $L_{i}^{-1}$. \\
\noindent \underline{\emph{Conformality of $\sigma$.}} We compute
\begin{align}\label{eq:first_derivatives}
    \nabla_{\partial_{z}}^{\tau}\sigma &= \phi(1) \otimes e_{+} + \phi(1)\otimes e_{-} = \ell_{1} \otimes e_{+} + \ell_{1} \otimes e_{-}  \notag \\
    \nabla_{\partial_{\bar{z}}}^{\tau}\sigma &= \phi^{*H}(1) \otimes e_{+} + \phi^{*H}(1)\otimes e_{-} = h_{1}\ell_{1}^{-1} \otimes e_{+} + h_{1}\ell_{1}^{-1} \otimes e_{-}.
\end{align}
We note that
\begin{align*}
    \mathbf{q}^{\C}(\nabla_{\partial_{z}}^{\tau}\sigma,  \nabla_{\partial_{\bar{z}}}^{\tau}\sigma) &= (\ell_{1} \otimes e_{+} + \ell_{1} \otimes e_{-})^{t}Q^{\tau}\overline{T^{\tau}(h_{1}\ell_{1}^{-1} \otimes e_{+} + h_{1}\ell_{1}^{-1} \otimes e_{-})}^{\tau} \\
    &=(\ell_{1} \otimes e_{+} + \ell_{1} \otimes e_{-})^{t}Q^{\tau}(\ell_{1} \otimes e_{-} + \ell_{1} \otimes e_{+})=0 \\
    \mathbf{q}^{\C}(\nabla_{\partial_{z}}^{\tau}\sigma , \nabla_{\partial_{z}}^{\tau}\sigma)&=(\ell_{1} \otimes e_{+} + \ell_{1} \otimes e_{-})^{t}Q^{\tau}\overline{T^{\tau}(\ell_{1} \otimes e_{+} + \ell_{1} \otimes e_{-})}^{\tau} \\
    &=(\ell_{1} \otimes e_{+} + \ell_{1} \otimes e_{-})^{t}Q^{\tau}(h_{1}\ell_{1}^{-1}\otimes e_{-} + h_{1}\ell_{1}^{-1} \otimes e_{+})=h_{1} \\ 
    \mathbf{q}^{\C}(\nabla_{\partial_{\bar{z}}}^{\tau}\sigma,\nabla_{\partial_{\bar{z}}}^{\tau}\sigma) &= (h_{1}\ell_{1}^{-1} \otimes e_{+} + h_{1}\ell_{1}^{-1} \otimes e_{-})^{t}Q^{\tau}\overline{T^{\tau}(h_{1}\ell_{1}^{-1} \otimes e_{+} + h_{1}\ell_{1}^{-1} \otimes e_{-})}^{\tau} \\
    &=(h_{1}\ell_{1}^{-1} \otimes e_{+} + h_{1}\ell_{1}^{-1} \otimes e_{-})^{t}Q^{\tau}(\ell_{1} \otimes e_{-} + \ell_{1} \otimes e_{+})=h_{1},
\end{align*}
hence the map $\sigma$ is conformal and the induced metric on $\sigma(\widetilde{X})$ is locally $2h_{1} dzd\bar{z}$. \\
\noindent \underline{\emph{Harmonicity and maximality of $\sigma$.}}
The map is harmonic because
\begin{align*}
    \nabla^{\tau}_{\partial_{\bar{z}}}\nabla^{\tau}_{\partial_{z}}\sigma &= \nabla^{\tau}_{\partial_{\bar{z}}}( \ell_{1} \otimes e_{+} + \ell_{1} \otimes e_{-})\\
    &= \phi^{*H}(\ell_{1}) \otimes e_{+} - \phi^{*H}(\ell_{1}) \otimes e_{-} \\
    &= h_{1}(1\otimes e_{+} - 1\otimes e_{-}) = h_{1}\sigma,
\end{align*}
so $\nabla^{\tau}_{\partial_{\bar{z}}}\nabla^{\tau}_{\partial_{z}}\sigma$ does not have components tangential to $\mathbb{H}^{2m}_{\tau}$. Since $\sigma$ is also conformal, this shows that $\sigma$ parameterizes a maximal surface in $\mathbb{H}^{2m}_{\tau}$.\\
\noindent \underline{\emph{The surface $\sigma(\widetilde{X})$ is isotropic and $\mathbf{P}$-alternating.}} By construction, the tangent space to $\mathbb{H}^{2m}_{\tau}$ at any point on $\sigma(\widetilde{X})$ can be identified with 
\[
    (\mathcal{L}_{m} \oplus \dots \oplus \mathcal{L}_{1}) \otimes e_{+} \oplus (\mathcal{L}_{m} \oplus \dots \mathcal{L}_{1}) \otimes e_{-},
\]
where $\mathcal{L}_{i}$ is the fixed point of the $\C$-antilinear involution $T$ restricted to $L_{i}\oplus L_{i}^{-1}$. More explicitly, $\mathcal{L}_{i}$ is the real vector bundle over $X$ generated by the sections $u_{i}=\frac{1}{\sqrt{2}}(h_{i}^{-1/2}\ell_{i}+h_{i}^{1/2}\ell_{i}^{-1})$ and $v_{i}=\frac{1}{\sqrt{2}}(ih_{i}^{-1/2}\ell_{i}-ih_{i}^{1/2}\ell_{i}^{-1})$. We introduce the following notation
\[
    \mathcal{L}_{i}:= \mathcal{L}_{i} \otimes 1 \subset E^{\tau}\ \ \ \text{and} \ \ \ \tau\mathcal{L}_{i}:= \mathcal{L}_{i} \otimes \tau \subset E^{\tau}
\]
and we rearrange the decomposition above as
\begin{equation}\label{eq:splitting2}
    \mathcal{L}_{1} \oplus \tau\mathcal{L}_{2} \oplus \dots \oplus \tau\mathcal{L}_{1} \ .
\end{equation}
Note that this splitting is $\mathbf{P}$-alternating because $\{u_{i}, v_{i}\}$ is a $\mathbf{q}^{\C}$-orthonormal frame of $\mathcal{L}_{i}$ and $\mathbf{q}^{\C}(\tau w, \tau w)=-\mathbf{q}^{\C}(w,w)$ for all $w\in \mathcal{E}^{\tau}$. In particular, all subspaces $\mathcal{L}_{i}$ and $\tau\mathcal{L}_{i}$ are isotropic for the symplectic form on $\mathbb{H}^{2m}_{\tau}$ induced by the $\tau$-imaginary part of $\mathbf{q}^{\C}$. It remains to show that the Levi-Civita connection of $\mathbb{H}^{2m}_{\tau}$ is represented by a tridiagonal matrix as in Equation \eqref{eq:nabla_decomposition2} in this frame. The Levi-Civita connection of $\mathbb{H}^{2m}_{\tau}$ can be identified with the orthogonal projection of $\nabla^{\tau}$ onto $\mathcal{L}_{1} \oplus \tau\mathcal{L}_{2} \oplus \dots \oplus \tau\mathcal{L}_{1}$, so we can express its matrix connection with respect to the above splitting by a direct computation. Indeed, 
\begin{align*}
    \nabla^{\tau}(u_{1}\otimes 1) & = \nabla^{\tau}(u_{1}\otimes e_{+} + u_{1}\otimes e_{-}) \\
    & =(D_{H}+\phi+\phi^{*H})(u_{1}) \otimes e_{+} + (D_{H}-\phi-\phi^{*H})(u_{1}) \otimes e_{-} \\
    &= \left(\frac{1}{2}\partial_{x}\log(h_{1})dy-\frac{1}{2}\partial_{y}\log(h_{1})dx\right) v_{1} \otimes 1 \\
    & +h_{2}^{1/2}h_{1}^{-1/2}\Ree(\gamma_{1})u_{2} \otimes \tau -h_{2}^{1/2}h_{1}^{-1/2}\Ima(\gamma_{1})v_{2} \otimes \tau \\
   \nabla^{\tau}(v_{1}\otimes 1) & = \nabla^{\tau}(v_{1}\otimes e_{+} + v_{1}\otimes e_{-}) \\
    &= \left(-\frac{1}{2}\partial_{x}\log(h_{1})dy+\frac{1}{2}\partial_{y}\log(h_{1})dx\right) u_{1} \otimes 1 \\
   & +h_{2}^{1/2}h_{1}^{-1/2}\Ree(\gamma_{1})v_{2} \otimes \tau +h_{2}^{1/2}h_{1}^{-1/2}\Ima(\gamma_{1})u_{2} \otimes \tau \\   \nabla^{\tau}(u_{j}\otimes 1) & = \nabla^{\tau}(u_{j}\otimes e_{+} + u_{j}\otimes e_{-}) 
    \\    
    & = \left(\frac{1}{2}\partial_{x}\log(h_{j})dy-\frac{1}{2}\partial_{y}\log(h_{j})dx\right) v_{j} \otimes 1 \\
    &+h_{j}^{1/2}h_{j-1}^{-1/2}[\Ree(\gamma_{j-1})u_{j-1}-\Ima(\gamma_{j-1})v_{j-1}] \otimes \tau \\
    &+h_{j+1}^{1/2}h_{j}^{-1/2}[\Ree(\gamma_{j})u_{j+1}+\Ima(\gamma_{j})v_{j+1}] \otimes \tau \\
    \nabla^{\tau}(v_{j}\otimes 1) & = \nabla^{\tau}(v_{j}\otimes e_{+} + v_{j}\otimes e_{-}) 
    \\    
    & = \left(-\frac{1}{2}\partial_{x}\log(h_{j})dy+\frac{1}{2}\partial_{y}\log(h_{m})dx\right) u_{j} \otimes 1 \\
    &+h_{j}^{1/2}h_{j-1}^{-1/2}[\Ree(\gamma_{j-1})v_{j-1}+\Ima(\gamma_{j-1})u_{j-1}] \otimes \tau \\
    &+h_{j+1}^{1/2}h_{j}^{-1/2}[\Ree(\gamma_{j})v_{j+1}-\Ima(\gamma_{j})u_{j+1}] \otimes \tau 
\end{align*}
for all $j=2, \dots, m-1$, and 
\begin{align*}
    \nabla^{\tau}(u_{m}\otimes 1) & = \nabla^{\tau}(u_{m}\otimes e_{+} + u_{m}\otimes e_{-}) 
    \\    
    & = \left(\frac{1}{2}\partial_{x}\log(h_{m})dy-\frac{1}{2}\partial_{y}\log(h_{m})dx\right) v_{m} \otimes 1 \\
    &+h_{m}^{-1}\Ree(\gamma_{m}) u_{m} \otimes \tau - h_{m}^{-1}\Ima(\gamma_{m}) v_{m} \otimes \tau \\
    & +h_{m-1}^{-1/2}h_{m}^{1/2}\Ree(\gamma_{m-1})u_{m-1} \otimes \tau -h_{m-1}^{-1/2}h_{m}\Ima(\gamma_{m-1})v_{m-1} \otimes \tau \\
    \nabla^{\tau}(v_{m}\otimes 1) & = \nabla^{\tau}(v_{m}\otimes e_{+} + v_{m}\otimes e_{-}) 
    \\    
    & = \left(-\frac{1}{2}\partial_{x}\log(h_{m})dy+\frac{1}{2}\partial_{y}\log(h_{m})dx\right) u_{m} \otimes 1 \\
    & -h_{m}^{-1}\Ree(\gamma_{m}) v_{m} \otimes \tau - h_{m}^{-1}\Ima(\gamma_{m}) u_{m} \otimes \tau \\
    & +h_{m-1}^{-1/2}h_{m}^{1/2}\Ree(\gamma_{m-1})v_{m-1} \otimes \tau +h_{m-1}^{-1/2}h_{m}\Ima(\gamma_{m-1})u_{m-1} \otimes \tau . 
\end{align*}
Since the para-complex structure given by multiplication by $\tau$ is parallel for $\nabla^{\tau}$, we can conclude that the matrix connection of $\nabla^{\tau}$ in the chosen frame is given by
\[
    \begin{pmatrix}
        \Omega_{1} & \Gamma_{1}^{t} & & & & & & &\\
        \Gamma_{1} & \Omega_{2} & \Gamma_{2}^{t} & & & & & &\\
        & \Gamma_{2} & \Omega_{3} & \Gamma_{3}^{t} & & & & &\\
        & & & \ddots & & & & &\\
        & & & \Gamma_{m-1} & \Omega_{m} & \Gamma_{m} & & &\\
        & & & & \Gamma_{m} & \Omega_{m} & \Gamma_{m-1} & &\\
        & & & & & \Gamma_{m-1}^{t} & \Omega_{m-1} & \Gamma_{m-2} & & \\
        & & & & & & & \ddots & \\
        & & & & & & & \Gamma_{1}^{t} & \Omega_{1} 
    \end{pmatrix} ,
\]
where 
\[
    \Omega_{j} = \begin{pmatrix}
        0 & -\omega_{j} \\
        \omega_{j} & 0
    \end{pmatrix} \ \text{with $\omega_{j}=\frac{1}{2}\partial_{x}\log(h_{j})dy-\frac{1}{2}\partial_{y}\log(h_{j})dx$} 
\]
\[
    \Gamma_{j}=\sqrt{\frac{h_{j+1}}{h_{j}}}\begin{pmatrix}
            \Ree(\gamma_{j}) & -\Ima(\gamma_{j}) \\
            \Ima(\gamma_{j}) & \Ree(\gamma_{j})
    \end{pmatrix}   
\]
for all $j=1, \dots, m-1$, and 
\[
    \Gamma_{m}=h_{m}^{-1}\begin{pmatrix}
            \Ree(\gamma_{m}) & -\Ima(\gamma_{m}) \\
            -\Ima(\gamma_{m}) & -\Ree(\gamma_{m})
    \end{pmatrix} .
\]
We observe that this expression is consistent with Equation \eqref{eq:nabla_decomposition2} after matching the bundles in the decompositions
\[
    \mathcal{L}_{1} \oplus \tau\mathcal{L}_{2} \oplus \dots \oplus \mathcal{L}_{2}\oplus \tau\mathcal{L}_{1} 
\]
\[
    \mathscr{L}_1 \oplus \mathbf{P}(\mathscr{L}_{2m-1}) \oplus \cdots \oplus \mathscr L_{2m-1} \oplus\p(\mathscr L_1)
\]
place by place and setting $\Gamma_{j}=\eta_{j+1}$: in fact in local frames in which the pull-back metric is represented by plus or minus the identity matrix (depending on whether it is positive or negative definite), the operator $-\eta_{j+1}^\dag$ is represented by $\Gamma_{j}^{t}$. 

\noindent \underline{\emph{The map $\sigma$ is superconformal.}} This follows immediately from the explicit expressions of $\Gamma_{j}$ found before since they represent scaled rotations or reflections away from the zeros of $\gamma_{j}$ and the zero map at points where $\gamma_{j}=0$.  \\
\noindent \underline{\emph{Holomorphic data.}} As explained in Theorem \ref{thm:hol_data}, the holomorphic data associated to the isotropic $\mathbf{P}$-alternating surface $\sigma$ is obtained by observing that each real rank-$2$ bundle in Equation \eqref{eq:splitting2} can be identified with an Hermitian vector bundle so that the matrices $\Gamma_{j}$ correspond to holomorphic maps. Now, the bundle $\mathcal{L}_{i}$ can be identified with $L_{i}$ through the orthogonal projection from $L_{i}\oplus L_{i}^{-1}$ to $L_{i}$. This identifies the inner product on $\mathcal{L}_{i}$ with the real part of the Hermitian metric $h_{i}$. Since multiplication by $\tau$ is holomorphic, the bundle $\tau\mathcal{L}_{i}$ is identified with $L_{i}$ as well. Under these identifications, the matrices $\Gamma_{j}$ correspond to the holomorphic maps $\gamma_{j}$, thus the holomorphic data associated to $\sigma$ is exactly the data $(L_{1}, \dots, L_{m}, \gamma_{1}, \dots, \gamma_{m})$ used to construct the Higgs bundle $(E,\phi,Q)$.
\end{proof}\begin{remark}
A priori, the surfaces that we can construct starting from the Higgs bundle in Theorem \ref{thm:existence} do not necessarily satisfy assumption (\ref{eq:assumption}), which ensures the property of infinitesimal rigidity (see Section \ref{sec:inf_rigidity}). When the condition is satisfied, from Theorem \ref{thm:estimates}, it follows that the induced metric on such surfaces has strictly negative curvature, since it coincides with \( h_1 \). On the other hand, if the Higgs bundle is such that \( L_i \cong K^{-i} \) for every \( i=1,\dots,m \) (see Definition \ref{def:Higgs_bundles_Hitchin_component}), then property (\ref{eq:assumption}) is automatically satisfied, and the metric \( h_1 \) induced on the surface in \( \mathbb{H}_\tau^{2m} \) always has strictly negative curvature. This generalizes a well-known phenomenon for \( m=1 \), where the induced metric on the space-like maximal Lagrangian in \( \mathbb{H}_\tau^2 \) has negative curvature, as it coincides with the Blaschke metric of the associated hyperbolic affine sphere (\cite{simon1990local, changping1990some, li2019introduction, RT_bicomplex}).
\end{remark}
\noindent Recently, Baraglia (\cite{baraglia2010g2}) introduced the concept of a harmonic sequence starting with a minimal map from a Riemann surface into a symmetric space of indefinite signature, by adapting the Riemannian setting (\cite{calabi1967minimal, eells1992harmonic}). In this latter case, particularly when the ambient space is Kähler, it has been showed by Wood (\cite{wood1984holomorphic}) that given a minimal map whose harmonic sequence satisfies a certain condition, one can derive a holomorphic differential on the surface. In our case, starting from \(\sigma:\widetilde{X} \to \h_\tau^{2m}\) as in Theorem \ref{thm:existence}, we can construct a harmonic sequence in the following inductive manner:  
\[
\sigma_0 := \sigma, \quad \sigma_1 := \nabla^\tau_{\partial_z} \sigma, \quad \dots,\quad \sigma_{k+1} := \nabla^\tau_{\partial_z} \sigma_k=\underbrace{\nabla_{\partial_z}^\tau\cdots\nabla_{\partial_z}^\tau}_{\text{(k+1)-times}}\sigma .
\] It is important to note that this definition differs from the one given by Baraglia in \cite[\S 2.4.1]{baraglia2010g2}, but it similarly allows us to derive a holomorphic differential on the surface if a certain condition is satisfied, as we will explain. 
Using calculations similar to those in the proof of Theorem \ref{thm:existence}, one can compute the terms of the harmonic sequence and observe that the following identity holds:  
\begin{align*}
& \sigma_k=\gamma_1\cdots\gamma_{k-1}\big(l_k\otimes e_++(-1)^{k-1}l_k\otimes e_-\big)\quad (\text{mod \ } \C_\tau\sigma_{k-1}) \\
&\boldsymbol{q}^\mathbb C\big(\sigma_k,\overline{\sigma_j}\big)=0,\quad \text{for all} \ \ k+j\le 2m, \\ & \boldsymbol{q}^\mathbb C\big(\sigma_{m+1},\overline{\sigma_m}\big)=-q_{2m+1}, \qquad\nabla_{\partial_{\bar z}}\boldsymbol{q}^\mathbb C\big(\sigma_{m+1},\overline{\sigma_m}\big)=0.
\end{align*}where “mod \(\mathbb{C}_\tau \sigma_{k-1}\)” means that the equality holds up to additional linear combinations of \(\sigma_{k-1}\) with coefficients in \(\mathbb{C}_\tau\). In other words, we are stating that the isotropic order of the harmonic map \(\sigma\) is \(2m\) and that \(\boldsymbol{q}^\mathbb C\big(\sigma_{m+1},\overline{\sigma_m}\big)\) is holomorphic. 
The construction of holomorphic differentials from harmonic maps is, in fact, a much more general phenomenon that holds for immersions into para-Kähler manifolds that are not necessarily locally isomorphic to \(\h_\tau^{2m}\). We defer the general statement and the full proof with all details to Appendix \ref{sec:appendix}.

\subsection{Gauss map}\label{sec:4.4} In this section we are going to show that a natural Gauss map associated to an isotropic $\mathbf{P}$-alternating surface in $\mathbb{H}^{2m}_{\tau}$ gives rise to a conformal harmonic map to the symmetric space $\mathrm{SL}(2m+1,\R)/\SO(2m+1)$. In particular, we use the isomorphism \(\SL(2m+1,\R) \cong \SU(2m+1,\R_\tau)\) explained in Section \ref{sec:2.2} to provide a new description of the symmetric space in terms of para-complex geometry. \\

\noindent Classically, the symmetric space $\mathbb X_m=\mathrm{SL}(2m+1,\R)/\SO(2m+1)$ is identified with the space $\mathcal{M}$ of positive definite inner products on $\mathbb{R}^{2m+1}$ with unit determinant.
We give here a new interpretation in terms of the geometry of $\mathbb{H}^{2m}_{\tau}$. Let $\mathcal{N}$ be the space of totally geodesic Lagrangian subspaces of $\mathbb{H}^{2m}_{\tau}$ such that the restriction of the pseudo-Riemannian metric $g$ of $\mathbb{H}^{2m}_{\tau}$ is negative definite. Note that these are obtained by intersecting with $\mathbb{H}^{2m}_{\tau}$ Lagrangian, negative definite $(2m+1)$-planes in $\mathbb{R}^{2m+1}_{\tau}$ and then taking the quotient by the action of $\mathcal{U}$. 

\begin{theorem}\label{thm:symmetric_space} As homogeneous space, $\mathcal{N}$ is diffeomorphic to $\mathbb X_m$.    
\end{theorem}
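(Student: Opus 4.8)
The plan is to identify $\mathcal N$ with the classical model of $\mathbb X_m$ as the space of unit-determinant positive-definite inner products on $\R^{2m+1}$, and then to recognize the action of $\SU(2m+1,\R_\tau)\cong\SL(2m+1,\R)$ as the usual one, so that the homogeneous-space statement follows from the orbit--stabilizer theorem. First I would exploit the idempotent polarization $\R^{2m+1}_\tau=U_+\oplus U_-$ with $U_\pm:=\R^{2m+1}e_\pm$. A short computation in the idempotent coordinates of Section \ref{sec:2.1} shows that $\hat g=\Ree(\mathbf q)$ and $\hat\omega=\Ima(\mathbf q)$ both vanish on $U_+$ and on $U_-$ separately, while $Q$ pairs the two summands non-degenerately. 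Since $U_-$ is $\hat g$-null, any $g$-negative-definite plane must intersect it trivially; hence every negative-definite Lagrangian $V\subset\R^{2m+1}_\tau$ is transverse to $U_-$ and is the graph $V_M=\{\,z\,e_++QMz\,e_-\ :\ z\in\R^{2m+1}\,\}$ of a linear operator over $U_+$.

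Next I would translate the two defining conditions into conditions on $M$. The Lagrangian condition $\hat\omega|_{V_M}=0$ is equivalent to $QM$ being symmetric, i.e. (using $Q^2=\Id$) to $M=M^t$; in that case $\hat g|_{V_M}$ is represented by the symmetric matrix $M$, and negative-definiteness of $g$ amounts to $M<0$. Thus negative-definite Lagrangian planes are parametrized exactly by symmetric negative-definite matrices $M$.

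The key step is to understand the $\mathcal U$-quotient. A unit $u=\cosh t+\tau\sinh t$ acts on $U_\pm$ as multiplication by $e^{\pm t}$, whence $u\cdot V_M=V_{e^{-2t}M}$; equivalently, $V_{\lambda M}$ and $V_M$ have the same image in $\h^{2m}_\tau=\{\mathbf q=-1\}/\mathcal U$ for every $\lambda>0$. Therefore $\mathcal N$ is in bijection with the rays of symmetric negative-definite matrices, and normalizing $-M$ to have determinant $1$ exhibits $\mathcal N$ as the space of unit-determinant positive-definite inner products, that is, the standard model of $\mathbb X_m$. Finally, using $\Psi(A)=Ae_++Q(A^{-1})^tQ\,e_-$ from Proposition \ref{prop:iso_group}, I would compute that $\Psi(A)$ carries $V_M$ to $V_{M'}$ with $M'=(A^{-1})^tMA^{-1}$, which is precisely the standard $\SL(2m+1,\R)$-action on inner products. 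Transitivity on positive-definite unit-determinant forms is classical, and the stabilizer of the ray of $-\Id$ consists of the $A\in\SL(2m+1,\R)$ with $AA^t=\Id$, namely $\SO(2m+1)$; the orbit--stabilizer theorem then gives the $G$-equivariant identification $\mathcal N\cong\SL(2m+1,\R)/\SO(2m+1)=\mathbb X_m$.

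I expect the main obstacle to be the third step: verifying rigorously that the $\mathcal U$-quotient implements exactly the projectivization $M\sim\lambda M$ and introduces no further identifications, so that the correspondence $\mathcal N\leftrightarrow\mathbb X_m$ is a well-defined bijection, together with keeping the sign and transpose bookkeeping consistent so that the induced action genuinely comes out as $N\mapsto(A^{-1})^tNA^{-1}$ and not its inverse or transpose. Smoothness of the correspondence in both directions, hence that it is a diffeomorphism rather than a mere set-theoretic bijection, will then follow automatically from $G$-equivariance once transitivity and the stabilizer have been pinned down.
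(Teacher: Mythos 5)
Your argument is correct, and it takes a genuinely different route from the paper's. The paper works frame-by-frame: it fixes $N\in\mathcal{N}$ and $p\in N$, uses the orthogonal decomposition $\R^{2m+1}_{\tau}=\R\cdot p\perp\tau(\R\cdot p)\perp T_{p}N\perp\tau(T_{p}N)$, observes that an isometry (commuting with multiplication by $\tau$) is determined by its effect on $p$ and on an orthonormal frame of $T_{p}N$, and reads off transitivity and the stabilizer $\SO(2m+1)$ directly from the fact that $\R\cdot p\oplus T_{p}N$ is a negative definite $(2m+1)$-plane; it never exhibits the bijection with the classical model $\mathcal{M}$. Your graph construction over $U_{+}=\R^{2m+1}e_{+}$ does exhibit it, and your computations check out: $\hat{g}|_{V_M}=\tfrac12(M+M^{t})$ and $\hat{\omega}|_{V_M}=\tfrac12(M-M^{t})$ in your parametrization, the unit $e^{t}e_{+}+e^{-t}e_{-}$ sends $V_{M}$ to $V_{e^{-2t}M}$ (and the other component of $\mathcal{U}$ gives $V_{e^{2t}M}$, so the orbit is exactly the positive ray), and $\Psi(A)V_{M}=V_{(A^{-1})^{t}MA^{-1}}$. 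What your approach buys is the explicit $\SL(2m+1,\R)$-equivariant identification $\mathcal{N}\cong\mathcal{M}$, $N\mapsto$ the normalized inner product $-M$, which the paper only asserts later (in the proof of Theorem \ref{thm:Gauss_map} it identifies $\mathcal{N}$ with $\mathcal{M}$ via $\epsilon_{+}$ without computation), and it hands you the smooth structure on $\mathcal{N}$ for free; the paper's proof is shorter and avoids the sign and transpose bookkeeping. The one point you flag as an obstacle — that passing from negative definite Lagrangian $(2m+1)$-planes modulo $\mathcal{U}$ to their intersections with $\h^{2m}_{\tau}$ introduces no extra identifications — is equally implicit in the paper (it is exactly the unproved ``Note that these are obtained by\dots'' preceding the theorem), and is easily settled by recovering $V$ from a point $\hat p$ of a lift of $N$ together with $T_{\hat p}(V\cap\{\qq=-1\})$; so it is not a gap specific to your route.
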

\begin{proof} It is sufficient to show that $\SL(2m+1,\R)$ acts transitively on $\mathcal{N}$ with stabilizer $\SO(2m+1)$.
Let $N \in \mathcal{N}$. We observe that we have a $g$-orthogonal decomposition
\[
        T\mathbb{H}^{2m}_{\tau} = TN \perp \mathbf{P}TN
\]
because the restriction of $g$ to $\mathbf{P}TN$ is positive definite (and thus $TN \cap \mathbf{P}TN = \{0\}$) and 
\[
    g(v, \mathbf{P}w)=\omega(v,w) = 0
\]
for all $v,w \in TN$ since $N$ is a Lagrangian submanifold. As a consequence, we also find that for all $p\in N$
\[
    \mathbb{R}^{2m+1}_{\tau} = \mathbb{R} \cdot p \perp \tau(\mathbb{R} \cdot p) \perp T_{p}N \perp \tau(T_{p}N).
\]
Since isometries of $\mathbb{H}^{2m}_{\tau}$ commute with the multiplication by $\tau$, an isometry preserving $N$ is completely determined by its values on $p$ and on a fixed orthonormal frame of $T_{p}N$. Since the $(2m+1)$-plane generated by $T_{p}N$ and $p$ is negative definite, the stabilizer of $N$ is isomorphic to $\SO(2m+1)$. Moreover, the group of isometries acts transitively on $\mathcal{N}$ by the following argument: if $N'\in \mathcal{N}$ is another totally geodesic Lagrangian, negative definite subspace, then we have another orthogonal decomposition
\[
    \mathbb{R}^{2m+1}_{\tau} = \mathbb{R} \cdot p' \perp \tau(\mathbb{R} \cdot p') \perp T_{p'}N' \perp \tau(T_{p'}N')
\]
with $p'\in N'$. As isometry of $\mathbb{H}^{2m}_{\tau}$ sending $N$ to $N'$ can be constructed as the linear map sending $p$ to $p'$ and an orthonormal basis of $T_{p}N$ to one of $T_{p'}N'$.
\end{proof}

\noindent With this result in mind, we introduce the following definition of Gauss map:

\begin{defi} Let $\sigma:\widetilde{X} \rightarrow \mathbb{H}^{2m}_{\tau}$ be an isotropic $\mathbf{P}$-alternating immersion with Frenet splitting $ \sigma^{*}T \mathbb{H}^{2m}_{\tau} = \mathscr{L}_{1} \oplus \mathbf{P}(\mathscr{L}_{2}) \oplus \dots \oplus \mathscr{L}_{2} \oplus \mathbf{P}(\mathscr{L}_{1})$. The Gauss map of $\sigma$ is defined as $G_{\sigma}:\widetilde{X} \rightarrow \mathcal{N}$ such that $G_{\sigma}(x)$ is the totally geodesic, Lagrangian, negative definite subspace $N_{\sigma(x)}$ tangent to $\mathbf{P}(\mathscr{L}_{1}) \oplus \dots \oplus \mathbf{P}(\mathscr{L}_{m})$ at $\sigma(x)$. In particular, $N_{\sigma(x)}$ is orthogonal to the surface $\sigma(\widetilde{X})$ at $\sigma(x)$.
\end{defi}

\begin{theorem}\label{thm:Gauss_map} The Gauss map $G_{\sigma}$ of an isotropic $\mathbf{P}$-alternating immersion $\sigma:\widetilde{X} \rightarrow \mathbb{H}^{2m}_{\tau}$ is conformal and harmonic, thus it parameterizes a minimal surface in the symmetric space $\SL(2m+1,\R)/\SO(2m+1)$.
\end{theorem}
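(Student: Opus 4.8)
The plan is to realize $G_\sigma$ as the base-point projection of the moving frame already built in the proof of Theorem~\ref{thm:hol_data}, and then to read off conformality and harmonicity directly from the Maurer--Cartan equations. First I would take the locally defined lift $F=(\sigma,u_1,v_1,\dots,u_{2m},v_{2m},\tau\sigma)$ with values in $\SU(2m+1,\R_\tau)\cong\SL(2m+1,\R)$, so that $dF=F\cdot\Omega$ with $\Omega$ the pullback of the Maurer--Cartan form. Using the model $\mathbb{X}_m\cong\mathcal{N}$ of Theorem~\ref{thm:symmetric_space}, the negative directions $\{\sigma,u_{\mathrm{even}},v_{\mathrm{even}}\}$ span precisely the negative-definite Lagrangian $(2m+1)$-plane $W$ tangent to $\mathbf{P}(\mathscr{L}_1)\oplus\cdots\oplus\mathbf{P}(\mathscr{L}_m)$, so that $G_\sigma=\pi\circ F$, where $\pi:\SL(2m+1,\R)\to\mathbb{X}_m$ is the quotient projection; in other words $F$ is a frame adapted to the reduction defining $G_\sigma$.

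The key preliminary step is the Cartan decomposition relative to this frame. The point $W=G_\sigma(x)$ is fixed by the involution $\iota$ equal to $+\Id$ on $W$ and $-\Id$ on $\mathbf{P}W$, which is diagonal in the frame $F$ (sign $+$ on $\sigma,u_{\mathrm{even}},v_{\mathrm{even}}$ and $-$ on $\tau\sigma,u_{\mathrm{odd}},v_{\mathrm{odd}}$). Splitting $\Omega=\Omega_{\mathfrak{k}}+\Omega_{\mathfrak{p}}$ accordingly, the $\mathfrak{k}$-part is exactly the block-diagonal piece $\diag(0,\Omega_1,\dots,\Omega_m,\dots,\Omega_1,0)$ of metric connections, which preserves both $W$ and $\mathbf{P}W$, while the $\mathfrak{p}$-part collects all off-diagonal entries, namely $\theta$ together with $\boldsymbol\eta_2,\dots,\boldsymbol\eta_{m+1}$ and their adjoints. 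I would then set $\nabla_{\mathfrak{k}}=d+\mathrm{ad}(\Omega_{\mathfrak{k}})$.

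For harmonicity I would invoke the standard moving-frame criterion: a map from a Riemann surface into a symmetric space is harmonic if and only if $d^{\nabla_{\mathfrak{k}}}\!\star\Omega_{\mathfrak{p}}=0$, which on a curve amounts to $\bar\partial^{\nabla_{\mathfrak{k}}}\Omega_{\mathfrak{p}}'=0$ for the $(1,0)$-part $\Omega_{\mathfrak{p}}'$. This is immediate from the Codazzi block \eqref{eq:Maurer_Cartan_second_block}, which reads $d^{\nabla_{\mathfrak{k}}}\Omega_{\mathfrak{p}}=0$: for degree reasons $d^{\nabla_{\mathfrak{k}}}\Omega_{\mathfrak{p}}=\bar\partial^{\nabla_{\mathfrak{k}}}\Omega_{\mathfrak{p}}'+\partial^{\nabla_{\mathfrak{k}}}\Omega_{\mathfrak{p}}''$ is the sum of a $(1,1)$-form and its conjugate, so both terms vanish and $\Omega_{\mathfrak{p}}'$ is $\nabla_{\mathfrak{k}}$-holomorphic. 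Since this uses only the Maurer--Cartan equations, it holds for \emph{every} isotropic $\mathbf{P}$-alternating immersion, with no nondegeneracy hypothesis. For conformality I would note that, by superconformality (Definition~\ref{def:superconformal}), each $\eta_i$ is a $(1,0)$-form and its adjoint is $(0,1)$, while $\theta^{1,0}$ carries $\sigma$ into $\mathscr{L}_1$; hence in the ordered frame $(\sigma,\mathscr{L}_1,\dots,\mathscr{L}_{2m},\tau\sigma)$ the endomorphism $\Omega_{\mathfrak{p}}'(\partial_z)$ is strictly grading-raising (a cyclic nilpotent, precisely the para-complexified Higgs field). Therefore $\Omega_{\mathfrak{p}}'(\partial_z)^2$ has vanishing diagonal, the Hopf differential $\trace\big(\Omega_{\mathfrak{p}}'(\partial_z)^2\big)$ is zero, and $G_\sigma$ is conformal; being also harmonic, it parameterizes a minimal surface in $\mathbb{X}_m$.

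The hard part will be the bookkeeping in the first two steps: checking that $F$ is genuinely adapted to the reduction defining $G_\sigma$ under the isomorphism $\SU(2m+1,\R_\tau)\cong\SL(2m+1,\R)$, and that the $\tau$-twisted rewriting of the bottom-right block in \eqref{eq:nabla_decomposition2} neither disturbs the clean splitting $\Omega=\Omega_{\mathfrak{k}}\oplus\Omega_{\mathfrak{p}}$ nor the grading-raising property of $\Omega_{\mathfrak{p}}'$. Once the adapted frame and the Cartan splitting are pinned down, both conformality and harmonicity become formal consequences of the structure equations already established.
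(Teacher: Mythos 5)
Your route is genuinely different from the paper's: the paper lifts $G_\sigma$ to $\tilde G_\sigma$ with values in $\SU(2m+1,\R_\tau)$, identifies it via the isomorphism $\epsilon_+$ with the harmonic-metric map $\mathcal{H}$ of the associated cyclic Higgs bundle, and then quotes the non-abelian Hodge correspondence for both harmonicity and conformality. You instead propose a self-contained moving-frames argument. Your identification of the adapted frame, the Cartan splitting $\Omega=\Omega_{\mathfrak{k}}+\Omega_{\mathfrak{p}}$ (the off-diagonal blocks connect consecutive $\mathscr{L}_i$'s, which alternate between $W$ and $\mathbf{P}W$, so they are indeed all in $\mathfrak{p}$), and the conformality argument via the cyclic grading of $\Omega_{\mathfrak{p}}'(\partial_z)$ and the vanishing of $\trace\big(\Omega_{\mathfrak{p}}'(\partial_z)^2\big)$ are all sound, and this would be an attractive alternative to invoking Corlette--Simpson.

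However, the harmonicity step as written has a genuine gap. The Codazzi block gives $d^{\nabla_{\mathfrak{k}}}\Omega_{\mathfrak{p}}=0$ for free from flatness, but this does \emph{not} imply $\bar\partial^{\nabla_{\mathfrak{k}}}\Omega_{\mathfrak{p}}'=0$. Writing $\Omega_{\mathfrak{p}}=A\,dz+B\,d\bar z$, one computes $d^{\nabla_{\mathfrak{k}}}\Omega_{\mathfrak{p}}=(\nabla_{\bar z}A-\nabla_{z}B)\,d\bar z\wedge dz$ while $d^{\nabla_{\mathfrak{k}}}\!\star\!\Omega_{\mathfrak{p}}=-i(\nabla_{\bar z}A+\nabla_{z}B)\,d\bar z\wedge dz$; since $\Omega_{\mathfrak{p}}$ is real, $B=\bar A$, and Codazzi only forces $\nabla_{\bar z}A$ to equal its own conjugate, not to vanish. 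A $(1,1)$-form plus its conjugate can cancel without either term vanishing, so the "degree reasons" deduction fails: harmonicity is the \emph{additional} condition $\nabla_{\bar z}A=0$. What rescues the argument is precisely Theorem \ref{thm:hol_data}(3): each $\boldsymbol\eta_i$ is purely of type $(1,0)$ (so that $d^{\nabla}\boldsymbol\eta_i=\bar\partial^{\nabla}\boldsymbol\eta_i$ and the second equation in \eqref{eq:Maurer_Cartan_second_block} then kills it), while $\boldsymbol\eta_i^t$ is purely $(0,1)$. That fact is not a consequence of superconformality alone, as you assert; it is extracted from the isotropy equations $\boldsymbol\eta_i\wedge\boldsymbol\eta_{i-1}=0$ in \eqref{eq:Maurer_cartan_third_block} together with the hypothesis that $\sigma(S)$ is not contained in a codimension-$4$ para-complex hyperbolic subspace. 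So your claim that the proof needs "no nondegeneracy hypothesis" is unjustified; once you route the $\nabla_{\mathfrak{k}}$-holomorphicity of $\Omega_{\mathfrak{p}}'$ through Theorem \ref{thm:hol_data} (handling the degenerate case by restriction to the totally geodesic $\h_\tau^{2m-2}$ containing the surface), the argument closes.
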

\begin{proof} By Theorem \ref{thm:existence}, we can assume that the immersion $\sigma$ comes from the data of an $\SL(2m+1,\R)$-Higgs bundle $(E,\phi,Q)$ over $X$. Fix a base point $z_{0} \in \widetilde{X}$ and, using the same notation as in the proof of the aforementioned theorem, fix the local $H^{\tau}$- and $\mathbf{q}$-orthonormal frame $\mathcal{F}(z)=\{\sigma(z)\otimes \tau, u_{1}(z) \otimes 1, v_{1}(z)\otimes 1, \dots, u_{m}(z)\otimes 1, v_{m}(z) \otimes 1\}$ of $\mathcal{E}^{\tau}$ in a neighborhood $U$ of $z_{0}$. Let $P_{z_{0}}^{z}$ denote the $\nabla^{\tau}$-parallel transport from $z_{0}$ to $z$. The map
\[
    \tilde{G}_{\sigma}: U \rightarrow \SU(2m+1,\R_{\tau})
\]
\[
    \tilde{G}_{\sigma}(z) = P_{z_{0}}^{z}\begin{pmatrix} \sigma(z_{0}) \otimes \tau & u_{1}(z_{0}) \otimes 1 & v_{1}(z_{0})\otimes 1 \dots & u_{m}(z_{0}) \otimes 1 & v_{m}(z_{0}) \otimes 1
            \end{pmatrix} ,
\]
obtained by constructing a matrix whose columns are the coordinates of the parallel transport of the vectors in $\mathcal{F}(z_{0})$ with respect to $\mathcal{F}(z)$, is a local lift of $G_{\sigma}$, in the sense that the diagram
\[
    \begin{tikzcd}
    U \arrow{r}{\tilde{G}_{\sigma}} \arrow{rd}[swap]{G_{\sigma}} & \SU(2m+1,\R_{\tau}) \arrow{d}{\pi}\\
    &  \mathcal{N}
    \end{tikzcd}
\]
commutes. Here $\pi:\SU(2m+1,\R_{\tau}) \rightarrow \mathcal{N}$ is the natural projection that associates to a matrix with columns $v_{1}, \dots, v_{2m+1}$ the totally geodesic, Lagrangian, negative definite subspace
\[
    \mathrm{Span}_{\R}(v_{1}\otimes \tau, \dots, v_{2m+1} \otimes \tau) \cap \mathbb{H}^{2m}_{\tau} \ . 
\]
On the other hand, if we post-compose $\tilde{G}_{\sigma}$ with the isomorphism 
\begin{align*}
    \epsilon_{+}:\SU(2m+1,\R_{\tau}) &\rightarrow \SL(2m+1,\R) \\   A&\mapsto Ae_{+} \ ,
\end{align*} 
we see that $\epsilon_{+} \circ \tilde{G}_{\sigma}$ also projects to 
\[
    \mathcal{H}:U \rightarrow \mathcal{M} \ \ \ \ \ \ \mathcal{H}(z):= [(\epsilon_{+} \circ \tilde{G}_{\sigma}(z))^{-1}]^{t}(\epsilon_{+} \circ \tilde{G}_{\sigma}(z))^{-1}, 
\]
which at each point $z$ represents the Hermitian metric $H$ on $(\mathcal{E}_{\R} \otimes e_{+}) \subset E^{\tau}$ with respect to the projection onto $\mathcal{E}_{\R} \otimes e_{+}$ of the parallel frame $P_{z_{0}}^{z}(\mathcal{F}(z_{0}))$. Again, this means that the diagram
\[
    \begin{tikzcd}
    U \arrow{r}{\epsilon_{+}\circ \tilde{G}_{\sigma}} \arrow{rd}[swap]{\mathcal{H}} & \SL(2m+1,\R) \arrow{d}{\pi}\\
    &  \mathcal{M}
    \end{tikzcd}
\]
commutes. This shows that if we identify the models $\mathcal{M}$ and $\mathcal{N}$ of the symmetric space $\mathbb X_m$ using the isometry induced by the isomorphism of Lie groups $\epsilon_{+}$, the maps $\mathcal{H}$ and $G_{\sigma}$ coincide:
\[
    \begin{tikzcd}
    U \arrow{r}{\tilde{G}_{\sigma}} \arrow{rd}[swap]{G_{\sigma}} \arrow[bend right=50]{drr}{\mathcal{H}}& \SU(2m+1,\R_{\tau}) \arrow{d}{\pi} \arrow{r}{\epsilon_{+}} & \SL(2m+1,\R) \arrow{d}{\pi}  \\
    &  \mathcal{N} \arrow{r}{\cong} & \mathcal{M}
    \end{tikzcd}
\]
Since by the non-abelian Hodge correspondence (\cite{Simpson_Higgs}, \cite{Corlette}), the map $\mathcal{H}$ is harmonic and conformal, the same holds for $G_{\sigma}$.
\end{proof}

\section{Application to higher rank Teichm\"uller theory} 
\noindent Let $\Sigma$ be a closed oriented smooth surface of genus $g\ge 2$. In this final section, we focus on the application to the study of surface group representations into \(\SL(2m+1,\R)\), considered as the isometry group of para-complex hyperbolic space. After a brief review of the general theory, we initially concentrate on the study of the moduli space of isotropic \(\p\)-alternating surfaces with a fixed Riemann surface structure, using the theory of cyclic Higgs bundles. Later, exploiting the infinitesimal rigidity of such surfaces, we show that the holonomy map is an immersion for \(m \geq 2\) and that it is a diffeomorphism onto a connected component of the representation space when \(m=1\), thus recovering the Labourie-Loftin parameterization of the Hitchin component. Finally, the last application concerns the construction of geometric structures in the case \(m=1\). In particular, we recover the structures defined by Guichard-Wienhard (\cite{guichard2012anosov}) with a completely different method.

\subsection{The representation space}
This section aims to introduce the space of surface group representations into \(\mathrm{SL}(2m+1,\mathbb{R})\), explaining how to topologically distinguish the various connected components discovered by Hitchin (\cite{hitchin1992lie}), and introduce some background materials on the topic. \\ \\ 
The Teichmüller space \(\mathcal{T}(\Sigma)\) of $\Sigma$ is defined as the set of (quasi-)complex structures on \(\Sigma\) compatible with the fixed orientation, modulo the action of the group of diffeomorphisms isotopic to the identity, denoted by \(\Diff_0(\Sigma)\). The Poincaré-Koebe uniformization theorem allows \(\mathcal{T}(\Sigma)\) to be identified with the space of representations \(\rho: \pi_1(\Sigma) \to \PSL(2, \R)\) that are discrete and faithful (also known as \emph{Fuchsian representations}), modulo conjugation. In fact, the Teichmüller space corresponds to one of the two connected components of this representation space.\\ \\
Much more in general, we can consider the representation space of surface groups into \(\mathrm{SL}(2m+1,\mathbb{R})\), namely the quotient of all completely reducible group homomorphisms \(\rho: \pi_1(\Sigma) \to \mathrm{SL}(2m+1,\mathbb{R})\) by the conjugation action, which will be denoted by \(\chi_m(\Sigma):=\chi(\pi_{1}(\Sigma), \SL(2m+1, \R))\). This space is endowed with an induced topology and a real analytic structure such that, at smooth points, it has dimension \(-4m(m+1)\chi(\Sigma)\) (\cite{goldman1984symplectic}). Using the non-abelian Hodge correspondence and a Morse type function on the moduli space of Higgs bundles, Hitchin (\cite{hitchin1992lie}) succeeded in counting the connected components of the space \(\chi_m(\Sigma)\). \begin{theorem}[\cite{hitchin1992lie}]
The variety $\chi_m(\Sigma)$ has three connected components: the one containing the class of the trivial representation, the one consisting of representations whose associated flat $\R^{2m+1}$-bundles have non-zero second Stiefel-Whitney class and the one consisting of representations connected to those arising as uniformization. Moreover, the third one is contained in the smooth locus of $\chi_m(\Sigma)$ and it is diffeomorphic to $\R^{-4m(m+1)\chi(\Sg)}$.
\end{theorem}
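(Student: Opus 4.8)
The plan is to transport the statement to the Higgs bundle side through the non-abelian Hodge correspondence and then run Hitchin's Morse theory with the Hitchin functional. By the correspondence recalled in Section \ref{sec:4.1}, $\chi_m(\Sigma)$ is homeomorphic to the moduli space $\mathcal{M}$ of polystable $\SL(2m+1,\R)$-Higgs bundles over a fixed Riemann surface $X=(\Sigma,J)$, an identification that preserves connected components and the smooth locus. On $\mathcal{M}$ I would study the Hitchin functional $f(E,\phi)=\|\phi\|^{2}_{L^{2}}$, computed with the harmonic metric solving \eqref{eq:Hitchin_general}. The two features that make the method work are that $f$ is proper and bounded below, and that it is the moment map for the circle action $\phi\mapsto e^{i\theta}\phi$, hence a Morse--Bott function whose critical submanifolds are the fixed points of the induced $\C^{\ast}$-action, i.e. the complex variations of Hodge structure.

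First I would isolate the topological invariant. Since $\SL(2m+1,\R)$ deformation retracts onto its maximal compact $\SO(2m+1)$, every flat $\R^{2m+1}$-bundle reduces to an $\SO(2m+1)$-bundle, whose only characteristic class over a surface is $w_{2}\in H^{2}(\Sigma,\Z_{2})\cong\Z_{2}$ (orientability forces $w_{1}=0$), so $\mathcal{M}$ splits into at most two topological types. On the representation-variety side, the classical count $\pi_{0}\big(\Hom(\pi_{1}\Sigma,\SO(2m+1))\big)\cong\pi_{1}(\SO(2m+1))=\Z_{2}$ shows that the locus of flat unitary connections --- equivalently the critical submanifold $\{\phi=0\}$ --- has exactly two connected components, indexed by $w_{2}$, the $w_{2}=0$ one containing the trivial representation.

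The analytic heart is the Hessian computation at the $\C^{\ast}$-fixed points with $\phi\neq 0$. Because each critical submanifold has even Morse index (the index is read off from the weights of the circle action), attaching the corresponding cells never affects $\pi_{0}$, so $\pi_{0}(\mathcal{M})$ equals the number of connected components of the locus of local minima of $f$. It thus remains to identify which fixed points are local minima: the whole $\{\phi=0\}$ locus always is, and Hitchin's eigenvalue estimate for the Hessian shows that the only local minimum with $\phi\neq 0$ is the Fuchsian point --- the cyclic Higgs bundle of Definition \ref{def:Higgs_bundles_Hitchin_component} with all higher differentials vanishing, arising from the principal $\SL(2,\R)\hookrightarrow\SL(2m+1,\R)$. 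This contributes one further component, for a total of three; by construction it lies in the $w_{2}=0$ type yet is disjoint from the trivial component, which explains the third, \enquote{uniformization} component. I expect this positivity/negativity bookkeeping --- proving that no other fixed point with $\phi\neq 0$ is a local minimum --- to be the main obstacle, since it requires a case analysis of the weight decomposition of $\mathrm{End}_{0}(E)$ at each Hodge bundle.

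Finally, for smoothness and the diffeomorphism type of the third component I would invoke the Hitchin section $s:\bigoplus_{j=2}^{2m+1}H^{0}(X,K^{j})\to\mathcal{M}$, a real-analytic embedding whose image is exactly this component. Its source is a complex vector space, and Riemann--Roch gives $\sum_{j=2}^{2m+1}\dim_{\C}H^{0}(X,K^{j})=(g-1)\big((2m+1)^{2}-1\big)=4m(m+1)(g-1)$, so the real dimension is $8m(m+1)(g-1)=-4m(m+1)\chi(\Sigma)$, whence the component is diffeomorphic to $\R^{-4m(m+1)\chi(\Sigma)}$. Smoothness is immediate because these Higgs bundles correspond to irreducible representations, which are smooth points of $\chi_m(\Sigma)$.
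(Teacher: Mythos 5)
The paper does not prove this statement: it is quoted verbatim from Hitchin's \emph{Lie groups and Teichm\"uller space} and used as background, so there is no internal proof to compare against. Your plan is essentially a reconstruction of Hitchin's original argument (non-abelian Hodge, properness of $f=\|\phi\|^2_{L^2}$, classification of local minima, the $w_2$ dichotomy, the Hitchin section for the diffeomorphism type), and all of the individual ingredients you name are the right ones; the dimension count and the smoothness claim are also correct, since $2m+1$ is odd, the center of $\SL(2m+1,\R)$ is trivial, and Hitchin representations are absolutely irreducible.

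There is, however, one genuine gap: the step ``each critical submanifold has even Morse index, attaching the corresponding cells never affects $\pi_0$, so $\pi_0(\mathcal{M})$ \emph{equals} the number of components of the minimum locus.'' A Morse--Bott handle decomposition of this kind presupposes smoothness, and $\mathcal{M}$ is singular precisely along the strictly polystable locus, which meets the $\phi=0$ minima in an essential way; moreover, even granting it, you would not then need your subsequent discussion of $w_2$ and the section at all, which is a sign the logic is misallocated. What properness and boundedness below actually give is only that every connected component of $\mathcal{M}$ contains a local minimum, i.e.\ the map $\pi_0(\mathrm{Min}(f))\to\pi_0(\mathcal{M})$ is \emph{surjective}, hence $|\pi_0(\mathcal{M})|\le 3$. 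The matching lower bound requires showing the three minimum components land in three distinct components of $\mathcal{M}$: $w_2$ separates the second from the other two, but the disjointness of the Fuchsian point from the trivial representation --- which you assert with ``yet is disjoint from the trivial component'' --- is exactly the nontrivial remaining claim. The standard repair is to prove that the image of the Hitchin section $s$ is a single connected component: it is closed because it is the equalizer $\{x \mid s(h(x))=x\}$ of two continuous maps ($h$ the Hitchin fibration) in a Hausdorff space, and open because its points are smooth of dimension $-4m(m+1)\chi(\Sigma)$ and $s$ is an immersion of a space of that same dimension; being connected, open and closed, it is one full component, and it cannot contain the trivial (reducible) representation. With that step supplied, your outline becomes a complete proof.
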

\noindent It must be noted that the there is no topological invariant which distinguishes the first component to the third one, as they are both formed by representations whose associated flat $\R^{2m+1}$-bundles have zero second Stiefel-Whitney class. The component containing the uniformizing representations is called the \emph{Hitchin component} and can be characterized as the connected component in $\chi_m(\Sigma)$ formed by deformations of the Fuchsian one, namely the post-composition of a Fuchsian representation $\pi_1(\Sigma)\to\PSL(2,\R)$ with the irreducible embedding $\PSL(2,\R)\hookrightarrow\SL(2m+1,\R)$. This component will be denoted by \(\mathrm{Hit}_m(\Sigma)\) and contains an isomorphic copy of the Teichmüller space $\mathcal{T}(\Sigma)$, as discussed above. 

\subsection{The moduli space of isotropic $\p$-alternating surfaces}\label{sec:moduli_space_fixed_X}
Let $\Sg$ be a closed oriented surfaces of genus $g\ge 2$ and let $X$ be a fixed Riemann surface structure. This section has two main objectives: first, we aim to study the moduli space of isotropic $\p$-alternating $\rho$-equivariant immersions into $\h_\tau^{2m}$ and show that there is a subspace which has a stratification labeled by the degree of a line bundle over $X$, and each strata has the structure of a holomorphic bundle over a certain symmetric product of $X$ (when $m=2$ we actually parameterize the whole moduli space); second, using polystable Higgs bundles, we study surfaces lying in a totally geodesic copy of $\h^{2m-2}_{\tau}$ in $\h^{2m}_{\tau}$ and show that they arise as limits of those described above, which correspond to stable bundles. As an interesting consequence, we obtain that the limiting representation \(\rho_0\) in \(\mathrm{SL}(2m+1,\mathbb{R})\) decomposes as a direct sum of two representations, each with a precise geometric meaning, and it is never contained in the Hitchin component. In other words, whenever $m\ge 2$, we identify a subspace of non-Hitchin representations in $\chi_m(\Sigma)$ for which there exists an equivariant isotropic $\p$-alternating immersion in \(\mathbb{H}_\tau^{2m}\).\\


\noindent Before starting the discussion, it is worth mentioning that in \cite{collier2023holomorphic} the authors studied surfaces immersed in a certain homogeneous space, equivariant for representations into the split real form $\mathrm{G}_2'$ of the exceptional complex Lie group \( \mathrm G_2 \). Using an approach similar to the Frenet splitting, they related such surfaces to cyclic $\mathrm G_2'$-Higgs bundles and investigated the corresponding moduli space. The stratification that we obtain for \( \mathrm{SL}(2m+1, \mathbb{R}) \) (a Lie group of real rank $2m$) resembles the same phenomenon for \( \mathrm{G}_2' \) (a Lie group of real rank $2$). 
\\ \\ For a fixed Riemann surface structure $X$, let us define the set of isomorphism classes of equivariant isotropic $\p$-alternating immersions into $\h_\tau^{2m}$ as $$\widetilde{\mathcal{ML}}(X)_m:=\bigslant{\left\{(\sigma,\rho) \ \Bigg| \ \parbox{15em}{$\sigma: \widetilde{X}\to\h_\tau^{2m}$ as in Definition \ref{def:P_alternating_surfaces} \\ $\rho:\pi_1(X)\to\SL(2m+1,\R)$} \right\}}{\sim}$$ where $(\sigma_1,\rho_1)\sim(\sigma_2,\rho_2)$ if and only if there exists $A\in\SL(2m+1,\R)$ such that $\sigma_2(p)=A\cdot\sigma_1(p)$ for any $p\in \widetilde{X}$ and $\rho_2(\gamma)=A\rho_1(\gamma)A^{-1}$ for any $\gamma\in\pi_1(X)$.

\begin{prop}\label{prop:bijection_modulispace_representations}
There is a bijection between the space $\widetilde{\mathcal{ML}}(X)_m$ and isomorphism classes of stable cyclic $\SL(2m+1,\R)$-Higgs bundles as in Theorem \ref{thm:existence}.
\end{prop}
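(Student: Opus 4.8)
The plan is to construct two mutually inverse maps between $\widetilde{\mathcal{ML}}(X)_m$ and the set of isomorphism classes of stable cyclic $\SL(2m+1,\R)$-Higgs bundles of the form appearing in Theorem \ref{thm:existence} (i.e.\ with $L_1 \cong K^{-1}$ and $\mu = 1$). The forward map $\Phi$ is given directly by Theorem \ref{thm:existence}: to the isomorphism class of such a stable bundle with holonomy $\rho$ we associate the class of the pair $(\sigma, \rho)$, where $\sigma$ is the $\rho$-equivariant isotropic $\p$-alternating immersion produced there. To see this is well defined on isomorphism classes, I would invoke Lemma \ref{lm:isomorphic_Higgs}: an isomorphism of such bundles is a diagonal $Q$-preserving gauge transformation, which conjugates the para-complexified flat connection $\nabla^\tau$ of Section \ref{sec:4.2}, hence conjugates the holonomy and moves the parallel-transported section defining $\sigma$ by the corresponding element of $\SL(2m+1,\R) \cong \SU(2m+1,\R_\tau)$; this is exactly the equivalence defining $\widetilde{\mathcal{ML}}(X)_m$.

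The backward map $\Psi$ is built from Theorem \ref{thm:hol_data} and Theorem \ref{thm:structure_eq_hol}. Given a class $(\sigma, \rho)$, with $\sigma$ not contained in a para-complex hyperbolic subspace of real codimension $4$, Theorem \ref{thm:hol_data} extracts holomorphic line bundles $L_1 \cong K^{-1}, L_2, \dots, L_m$ with Hermitian metrics $\boldsymbol{h}_i$ together with holomorphic $(1,0)$-forms $\eta_2, \dots, \eta_{m+1}$. I then assemble the cyclic bundle $E = L_m^{-1} \oplus \cdots \oplus L_1^{-1} \oplus \mathcal{O}_X \oplus L_1 \oplus \cdots \oplus L_m$ with $Q$ the standard antidiagonal form, and define the Higgs field by $\mu := 1$, $\gamma_j := \eta_{j+1}$ for $j = 1, \dots, m-1$ (a section of $L_j^{-1}L_{j+1}K$, since $\eta_{j+1}$ takes values in $\Hom_\C(L_j, L_{j+1})$) and $\gamma_m := \eta_{m+1}$ (a section of $L_m^{-2}K$ via the identification $\Hom_\C(L_m, \overline{L_m}) \cong L_m^{-2}$). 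The essential consistency check is that, under the substitutions $h_i = \boldsymbol{h}_i$ and these identifications, the structure equations \eqref{eq:immersion_data_eta} coincide term by term with Hitchin's equations \eqref{eq:Hitchin_cyclic}; therefore $H = \diag(h_m^{-1}, \dots, h_1^{-1}, 1, h_1, \dots, h_m)$ solves Hitchin's equations, so $(E,\phi,Q)$ is polystable and its flat connection has holonomy conjugate to $\rho$.

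The central point requiring genuine work is upgrading polystability to stability for $\Psi(\sigma,\rho)$. Here I would use Proposition \ref{prop:stability_cyclic}, which reduces stability to checking that every proper $\phi$-invariant sub-sum of the line summands has negative degree. The non-degeneracy hypothesis forces $\eta_2, \dots, \eta_m$ — and hence $\gamma_1, \dots, \gamma_{m-1}$, together with $\mu = 1$ — to be non-zero by Theorem \ref{thm:hol_data}(1), so every subdiagonal entry of $\phi$ is non-zero. Consequently any $\phi$-invariant sub-sum containing a given summand must contain all summands below it, leaving only the nested family of "tails"; a direct degree computation (using $\deg E = 0$ and $\deg L_1 = 2-2g$) then shows each proper member has negative degree, which gives stability. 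The case $\gamma_m \equiv 0$ is the Fuchsian-type boundary point and the same computation applies.

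Finally I would verify that $\Phi$ and $\Psi$ are mutually inverse. The composition $\Psi \circ \Phi$ is the identity directly from the last sentence of Theorem \ref{thm:existence}, which asserts that the structural data of the surface built from a bundle is precisely that bundle's holomorphic data. For $\Phi \circ \Psi$, I would appeal to a rigidity (fundamental-theorem-of-surfaces) argument: the holomorphic immersion data determine the full Maurer–Cartan form $\Omega$ of the adapted frame through equations \eqref{eq:Maurer_Cartan_first_block}–\eqref{eq:Maurer_cartan_third_block}, and two isotropic $\p$-alternating immersions with the same $\Omega$ differ by an element of $\SL(2m+1,\R)$. Equivalently, both $\sigma$ and the surface reconstructed by Theorem \ref{thm:existence} arise as $\nabla^\tau$-parallel transport of the canonical section $1\otimes e_+ - 1\otimes e_-$ in the same flat bundle, so they coincide up to isometry and equivariance, and the recovered holonomy is exactly $\rho$. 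I expect this rigidity step and the stability verification to be the two substantive parts of the argument, the remaining identifications being bookkeeping already performed in Theorems \ref{thm:hol_data}, \ref{thm:structure_eq_hol}, and \ref{thm:existence}.
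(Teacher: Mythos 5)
Your overall architecture matches the paper's proof: the forward map is Theorem \ref{thm:existence}, the backward map is the holomorphic interpretation of the Frenet splitting from Theorems \ref{thm:hol_data} and \ref{thm:structure_eq_hol}, and the compatibility with the equivalence relations is checked via Lemma \ref{lm:isomorphic_Higgs} (with $\mu\equiv 1$ forcing $\lambda_1=1$ in the gauge transformation). The paper is in fact terser than you are on the mutual-inverse verification and on stability, so most of your added detail is welcome.

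There is, however, a genuine flaw in your stability step. You claim that once all subdiagonal entries of $\phi$ are non-zero, the only proper $\phi$-invariant sub-sums are the nested tails, and that ``a direct degree computation (using $\deg E=0$ and $\deg L_1=2-2g$)'' shows each has negative degree. The degrees of $L_2,\dots,L_m$ are \emph{not} determined by $\deg E=0$ and $\deg L_1$, and the tails' degrees are genuinely not forced to be negative: e.g.\ the tail $\mathcal{O}_X\oplus L_1\oplus\cdots\oplus L_m$ has degree $\deg(L_m)+\sum_{i<m}\deg(L_i)$, which can be non-negative for suitable $L_m$. Indeed Lemma \ref{lem:stability_Higgs_bundles} later in the paper shows that for $\gamma_m\equiv 0$ stability \emph{fails} whenever $\deg(L_m^{-1})\le 0$, so no purely formal degree bookkeeping can work. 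The correct route is the one you already have in hand but do not use: the structure equations \eqref{eq:immersion_data_eta} coincide with Hitchin's equations \eqref{eq:Hitchin_cyclic}, so the diagonal metric $H$ built from the $\boldsymbol h_i$ is a solution and the bundle is \emph{polystable}. If some proper invariant tail $F$ had $\deg F=0$, polystability would force a $\phi$-invariant complement, which would have to be the complementary ascending sub-sum; but that is invariant only if the connecting entry of $\phi$ vanishes, contradicting Theorem \ref{thm:hol_data}(1) (and $\mu=1$). Hence every proper invariant sub-sum has strictly negative degree and the bundle is stable. With this repair your argument is complete and consistent with the paper's.
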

\begin{proof}
One direction has already been proven in Theorem \ref{thm:existence}, namely given a stable cyclic $\SL(2m+1,\R)$-Higgs bundles $(L_1,\dots,L_m,\mu,\gamma_1,\dots,\gamma_m)$ as defined in Section \ref{sec:4.1} with $L_{1}=K^{-1}$ and $\mu=1$, we showed the existence of an isotropic $\p$-alternating immersion $\sigma:\widetilde{X}\to\h_\tau^{2m}$ equivariant with respect to the holonomy $\rho:\pi_1(X)\to\SL(2m+1,\R)$ of the flat connection $\nabla^\tau$ (see Section \ref{sec:4.2}). \newline Conversely, using the Frenet splitting and its holomorphic interpretation explained in Section \ref{sec:3.2}, it is possible to define a stable cyclic $\SL(2m+1,\R)$-Higgs bundles of the appropriate form.\newline It remains only to show that two pairs $(\sigma_1,\rho_1)$ and $(\sigma_2,\rho_2)$ are equivalent if and only if the associated Higgs bundles are isomorphic. In this respect, according to Lemma \ref{lm:isomorphic_Higgs}, two stable cyclic $\SL(2m+1,\R)$-Higgs bundles $(L_1,\dots,L_m,\gamma_1,\dots,\gamma_m)$ and $(L_1',\dots,L_m',\gamma_1',\dots,\gamma_m')$ are isomorphic if and only if $L_i=L_i'$ for any $i=1,\dots,m$ and there is a diagonal gauge transformation $\mathcal{G}$ sending $(\gamma_1,\dots,\gamma_m)$ to $(\gamma_1',\dots,\gamma_m')$, with unit determinant and such that $Q\mathcal G^tQ=\mathcal G^{-1}$, where $Q$ is the matrix associated with the non-degenerate bi-linear form of the Higgs bundles (see Section \ref{sec:4.1}). Given that $\mu\equiv 1$, such a transformation is necessarily $\mathcal G=\diag(\lambda_m^{-1},\dots,\lambda_1^{-1},1,\lambda_1,\dots,\lambda_m)$, with $\lambda_1=1$ and $\lambda_i\in\C^*$ for any $i=2,\dots,m$. Using the holomorphic interpretation of the Frenet splitting given in Theorem \ref{thm:hol_data} and the construction of the isotropic $\p$-alternating surface in Theorem \ref{thm:existence}, it follows that $\mathcal G$ acts on $\sigma$ and changes the Frenet splitting to be consistent with the description involving the Higgs bundles. Moreover, the two representations are conjugate one to other by $\mathcal G$. The argument can clearly be reversed.
\end{proof}
\begin{remark}\label{rem:moduli_m=1}
When \( m = 1 \), the surface \( \sigma: \widetilde{X} \to \mathbb{H}_\tau^2 \) is a space-like maximal Lagrangian immersion that is equivariant under a Hitchin representation \( \rho: \pi_1(X) \to \mathrm{SL}(3, \mathbb{R}) \), which has been already studied in \cite{RT_bicomplex}, \cite{hildebrand2011cross}. The associated cyclic Higgs bundles all belong to the Hitchin component (because $L_{1}=K^{-1}$), hence their isomorphism class is completely determined by the holomorphic cubic differential $\gamma_{1}$. In other words, the space \( \widetilde{\mathcal{ML}}(X)_1 \) is biholomorphic to the complex space of holomorphic cubic differentials on \( X \).  
\end{remark}
\noindent Let us assume $m\ge 2$ and let us restrict our attention to cyclic $\SL(2m+1,\R)$-Higgs bundles as in Theorem \ref{thm:existence} such that \(L^{-1}_i \cong K^i\) and \(\gamma_{i-1} \equiv 1\) for \(i = 1, \dots, m-1\).\footnote{Note that, when \( m = 2 \), the condition just imposed is always satisfied.} In other words, these can be represented as:  
\begin{equation}\label{eq:Higgs_bundles_SL(2m+1,R)}\begin{tikzcd}[column sep=1.5em, row sep=1em]
L^{-1} \arrow[r, "\gamma_{m-1}"', bend right] & K^{m-1}  \arrow[r, "1"', bend right] & \dots \arrow[r, "1"', bend right] & K \arrow[r, "1"', bend right] & \mathcal{O}_X \arrow[r, "1"', bend right] & K^{-1} \arrow[r, "1"', bend right] & \dots\arrow[r, "1"', bend right] & K^{1-m} \arrow[r, "\gamma_{m-1}"', bend right] & L \arrow[llllllll, "\gamma_m"', bend right]
\end{tikzcd}\end{equation}
where each section in the diagram above should be considered twisted for a copy of $K$ in the target holomorphic bundle and $L:=L_m$. By assumption, $\gamma_{m-1}\in H^0(X,LK^m)$ can not be identically zero and this forces the degree of $LK^m$ to be non-negative, or in other words, $\deg(L^{-1})\le m(2g-2)$. \begin{lemma}\label{lem:stability_Higgs_bundles}
Let $(L,\gamma_{m-1},\gamma_m)$ be a cyclic $\SL(2m+1,\R)$-Higgs bundles of the form (\ref{eq:Higgs_bundles_SL(2m+1,R)}) and let $d:=\deg(L^{-1})$. Then, \begin{enumerate}
    \item[(i)] if $\gamma_m\equiv 0$ the Higgs bundle is polystable if and only if $0<d\le m(2g-2)$ and in these cases it is actually stable;
    \item[(ii)] if $\gamma_m$ is not identically zero, the Higgs bundle is polystable if and only if $1-g\leq d \leq m(2g-2)$, in which cases it is actually stable.
\end{enumerate}
\end{lemma}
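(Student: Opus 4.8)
The plan is to reduce stability to a finite combinatorial check via Proposition \ref{prop:stability_cyclic}. Since $E$ is a direct sum of line bundles $M_1=L^{-1},\,M_2=K^{m-1},\dots,M_{m+1}=\mathcal{O}_X,\dots,M_{2m}=K^{1-m},\,M_{2m+1}=L$, every saturated subsheaf of an $M_i$ is either $0$ or $M_i$, so it suffices to test the inequality $\deg F<0$ on the graded $\phi$-invariant subbundles $F=\bigoplus_{i\in I}M_i$ with $I\subsetneq\{1,\dots,2m+1\}$. I would first record the degrees $\deg M_1=d$, $\deg M_{2m+1}=-d$, $\deg M_j=(m+1-j)(2g-2)$ for $2\le j\le 2m$, together with $\deg E=0$ (because $\det E=\mathcal{O}_X$). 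I would also extract the two \emph{existence} constraints built into the form \eqref{eq:Higgs_bundles_SL(2m+1,R)}: a nonzero $\gamma_{m-1}\in H^0(X,LK^m)$ requires $\deg(LK^m)=m(2g-2)-d\ge 0$, that is $d\le m(2g-2)$, while in case (ii) a nonzero $\gamma_m\in H^0(X,L^{-2}K)$ requires $\deg(L^{-2}K)=2d+2g-2\ge 0$, that is $d\ge 1-g$. These two facts already account for the stated upper and lower bounds on $d$, so only the stability inequality remains to be analysed.

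The combinatorial heart is the remark that, under the normalization $\mu=1$ and $\gamma_1=\dots=\gamma_{m-2}=1$ with $\gamma_{m-1}\not\equiv 0$, every arrow of the cyclic quiver $M_1\to M_2\to\dots\to M_{2m+1}\to M_1$ induced by $\phi$ carries a nonzero section, with the sole exception of the closing arrow $\gamma_m\colon M_{2m+1}\to M_1$. Hence a graded subbundle $\bigoplus_{i\in I}M_i$ is $\phi$-invariant exactly when $I$ is closed under following nonzero arrows. In case (i), where $\gamma_m\equiv 0$, the quiver degenerates to the linear chain $M_1\to\dots\to M_{2m+1}$, so the proper invariant subbundles are precisely the tails $F_k=M_k\oplus\dots\oplus M_{2m+1}$ for $k=2,\dots,2m+1$. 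Using $\deg E=0$ one computes $\deg F_2=\deg F_{2m+1}=-d$, and I would then show that $-d$ is the maximum of $\deg F_k$ over all tails, so that $\deg F_k\le -d$ for every $k$. Consequently all invariant subbundles have strictly negative degree if and only if $d>0$, and the inequality is strict, which gives genuine stability and settles case (i); the failure at $d\le 0$ (destabilizing for $d<0$, and only strictly semistable, not polystable, at $d=0$) yields the ``only if''.

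In case (ii), where $\gamma_m\not\equiv 0$, all $2m+1$ arrows carry nonzero sections, so the only subsets $I$ closed under arrows are $\varnothing$ and the full index set. There are therefore \emph{no} proper graded $\phi$-invariant subbundles, and Proposition \ref{prop:stability_cyclic} makes the Higgs bundle automatically stable for every admissible $d$. Combined with the existence constraints of the first paragraph, this shows stability holds exactly for $1-g\le d\le m(2g-2)$, and that the bundle fails to exist (hence is not polystable) outside this range.

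The main point requiring care is the degree bookkeeping in case (i): one must verify that among the tails the maximal degree is attained at the two ends $k=2$ and $k=2m+1$, both equal to $-d$. This reduces to checking that the partial tail sums $\sum_{j=k}^{2m}(m+1-j)$ are non-positive for all $k\ge 2$, which follows because the sequence $(m+1-j)_{j=2}^{2m}$ is decreasing and sums to zero, so every one of its tails is non-positive. A secondary subtlety, worth recording for the ``only if'' direction, is the boundary value $d=0$ in case (i): there $\deg F_2=0$, and one checks that $F_2$ admits no $\phi$-invariant complement (its natural complement $M_1$ is not $\phi$-invariant, as $\gamma_{m-1}\not\equiv 0$), so the bundle is strictly semistable but not polystable, in agreement with the strict inequality $d>0$ in the statement.
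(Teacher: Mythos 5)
Your proof is correct and follows essentially the same route as the paper: reduce to graded $\phi$-invariant subbundles via Proposition \ref{prop:stability_cyclic}, identify them as the tails of the chain when $\gamma_m\equiv 0$ (resp.\ none when $\gamma_m\not\equiv 0$), check that the maximal tail degree is $-d$, and derive the bounds on $d$ from the existence of the nonzero sections $\gamma_{m-1}$ and $\gamma_m$; your explicit verification that every tail sum $\sum_{j=k}^{2m}(m+1-j)$ is non-positive is a welcome detail the paper only asserts. The one place to tighten is the $d=0$ case: showing that the \emph{natural} complement $M_1$ of $F_2$ is not $\phi$-invariant does not by itself rule out \emph{every} $\phi$-invariant complement — you should note that any complement of the corank-one subbundle is the graph of a map $M_1\to F_2$, whose image under $\phi$ has $M_2\otimes K$-component exactly $\gamma_{m-1}\not\equiv 0$, hence cannot be $\phi$-invariant.
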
\begin{proof}
Starting with the case where $\gamma_m$ is identically zero, we observe that the only proper $\phi$-invariant sub-bundles are all ascending direct sums of the line bundles in which \(L^{-1}\) does not appear, for example: $$L,\quad K^{1-m}\oplus L\quad, \ \dots \ ,\quad \mathcal{O}_X\oplus K^{-1}\dots\oplus K^{1-m}\oplus L\quad, \ \dots \ , \quad K^{m-1}\oplus\dots\oplus K^{1-m}\oplus L.$$ Therefore, if $0<d\le m(2g-2)$ all the holomorphic bundles appearing above have strictly negative degree, and therefore the associated $\mathrm{SL}(2m+1, \mathbb{R})$-Higgs bundle (\ref{eq:Higgs_bundles_SL(2m+1,R)}) is stable. Conversely, if the Higgs bundle is stable, we must have $-d=\deg(L)< 0$ which is equivalent to $d>0$. The inequality $d\le m(2g-2)$ is always true according to the discussion prior to the statement of the lemma. When $d=0$, $L$ is a degree $0$ $\phi$-invariant subbundle without $\phi$-invariant complement, thus $(E,\phi)$ is not polystable.  Finally, whenever $d<0$ the line bundle $L$ is a destabilizing proper $\phi$-invariant sub-bundle having strictly positive degree. Therefore, for any $d<0$ the cyclic $\SL(2m+1,\R)$-Higgs bundle (\ref{eq:Higgs_bundles_SL(2m+1,R)}) is not polystable. \newline 
If $\gamma_{m}$ is not identically zero, there is no proper $\phi$-invariant subbundle compatible with the cyclic splitting (see Proposition \ref{prop:stability_cyclic}), so $(E,\phi)$ is automatically stable. However, since $\gamma_{m}\in H^{0}(X,L^{-2}K)$, such $\gamma_{m}$ exists if and only if $d \geq 1-g$. 
\end{proof} 
\noindent As a consequence of Proposition \ref{prop:bijection_modulispace_representations}, we obtain that the set of equivalence classes of Higgs bundles as in (\ref{eq:Higgs_bundles_SL(2m+1,R)}) is in bijection with a subspace of \(\widetilde{\mathcal{ML}}(X)_m\), denoted by \(\mathcal{ML}(X)_m\). It is easily deduced that the two spaces coincide only when \(m = 2\). Moreover, the map $\boldsymbol\deg:\mathcal{ML}(X)_m\to\Z$ which takes the equivalence class $[(L,\gamma_{m-1},\gamma_m)]$ to $d:=\deg(L^{-1})$ is continuous, hence we obtain a decomposition $$\mathcal{ML}(X)_m=\bigsqcup_{d\in\Z}\mathcal{ML}_d(X)_m$$ labeled by the degree of $L^{-1}$, where $\mathcal{ML}_d(X)_m=\boldsymbol{\deg}^{-1}(d)$. 
\begin{remark}
From a geometric point of view, when $m\ge 3$, the space \(\mathcal{ML}(X)_m\) can be thought of as the set of equivariant isotropic $\p$-alternating immersions with an additional property: the $\mathrm{Hom}$-valued $1$-forms $\eta_{i}$, seen as holomorphic $1$-forms as in Theorem \ref{thm:hol_data}, induce isomorphisms between $L_{i-1}$ and $L_{i}\otimes K$ for all $i$, except possibly for $i=m,m+1$.
\end{remark}
\noindent The subsequent step is to present a parameterization of the spaces introduced above as holomorphic bundles over a symmetric product of the surface, a situation similar to the one studied by Hitchin (\cite[Theorem 10.8]{hitchin1987self}) for $\PSL(2,\R)$ using the Euler number (see also \cite{collier2018psl} for the $\mathrm{SO}_0(2,1)$ version). We first recall a technical lemma that will be useful during the proof. \begin{lemma}\label{lem:twisted_line_bundle}
Let $\mathcal N\to X$ be a holomorphic line bundle and let $l\ge 1$ be an integer such that $\deg(\mathcal N)\in[0,l(2g-2))\cap\mathbb Z$, then there exists an effective divisor $D$ such that $\mathcal{N}\cong K^l(-D)$.
\end{lemma}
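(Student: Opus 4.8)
The plan is to reduce the statement to the existence of a single nonzero holomorphic section of the auxiliary line bundle $\mathcal M := K^{l} \otimes \mathcal N^{-1}$. Indeed, asking that $\mathcal N \cong K^{l}(-D)$ for an effective $D$ is the same as asking that $\mathcal O(D) \cong \mathcal M$ with $D \ge 0$, which in turn is equivalent to the existence of some $0 \ne s \in H^{0}(X, \mathcal M)$: given such $s$ one takes $D := (s)$, its zero divisor, which is effective, and conversely a defining section of $\mathcal O(D)$ produces $s$. So the whole lemma comes down to proving $h^{0}(X, \mathcal M) \ge 1$, after which $\deg D = \deg\mathcal M$ records the degree of the resulting divisor.

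First I would set $e := \deg\mathcal M = l(2g-2) - \deg\mathcal N$ and observe that the hypothesis $0 \le \deg\mathcal N < l(2g-2) = \deg K^{l}$ forces $e$ to be a strictly positive integer, so $e \in \{1, \dots, l(2g-2)\}$ and in particular $\deg D = e \ge 1$. By Riemann--Roch together with Serre duality one has $h^{0}(\mathcal M) = e - g + 1 + h^{0}(K \otimes \mathcal M^{-1})$. Whenever $e \ge g$ the first two terms already give $h^{0}(\mathcal M) \ge e - g + 1 \ge 1$, and the lemma follows at once in this range.

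The delicate point, which I expect to be the main obstacle, is the low-degree regime $1 \le e \le g-1$, i.e. $l(2g-2)-g+1 \le \deg\mathcal N \le l(2g-2)-1$: here Riemann--Roch no longer forces a section, and for a \emph{generic} line bundle of degree $e$ the Brill--Noether image $W_{e} \subsetneq \mathrm{Pic}^{e}(X)$ is a proper subvariety, so $h^{0}(\mathcal M)=0$ and no effective $D$ exists. I would resolve this by exploiting the extra structure that is always available when the lemma is invoked in Section \ref{sec:moduli_space_fixed_X}: there $\mathcal N = L^{-1}$ and $\mathcal M = LK^{m}$ already carries the nonzero section $\gamma_{m-1} \in H^{0}(X, LK^{m})$ coming from the Higgs field, so $\mathcal M$ is effective a priori and the divisor-of-a-section argument of the first paragraph applies verbatim with $D = (\gamma_{m-1})$. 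Consequently I would either add the standing assumption that $K^{l}\otimes\mathcal N^{-1}$ be effective — automatic in every use of the lemma — which reduces the proof to the trivial first paragraph, or, for an unconditional statement about arbitrary line bundles, tighten the degree bound to $0 \le \deg\mathcal N \le l(2g-2)-g$, under which the Riemann--Roch argument of the second paragraph is already complete.
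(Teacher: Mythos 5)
Your reduction to showing $h^{0}(X,K^{l}\otimes\mathcal N^{-1})\ge 1$ is exactly the right reading of the statement, and your Riemann--Roch argument settles it completely when $e:=l(2g-2)-\deg\mathcal N\ge g$. More importantly, the obstruction you flag in the range $1\le e\le g-1$ is genuine, and the paper's own proof stumbles on precisely this point: after translating to divisors, it needs a nonzero holomorphic section $h$ of $\mathcal O\bigl(\divi(f\cdot q_l)-D_{\mathcal N}\bigr)\cong K^{l}\otimes\mathcal N^{-1}$ and justifies its existence solely by the observation that this bundle has strictly positive degree. Positive degree does not produce sections below degree $g$: for a generic line bundle $\mathcal N$ with $l(2g-2)-g+1\le\deg\mathcal N\le l(2g-2)-1$, the bundle $K^{l}\otimes\mathcal N^{-1}$ lies outside the Abel--Jacobi image $W_{e}\subsetneq\mathrm{Pic}^{e}(X)$ and has no nonzero sections, so no effective $D$ exists. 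The lemma as stated is therefore false in that regime, and your analysis is the correct one.

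Your diagnosis of why the application survives is also accurate. In the proof of Theorem \ref{prop:parameterization_moduli_space}(i) the lemma is applied to the bundle $\mathcal N^{-1}=L^{-1}$ of the triple $(\mathcal N,\alpha,\beta)\in\widetilde{\mathcal E}^{m}_{d}$, where $\alpha=\gamma_{m-1}\in H^{0}(X,K^{m}\mathcal N)\setminus\{0\}$ is part of the data; so the relevant bundle is effective by fiat and one may simply take $D=\divi(\gamma_{m-1})$, as in your first paragraph. Either of your proposed repairs --- adding the standing hypothesis that $K^{l}\otimes\mathcal N^{-1}$ be effective, or tightening the bound to $\deg\mathcal N\le l(2g-2)-g$ --- yields a correct statement; the former is the version actually needed in Section \ref{sec:moduli_space_fixed_X}.
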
\begin{proof}
Using the correspondence between line bundles and divisors on $X$ we know there exists a divisor $D_{\mathcal N}$ such that $\mathcal N\cong\mathcal O(D_\mathcal N)$ and $\deg(D_\mathcal N)=\deg(\mathcal N)\in[0,l(2g-2))\cap\Z$. Moreover, $K^l\cong\mathcal O(\divi(q_l))$ where $q_l$ is a meromorphic $l$-differential over $X$. In particular, $\mathcal N\cong K(-D')$ if and only if $\mathcal O(D_\mathcal N)\cong\mathcal O(\divi(q_l)-D')$ if and only if there exists a meromorphic function $f$ on $X$ such that $\divi(f)=D_\mathcal N-\divi(q_l)+D'$. Therefore, $\deg(D')=l(2g-2)-\deg(\mathcal N)$ and it is strictly positive by hypothesis. The divisor $D'$ is effective if and only if $\divi(f\cdot q_l)-D_\mathcal N$ is effective. For this purpose, if the meromorphic function $f$ is such that the divisor $\divi(f\cdot q_l)-D_\mathcal N$ is effective then we conclude the proof. Otherwise, we can choose an other non-zero meromorphic function $h$ such that $\divi(h\cdot f\cdot q_l)-D_\mathcal N$ is effective, which is equivalent to being effective for $\widetilde D:=D'+\divi(h)$. We observe that such a non-zero function $h$ always exists being a holomorphic section $\mathcal{O}(\divi(f\cdot q_l)-D_\mathcal N)$, which has strictly positive degree by hypothesis. We conclude by noting that $\mathcal N\cong K^l(-\widetilde D)$. 
\end{proof} \begin{theorem}\label{prop:parameterization_moduli_space}
Let $X$ be a fixed Riemann surface structure on a closed connected surface $\Sg$ of genus $g\ge 2$, then \begin{enumerate}
    \item[(i)] if $0<d\le m(2g-2)$ the set $\mathcal{ML}_d(X)_m$ is biholomorphic to the total space of a holomorphic vector bundle of complex rank $2d+g-1$ over the $(m(2g-2)-d)$-th symmetric product of $X$;
    \item[(ii)] if $1-g\le d\le 0$ the set $\mathcal{ML}_d(X)_m$ is biholomorphic to a bundle over an $H^{1}(X,\Z_{2})$-cover of the $(2d+2g-2)$-symmetric product of $X$ whose fiber is $(\C^{(2m-1)(g-1)-d} \setminus \{0\}) / \{\pm \mathrm{Id}\}$.
    \item[(iii)] if $d\notin[1-g,\dots,m(2g-2)]\cap\Z$ then $\mathcal{ML}_d(X)_m$ is empty.
\end{enumerate}Moreover, the space $\mathcal{ML}_d(X)_m$ is smooth for any $d\in[1-g,m(2g-2)]\cap\Z$.
\end{theorem}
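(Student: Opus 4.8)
The plan is to reduce everything, through Proposition~\ref{prop:bijection_modulispace_representations}, to an explicit enumeration of isomorphism classes of Higgs bundles of the form~(\ref{eq:Higgs_bundles_SL(2m+1,R)}) with $\deg(L^{-1})=d$, and then to parameterize these classes by divisors and sections on $X$. The first step is to pin down the residual gauge group. By Lemma~\ref{lm:isomorphic_Higgs} an isomorphism between two such bundles is a diagonal gauge $\diag(\lambda_m^{-1},\dots,\lambda_1^{-1},1,\lambda_1,\dots,\lambda_m)$; imposing $\mu\equiv 1$ forces $\lambda_1=1$, and imposing $\gamma_1\equiv\cdots\equiv\gamma_{m-2}\equiv 1$ forces $\lambda_1=\cdots=\lambda_{m-1}=1$ by an immediate induction. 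Hence the only surviving freedom is $\lambda:=\lambda_m\in\C^*$, acting on the two remaining sections by
\[
 (\gamma_{m-1},\gamma_m)\longmapsto(\lambda\gamma_{m-1},\lambda^{-2}\gamma_m).
\]
Thus $\mathcal{ML}_d(X)_m$ is the quotient by this $\C^*$ of the space of triples $(L,\gamma_{m-1},\gamma_m)$ with $\deg L=-d$, $0\neq\gamma_{m-1}\in H^0(LK^m)$ and $\gamma_m\in H^0(L^{-2}K)$, the stability and emptiness constraints being supplied by Lemma~\ref{lem:stability_Higgs_bundles}. Part~(iii) follows at once: outside $[1-g,m(2g-2)]$ no such triple is stable, so the set is empty.

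The conceptual heart of the argument is to decide, in each range of $d$, which of the two sections to \emph{spend} on building the base so that the complementary section sweeps out a fibre of \emph{constant} dimension; the arbiter is Riemann--Roch. For $0<d\le m(2g-2)$ (part~(i)) one has $\deg(L^2)=-2d<0$, whence $h^1(L^{-2}K)=h^0(L^2)=0$ and $h^0(L^{-2}K)=2d+g-1$ is constant, while $h^0(LK^m)$ jumps; I would therefore record $\gamma_{m-1}$ by its divisor. A nonzero $\gamma_{m-1}$ modulo the $\Aut(L)=\C^*$-scaling is exactly an effective divisor $D$ of degree $\deg(LK^m)=m(2g-2)-d$, i.e.\ a point of $\Sym^{m(2g-2)-d}(X)$, and this simultaneously recovers $L\cong K^{-m}\mathcal{O}(D)$ via Abel--Jacobi. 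Since $\C^*$ acts freely on $\gamma_{m-1}\neq 0$, normalizing $\gamma_{m-1}$ leaves no residual symmetry and $\gamma_m$ ranges over the full vector space $H^0(L^{-2}K)$; by Grauert's theorem these assemble into a holomorphic vector bundle of rank $2d+g-1$ over the symmetric product (a twist of the direct image of the universal $L^{-2}K$), which is part~(i).

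For $1-g\le d\le 0$ (part~(ii)) the two roles reverse. Now $h^0(L^{-2}K)$ jumps (since $\deg L^2=-2d\ge 0$), whereas $\deg(L^{-1}K^{1-m})=d-(m-1)(2g-2)<0$ forces $h^1(LK^m)=0$ and $h^0(LK^m)=(2m-1)(g-1)-d$ constant; moreover Lemma~\ref{lem:stability_Higgs_bundles} shows that stability for $d\le 0$ requires $\gamma_m\not\equiv 0$. I would therefore record $\gamma_m$ by its divisor $D'=(\gamma_m)\in\Sym^{2d+2g-2}(X)$. The delicate point here --- and the step I expect to be the main obstacle --- is that $D'$ only determines $L^2\cong K\mathcal{O}(-D')$, not $L$ itself; recovering $L$ requires a choice of square root, and the square roots of a fixed line bundle of even degree form a torsor over $J[2]\cong H^1(X,\Z_2)$, so as $D'$ varies one obtains precisely the claimed $H^1(X,\Z_2)$-cover of $\Sym^{2d+2g-2}(X)$ as base. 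Furthermore the surviving $\C^*$ now acts on $\gamma_m$ only through $\lambda^{-2}$, so the stabilizer of a normalized $\gamma_m$ is $\{\lambda:\lambda^2=1\}=\{\pm 1\}$, acting on the fibre coordinate $0\neq\gamma_{m-1}\in H^0(LK^m)$ by $\pm\mathrm{Id}$; the fibre is thus $(\C^{(2m-1)(g-1)-d}\setminus\{0\})/\{\pm\mathrm{Id}\}$, giving part~(ii).

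Finally, smoothness is read off from these models: symmetric products of the smooth curve $X$ are smooth, the finite étale $H^1(X,\Z_2)$-cover preserves smoothness, total spaces of holomorphic vector bundles over smooth bases are smooth, and in~(ii) the involution $\pm\mathrm{Id}$ acts freely on $\C^N\setminus\{0\}$ so the quotient fibre is smooth as well. The remaining technical work is to promote the set-theoretic bijections to biholomorphisms. For this I would construct the universal effective divisor over each symmetric product, form the associated universal line bundle together with the direct image of the relevant twist of $K$, and check --- again via Grauert and the representability of the relative Picard and divisor functors --- that the tautological families are holomorphic and descend along the $\C^*$- and $\{\pm 1\}$-quotients to the complex structure that $\mathcal{ML}_d(X)_m$ inherits from the Higgs bundle moduli space.
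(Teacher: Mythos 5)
Your proposal is correct and follows essentially the same route as the paper: reduce via Proposition \ref{prop:bijection_modulispace_representations} to the data $(L,\gamma_{m-1},\gamma_m)$ modulo the residual $\C^*$ acting by $(\lambda\gamma_{m-1},\lambda^{-2}\gamma_m)$, fibre over $\Sym^{m(2g-2)-d}(X)$ via $\mathrm{div}(\gamma_{m-1})$ when $d>0$ (with constant fibre $H^0(L^{-2}K)\cong\C^{2d+g-1}$ by the same Riemann--Roch vanishing) and over the square-root $H^1(X,\Z_2)$-cover of $\Sym^{2d+2g-2}(X)$ via $\mathrm{div}(\gamma_m)$ when $1-g\le d\le 0$ (with the residual $\{\pm 1\}$ producing the $(\C^{(2m-1)(g-1)-d}\setminus\{0\})/\{\pm\Id\}$ fibre). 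The only cosmetic deviations are that the paper deduces smoothness from stability of the corresponding Higgs bundles rather than from smoothness of the explicit models, and phrases the part (ii) fibre as the image of the quadratic map $\alpha\mapsto\alpha^2\beta$ rather than as a free $\{\pm\Id\}$-quotient.
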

\begin{proof}
$(i)$ Let us start with the case $d=m(2g-2)$, so that $L^{-1}$ is necessarily isomorphic to $K^m$, $\gamma_{m-1}\equiv 1$, and $\gamma_m=q_{2m+1}$ is a holomorphic differential on $X$ of order $2m+1$. As a consequence, the set of equivalence classes $[(K^m,q_{2m+1})]$ is in bijection with $H^0(X,K^{2m+1})$ and therefore $\mathcal{ML}_{m(2g-2)}(X)_m$ is biholomorphic to the space of holomorphic $(2m+1)$-differentials over $X$, which has complex dimension equal to $(1+4m)(g-1)$. \newline Let us assume $d\in (0,m(2g-2))$ and define the set $$\widetilde{\mathcal{E}}_d^m:=\{(\mathcal N,\alpha,\beta) \ | \deg(\mathcal{N}^{-1})=d, \alpha\in H^0(X,K^m\mathcal N)\setminus\{0\}, \ \beta\in H^0(X,\mathcal{N}^{-2}K)\},$$then applying Lemma \ref{lem:twisted_line_bundle} to any such $\mathcal{N}^{-1}$ we get $\mathcal{N}^{-1}\cong K^m(-D)$ for some effective divisor $D$ with $\deg(D)=m(2g-2)-d$. In particular, $\alpha$ can be thought of as an element in $H^0(X,\mathcal{O}(D))$, namely is the holomorphic section defining $D$, and $\beta$ can be thought of as an element in $H^0(X,K^{2m+1}(-2D))$. There is a map $\widetilde\Phi:\widetilde{\mathcal{E}}_d^m\to\mathcal{ML}_d(X)_m$ sending the triple $(K^{-m}(D),\alpha,\beta)$ to the equivalence class of the Higgs bundle in (\ref{eq:Higgs_bundles_SL(2m+1,R)}) with $L\cong K^{-m}(D)$, $\gamma_{m-1}=\alpha$ and $\gamma_m=\beta$, which is surjective by construction. Moreover, two points $(K^{-m}(D),\alpha,\beta)$ and $(K^{-m}(D'),\alpha',\beta')$ in $\widetilde{\mathcal{E}}_d^m$ have the same image under $\widetilde\Phi$ if and only if $D=D'$, $\alpha=\lambda\alpha'$ and $\beta'=\lambda^{-2}\beta$ for some $\lambda\in\C^*$ (see Lemma \ref{lm:isomorphic_Higgs}). Therefore, the quotient $\mathcal{E}_d^m:=\widetilde{\mathcal{E}}_d^m/\C^*$ is diffeomorphic to the moduli space $\mathcal{ML}_d(X)_m$.
Let us now denote by $S^{m(2g-2)-d}(X)$ the $(m(2g-2)-d)$-th symmetric product of the surface, which is naturally in bijection with the set of effective divisors of degree $m(2g-2)-d$. The projection map $\pi:\mathcal{E}_d^m\to S^{m(2g-2)-d}(X)$ sending the triple $[(K^{-m}(D),\alpha,\beta)]$ to $\mathrm{div}(\alpha)=D\in S^{m(2g-2)-d}(X)$ is surjective and its fiber, over a degree $m(2g-2)-d$ effective divisor $D'$ is isomorphic to $H^0(X,K^{m}(-2D')$: a complex vector space of complex dimension $2d+g-1$ by a simple application of Riemann-Roch theorem. Hence, $\mathcal{E}_{d}^m$, and consequently $\mathcal{ML}_{d}(X)_{m}$ is biholomorphic to the total space of a holomorphic vector bundle of complex rank $2d+g-1$ over $S^{m(2g-2)-d}(X)$.
\newline $(ii)$ The $(2d+2g-2)$-symmetric product $S^{2d+2g-2}(X)$ of $X$ has an $H^{1}(X,\Z_{2})$-cover $\widehat{S}^{2d+2g-2}(X)$ which parameterizes pairs $(\mathcal{V}, [\beta])$, where $\mathcal{V}$ is a line bundle of degree $d+g-1$ and $[\beta] \in \mathbb{P}(H^{0}(X, \mathcal{V}^{2}) \setminus \{0\})$. After fixing a square root of $K$, there is a well-defined injective map
\begin{align*}
    \Phi: \mathcal{ML}_{d}(X)_{m} &\rightarrow \widehat{S}^{2d+2g-2}(X) \times H^{0}(X,K^{2m+1}) \\
     [(L,\gamma_{m-1},\gamma_{m})] &\mapsto ( (L^{-1}K^{1/2}, [\gamma_{m}]), \gamma_{m-1}^{2}\gamma_{m}) . 
\end{align*}
Let $W$ denote the image of $\Phi$. Projection onto the first factor defines a surjective map $\pi: W \rightarrow \widehat{S}^{2d+2g-2}(X)$. The fiber $\pi^{-1}((\mathcal{V},[\beta]))$ consists of all holomorphic $(2m+1)$-differentials $q$ that can be written as $q=\alpha^{2}\beta$ for some $\alpha \in H^{0}(X, \mathcal{V}^{-1}K^{m+1/2})$. Therefore, the fiber can be identified with the image of the quadratic map $H^{0}(X, \mathcal{V}^{-1}K^{m+1/2}) \setminus \{0\} \rightarrow H^{0}(X,\mathcal{V}^{-2}K^{2m+1})$ and, in particular, it is biholomorphic to $(\C^{(2m-1)(g-1)-d} \setminus \{0\}) / \{\pm \mathrm{Id}\}$. 
\newline $(iii)$ If $d>m(2g-2)$ then $\deg(LK^m)=-d+m(2g-2)<0$. Therefore, the holomorphic section $\gamma_{m-1}$ is zero everywhere and $L^{-1}$ becomes a proper $\phi$-invariant sub-bundle with positive degree. In other words, the corresponding $\SL(2m+1,\C)$-Higgs bundle is not (poly)stable and there is no solution to Hitchin's equations. Following the same reasoning, if $d<1-g$ then $\gamma_m\equiv 0$. In this case, $L$ is a proper $\phi$-invariant sub-bundle with positive degree $-d>g-1$.\newline Finally, by Proposition \ref{prop:bijection_modulispace_representations} and Lemma \ref{lem:stability_Higgs_bundles} any point in $\mathcal{ML}_d(X)_m$ for $d\in[1-g,m(2g-2)]\cap\Z$ corresponds to an equivalence class of stable Higgs bundles, which represents smooth points in the corresponding moduli space (see \cite[Proposition 3.18]{HK_correspondence}).
\end{proof}
\begin{remark}
As previously mentioned, when \(m = 2\), the subspace \(\mathcal{ML}(X)_2\) coincides with the entire space \(\widetilde{\mathcal{ML}}(X)_2\) of equivariant isotropic $\p$-alternating immersions in \(\mathbb{H}_\tau^4\), to which all the results of Proposition \ref{prop:parameterization_moduli_space} apply.
\end{remark}
\noindent In the final part of this section, we study a family of equivariant isotropic \(\p\)-alternating surfaces that lie in a totally geodesic copy of $\h^{2m-2}_{\tau}$ inside $\h^{2m}_{\tau}$. We consider Higgs bundles as in Equation \eqref{eq:Higgs_bundles_SL(2m+1,R)} with \(\gamma_{m-1} \equiv 0\) and \(1-g \le \deg(L^{-1}) = d \leq 0\): they are strictly polystable because they can be written as 
\begin{equation}\label{eq:Higgs_bundles_SL(2m+1,R)_totally_geodesic}\begin{tikzcd}[column sep=1em, row sep=1em]
K^{m-1} \arrow[r, "1"', bend right] & \dots \ \  K \arrow[r, "1"', bend right] & \mathcal{O}_X \arrow[r, "1"', bend right]  & K^{-1} \dots \arrow[r, "1"', bend right] & K^{1-m} & \oplus & L \arrow[r, "\gamma_m"', bend right] & L^{-1}
\end{tikzcd}\end{equation} with $\gamma_m\in H^0(X,L^{-2}K)\setminus\{0\}$. By polystability, the representations $\rho$ arising from these Higgs bundles are reducible and split as $\rho=\rho_1\oplus\rho_2$ where $\rho_{1}:\pi_1(S)\to\SL(2m-1,\R)$ is the uniformizing Hitchin representation and $\rho_2:\pi_1(S)\to\SL(2,\R)$ has Euler number equal to $-d\in[0,g-1]\cap\Z$. In particular, the representation \(\rho\) is never contained in the $\SL(2m+1,\R)$-Hitchin component (being reducible, for example), but it belongs to one of the other two components depending on the parity of the degree of \(L^{-1}\): if \(d\) is even (resp. odd), then it is contained in the component with \(sw_2 = 0\) (resp. \(sw_2 \neq 0\) \cite[\S 10]{hitchin1992lie}). As stated in the proof of Theorem \(\ref{thm:hol_data}\), since \(\gamma_{m-1} \equiv 0\), and therefore \(\eta_m \equiv 0\), the immersed surface \(\sigma: \widetilde{X} \to \mathbb{H}_\tau^{2m}\) is contained in a totally geodesic copy of a para-complex hyperbolic space \(\mathbb{H}_\tau^{2m-2}\) of real codimension 4. Using the decomposition induced by the Frenet splitting of the surface (see Definition \ref{def:Frenet}), we can decompose $\sigma^*T\mathbb{H}_\tau^{2m}$ as follows: 
\[
T\widetilde{X} \oplus \mathscr{L}_2 \oplus \dots \oplus \dots\mathscr{L}_{m-1}\oplus\p(\mathscr{L}_{m-1})\oplus\dots\oplus\p(T\widetilde X) \ \oplus \ \mathscr{L}_m\oplus\p(\mathscr{L}_m)
\] where the first $2m-2$ summands correspond to $\sigma^{*}T\h^{2m-1}_{\tau}$ and $\mathscr{L}_m\oplus\p(\mathscr{L}_m)$ gives its normal bundle inside $\sigma^{*}T\h^{2m}_{\tau}$. Using the holomorphic interpretation of the \(\mathscr{L}_i\)'s given in Theorem \(\ref{thm:hol_data}\), it follows that the Euler class of the orientable bundle \(\mathscr{L}_m\) equals the Chern class of its holomorphic counterpart \(L\equiv L_m\). In other words, the Euler class of the representation \(\rho_2\) is determined by the topology of the component of the normal bundle to $\sigma(\widetilde{X})$ that is also normal to \(\mathbb{H}_\tau^{2m-2}\). 
\begin{remark} When \(m=2\), the equivariant immersion for \(\rho_1 \oplus \rho_2\) is contained in the totally geodesic copy of \(\mathbb{H}_\tau^2\) inside \(\mathbb{H}_\tau^4\). In this case, \(\gamma_1 \equiv 0\) and coincides precisely with the \((1,0)\)-part of the second fundamental form $\II$ of the immersion. Consequently, we obtain a totally geodesic space-like maximal Lagrangian surface in \(\mathbb{H}_\tau^2\), which is nothing but a copy of \(\mathbb{H}^2\) (see Example \(\ref{ex:maximal_Lagrangian}\)).
\end{remark} 
\noindent Let us now explain how these particular surfaces can be obtained as limits of those corresponding to stable Higgs bundles. Given a Higgs bundle \((L, \gamma_{m-1}, \gamma_{m})\) as in (\ref{eq:Higgs_bundles_SL(2m+1,R)}) with \(1-g \leq \deg(L^{-1}) = d \leq 0\), the natural action of \(\mathbb{C}^*\) leaves \(L\) invariant and maps \((\gamma_{m-1}, \gamma_{m})\) to \((t\gamma_{m-1}, t\gamma_{m})\) for \(t \in \mathbb{C}^*\). Using the gauge transformation \(\mathcal{G} := \operatorname{diag}(t^{-\frac{1}{2}}, 1, \dots, 1, t^{\frac{1}{2}})\), the holomorphic structure of the bundle remains unchanged, but \(\mathcal{G} \cdot (\gamma_{m-1}, \gamma_{m}) = (t^{\frac{1}{2}} \gamma_{m-1}, t^{-1} \gamma_m)\) (see the proof of Proposition \ref{prop:bijection_modulispace_representations}). In particular, the bundle \((L, t\gamma_{m-1}, t\gamma_m)\) is isomorphic to \((L, t^{\frac{3}{2}} \gamma_{m-1}, \gamma_m)\), whose equivalence class corresponds to a continuous family of equivariant isotropic $\p$-alternating surfaces \([\sigma_t, \rho_t] \in \mathcal{ML}_d(X)_m\). Taking the limit \(t \to 0\), the equivalence class \([\rho_t, \sigma_t]\) converges to an isotropic $\p$-alternating surface in \(\mathbb{H}_\tau^{2m-2} \subset \h^{2m}_{\tau}\). Moreover, if \(1-g \leq d \leq 0\), thanks to Proposition \ref{prop:parameterization_moduli_space}, we know that the space \(\mathcal{ML}_d(X)_m\) is biholomorphic to the quotient of the total space of the complement of the zero section of a complex rank \( (2m-1)(g-1)-d\) bundle by \(\pm\mathrm{Id}\). As described above, the zero section of this bundle corresponds precisely to isotropic $\p$-alternating immersions lying in a totally geodesic copy of $\h^{2m-2}_{\tau} \subset \h^{2m}_{\tau}$ (see also \cite{collier2023holomorphic} for the $\mathrm{G}_2'$ case).

\subsection{Geometry of cyclic representations}
By means of infinitesimal rigidity (see Section \ref{sec:inf_rigidity}), we study the holonomy map $\mathrm{Hol}$ that associates to each $\rho$-equivariant isotropic $\p$-alternating surface the conjugacy class of $\rho$. In particular, when \(m = 1\), the map $\mathrm{Hol}$ is precisely the Labourie-Loftin parametrization of the Hitchin component, which we recover using only the geometry of the para-complex hyperbolic space. When \(m \geq 2\), on the one hand, we are able to study $\mathrm{Hol}$ on a large subset so that its image is not only contained in the Hitchin component, but on the other hand, we obtain partial results, as in the case of Labourie (\cite{labourie2017cyclic}) and Collier-Toulisse (\cite{collier2023holomorphic}) using \emph{cyclic surfaces}, or in the case of Nie (\cite{Nie_alternating}) using \emph{A-surfaces}. \\ \\ 
We define the moduli space of equivariant isotropic \(\p\)-alternating immersions in \(\h_\tau^{2m}\) as $$\widetilde{\mathcal{ML}}(\Sigma)_m:=\bigslant{\left\{(\sigma,\rho) \ \Bigg| \ \parbox{15em}{$\sigma: \widetilde{\Sigma}\to\h_\tau^{2m}$ as in Definition \ref{def:P_alternating_surfaces} \\ $\rho:\pi_1(\Sigma)\to\SL(2m+1,\R)$} \right\}}{\sim}$$ where $(\sigma_1,\rho_1)\sim(\sigma_2,\rho_2)$ if and only if there exists $A\in\SL(2m+1,\R)$ and $\varphi\in\Diff_0(\Sigma)$ such that $\sigma_2(p)=A\cdot(\sigma_1\circ\varphi)(p)$ for any $p\in \widetilde{\Sigma}$ and $\rho_2(\gamma)=A\rho_1(\gamma)A^{-1}$ for any $\gamma\in\pi_1(\Sigma)$. There is a natural projection map \begin{align*}
    \pi: \ &\widetilde{\mathcal{ML}}(\Sigma)_m\to\mathcal{T}(\Sigma) \\ & [(\sigma,\rho)]\longmapsto [J] 
\end{align*}where \(J\) is the complex structure induced by the conformal class of the first fundamental form of the immersion. The fiber of this map over $J$ is exactly the space $\widetilde{\mathcal{ML}}(X)_{m}$, with $X=(S,J)$, studied in the previous subsection. This projection is equivariant with respect to the action of the mapping class group \(\mathrm{MCG}(\Sigma) := \Diff_+(\Sigma)/\Diff_0(\Sigma)\), which acts naturally on the space \(\widetilde{\mathcal{ML}}(\Sigma)_m\) by lift of the action at the level of the Teichm\"uller space. \\

\noindent We denote by $\widetilde{\mathcal{ML}}(\Sigma)_{m}^{*}$ the subset of equivariant isotropic $\p$-alternating surfaces in $\h^{2m}_{\tau}$ whose holomorphic data satisfy the assumptions in Equation \eqref{eq:assumption}. 

\begin{theorem}\label{thm:holonomy} For $t\in (-\epsilon, \epsilon)$ let $[(\sigma_{t}, \rho_{t})]$ be a smooth path in $\widetilde{\mathcal{ML}}(\Sigma)_{m}^{*}$ such that $\dot{\rho}=\frac{d}{dt}\rho_{|_{t=0}}$ is zero in $T_{\rho_{0}}\chi_{m}(\Sigma)$. Then the tangent vector $[(\dot{\sigma}, \dot{\rho})]$ is trivial in $T\widetilde{\mathcal{ML}}(\Sigma)_{m}^{*}$.
\end{theorem}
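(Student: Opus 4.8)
The plan is to prove that the kernel of $d\,\mathrm{Hol}$ is trivial by exhibiting the given variation $(\dot\sigma,\dot\rho)$ as a sum of the two types of \emph{trivial} infinitesimal deformations of the pair $(\sigma,\rho)$: those generated by the $\SL(2m+1,\R)$-action (infinitesimal ambient isometries) and those generated by $\Diff_0(\Sigma)$ (infinitesimal reparametrisations). Recall that a tangent vector to $\widetilde{\mathcal{ML}}(\Sigma)_m^*$ at $[(\sigma,\rho)]$ is the class of a variation $(\dot\sigma,\dot\rho)$ modulo the subspace spanned by the directions $(\xi\cdot\sigma,\,\mathrm{d}\xi)$ with $\xi\in\mathfrak{sl}(2m+1,\R)$, where $\xi\cdot\sigma$ is the fundamental field of $\xi$ along $\sigma$ and $\mathrm{d}\xi(\gamma)=\xi-\mathrm{Ad}_{\rho(\gamma)}\xi$ is the associated coboundary, together with the directions $(\mathrm{d}\sigma(V),0)$ with $V\in\Gamma(T\Sigma)$. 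The hypothesis is precisely that the class $[\dot\rho]$ vanishes in $T_{\rho_0}\chi_m(\Sigma)\cong H^1\big(\pi_1(\Sigma),\mathfrak{sl}(2m+1,\R)_{\mathrm{Ad}\rho}\big)$.

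First I would use the vanishing of $[\dot\rho]$ to reduce to an honestly equivariant normal variation. Since $\dot\rho$ is a coboundary there is $\xi_0\in\mathfrak{sl}(2m+1,\R)$ with $\dot\rho=\mathrm{d}\xi_0$; subtracting the trivial tangent vector $(\xi_0\cdot\sigma,\mathrm{d}\xi_0)$ leaves $(\tilde\xi,0)$, where $\tilde\xi:=\dot\sigma-\xi_0\cdot\sigma$. As its $\dot\rho$-component now vanishes, $\tilde\xi$ is a genuinely $\rho$-equivariant section of $\sigma^*T\h_\tau^{2m}$ and therefore descends to a smooth section over the closed surface $\Sigma$. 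Decomposing $\tilde\xi=\tilde\xi^\top+\tilde\xi^\perp$ into its components tangent to $\mathscr{L}_1\cong TS$ and normal, the tangential part equals $\mathrm{d}\sigma(V)$ for a unique $V\in\Gamma(T\Sigma)$ (because $\mathrm{d}\sigma$ identifies $T\Sigma$ with $\mathscr{L}_1$), hence is a trivial reparametrisation direction; subtracting it leaves the purely normal field $\tilde\xi^\perp\in\Gamma(\mathcal{N})$, and it suffices to prove $\tilde\xi^\perp=0$.

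The key point is that $\tilde\xi^\perp$ is a Jacobi field. Every $\sigma_t$ is isotropic $\p$-alternating, hence maximal by Remark \ref{rem:maximal_spacelike}, so by the last item of Theorem \ref{thm:variation_area} the normal component $(\dot\sigma)^\perp$ lies in the kernel of the Jacobi operator $L$. The same holds for $(\xi_0\cdot\sigma)^\perp$, since $t\mapsto\exp(t\xi_0)\cdot\sigma$ is a variation through maximal immersions (the ambient isometries $\exp(t\xi_0)$ preserve maximality). As $\ker L$ is a linear space and passing to normal components is linear, $\tilde\xi^\perp=(\dot\sigma)^\perp-(\xi_0\cdot\sigma)^\perp\in\ker L$, and subtracting the purely tangential reparametrisation direction does not affect this normal component.

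Finally I would invoke the infinitesimal rigidity of Corollary \ref{cor:no_Jacobi}. Although it is phrased for compactly supported Jacobi fields on $\widetilde\Sigma$, the field $\tilde\xi^\perp$ descends to a smooth section over the compact quotient $\Sigma$, so the integration-by-parts argument behind Corollary \ref{cor:no_Jacobi} applies verbatim with the integral taken over $\Sigma$ — there are no boundary terms since $\Sigma$ is closed — and the sign-definiteness of the second variation on $\mathcal{N}^+$ and $\mathcal{N}^-$ from Theorem \ref{thm:inf_rigidity} forces $\tilde\xi^\perp=0$. Consequently $(\dot\sigma,\dot\rho)=(\xi_0\cdot\sigma,\mathrm{d}\xi_0)+(\mathrm{d}\sigma(V),0)$ is a trivial tangent vector, as required. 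I expect the main obstacle to be exactly this last circle of ideas: guaranteeing that the residual deformation is a bona fide equivariant Jacobi field (which rests on the linearity of $\ker L$ and the maximality of all three families above) and legitimately transporting Corollary \ref{cor:no_Jacobi} from the compactly supported to the $\rho$-equivariant, closed-quotient setting; the bookkeeping that cleanly separates the ambient-isometry and reparametrisation directions is the technical heart of the argument.
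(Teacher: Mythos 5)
Your proof is correct and follows essentially the same route as the paper's: the paper conjugates the path by $A_{t}\in\SL(2m+1,\R)$ to kill $\dot{\rho}$, observes that the resulting variational field is $\pi_{1}(\Sigma)$-invariant with Jacobi normal component, invokes Corollary \ref{cor:no_Jacobi} to conclude it is purely tangential, and absorbs the tangential part by a reparametrisation $\varphi_{t}$. Your infinitesimal version (subtracting the coboundary direction $(\xi_{0}\cdot\sigma,\mathrm{d}\xi_{0})$ and then the reparametrisation direction $\mathrm{d}\sigma(V)$) is the same argument, including the correct observation that the equivariant Jacobi field descends to the closed quotient so the integration-by-parts behind Corollary \ref{cor:no_Jacobi} applies.
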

\begin{proof} By assumption, there exists a smooth path $A_{t} \in \SL(2m+1,\R)$ such that $\dot{\rho}':=\frac{d}{dt} {A_{t}\rho_{t}A_{t}^{-1}}_{|_{t=0}}=0$. We consider the smooth $1$-parameter family of equivariant immersions $(\sigma_{t}'= A_{t}\sigma_{t}, \rho_{t}':=A_{t}\rho_{t}A_{t}^{-1})$, which is equivalent to $(\sigma_{t}, \rho_{t})$ in $\widetilde{\mathcal{ML}}(\Sigma)_{m}^{*}$ by definition. Taking the derivative at $t=0$ of the equivariance relation
\[
    \sigma_{t}'(\gamma \cdot p) = \rho'_{t}(\gamma)\sigma'_{t}(p) \ \ \  \ \ \text{for all $\gamma\in \pi_{1}(\Sigma)$ and $p\in \widetilde{\Sigma}$,}  
\]
we obtain that the variational vector field $\dot{\sigma}'$ is $\pi_{1}(\Sigma)$-invariant. As a consequence, we can see $\dot{\sigma}'$ as a vector field on $\Sigma$ with normal component that is a Jacobi vector field by Theorem \ref{thm:variation_area}. By Corollary \ref{cor:no_Jacobi}, we conclude that $\dot{\sigma}$ is purely tangential, thus there exists a smooth $1$-parameter family $\varphi_{t}$ of diffeomorphisms of $\Sigma$ with $\varphi_{0}=\mathrm{Id}$ such that $\dot{\varphi}=-\dot{\sigma}'$. The new family of immersions $\sigma_{t}'':= A_{t} \cdot (\sigma_{t} \circ \varphi_{t})$ is still $\rho_{t}'$-equivariant and in the same equivalence class as $(\sigma_{t}, \rho_{t})$. By construction $\dot{\sigma}''=0$, hence the tangent vector $[(\dot{\sigma}, \dot{\rho})]$ is trivial in $T\widetilde{\mathcal{ML}}(\Sigma)_{m}^{*}$.
\end{proof}
\begin{cor}\label{cor:immersion}
The holonomy map $\mathrm{Hol}:\widetilde{\mathcal{ML}}(\Sigma)_{m}^{*}\longrightarrow\chi_m(\Sigma)$ given by $\mathrm{Hol}\big([\sigma,\rho]\big):=[\rho]$ is an immersion for any $m\ge 1$.
\end{cor}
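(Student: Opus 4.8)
The plan is to read off the corollary directly from Theorem \ref{thm:holonomy}, which does all the real work; the only task is to translate its statement into the language of differentials. To show that $\mathrm{Hol}$ is an immersion it suffices to check that its differential $d\mathrm{Hol}$ is injective at every point $[\sigma,\rho]\in\widetilde{\mathcal{ML}}(\Sigma)_m^*$. I would represent an arbitrary tangent vector $v\in T_{[\sigma,\rho]}\widetilde{\mathcal{ML}}(\Sigma)_m^*$ by a smooth path $t\mapsto[(\sigma_t,\rho_t)]$ with $[(\sigma_0,\rho_0)]=[\sigma,\rho]$, so that $v=[(\dot\sigma,\dot\rho)]$ with dots denoting derivatives at $t=0$. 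By construction of the map, $d\mathrm{Hol}(v)=\frac{d}{dt}[\rho_t]\big|_{t=0}=[\dot\rho]\in T_{[\rho]}\chi_m(\Sigma)$.

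The key step is to unpack what it means for $d\mathrm{Hol}(v)$ to vanish. The Zariski tangent space $T_{[\rho]}\chi_m(\Sigma)$ is the quotient of the space of cocycles $\gamma\mapsto\dot\rho(\gamma)\rho(\gamma)^{-1}$ (valued in $\mathfrak{sl}(2m+1,\R)$, twisted by $\mathrm{Ad}\rho$) by the infinitesimal conjugation action, i.e.\ the group cohomology $H^1(\pi_1(\Sigma),\mathfrak{sl}(2m+1,\R)_{\mathrm{Ad}\rho})$. Hence $d\mathrm{Hol}(v)=0$ says precisely that the associated cohomology class is a coboundary, which is equivalent to the existence of the smooth path $A_t\in\SL(2m+1,\R)$ along which the conjugated representations $A_t\rho_tA_t^{-1}$ have vanishing $t$-derivative at $t=0$. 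This is exactly the hypothesis ``$\dot\rho$ is zero in $T_{\rho_0}\chi_m(\Sigma)$'' of Theorem \ref{thm:holonomy}.

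Applying that theorem, I conclude that $[(\dot\sigma,\dot\rho)]$ is the trivial tangent vector in $T\widetilde{\mathcal{ML}}(\Sigma)_m^*$, that is, $v=0$. Since $v$ was arbitrary in the kernel of $d\mathrm{Hol}$, this establishes injectivity of the differential at every point, and therefore that $\mathrm{Hol}$ is an immersion for all $m\ge 1$.

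The main obstacle is not in the corollary itself but was already absorbed into Theorem \ref{thm:holonomy}, whose proof hinges on the infinitesimal rigidity of isotropic $\p$-alternating surfaces (Corollary \ref{cor:no_Jacobi}): once the variation of $\rho$ is gauged away, the normal part of $\dot\sigma$ is a compactly supported Jacobi field and hence vanishes, so $\dot\sigma$ is purely tangential and can be reabsorbed by a reparametrization. It is exactly the divisor assumption \eqref{eq:assumption}, enforced throughout $\widetilde{\mathcal{ML}}(\Sigma)_m^*$, that makes this rigidity available; the only genuinely new point to verify in the corollary is the cohomological identification in the previous paragraph, which is routine.
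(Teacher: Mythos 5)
Your proposal is correct and is exactly the argument the paper intends: the corollary is an immediate restatement of Theorem \ref{thm:holonomy}, which asserts precisely that the kernel of $d\mathrm{Hol}$ is trivial, and your cohomological unpacking of ``$\dot\rho=0$ in $T_{\rho_0}\chi_m(\Sigma)$'' as the existence of a conjugating path $A_t$ is the same reading the paper uses at the start of the proof of that theorem.
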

\noindent A particular class of equivariant isotropic $\p$-alternating immersions in $\mathbb{H}^{2m}_{\tau}$ belonging to $\widetilde{\mathcal{ML}}(\Sigma)_{m}^{*}$ is the one associated to cyclic $\SL(2m+1,\R)$-Higgs bundles in the Hitchin component (see Section \ref{sec:4.1}). In this setting, Theorem \ref{thm:holonomy} recovers the infinitesimal rigidity of the cyclic locus in the Hitchin components proved by Labourie with completely different techniques (\cite{labourie2017cyclic}). Finally, when \( m = 1 \), we recover the Labourie-Loftin parametrization of the Hitchin component of \( \mathrm{SL}(3,\mathbb{R}) \) as a holomorphic vector bundle over the Teichmüller space of the surface (\cite{loftin2001affine, Labourie_cubic}) by adapting Labourie's argument from Labourie's cyclic surfaces to our setting of isotropic $\p$-alternating immersions in $\h^2_\tau$. 

\begin{cor}\label{cor:Labourie-Loftin} The map $\mathrm{Hol}:\mathcal{ML}(\Sigma)_{1}\longrightarrow \mathrm{Hit}_3(\Sigma)\subset \chi_1(\Sigma)$ is a diffeomorphism. In particular, $\mathrm{Hit}_3(\Sigma)$ is diffeomorphic to the bundle of holomorphic cubic differentials over the Teichm\"uller space of $\Sigma$.
\end{cor}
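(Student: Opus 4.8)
The plan is to exhibit $\mathrm{Hol}$ as a proper local diffeomorphism onto the connected, simply connected manifold $\mathrm{Hit}_3(\Sigma)$ and then invoke the elementary fact that such a map must be a global diffeomorphism; this has the advantage of producing injectivity and surjectivity simultaneously, without separately appealing to uniqueness of equivariant hyperbolic affine spheres. First I would pin down the two sides of the map. By Remark \ref{rem:moduli_m=1} and Proposition \ref{prop:bijection_modulispace_representations}, the projection $\pi\colon[\sigma,\rho]\mapsto[J]$ to the induced complex structure identifies $\mathcal{ML}(\Sigma)_1$ with the total space of the bundle over $\mathcal{T}(\Sigma)$ whose fibre over $[J]$ is the space $H^0(X,K^3)$ of holomorphic cubic differentials on $X=(\Sigma,J)$; in particular $\mathcal{ML}(\Sigma)_1$ is connected, being a vector bundle over (contractible) Teichm\"uller space. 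Since the normalisation $L_1\cong K^{-1}$ forces the associated cyclic Higgs bundle into the Hitchin component (Remark \ref{rem:moduli_m=1}), the image of $\mathrm{Hol}$ lands in $\mathrm{Hit}_3(\Sigma)$. A Riemann--Roch count gives $\dim_\R\mathcal{ML}(\Sigma)_1=6(g-1)+2\cdot 5(g-1)=16(g-1)$, which matches $\dim_\R\mathrm{Hit}_3(\Sigma)=-4\cdot 1\cdot 2\cdot\chi(\Sigma)=16(g-1)$ by \cite{hitchin1992lie}. As $\mathrm{Hol}$ is an immersion by Corollary \ref{cor:immersion} and the dimensions agree, $\mathrm{Hol}$ is a local diffeomorphism, and in particular an open map.

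Granting properness, the image of $\mathrm{Hol}$ is open (local diffeomorphism) and closed (a proper map into a locally compact Hausdorff space is closed) in the connected manifold $\mathrm{Hit}_3(\Sigma)$, hence equals all of it. A proper local homeomorphism is a covering map onto its image, so $\mathrm{Hol}$ is a covering of $\mathrm{Hit}_3(\Sigma)$; since $\mathrm{Hit}_3(\Sigma)\cong\R^{16(g-1)}$ is simply connected (\cite{hitchin1992lie}) and the domain is connected, the covering has a single sheet. Therefore $\mathrm{Hol}$ is a continuous bijection and a local diffeomorphism, whence a diffeomorphism. Transporting this through the identification $\mathcal{ML}(\Sigma)_1\cong$ (cubic differentials over $\mathcal{T}(\Sigma)$) established above yields exactly the assertion that $\mathrm{Hit}_3(\Sigma)$ is diffeomorphic to the bundle of holomorphic cubic differentials over $\mathcal{T}(\Sigma)$, i.e. the Labourie--Loftin parametrisation.

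The heart of the matter, and the step I expect to be the main obstacle, is the \emph{properness} of $\mathrm{Hol}$. I would argue by sequences: suppose $\rho_n=\mathrm{Hol}([\sigma_n,\rho_n])\to\rho_\infty$ in $\mathrm{Hit}_3(\Sigma)$ and seek a convergent subsequence of $[\sigma_n,\rho_n]$ in $\mathcal{ML}(\Sigma)_1$. Each $[\sigma_n,\rho_n]$ determines, via Theorem \ref{thm:Gauss_map} and the nonabelian Hodge correspondence of Theorem \ref{thm:existence}, a $\rho_n$-equivariant conformal harmonic (minimal) map $G_{\sigma_n}\colon\widetilde{\Sigma}\to\mathbb{X}_1$, whose energy equals the area and depends continuously on the holonomy, hence stays bounded along the sequence. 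Because the $\rho_n$ lie in a compact subset of the character variety and are Anosov, one expects uniform upper and lower bounds on translation lengths, which both prevent the induced conformal structures $J_n$ from escaping a compact part of $\mathcal{T}(\Sigma)$ and rule out energy concentration; a standard compactness argument for equivariant harmonic maps then extracts a subsequence with $J_n\to J_\infty$ and cubic differentials $q_3^n\to q_3^\infty$, giving the desired limit $[\sigma_\infty,\rho_\infty]$. The delicate analytic content is precisely the non-degeneration of the $J_n$: deriving the required uniform energy and systole control from the Anosov property (equivalently, the properness of the harmonic-metric map under nonabelian Hodge) is where the genuine work lies, and it is the one input that cannot be obtained formally from Corollary \ref{cor:immersion} alone.
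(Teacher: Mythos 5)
Your overall skeleton --- show that $\mathrm{Hol}$ is a local diffeomorphism (immersion plus equality of dimensions) and then upgrade to a global diffeomorphism via properness and simple connectivity of $\mathrm{Hit}_3(\Sigma)$ --- is in spirit the same as what the paper does, since this is essentially what \cite[Theorem 8.1.1]{labourie2017cyclic} packages. The difference is that the paper never attempts to prove properness of $\mathrm{Hol}$ directly: it embeds $\mathcal{ML}(\Sigma)_1$ into $P=\mathcal{T}(\Sigma)\times\mathrm{Hit}_3(\Sigma)$ via $[(\sigma,\rho)]\mapsto (J,\mathrm{Hol}(\sigma,\rho))$, identifies the image $N$ with the fiberwise critical locus of the energy functional $F(J,\rho)$ of the equivariant harmonic map to the symmetric space (this is where Theorem \ref{thm:Gauss_map} enters: the conformal harmonic maps are exactly the Gauss maps of the isotropic $\p$-alternating immersions), checks that $F$ is smooth, positive and proper on each fiber by \cite{labourie2008cross} and that $N$ is transverse to the fibers by Corollary \ref{cor:immersion}, and then quotes Labourie's theorem to conclude that the projection $N\to\mathrm{Hit}_3(\Sigma)$ is a diffeomorphism.

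The genuine gap in your proposal is the properness step, which you yourself flag as the heart of the matter but whose justification does not go through as written. You assert that the energy of the minimal map $G_{\sigma_n}$ ``depends continuously on the holonomy, hence stays bounded along the sequence.'' But that energy is $F(J_n,\rho_n)$ evaluated at the induced conformal structure $J_n$, and $(J_n)$ is exactly the sequence whose non-degeneration you are trying to establish; a minimal surface corresponds to a critical point of $J\mapsto F(J,\rho_n)$, not necessarily to its minimum, so continuity in $\rho$ of the minimal energy gives no a priori bound on the critical value $F(J_n,\rho_n)$. Without an energy bound, the compactness argument for the equivariant harmonic maps and the claimed confinement of the $J_n$ to a compact part of $\mathcal{T}(\Sigma)$ both collapse --- the reasoning is circular. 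This is precisely the difficulty that Labourie's degree-theoretic Theorem 8.1.1 is designed to bypass, and it cannot be repaired by ``standard compactness for equivariant harmonic maps'' alone: one either has to reprove that theorem or invoke it after verifying its hypotheses, which is what the paper does.
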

\begin{proof} Let $P=\mathrm{Teich}(\Sigma) \times \mathrm{Hit}_3(\Sigma)$ and let $p$ be the projection onto the second factor. Let $F$ be the function that associates to every $(J,\rho) \in P$ the energy of the unique $\rho$-equivariant harmonic map from $X=(\Sigma, J)$ to the symmetric space $\SL(3,\R)/\SO(3)$. By \cite{labourie2008cross}, the map $F$ is smooth, positive, and the restriction $F_\delta$ of $F$ to each fiber $p^{-1}(\delta)$ is proper. From Corollary \ref{cor:immersion}, the map 
\begin{align*}
    \Psi:  \mathcal{ML}(\Sigma)_1 &\rightarrow P \\
            [(\sigma, \rho)] &\mapsto (J, \mathrm{Hol}(\sigma, \rho))
\end{align*}
where $J$ is the complex structure compatible with the first fundamental form of $\sigma$, is transverse to the fibers of $p$. Note that $\mathrm{Hol}$ indeed takes values in the Hitchin component because $\mathrm{Hol}(\sigma, \rho)$ is also the holonomy of the cyclic $\SL(3,\R)$-Higgs bundle associated to the equivariant isotropic $\p$-alternating immersion in $\h^2_\tau$, which belongs to the Hitchin component by construction. Moreover, $\Psi$ is an embedding and its image $N$ is precisely given by the pairs $(J,\rho) \in P$ such that the $\rho$-equivariant harmonic map from $X=(S,J)$ to $\SL(3,\R)/\SO(3)$ is conformal: in fact, such harmonic map can be obtained as Gauss map of the equivariant isotropic $\p$-alternating immersion $\h^2_\tau$ from Section \ref{sec:4.4}. Therefore, the subbundle $N \subset P$ satisfies the assumptions of \cite[Theorem 8.1.1]{labourie2017cyclic}, hence the restriction of $p$ to $N$ is a diffeomorphism. In other words the map $\mathrm{Hol}:\mathcal{ML}(\Sigma)_{1}\longrightarrow \mathrm{Hit}_3(\Sigma)$ is a diffeomorphism.
\end{proof}

\subsection{Geometric structures}
In this section, we construct geometric structures on a fiber bundle over the surface \(\Sigma\) starting from a Hitchin representation in $\SL(3,\R)$ and show that they coincide with those defined by Guichard-Wienhard (\cite{guichard2012anosov}) using the Anosov property. We will briefly recall the classical definition of an \((X, G)\)-structure (see \cite{goldman2022geometric} for a more detailed exposition) and their equivalence in terms of an \((X,G)\)-\emph{developing pair}. We will also make use of the notion of the graph of a \((X,G)\)-structure (see the survey \cite{alessandrini2019higgs}), which will provide a method for defining such a structure based on a $\rho$-equivariant space-like maximal Lagrangian in $\h_\tau^2$. \\ \\ Let us consider a smooth manifold $X$ endowed with an effective and transitive action by a real (or complex) Lie group $G$. An $(X,G)$ \emph{geometric structure} on a smooth manifold $M$ with $\dim(M)=\dim(X)$ is the datum of a maximal atlas $\big\{(U_i,\varphi_i)\big\}_{i\in I}$ for which the transitions function $\varphi_i\circ\varphi_j^{-1}:\varphi_j\big(U_i\cap U_j\big)\to\varphi_i\big(U_i\cap U_j\big)$ are \emph{locally in $G$}, namely they restricts to some $g_{ij}\in G$ on any connected component of $\varphi_j\big(U_i\cap U_j\big)$. The notion of a \((X, G)\) geometric structure just described is equivalent to the existence of a pair of maps \((\text{dev}, h)\), where \(h: \pi_1(M) \to G\) is a group homomorphism called the \emph{holonomy representation} and \(\text{dev}: \widetilde{M} \to X\) is an \(h\)-equivariant map called the \emph{developing map}, where \(\widetilde{M}\) denotes the universal cover of \(M\). To obtain a well-defined parameter space of \((X, G)\)-structures, two of them must be declared equivalent (or \emph{isotopic}) if there exists a diffeomorphism \(\phi: M \to M\) isotopic to the identity that is also an isomorphism of \((X, G)\)-structures, meaning that it is compatible with the earlier definition given by the maximal atlas. The set \(\mathcal{D}_{(X,G)}(M)\) of all \((X, G)\)-structures up to isotopy can be endowed with the compact-open topology using its equivalence with developing pairs \((\text{dev}, h)\). In particular, if \(\chi(M,G)\) denotes the space of representations of \(\pi_1(M)\) into \(G\) up to conjugation, the well-known \emph{Ehresmann-Thurston principle} (\cite{thurston_notes}) states that the holonomy map  
\[
\text{hol}: \mathcal{D}_{(X,G)}(M) \to \chi(M,G)
\]  
is continuous, locally injective, and the following property holds: given an \((X, G)\)-structure with holonomy \(\rho\), any other representation \(\rho'\) sufficiently close to \(\rho\) is the holonomy of an \((X, G)\)-structure (nearby) to the original one. Finally, we briefly explain a different method for defining an $(X,G)$-structure on a closed manifold $M$ based on the theory of flat bundles. Let \(\rho: \pi_1(M) \to G\) be a representation, and let \(\pi: B \to M\) be a flat bundle with fiber \(X\) and holonomy \(\rho\). A section \(s \in \Gamma(M, B)\) is called \emph{transverse} if it is transverse to the parallel foliation of the flat bundle. Since smooth sections of \(B\) correspond uniquely to \(\rho\)-equivariant maps from \(\widetilde{M}\) to \(X\), transverse sections correspond to equivariant maps which are local diffeomorphisms, whenever \(\dim(M) = \dim(X)\). In particular, they allow the construction of a developing pair \((\text{dev}_\rho, h)\) and thus an \((X, G)\)-structure on \(M\). \\ \\ Let \(\Sigma\) be a closed, oriented surface of genus \(g \geq 2\). We now focus on the case of interest, namely when \(G = \mathrm{SL}(3, \mathbb{R})\) and the space \(X\) is given by the flag variety $$\mathcal{F}:=\{(F_1,F_2) \ | \ F_1\subset F_2\subset\R^3, \ \dim F_i=i, \ \text{for} \ i=1,2\}$$realized as the quotient of \(\mathrm{SL}(3, \mathbb{R})\) by the parabolic subgroup \(P_{1,2}\), which stabilizes pairs of transverse lines and planes in \(\mathbb{R}^3\). Given a Hitchin representation \(\rho: \pi_1(\Sigma) \to \mathrm{SL}(3, \mathbb{R})\), Guichard-Wienhard (\cite{guichard2012anosov}) proved that there exists an open \emph{domain of discontinuity} \(\Omega_\rho \subset \mathcal{F}\), on which the action of \(\Gamma := \rho(\pi_1(\Sigma))\) is properly discontinuous and co-compact. This open set \(\Omega_\rho\) consists of three connected components (\cite{barbot2010three}), and the one we will focus on is given by  
\[
\Omega_\rho^{dS} = \{(p, \Ker(\varphi)) \in \mathcal{F} \ | \ p \notin \overline{\mathcal{C}}, \ \Ker(\varphi)\cap \mathcal{C} \neq \emptyset\},
\] where $\mathcal{C}$ is the properly convex subset of $\R\mathbb P^2$ preserved by the action of $\rho$ (\cite{goldman1990convex}) and $\varphi$ is a functional such that $\varphi(p)=0$. The manifold \(M\) on which we aim to define the \((\mathcal{F}, \mathrm{SL}(3, \mathbb{R}))\)-structure is the projectivezed tangent bundle of the surface, denoted by \(\Pp T\Sigma\). It is already known that the quotient of each connected component of the Guichard-Wienhard domain \(\Omega_\rho\) by \(\Gamma\) is homeomorphic to the projectivized tangent bundle of the surface. Our main result consists in showing that an \((\mathcal{F}, \mathrm{SL}(3, \mathbb{R}))\)-structure on \(\Pp T\Sigma\) can be defined from the isotropic $\p$-alternating immersions we introduced in Section \ref{sec:definition_surfaces}. Moreover, using a continuity argument based on the Ehresmann-Thurston principle, we show that this construction recovers exactly the structure defined by Guichard-Wienhard. A similar approach was used in \cite{CTT} for Hitchin representations in \(\mathrm{SO}_0(2,3)\), and more recently, Nolte-Reistenberg (\cite{nolte2024concave}) studied \((\mathcal{F}, \mathrm{SL}(3, \mathbb{R}))\)-structures on \(\Pp T\Sigma\) with a completely different approach. \\ \\ Let \(\sigma: \widetilde{\Sigma} \to \mathbb{H}_\tau^2\) be the \(\rho\)-equivariant space-like maximal Lagrangian immersion associated with a Hitchin representation $\rho$ in $\SL(3,\R)$ (see Remark \ref{rem:moduli_m=1} and Corollary \ref{cor:Labourie-Loftin}), and consider one of its horizontal lifts \(\hat{\sigma}: \widetilde{\Sigma} \to \mathbb{R}^3_\tau\) such that \(\boldsymbol{q}(\hat{\sigma}, \hat{\sigma}) = -1\). Since everything is \(\rho\)-equivariant, we can consider the objects defined on the closed surface \(\Sigma = \widetilde{\Sigma} / \rho\). In particular, we have the following orthogonal splitting:  
\[
E_\rho := \hat{\sigma}^* T\mathbb{R}_\tau^3 = \mathbb{R} \hat{\sigma} \oplus T\Sigma \oplus \p\big(T\Sigma\big) \oplus \p\big(\mathbb{R} \hat{\sigma}\big)
\] where $E_\rho$ is the flat bundle over $\Sigma$ whose holonomy equals $\rho$. Moreover, the matrix of $1$-forms associated with the flat connection $D$ on $E_\rho$ is given by $$A=\begin{pmatrix}
    0 & \theta^t & 0 & 0 \\ \theta & A_1 & \eta_2^\dag & 0 \\ 0 & \eta_2 & A_2 & \theta \\ 0 & 0 & \theta^t & 0
\end{pmatrix}$$ where $\theta^t=(\theta_1,\theta_2)$ is the frame of $T^*\Sigma$ dual to a local orthonormal frame $\{u,v\}$ of $T\Sigma$ (see the proof of Theorem \ref{thm:hol_data}) with respect to which the matrices \(A_1, A_2, \eta_2, \eta_2^\dag\) are computed\footnote{A different notation has been used compared to Theorem \ref{thm:hol_data} to avoid confusion with the domain \(\Omega_\rho\).}. Let $\pi:T^1\Sigma\to\Sigma$ be the natural projection from the unit tangent bundle to the surface. There is a tautological section $s_1$ of the flat bundle $\pi^*E_\rho$ over $T^1\Sigma$ with values in the sub-bundle $\pi^*T^1\Sigma$, so that $\boldsymbol{q}(s_1,s_1)=1$. Moreover, one can define a section \( s_2 \) of \( \pi^* \p(T^1 \Sigma) \) obtained by first applying a rotation of \( \frac{\pi}{2} \) on the tangent plane to \( \Sigma \), then the section \( s_1 \), and finally the para-complex structure \( \p \) on the unit vector part. It is clear from the definition that $\boldsymbol{q}(s_2,s_2)=-1$, hence the sum $s:=s_1+s_2$ defines an isotropic line sub-bundle of $\pi^*E_\rho$. In other words, the map \( s \) can be seen as a section of the sub-bundle \( \text{Iso}_1(\pi^*E_\rho) \), whose fiber over a point \( (p,v) \in T^1\Sigma \) can be identified with the space of isotropic vectors in \( (E_\rho)_p \). At this point, it is important to clarify the relationship between this space and \( \mathbb{H}_\tau^2 \). In Section \ref{sec:2.4}, we introduced the notion of boundary of the para-complex hyperbolic space, which was defined as  \[
\partial_\infty \mathbb{H}_\tau^2 =\mathbb{P}_\tau\Big(\{ z\in\R_\tau^3 \ | \ \boldsymbol{q}(z,z) = 0 \}\Big),
\]where $\mathbb P_\tau$ means we are taking the quotient with respect to the para-complex numbers with non-zero absolute value.  Using the description in terms of incidence geometry (see Section \ref{sec:2.4}), the space \( \partial_\infty \mathbb{H}_\tau^2 \) is precisely identifiable with the flag variety \( \mathcal{F} \). Moreover, for each \( p \in \Sigma \), after identifying \( (E_\rho)_p \) with \( \mathbb{R}_\tau^3 \) and using the idempotent decomposition \( \mathbb{R}_\tau^3 = \mathbb{R}^3 e_+ \oplus \mathbb{R}^3 e_- \), we can construct a homeomorphism: \begin{align*}
    & \ \ \ \ \ \ \ \ \ \ \ \mathbb P_\tau\Big(\text{Iso}_1(\R_\tau^3)\Big)\longrightarrow\mathcal{F} \\ &[z]=[z^+e_++z^-e_-]\mapsto [(z^+,\Ker(\varphi_{z^-}))]
\end{align*}where $\varphi_{z^-}$ is the functional determined by the identification $\R^3\cong(\R^3)^*$ using the bilinear form $Q=\begin{psmallmatrix}
    0 & 0 & 1 \\ 0 & 1 & 0 \\ 1 & 0 & 0
\end{psmallmatrix}$ (see Section \ref{sec:2.4})
\begin{prop}\label{prop:transverse_section}
The isotropic section $s:T^1\Sigma\to\pi^*E_\rho$ is transverse and induces an $\big(\mathcal{F},\SL(3,\R)\big)$-structure on $\Pp T\Sigma$.
\end{prop}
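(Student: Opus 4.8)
The plan is to verify transversality by an infinitesimal computation and then invoke the dictionary between transverse sections of flat bundles and $(\mathcal{F},\SL(3,\R))$-structures recalled just above the statement. Under the identification $\mathbb{P}_\tau\big(\mathrm{Iso}_1(\R^3_\tau)\big)\cong\mathcal{F}$, the section $s$ corresponds to a $\rho$-equivariant map into $\mathcal{F}$ on the universal cover of $T^1\Sigma$, and since $\dim T^1\Sigma=\dim\mathcal{F}=3$, transversality to the parallel foliation amounts to this map being a local diffeomorphism, i.e. to the intrinsic derivative $X\mapsto[D_X s]\in T_{[s]}\mathcal{F}$ being an isomorphism at each point. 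Differentiating $\boldsymbol{q}(s,s)=0$ shows $D_X s$ always lies in $T_s\{\boldsymbol{q}=0\}=\{Y:\Ree\,\boldsymbol{q}(Y,s)=0\}$, and $T_{[s]}\mathcal{F}$ is precisely this $5$-dimensional space modulo $\Span_\R\{s,\p s\}$, the tangent to the orbit $\R_\tau s$. First I would therefore reduce the proposition to showing that, at every $(p,\psi)\in T^1\Sigma$, the five vectors $s,\ \p s,\ D_{\partial_\psi}s,\ D_{\tilde u}s,\ D_{\tilde v}s$ are linearly independent in $\R^3_\tau$, where $\partial_\psi$ is the vertical fibre direction and $\tilde u,\tilde v$ are the horizontal lifts of the orthonormal frame $\{u,v\}$.

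Next I would run the computation in a frame parallel at $p$, writing $w=\cos\psi\,u+\sin\psi\,v$ for the fibre coordinate, so that $s=w+\p(Jw)$ and $\p s=Jw+\p w$, with $J$ rotation by $\tfrac{\pi}{2}$ in $T\Sigma$. The vertical derivative is elementary because $\partial_\psi$ stays inside a single fibre: differentiating coefficients gives $D_{\partial_\psi}s=Jw-\p w$. The key point is that $s$, $\p s$ and $D_{\partial_\psi}s$ all lie in $T\Sigma\oplus\p(T\Sigma)$ and carry no component along $\hat\sigma$ or $\p\hat\sigma$. For the horizontal directions I would use the explicit connection matrix $A$ together with the fact that $\p$ is $D$-parallel: from $D\hat\sigma=\theta_1 u+\theta_2 v$ and $\boldsymbol{q}(\hat\sigma,\hat\sigma)=-1$ one gets that the $\hat\sigma$-component of $D_{\tilde u}w$ is $\boldsymbol{q}(w,u)=\cos\psi$ and that of $D_{\tilde u}(Jw)$ is $\boldsymbol{q}(Jw,u)=-\sin\psi$, the tangential parts vanishing since $w,Jw$ are parallel along the horizontal lift and the remaining contributions are the normal terms $\II(u,\,\cdot\,)$. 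Applying $\p$ to the $Jw$-piece converts its $\hat\sigma$-component into a $\p\hat\sigma$-component, so that
\[
D_{\tilde u}s=\cos\psi\,\hat\sigma-\sin\psi\,\p\hat\sigma+\big(\text{terms in }T\Sigma\oplus\p T\Sigma\big),
\]
\[
D_{\tilde v}s=\sin\psi\,\hat\sigma+\cos\psi\,\p\hat\sigma+\big(\text{terms in }T\Sigma\oplus\p T\Sigma\big).
\]

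The conclusion is then immediate, and notably independent of the second fundamental form: the $(\hat\sigma,\p\hat\sigma)$-components of $D_{\tilde u}s,D_{\tilde v}s$ form the rotation matrix $\begin{psmallmatrix}\cos\psi & \sin\psi \\ -\sin\psi & \cos\psi\end{psmallmatrix}$, which is invertible, whereas $s,\p s,D_{\partial_\psi}s$ have vanishing $(\hat\sigma,\p\hat\sigma)$-part. Hence in any vanishing linear combination the coefficients of $D_{\tilde u}s,D_{\tilde v}s$ must be zero, after which a short check — using that $(\cos\psi,\sin\psi)$ and $(-\sin\psi,\cos\psi)$ are independent in the $T\Sigma$-block — shows $s,\p s,D_{\partial_\psi}s$ are themselves independent. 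This proves transversality on all of $T^1\Sigma$, including the zeros of the Pick cubic differential where $\II$ vanishes. Finally I would note that under $v\mapsto -v$ one has $w\mapsto -w$ and $Jw\mapsto -Jw$, hence $s\mapsto -s$ and the isotropic line $[s]$ is invariant, so $[s]$ descends to a section over $\Pp T\Sigma=T^1\Sigma/\{\pm 1\}$, transversality being a local condition unaffected by the double cover; the recalled correspondence between transverse sections and developing pairs then yields the $(\mathcal{F},\SL(3,\R))$-structure on $\Pp T\Sigma$ with holonomy $\rho$. The main obstacle is organizational rather than computational: one must correctly locate $T_{[s]}\mathcal{F}$ inside the para-complex projectivization and express the intrinsic derivative through $A$ while tracking how $\p$ interchanges the $\hat\sigma$- and $\p\hat\sigma$-directions. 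Once this bookkeeping is in place, the decisive feature is the block-triangular structure — the horizontal directions contributing an invertible rotation block in the $(\hat\sigma,\p\hat\sigma)$-plane while the vertical and orbit directions remain in $T\Sigma\oplus\p T\Sigma$ — which makes linear independence automatic and immune to the degeneration of $\II$.
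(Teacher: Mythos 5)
Your proposal is correct and follows essentially the same route as the paper: reduce transversality to a pointwise linear-independence check in a local frame, observe that $s$, $\p s$ and the vertical derivative lie in $T\Sigma\oplus\p(T\Sigma)$ while the horizontal derivatives project onto $\R\hat\sigma\oplus\p(\R\hat\sigma)$ as the invertible rotation $\begin{psmallmatrix}\cos\psi & \sin\psi\\ -\sin\psi & \cos\psi\end{psmallmatrix}$, and finally use $[s(z,\psi)]=[s(z,\psi+\pi)]$ to descend to $\Pp T\Sigma$. The only (immaterial) difference is bookkeeping: the paper checks independence of the six vectors $s_1,s_2,D_\alpha s_1,D_\alpha s_2,D_Xs,D_Ys$, whereas you check the equivalent five-vector condition $s,\p s,D_{\partial_\psi}s,D_{\tilde u}s,D_{\tilde v}s$ coming from identifying $T_{[s]}\mathcal{F}$.
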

\begin{proof} Let $\alpha$ be a coordinate angle in the fiber of $T^1\Sigma$. To enlighten the notation, we will still denote by $D$ the flat connection on $\pi^{*}E_{\rho}$. In order to show that $s$ is transverse to the flat structure, it is sufficient to prove that the vectors $s, D_{\alpha}s, D_{X}s$ and $D_{Y}s$ are linearly independent at all points, whenever the vector fields $X$ and $Y$ are chosen to provide a local frame of $T\Sigma$. Since $s=s_{1}+s_{2}$, it is actually sufficient to show that the sections $s_{1}, s_{2}, D_{\alpha}s_{1}, D_{\alpha}s_{2}, D_{X}s$ and $D_{Y}s$ are linearly independent at every point. By definition, since $s_{1}$ takes values in the unit circle in the fiber of $\pi^{*}(T^1\Sigma)\subset \pi^{*}E_{\rho}$, the derivative $D_{\alpha}s_{1}$ is orthogonal to $s_{1}$, thus at every point $(p,\beta)\in T^{1}\Sigma$ they generate $\pi^{*}T_{p}\Sigma$. By the same argument, using the fact that $s_{2}$ only differs from $s_{1}$ by pre-composition by a rotation and post-composition by $\p$, the sections $s_{2}$ and $D_{\alpha}s_{2}$ generate $\pi^{*}(\p T_{p}\Sigma)$. This implies that $D_{X}s$ and $D_{Y}s$ are linearly independent if and only if their projections onto $\pi^{*}(\R\hat{\sigma} \oplus \p\R\hat{\sigma})$ are. Using the explicit formula of the connection $D$, we see that those projections are
\begin{align*}
    \mathrm{pr}(X)&:=\theta^{t}(X)s_{1}+\p(D_{X}(s_{1}\circ R_{\pi/2})) \\
    \mathrm{pr}(Y)&:=\theta^{t}(Y)s_{1}+\p(D_{Y}(s_{1}\circ R_{\pi/2})).
\end{align*}
In local coordinates, after choosing a local orthonormal frame $\{u,v\}$ of $T\Sigma$, we can write
\begin{align*}
    s_{1}(z, \beta)&=\cos(\beta)u(z)+\sin(\beta)v(z) \\
    s_{2}(z, \beta)&=-\sin(\beta)\p u(z) + \cos(\beta)\p v(z).
\end{align*}
Therefore, if we choose $X=u$ and $Y=v$ and $\{u,v\}$ to be the dual frame of $\theta^{t}=(\theta_{1}, \theta_{2})$, we get
\begin{align*}
    \mathrm{pr}(X)(z,\beta)&=\cos(\beta)\hat{\sigma}(z)-\sin(\beta)\p\hat{\sigma}(z) \\
    \mathrm{pr}(Y)(z,\beta)&=\sin(\beta)\hat{\sigma}(z)+\cos(\beta)\p \hat{\sigma}(z), 
\end{align*}
which are evidently independent for all $(z,\beta) \in T^{1}\Sigma$. \\
Finally, in order for $s$ to induce a developing map with values in $\mathcal{F}$, we need to consider projective classes of $\mathbf{q}$-isotropic vectors, and thus consider the section $s$ up to scalar multiplication by unitary para-complex numbers. From the expression in local coordinates of $s(z,\beta)=s_{1}(z,\beta)+s_{2}(z,\beta)$, it is straightforward to verify that the $[s(z,\beta)]=[s(z,\beta+\pi)]$, thus $s$ descends to a section of the bundle $\Pp_{\tau}(\mathrm{Iso}_{1}(\pi^{*}E_{\rho}))$ over $\Pp T\Sigma$. 
\end{proof}

\noindent This result implies the existence of a developing pair \((\text{dev}_\rho, h)\), where \(\text{dev}_\rho: \Pp T\widetilde{\Sigma} \to \mathcal{F}\) and \(h: \pi_1(\Pp T\Sigma) \to \SL(3,\mathbb{R})\) are, respectively, the developing map and the holonomy of the \((\mathcal{F}, \SL(3,\mathbb{R}))\)-structure determined by the section \(s\). Moreover, the representation \(h\) strongly factors through \(\rho\), meaning that \(h = \rho \circ \pi_*\). The next objective is to show that if the representation \(\rho\equiv\rho_0\) is Fuchsian, meaning that it factors through the irreducible embedding \(\PSL(2,\mathbb{R}) \hookrightarrow \SL(3,\mathbb{R})\), then the developing map \(\text{dev}_{\rho_0}\) is a homeomorphism onto the connected component \(\Omega_{\rho_0}^{dS}\) of the Guichard-Wienhard domain. In this case, the space-like maximal Lagrangian surface in $\h_\tau^2$ preserved by $\rho_0$ is a totally geodesic copy of the hyperbolic plane $\h^2$ defined in the hyperboloid model as a subset of $\R^{2,1}$. Then, the following lemma holds  \begin{lemma}\label{lm:domain_GW}
Let $\rho_0:\pi_1(\Sigma)\to\SL(3,\R)$ be a Fuchsian representation, then $$\Omega_{\rho_0}^{dS}=\{(v,\Ker(\varphi))\in\mathcal{F} \ | \ v\in\R^{2,1} \ \text{and} \ \Ker(\varphi)^{\perp}\in\R^{2,1} \ \text{are spacelike} \}$$
\end{lemma}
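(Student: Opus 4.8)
The plan is to reduce both conditions in the definition of $\Omega_{\rho_0}^{dS}$ to statements about the signature of vectors and planes for the bilinear form $Q$, exploiting the explicit description of the invariant convex set $\mathcal{C}$ available in the Fuchsian case.

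First I would recall that the irreducible embedding $\PSL(2,\R)\hookrightarrow\SL(3,\R)$ preserves, up to scale, a unique symmetric bilinear form on $\R^3$; normalising it to coincide with $Q$, its signature is $(2,1)$, and the associated hyperboloid model $\h^2=\{v\in\R^3 \ | \ Q(v,v)=-1\}/\pm$ is exactly the totally geodesic surface preserved by $\rho_0$, as observed just before the statement. Therefore the properly convex set preserved by $\rho_0$ is the projectivised timelike cone $\mathcal{C}=\{[v]\in\R\mathbb{P}^2 \ | \ Q(v,v)<0\}$, with closure $\overline{\mathcal{C}}=\{[v] \ | \ Q(v,v)\le 0\}$ bounded by the invariant conic. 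This yields the first translation at once: a point $p=[v]$ lies outside $\overline{\mathcal{C}}$ if and only if $Q(v,v)>0$, that is, $v$ is spacelike.

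The second step is to reinterpret the incidence condition $\Ker(\varphi)\cap\mathcal{C}\neq\emptyset$. Regarding the projective line $\Ker(\varphi)$ as the projectivisation of a $2$-plane $W\subset\R^3$, it meets the open disk $\mathcal{C}$ precisely when $W$ contains a timelike vector; by elementary linear algebra in $\R^{2,1}$ this occurs if and only if $Q|_W$ is non-degenerate of signature $(1,1)$ (a timelike vector cannot lie in the radical of $Q|_W$, since its $Q$-orthogonal complement is positive definite, so the presence of a timelike direction already forces non-degeneracy). Using the $Q$-orthogonal decomposition $\R^3=W\oplus W^\perp$ and additivity of signatures, $Q|_W$ has signature $(1,1)$ exactly when $W^\perp$ is positive definite, i.e. spacelike; the remaining possibilities — either $W^\perp$ timelike, so that $Q|_W$ is definite and $W$ misses $\mathcal{C}$, or $W$ tangent to the cone, so that $W^\perp$ is null and $W$ contains only null directions — are thereby excluded. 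Since the identification $(\R^3)^*\cong\R^3$ via $Q$ sends $\varphi$ to the vector $v_\varphi$ spanning $W^\perp=\Ker(\varphi)^\perp$, this is precisely the condition that $\Ker(\varphi)^\perp$ be spacelike.

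Combining the two reductions gives the stated equality. I do not expect a genuine obstacle here: the argument is elementary once $\mathcal{C}$ is identified with the interior of the $Q$-null cone, and the only point requiring care is the bookkeeping of signatures and sign conventions. As a final consistency check I would verify that the two spacelike conditions are compatible with the flag incidence $v\in\Ker(\varphi)$ — indeed, if $v_\varphi$ is spacelike then $\Ker(\varphi)=v_\varphi^\perp$ has signature $(1,1)$ and contains spacelike lines — so that the described set is non-empty and genuinely coincides with $\Omega_{\rho_0}^{dS}$.
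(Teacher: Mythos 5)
Your proof is correct and follows essentially the same route as the paper's: identify $\mathcal{C}_{\rho_0}$ with the projectivised timelike cone of the $\rho_0$-invariant form of signature $(2,1)$, translate $p\notin\overline{\mathcal{C}_{\rho_0}}$ into $v$ being spacelike, and translate $\Ker(\varphi)\cap\mathcal{C}_{\rho_0}\neq\emptyset$ into $\Ker(\varphi)$ having signature $(1,1)$, equivalently $\Ker(\varphi)^{\perp}$ being spacelike. The only cosmetic difference is that you derive the signature $(1,1)$ directly from the presence of a timelike vector (via non-degeneracy of $Q|_{W}$), whereas the paper additionally invokes the flag incidence $v\in\Ker(\varphi)$ with $v$ spacelike; both are sound.
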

\begin{proof}
In the Fuchsian case, the convex subset $\mathcal{C}_{\rho_0}$ is given by the projective copy of $\h^2$ inside $\R\mathbb P^2$. Therefore, $p\in\mathcal C_{\rho_0}$ if and only if $\langle v,v\rangle_{2,1}<0$, where $v\in\R^{2,1}$ is understood as a vector generating the line representing the point $p\in\mathbb{RP}^2$. Since points in the boundary of $\mathcal{C}_{\rho_0}$ corresponds to isotropic vectors in $\R^{2,1}$ we conclude that $p\notin\overline{\mathcal{C}_{\rho_0}}$ if and only if $v$ is space-like. Using a similar argument, we see that \(\Ker(\varphi) \cap \mathcal{C}_{\rho_0} \neq \emptyset\) if and only if there exists a time-like vector in \(\Ker(\varphi)\). By assumption, we know that \(v \in \Ker(\varphi)\), and we have proved that \(v\) is space-like. This implies that \(\Ker(\varphi)\) is a plane of signature \((1,1)\) in \(\mathbb{R}^{2,1}\), and therefore, its orthogonal complement must necessarily be space-like.
\end{proof}
\begin{theorem}\label{thm:geometric_structure}
The developing map \(\dev_{\rho_0}: \Pp T\widetilde\Sigma \to \mathcal{F}\) of the \((\mathcal{F}, \SL(3,\R))\)-structure on \(\Pp T\Sigma\) with \(\rho_0\) Fuchsian is a homeomorphism onto \(\Omega_{\rho_0}^{dS}\).
\end{theorem}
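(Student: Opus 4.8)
The plan is to make the developing map $\dev_{\rho_0}$ completely explicit in the Fuchsian case and then to write down a continuous inverse whose image is exactly $\Omega_{\rho_0}^{dS}$. Since $\rho_0$ is Fuchsian, $\hat\sigma$ identifies $\widetilde\Sigma$ with the totally geodesic copy of $\h^2=\{v\in\R^{2,1}\mid \langle v,v\rangle_{2,1}=-1\}$ sitting inside $\R^3\subset\R^3_\tau$, so for $p\in\widetilde\Sigma$ the frame $\{u(p),v(p)\}$ is a $\langle\cdot,\cdot\rangle_{2,1}$-orthonormal basis of the spacelike tangent plane $T_p\h^2=p^\perp$. Trivializing the flat bundle $\pi^*E_{\rho_0}$ over $\Pp T\widetilde\Sigma$ by the constant frame of $\R^3_\tau$ and expanding $s=s_1+s_2=\xi+\tau\,\xi^\perp$ (with $\xi=\cos\beta\,u+\sin\beta\,v$ and $\xi^\perp=R_{\pi/2}\xi$) in the idempotent basis, I would obtain
\[
    s(p,\xi)=(\xi+\xi^\perp)e_+ + (\xi-\xi^\perp)e_- ,
\]
so that, under the identification $\Pp_\tau(\mathrm{Iso}_1(\R^3_\tau))\cong\mathcal F$ recalled before the statement,
\[
    \dev_{\rho_0}(p,[\xi])=\big(\R(\xi+\xi^\perp),\,(\xi-\xi^\perp)^\perp\big),
\]
where $(\cdot)^\perp$ is the $\langle\cdot,\cdot\rangle_{2,1}$-orthogonal complement. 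This descends to $\Pp T\widetilde\Sigma$ because $\xi\mapsto-\xi$ reverses both $\xi\pm\xi^\perp$ and hence fixes the flag.

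With this formula in hand, the first check is that the image lands in $\Omega_{\rho_0}^{dS}$: both $\xi\pm\xi^\perp$ lie in the spacelike plane $T_p\h^2$ and have squared norm $2$, so $F_1=\R(\xi+\xi^\perp)$ and $F_2^\perp=\R(\xi-\xi^\perp)$ are spacelike lines, which by Lemma \ref{lm:domain_GW} is precisely the defining property of $\Omega_{\rho_0}^{dS}$; the flag relation $F_1\subset F_2$ follows from $\langle\xi+\xi^\perp,\xi-\xi^\perp\rangle_{2,1}=0$. The heart of the proof is then to invert the map. Given $(F_1,F_2)\in\Omega_{\rho_0}^{dS}$, the lines $F_1$ and $F_2^\perp$ are spacelike, and the flag condition $F_1\subset F_2=(F_2^\perp)^\perp$ forces them to be orthogonal; hence $P:=F_1\oplus F_2^\perp$ is a positive definite plane, its complement $P^\perp$ is a timelike line, and I would recover $p$ as the intersection of $P^\perp$ with the upper sheet $\h^2$, noting that then $T_p\h^2=P$. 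A short computation of rotation angles shows that the two requirements $\R(\xi+\xi^\perp)=F_1$ and $(\xi-\xi^\perp)^\perp=F_2$ both reduce, modulo $\pi$, to fixing the angle of $\xi$ inside $T_p\h^2$ (the second becoming automatic once $F_1\perp F_2^\perp$), so there is a unique class $[\xi]\in\Pp T_p\h^2$ over $(F_1,F_2)$. As this inverse is assembled from orthogonal projection, normalization and the solution of a linear system, it is continuous.

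Putting the steps together, $\dev_{\rho_0}$ is a continuous, $\rho_0$-equivariant bijection of $\Pp T\widetilde\Sigma$ onto $\Omega_{\rho_0}^{dS}$ admitting a continuous inverse, hence a homeomorphism; its being a local homeomorphism is also guaranteed by the transversality established in Proposition \ref{prop:transverse_section}. I expect the genuine difficulty to lie entirely in the bookkeeping of orientations and of the identifications $\xi\sim-\xi$ and $\beta\sim\beta+\pi$: handling these correctly is exactly what ensures that the inverse is single-valued on $\Pp T\widetilde\Sigma$ (and not merely on the unit tangent bundle $T^1\widetilde\Sigma$), and that the chosen sheet of the hyperboloid recovering $p$ varies continuously with the flag.
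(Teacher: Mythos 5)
Your proof is correct, and it starts from the same explicit formula for $\dev_{\rho_0}$ in the idempotent basis that the paper derives (your $(\xi+\xi^\perp)e_+ + (\xi-\xi^\perp)e_-$ is exactly the paper's expansion with $z^\pm$ written in the frame $\{u,v\}$), and the image-containment step via Lemma \ref{lm:domain_GW} is identical. Where you genuinely diverge is in establishing bijectivity: the paper proves injectivity and surjectivity separately — injectivity by a contradiction argument intersecting the two $\mathbf q$-orthogonal hyperplanes $\hat\sigma(p_1)^{\perp_{\mathbf q}}\cap\hat\sigma(p_2)^{\perp_{\mathbf q}}$ (a $\tau$-invariant signature-$(1,1)$ plane containing a unique isotropic direction) and showing a spacelike line cannot contain two independent vectors, and surjectivity by parametrizing the geodesic $\Ker(\varphi)\cap\h^2$ by arclength and exhibiting an explicit preimage at angle $\tfrac34\pi$ — whereas you build a single global inverse: from the flag you recover the positive-definite plane $P=F_1\oplus F_2^\perp$ (orthogonality coming from $F_1\subset F_2$), hence $p$ as $P^\perp\cap\h^2$ and $[\xi]$ from the rotation-angle condition $\beta\equiv\theta_1-\pi/4\pmod\pi$. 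Your route packages injectivity and surjectivity into one uniqueness-plus-existence statement and is arguably more transparent about why the fibre is a single point of $\Pp T_p\Sigma$ rather than of $T^1_p\Sigma$; the paper's route avoids any discussion of continuity of the inverse by never constructing one. In your write-up the sentence asserting continuity of the inverse from ``orthogonal projection, normalization and a linear system'' is the only loose point (a priori the choice of sheet of the hyperboloid must be argued to vary continuously), but it is harmless since, as you note, a continuous bijection that is a local homeomorphism (Proposition \ref{prop:transverse_section}) is automatically a homeomorphism.
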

\begin{proof} By Proposition \ref{prop:transverse_section}, we already know that $\dev_{\rho_{0}}$ is a local homeomorphism, thus we only need to prove that $\dev_{\rho_{0}}$ takes indeed values in $\Omega_{\rho_{0}}^{dS}$ and is bijective. \\
\underline{$\Ima(\dev_{\rho_{0}}) \subset \Omega_{\rho_{0}}^{dS}$.} The $\rho_{0}$-equivariant isotropic $\p$-alternating surface is the diagonal embedding $\hat{\sigma}: \mathbb{H}^{2} \hookrightarrow \mathbb{R}^{2,1}e_{+} \oplus \mathbb{R}^{2,1}e_{-}$ of the hyperboloid model of the hyperbolic plane with respect to the bilinear form $Q$ (\cite{RT_bicomplex}). Let $p\in \mathbb{H}^{2}$ and $\{u,v\}$ be an orthonormal frame of $\hat{\sigma}^{*}T_{p}\h^{2}$. We can write $u=u^{+}e_{+}+u^{-}e_{-}$ and $v=v^{+}e_{+}+v^{-}e_{-}$ with $u^{+}=u^{-}$ and $v^{+}=v^{-}$ because the embedding is diagonal. By definition of the developing map $\dev_{\rho_{0}}$, for every $\alpha \in S^{1}$, we have
\begin{align}\label{eq:dev}
    \dev_{\rho_{0}}(p,\alpha)&= \cos(\alpha)u+\sin(\alpha)v-\sin(\alpha)\tau u + \cos(\alpha)\tau v \notag \\ 
    &=[(\cos(\alpha)-\sin(\alpha))u^{+}+(\sin(\alpha)+\cos(\alpha))v^{+}]e_{+} \\
    &+[(\cos(\alpha)+\sin(\alpha))u^{-}+(\sin(\alpha)-\cos(\alpha))v^{-}]e_{-}. \notag
\end{align}
Since $u^{\pm}, v^{\pm}$ are tangent vectors to the hyperboloid in $\R^{2,1}$, the vectors $z^{+}:=(\cos(\alpha)-\sin(\alpha))u^{+}+(\sin(\alpha)+\cos(\alpha))v^{+}$ and $\Ker(\varphi_{z_{-}})^{\perp}=z_{-}:=(\cos(\alpha)+\sin(\alpha))u^{-}+(\sin(\alpha)-\cos(\alpha))v^{-}$ are both spacelike, and the claim follows from Lemma \ref{lm:domain_GW}.

\noindent \underline{$\dev_{\rho_{0}}$ is injective.} Assume that $\dev_{\rho_{0}}(p_1,\alpha_1)=\dev_{\rho_{0}}(p_2,\alpha_2)$, we shall show that $(p_1,\alpha_1)=(p_2,\alpha_2)$ in $\Pp T\widetilde{\Sigma}$. By construction $\dev_{\rho_{0}}(p_{j},\alpha_{j}) \in \mathrm{Iso}_{1}(\hat{\sigma}_{j}^{\perp_{\mathbf{q}}})$ for $j=1,2$. If $p_1 \neq p_2$, the intersection $P=\hat{\sigma}(p_{1})^{\perp_\mathbf{q}}\cap \hat{\sigma}(p_{2})^{\perp_\mathbf{q}}$ is a $\tau$-invariant plane and thus has signature $(1,1)$ for the inner product $\Ree_{\tau}(\mathbf{q})$. Therefore, there is a unique $\mathbf{q}$-isotropic vector in $P$, which we denote by $v=v^{+}e_{+}+v^{-}e_{-}$, and we must have $\dev_{\rho_{0}}(p_{j}, \alpha_{j})=v$. Since $\rho_{0}$ is Fuchsian, we know that $\hat{\sigma}$ is a diagonal embedding, and we have already shown that $v^{+}$ is spacelike and tangent to the hyperboloid in $\R^{2,1}$ at the points $\hat{\sigma}(p_{j})^{\pm}$, where $\hat{\sigma}(p_{j})=\hat{\sigma}(p_{j})e^{+}+\hat{\sigma}(p_{j})e_{-}$. Furthermore, the computation above shows that $v^{+}$ and $v^{-}$ are linearly independent. This gives a contradiction because if $p_{1}\neq p_{2}$, the intersection of the tangent planes at $\hat{\sigma}(p_{1})^{+}$ and $\hat{\sigma}(p_{2})^{+}$ is a spacelike line, which cannot contain the two linearly independent vectors $v^{+}$ and $v^{-}$. On the other hand, if $p_{1}=p_{2}=p$, then, after fixing an orthonormal basis of $\hat{\sigma}^{*}T\h^{2}$ as before, we can write
\begin{align*}
    \dev_{\rho_{0}}(p,\alpha_j)&=[(\cos(\alpha_j)-\sin(\alpha_j))u^{+}+(\sin(\alpha_j)+\cos(\alpha_j))v^{+}]e_{+}\\
    &+[(\cos(\alpha_j)+\sin(\alpha_j))u^{-}+(\sin(\alpha_j)-\cos(\alpha_j))v^{-}]e_{-}.
\end{align*}
and it is straightforward to check that $\dev_{\rho_{0}}(p,\alpha_1)=\dev_{\rho_{0}}(p,\alpha_2)$ in $\Pp_{\tau}(\mathrm{Iso}_{1}(\R^{3}_\tau))$ if and only if $\alpha_{1}=\alpha_{2}+\pi$, hence $[(p,\alpha_{1})]=[(p,\alpha_2)]$ in the projectivized tangent bundle.

\noindent \underline{$\dev_{\rho_{0}}$ is surjective.} Let $(v, \Ker(\varphi)) \in \Omega_{\rho_{0}}^{dS}$. Since $\Ker(\varphi)^{\perp}$ is spacelike, the plane $\Ker(\varphi)^{\perp}$ has signature $(1,1)$. The intersection $\Ker(\varphi)^{\perp}\cap \mathbb{H}^{2}$ is a geodesic $\gamma \in \mathbb{H}^{2}$, which we parameterize by arclength $\gamma(t)=\cosh(t)p+\sinh(t)w$ for some $p\in \mathbb{H}^{2}$ and $w\in T_{p}^{1}\mathbb{H}^{2}$. Up to scaling, we can assume that $v$ is unitary and with positive inner product with $p$, so that there is a unique time $t_{0}$ such that $\gamma'(t_{0})=v$. Let $p_{0}=\gamma(t_{0})$ and let $w$ be unit vector spanning $\Ker(\varphi)^{\perp}$ so that $\{v,w\}$ is an oriented orthonormal basis of $T_{p_{0}}\h^{2}$. Then, by the explicit formula of $\dev_{\rho_{0}}$ in Equation \eqref{eq:dev}, we see that $\dev(p_{0}, \frac{3}{4}\pi)= -\sqrt{2}ve_{+}+\sqrt{2}we_{-}$, which corresponds exactly to the pair $(v, \Ker(\varphi))$ under the homeomorphism between $\Pp_{\tau}(\mathrm{Iso}_{1}(\R^{3}_\tau))$ and $\mathcal{F}$.    
\end{proof}

\noindent This last result implies that the geometric structure defined by Guichard-Wienhard using the domain of discontinuity \(\Omega_\rho^{dS}\) and the one we construct using maximal Lagrangian immersions in \(\h_\tau^2\) coincide when \(\rho\) is Fuchsian. By applying a continuity argument and the Ehresmann-Thurston principle, we obtain the following immediate consequence.
\begin{cor}
For any $\rho:\pi_1(\Sigma)\to\SL(3,\R)$ in the Hitchin component, the $(\mathcal{F},\SL(3,\R))$-structure defined on $\Pp T\Sigma$ by Proposition \ref{prop:transverse_section} coincides with the one obtained as the quotient $\Omega_\rho^{dS}/\rho\big(\pi_1(\Sigma)\big)$.
\end{cor}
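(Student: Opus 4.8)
The plan is to bootstrap from the Fuchsian case handled in Theorem \ref{thm:geometric_structure} to the entire Hitchin component by a connectedness argument, using the Ehresmann-Thurston principle. The two relevant families of geometric structures are: family $A$, the $(\mathcal{F},\SL(3,\R))$-structures on $\Pp T\Sigma$ produced by the transverse section $s$ of Proposition \ref{prop:transverse_section}, and family $B$, the Guichard-Wienhard structures arising as quotients $\Omega_\rho^{dS}/\rho(\pi_1(\Sigma))$. Writing $\pi:\Pp T\Sigma\to\Sigma$ for the bundle projection, both families have the same holonomy $h=\rho\circ\pi_*$: for family $A$ this is recorded after Proposition \ref{prop:transverse_section}, and for family $B$ it follows from the identification $\Omega_\rho^{dS}/\rho(\pi_1(\Sigma))\cong\Pp T\Sigma$. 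Denote by $[\mathrm{str}_A(\rho)]$ and $[\mathrm{str}_B(\rho)]$ the corresponding isotopy classes in $\mathcal{D}_{(\mathcal{F},\SL(3,\R))}(\Pp T\Sigma)$; the goal is to show they agree for every $\rho$ in the (connected) Hitchin component $\mathrm{Hit}_3(\Sigma)$.

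First I would verify that $\rho\mapsto[\mathrm{str}_A(\rho)]$ and $\rho\mapsto[\mathrm{str}_B(\rho)]$ are continuous maps on $\mathrm{Hit}_3(\Sigma)$. For family $B$ this is part of the theory of domains of discontinuity for Anosov representations: the Anosov boundary maps, hence the domains $\Omega_\rho^{dS}$, vary continuously with $\rho$. For family $A$, the developing map is assembled from the equivariant maximal Lagrangian lift $\hat\sigma$, the flat connection $D$, and the tautological sections building $s$; continuity in $\rho$ then reduces to the continuous dependence of the harmonic metric and of $\hat\sigma$ on the Higgs bundle data, which is supplied by the non-abelian Hodge correspondence through the continuous dependence of solutions to Hitchin's equations.

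Next I would run the clopen argument. Let $U\subset\mathrm{Hit}_3(\Sigma)$ be the set of $\rho$ with $[\mathrm{str}_A(\rho)]=[\mathrm{str}_B(\rho)]$. Since both families are continuous and $\mathcal{D}_{(\mathcal{F},\SL(3,\R))}(\Pp T\Sigma)$ is Hausdorff, $U$ is closed. For openness, fix $\rho\in U$ and let $\mathfrak{s}$ be the common structure. By the Ehresmann-Thurston principle $\mathrm{hol}$ is a local homeomorphism near $\mathfrak{s}$, hence injective on a neighborhood $V\ni\mathfrak{s}$. For $\rho'$ near $\rho$, continuity places both $[\mathrm{str}_A(\rho')]$ and $[\mathrm{str}_B(\rho')]$ in $V$, and they share the holonomy $\rho'\circ\pi_*$; injectivity of $\mathrm{hol}|_V$ forces them to coincide, so $\rho'\in U$ and $U$ is open. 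By Theorem \ref{thm:geometric_structure}, $U$ contains the nonempty Fuchsian locus, and connectedness of $\mathrm{Hit}_3(\Sigma)$ yields $U=\mathrm{Hit}_3(\Sigma)$.

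The main obstacle I expect is the continuity of family $A$ in $\rho$ as a map into $\mathcal{D}$: one must check that the full construction of Proposition \ref{prop:transverse_section}, namely the harmonic metric, the maximal Lagrangian $\hat\sigma$, the flat connection, the transverse section $s$, and the induced developing pair, depends continuously on $\rho$ in a way strong enough to control the isotopy class. Granting the continuous dependence of Hitchin-equation solutions, this becomes a routine step-by-step continuity verification, but it must be stated with care. A minor additional point is that, a priori, Theorem \ref{thm:geometric_structure} only locates the image of the developing map inside $\Omega_\rho^{dS}$ in the Fuchsian case; however, the openness step propagates the identification of the correct component of $\Omega_\rho$ from the Fuchsian locus to all of $\mathrm{Hit}_3(\Sigma)$, so no separate analysis of the other components of $\Omega_\rho$ is needed.
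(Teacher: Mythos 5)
Your proposal is correct and follows essentially the same route as the paper: the paper also anchors the identification at the Fuchsian locus via Theorem \ref{thm:geometric_structure}, invokes the continuous dependence of both the Guichard--Wienhard quotients and the section-induced structures on $\rho$, and concludes by the Ehresmann--Thurston principle over the connected Hitchin component. Your write-up merely makes the open/closed bookkeeping more explicit than the paper's brief argument.
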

\begin{proof}
Theorem \ref{thm:geometric_structure} shows, in particular, that when \(\rho = \rho_0\) is Fuchsian, the space \(\Omega_{\rho_0}^{dS}/\Gamma_0\) is homeomorphic to \(\Pp T\Sigma\) and the $(\mathcal{F},\SL(3,\R))$ obtained in Proposition \ref{prop:transverse_section} coincides with the Guichard-Wienhard's one. Since the topology of \(\Omega_\rho^{dS}/\Gamma\) remains unchanged when \(\rho\) varies continuously (\cite[Theorem 9.12]{guichard2012anosov}), the quotient must have the same topology for every Hitchin representation. Moreover, given that the $(\mathcal{F},\SL(3,\R))$-structure varies continuously, by the Ehresmann-Thurston principle (\cite{thurston_notes}) we conclude the two geometric structures coincide for any representation in the Hitchin component. 
\end{proof}
    
\appendix
\section{Harmonic immersions in para-K\"ahler manifolds}\label{sec:appendix}
\noindent We present here an extension of a well-known result for harmonic immersions of Riemann surfaces in K\"ahler manifolds of constant holomorphic sectional curvature (\cite{wood1984holomorphic}) to the para-K\"ahler setting.\\

\noindent Let $X$ be a Riemann surface and $(M,\mathbf{P},g)$ a para-K\"ahler manifold of dimension $2n$ and constant para-holomorphic sectional curvature $K$. We denote by $\mathbf{q}$ the para-K\"ahler metric
\[
    \mathbf{q}(V,W)=g(V,W)+\tau g(V,\mathbf{P}W) \ \ \ \text{for $V,W\in \Gamma(TM)$}.
\]
Let $\C TM = TM \otimes_{\R}\C$ be the complexified tangent space of $M$. We extend $\mathbf{q}$ to a bilinear form $\mathbf{q}^{\C}$ on $\C TM$ that is $\C$-linear in the first entry and $\C$-antilinear in the second. We also extend the endomorphism $\mathbf{P}$ to $\C TM$ by $\C$-linearity and the Riemannian metric $g$ to a complex bilinear form $g_{\C}$ on $\C TM$. \\

\noindent Given a smooth immersion $\varphi:X\rightarrow M$, we extend its differential by $\C$-linearity to obtain a map $d_{\C}\varphi$ from the complex tangent space $T^{\C}X$ of $X$ to $\C TM$. If $\nabla$ denotes the Levi-Civita connection of $g$, the pull-back bundle $\varphi^{*}\C TM$ on $X$ inherits a natural connection $\overline{\nabla}$ that is $\C$-bilinear and satisfies
\[
    \overline{\nabla}_{Z}V = \nabla_{d^{\C}\varphi(Z)} V
\]
for all sections $Z\in \Gamma(TX)$ and $V \in \Gamma(\varphi^{*}\C TM)$, where we identified $(\varphi^{*}\C TM)_{x}$ with $T_{\varphi(x)}M$ for all $x \in X$. For a complex coordinate $z$ on $X$, we set
\[
    \varphi_{z,1}=\varphi_{z}:=d_{\C}\varphi(\partial_{z}) \ \ \ \text{and} \ \ \  \varphi_{\bar{z}}:=d_{\C}\varphi(\partial_{\bar{z}}) 
\]
and, more in general, for all $m\in \mathbb{N}_{\geq 2}$,
\[
    \varphi_{z,m}=\underbrace{\overline{\nabla}_{\partial_{z}}\cdots \overline{\nabla}_{\partial_{z}}}_{\text{$(m-1)$}}\varphi_{z} \ . 
\]

\begin{defi} The isotropic order of a smooth map $\varphi:X\rightarrow M$ is the largest integer $\gamma \in \mathbb{N}_{\geq 1} \cup \{\infty\}$ such that, in all local charts, 
\[
    \eta_{\alpha,\beta}:=\mathbf{q}^{\C}(\varphi_{z,\alpha}, \varphi_{\bar{z}, \beta})=0 
\]
for all natural numbers $\alpha, \beta \geq 1$ such that $\alpha+\beta \leq \gamma$. 
\end{defi}  

\noindent The following theorem generalizes a well-known result for harmonic immersions in K\"ahler manifolds:

\begin{theorem} Let $\varphi:X \rightarrow M$ be a harmonic map with isotropic order $\gamma+1$. Then
\begin{enumerate}[i)]
    \item for all integers $\alpha,\beta \geq 1$ with $\alpha+\beta=\gamma+1$, we have $\eta_{\alpha,\beta}=(-1)^{\beta-1}\eta_{\gamma,1}$;
    \item the tensor $\eta_{\gamma,1}dz^{\gamma+1}$ is holomorphic.
\end{enumerate}
\end{theorem}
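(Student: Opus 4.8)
The plan is to mimic the classical computation of Eells--Wood and Wood (\cite{wood1984holomorphic}) in the para-Kähler setting, exploiting that the curvature tensor has the explicit form given by Lemma \ref{lem:para_hol_curvature}. First I would set up the basic differential identities for the higher derivatives $\varphi_{z,\alpha}$ and $\varphi_{\bar z,\beta}$. The harmonicity of $\varphi$ means precisely that $\overline{\nabla}_{\partial_{\bar z}}\varphi_z = \overline{\nabla}_{\partial_z}\varphi_{\bar z}=0$, so that $\varphi_{z,\alpha}$ can be differentiated in the $\partial_{\bar z}$-direction and the result re-expressed, via the Ricci identity, purely in terms of the curvature $R^{\nabla}$ acting on lower-order derivatives. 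Concretely, the key recursion is
\[
    \overline{\nabla}_{\partial_{\bar z}}\varphi_{z,\alpha}
    = \overline{\nabla}_{\partial_z}\varphi_{z,\alpha-1,\bar z}
      + R^{\nabla}\bigl(\varphi_{\bar z},\varphi_z\bigr)\varphi_{z,\alpha-1},
\]
obtained by commuting $\overline{\nabla}_{\partial_{\bar z}}$ past the string of $\overline{\nabla}_{\partial_z}$'s one slot at a time. Since $\varphi_{z,\alpha-1,\bar z}$ involves $\overline{\nabla}_{\partial_{\bar z}}\varphi_z=0$ at the bottom of the recursion, the whole $\partial_{\bar z}$-derivative of each $\varphi_{z,\alpha}$ is governed by iterated curvature terms.

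Next I would compute these curvature terms using Lemma \ref{lem:para_hol_curvature}. The crucial point is that, for a harmonic \emph{and conformal} map, or more precisely under the isotropy hypothesis, the inner products $\mathbf q^{\mathbb C}(\varphi_{z,\alpha},\varphi_{\bar z,\beta})$ with $\alpha+\beta\le\gamma$ all vanish, and one checks that $g_{\mathbb C}(\varphi_z,\varphi_z)=0$ as well (the conformality condition $\eta_{1,1}=0$ together with $\mathbf P$-compatibility). Feeding the constant-para-holomorphic-curvature formula into $R^{\nabla}(\varphi_{\bar z},\varphi_z)\varphi_{z,\alpha-1}$ and contracting against $\varphi_{\bar z,\beta}$, the terms $g(X,Z)g(Y,W)$, $g(X,\mathbf P Z)g(\mathbf P Y,W)$, etc., reduce — after repeatedly using the vanishing of the lower-order $\eta$'s — to expressions proportional to the $\eta_{\alpha',\beta'}$ with smaller total order, hence to zero, \emph{except} for the boundary contributions where $\alpha+\beta=\gamma+1$. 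The sign $(-1)^{\beta-1}$ in part (i) is exactly the bookkeeping of how the factor $\mathbf P$ (equivalently multiplication by $\tau$, with $\tau^2=1$ changing the sign of the metric under $g(\mathbf P\cdot,\mathbf P\cdot)=-g$) passes through $\beta-1$ steps of $\partial_{\bar z}$-differentiation. So I would prove (i) by downward induction on $\beta$, starting from $\eta_{\gamma,1}$ and showing $\eta_{\alpha,\beta}=-\eta_{\alpha+1,\beta-1}$ at the top order, where this single recursion sign accumulates to $(-1)^{\beta-1}$.

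For part (ii) I would show $\partial_{\bar z}\eta_{\gamma,1}=0$. Expanding,
\[
    \partial_{\bar z}\,\mathbf q^{\mathbb C}(\varphi_{z,\gamma},\varphi_{\bar z})
    = \mathbf q^{\mathbb C}\bigl(\overline{\nabla}_{\partial_{\bar z}}\varphi_{z,\gamma},\varphi_{\bar z}\bigr)
      + \mathbf q^{\mathbb C}\bigl(\varphi_{z,\gamma},\overline{\nabla}_{\partial_{\bar z}}\varphi_{\bar z}\bigr),
\]
using that $\overline{\nabla}$ is compatible with the parallel form $\mathbf q^{\mathbb C}$. The second term involves $\overline{\nabla}_{\partial_{\bar z}}\varphi_{\bar z}$, which is controlled by harmonicity and the curvature identity applied to the conjugate sequence, while the first term is handled by the recursion above. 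Both are seen to vanish because they pair curvature contractions against derivatives whose mutual inner products are exactly the $\eta_{\alpha,\beta}$ of total order $\le\gamma$, all zero by the isotropy hypothesis (isotropic order is $\gamma+1$). Finally, checking that $\eta_{\gamma,1}\,dz^{\gamma+1}$ is a globally well-defined tensor is a matter of tracking the transformation of $\varphi_{z,\gamma}$ under a holomorphic change of coordinate $w=w(z)$: the leading term transforms as $(dz/dw)^{\gamma}$ and the pairing with $\varphi_{\bar z}$ contributes one more factor $(dz/dw)$, giving weight $\gamma+1$, while the lower-order correction terms in the coordinate change pair against lower-order $\varphi_{\bar z,\beta}$ and vanish by isotropy. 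The main obstacle I anticipate is \emph{not} the holomorphicity argument itself but the careful combinatorial control of the sign and of which curvature contractions survive: one must verify that every term generated by the Ricci commutation, once the explicit para-Kähler curvature is inserted and the lower-order isotropy relations are used, collapses cleanly, with the $g(\mathbf P\cdot,\mathbf P\cdot)=-g$ identity supplying exactly the alternating sign and no stray contributions from the mixed $g(X,\mathbf P Y)$ terms of Lemma \ref{lem:para_hol_curvature}.
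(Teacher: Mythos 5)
Your proposal follows essentially the same route as the paper: harmonicity plus the Ricci commutation identity and the explicit constant para-holomorphic curvature formula of Lemma \ref{lem:para_hol_curvature} are used, together with the vanishing of the lower-order $\eta_{\alpha,\beta}$, to show that $\overline{\nabla}_{\partial_{\bar z}}\varphi_{z,\alpha}$ stays in the span of $\{\varphi_{z,j},\mathbf{P}\varphi_{z,j}\}_{j<\alpha}$, from which $\partial_{\bar z}\eta_{\gamma,1}=0$ follows, and part (i) follows from differentiating the identities $\eta_{\alpha,\beta}=0$ at total order $\gamma$. One small correction: the sign $(-1)^{\beta-1}$ comes purely from the Leibniz rule $0=\partial_z\eta_{\alpha,\beta}=\eta_{\alpha+1,\beta}+\eta_{\alpha,\beta+1}$ applied at total order $\gamma$ (your recursion $\eta_{\alpha,\beta}=-\eta_{\alpha+1,\beta-1}$ is the correct one), not from the para-complex structure $\mathbf{P}$ passing through the differentiations.
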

\begin{proof} The first part of the statement follows from an elementary integration by part: details are left to the reader. \\
In order to prove ii), we first show by induction that the following property holds: for all integers $1\leq \alpha \leq \gamma$ we have
\begin{equation}\label{eq:span}
    \overline{\nabla}_{\partial_{\bar{z}}} \varphi_{z,\alpha} \in \mathrm{Span}\{\varphi_{z}, \mathbf{P}\varphi_{z}, \dots, \varphi_{z, \alpha-1}, \mathbf{P}\varphi_{z, \alpha-1}\} \ .
\end{equation}
For $\alpha=1$, the statement holds because
\[
    \overline{\nabla}_{\partial_{\bar{z}}}\varphi_{z}=0
\]
by harmonicity of $\varphi$. Now, we assume the statement holds for $\alpha-1$, and we compute
\[
    \overline{\nabla}_{\partial_{\bar{z}}}\varphi_{z,\alpha} = \overline{\nabla}_{\partial_{z}}\overline{\nabla}_{\partial_{\bar{z}}}\varphi_{z,\alpha-1}-R^{\nabla}_{\C}(\varphi_{z}, \varphi_{\bar{z}})(\varphi_{z,\alpha-1}) \ ,
\]
where $R^{\nabla}_{\C}$ denotes the $\C$-bilinear extension of the Riemann tensor on $(M,g)$. We analyse the two terms in the above expression separately:
\begin{itemize}
    \item by assumption $\overline{\nabla}_{\partial_{\bar{z}}} \varphi_{z,\alpha-1} \in \mathrm{Span}\{\varphi_{z}, \mathbf{P}\varphi_{z}, \dots, \varphi_{z, \alpha-2}, \mathbf{P}\varphi_{z, \alpha-2}\}$ hence, since $\mathbf{P}$ is parallel, $\overline{\nabla}_{\partial_{z}}\overline{\nabla}_{\partial_{\bar{z}}}\varphi_{z,\alpha-1} \in  \mathrm{Span}\{\varphi_{z}, \mathbf{P}\varphi_{z}, \dots, \varphi_{z, \alpha-1}, \mathbf{P}\varphi_{z, \alpha-1}\}$, as desired. 
    \item for the second term, using the formula for the Riemann tensor of a para-K\"ahler manifold with constant para-holomorphic sectional curvature $\kappa$ (Lemma \ref{lem:para_hol_curvature}), we get
    \begin{align*}
        R^{\nabla}_{\C}(\varphi_{z}, \varphi_{\bar{z}})\varphi_{z,\alpha-1}&=-\frac{\kappa}{4}\Bigg[g_{\C}(\varphi_{z}, \varphi_{z,\alpha-1})\varphi_{\bar{z}}-g_{\C}(\varphi_{\bar{z}},\varphi_{z,\alpha-1})\varphi_{z}+g_{\C}(\varphi_{z}, \mathbf{P}\varphi_{z,\alpha-1})\mathbf{P}\varphi_{\bar{z}} \\
        & \ \ \ \ \ \ \ \ \ \ -g_{\C}(\varphi_{\bar{z}},\mathbf{P}\varphi_{z,\alpha-1})\mathbf{P}\varphi_{z}+2g_{\C}(\varphi_{z},\mathbf{P}\varphi_{\bar{z}})\mathbf{P}\varphi_{z,\alpha-1}\Bigg] 
    \end{align*}
    and this belongs to $\mathrm{Span}\{\varphi_{z}, \mathbf{P}\varphi_{z}, \dots, \varphi_{z, \alpha-1}, \mathbf{P}\varphi_{z, \alpha-1}\}$ if and only if 
    \[
        g_{\C}(\varphi_{z}, \varphi_{z,\alpha-1})=g_{\C}(\varphi_{z}, \mathbf{P}\varphi_{z,\alpha-1})=0.
    \]
    These last conditions are satisfied because
    \begin{align*}
        g_{\C}(\varphi_{z}, \varphi_{z,\alpha-1})&= g_{\C}(\varphi_{z,\alpha-1},   \varphi_{z}) \\ &=\Ree_{\tau}(q^{\C}(\varphi_{z,\alpha-1},\varphi_{\bar{z}}))=\Ree_{\tau}(\eta_{\alpha-1,1})=0
    \end{align*}
    since $\varphi$ has isotropic order $\gamma+1$ and $(\alpha-1)+1=\alpha \leq \gamma$. Similarly,
    \begin{align*}
        g_{\C}(\varphi_{z}, \mathbf{P}\varphi_{z,\alpha-1})&= g_{\C}(\mathbf{P}\varphi_{z,\alpha-1},   \varphi_{z})\\ &=-\Ima_{\tau}(q^{\C}(\varphi_{z,\alpha-1},\varphi_{\bar{z}}))=-\Ima_{\tau}(\eta_{\alpha-1,1})=0 .
    \end{align*}
\end{itemize}
We can now prove that the tensor $\eta_{\gamma,1}dz^{\gamma+1}$ is holomorphic:
\begin{align*}
    \partial_{\bar{z}}\eta_{\gamma,1}&=\partial_{\bar{z}} (q^{\C}(\varphi_{z,\gamma}, \varphi_{\bar{z}})) \\
    &=q^{\C}(\overline{\nabla}_{\partial_{\bar{z}}}\varphi_{z,\gamma}, \varphi_{\bar{z}})+q^{\C}(\varphi_{z,\gamma}, \overline{\nabla}_{\partial_{z}}\varphi_{\bar{z}}) \\
    &=q^{\C}(\overline{\nabla}_{\partial_{\bar{z}}}\varphi_{z,\gamma}, \varphi_{\bar{z}}) \tag{$\overline{\nabla}_{\partial_{z}}\varphi_{\bar{z}}=0$ by harmonicity} \\
    &=q^{\C}\left(\sum_{j=1}^{\gamma-1} a_{j}\varphi_{z,j}, \varphi_{\bar{z}}\right) \tag{Equation \eqref{eq:span}}\\
    &=\sum_{j=1}^{\gamma-1}a_{j}q^{\C}(\varphi_{z,j},\varphi_{\bar{z}})=\sum_{j=1}^{\gamma-1}a_{j}\eta_{j,1}=0.
\end{align*}


\end{proof}

\bibliographystyle{alpha}
\bibliography{biblio}

\end{document}